	\let\over=\@@over \let\overwithdelims=\@@overwithdelims
	\let\atop=\@@atop \let\atopwithdelims=\@@atopwithdelims
  	\let\above=\@@above \let\abovewithdelims=\@@abovewithdelims
\tikzstyle{int}=[draw, fill=blue!20, minimum size=2em]
\tikzstyle{dot}=[circle, draw, fill=blue!20, minimum size=2em]
\tikzstyle{init} = [pin edge={to-,thin,black}]
\numberwithin{equation}{section}
\numberwithin{equation}{section}
\newcommand{\R}{\mathbb{R}}
\newcommand{\pnorm}[2]{\lVert #1\rVert_{#2}}
\newcommand{\bigpnorm}[2]{\big\lVert#1\big\rVert_{#2}}
\renewcommand{\epsilon}{\varepsilon}
\renewcommand{\d}[1]{\mathrm{d}#1}
\newcommand{\floor}[1]{\left\lfloor #1 \right\rfloor}
\newcommand{\ceil}[1]{\left\lceil #1 \right\rceil}
\renewcommand{\hat}{\widehat}
\renewcommand{\tilde}{\widetilde}
\DeclareMathOperator{\E}{\mathbb{E}}
\DeclareMathOperator{\Prob}{\mathbb{P}}
\DeclareMathOperator{\var}{Var}
\DeclareMathOperator{\cov}{Cov}
\DeclareMathOperator{\emp}{emp}
\DeclareMathOperator{\totreg}{\mathsf{TotRegret}}
\DeclareMathOperator{\reg}{\mathsf{Regret}}
\DeclareMathOperator{\poi}{\mathsf{Poi}}
\DeclareMathOperator{\bin}{\mathsf{Bin}}
\DeclareMathOperator{\hb}{\mathsf{hybrid}}
\DeclareMathOperator{\npmle}{\mathsf{NPMLE}}
\DeclareMathOperator{\rob}{\mathsf{Robbins}}
\DeclareMathOperator*{\argmax}{arg\,max\,}
\DeclareMathOperator*{\argmin}{arg\,min\,}
\newcommand{\beq}{\begin{equation}}
\newcommand{\eeq}{\end{equation}}
\newcommand{\beqa}{\begin{equation} \begin{aligned}}
\newcommand{\eeqa}{\end{aligned} \end{equation}}
\newcommand{\beqas}{\begin{equation*} \begin{aligned}}
\newcommand{\eeqas}{\end{aligned} \end{equation*}}
\newcommand{\bit}{\begin{itemize}}
	\newcommand{\eit}{\end{itemize}}
\newcommand{\bmat}{\begin{bmatrix}}
	\newcommand{\emat}{\end{bmatrix}}
\newcommand\numberthis{\addtocounter{equation}{1}\tag{\theequation}}
\newcommand{\supp}{\ensuremath{\mathrm{supp}}}
	\newcommand{\eqref}[1]{~(\ref{#1})}
	\def\mod{\mathop{\rm mod}}
\def\argmin{\mathop{\rm argmin}}
\def\argmax{\mathop{\rm argmax}}
\def\exp{\mathop{\rm exp}}
\def\cov{\mathop{\rm cov}}
\newcommand{\stepa}[1]{\overset{\rm (a)}{#1}}
\newcommand{\stepb}[1]{\overset{\rm (b)}{#1}}
\newcommand{\Poi}{\mathsf{Poi}}
\newcommand{\reals}{\mathbb{R}}
\newcommand{\integers}{\mathbb{Z}}
\newcommand{\Expect}{\mathbb{E}}
\newcommand{\iid}{i.i.d.\xspace}
\newcommand{\pth}[1]{\left( #1 \right)}
\newcommand{\iiddistr}{{\stackrel{\text{\iid}}{\sim}}}
\newcommand{\indc}[1]{{\mathbf{1}\left\{{#1}\right\}}}
\definecolor{myblue}{rgb}{.8, .8, 1}
\definecolor{mathblue}{rgb}{0.2472, 0.24, 0.6} 
\definecolor{mathred}{rgb}{0.6, 0.24, 0.442893}
\definecolor{mathyellow}{rgb}{0.6, 0.547014, 0.24}
\newcommand{\calG}{{\mathcal{G}}}
\newcommand{\calP}{{\mathcal{P}}}
\newcommand{\mmse}{\mathsf{mmse}}
\def\unifto{\mathop{{\mskip 3mu plus 2mu minus 1mu%
	\setbox0=\hbox{$\mathchar"3221$}%
	\raise.6ex\copy0\kern-\wd0%
	\lower0.5ex\hbox{$\mathchar"3221$}}\mskip 3mu plus 2mu minus 1mu}}
\def\simleq{{{\mskip 3mu plus 2mu minus 1mu%
	\setbox0=\hbox{$\mathchar"013C$}%
	\raise.2ex\copy0\kern-\wd0%
	\lower0.9ex\hbox{$\mathchar"0218$}}\mskip 3mu plus 2mu minus 1mu}}
\def\simleq{\lesssim}
\def\simgeq{{{\mskip 3mu plus 2mu minus 1mu%
	\setbox0=\hbox{$\mathchar"013E$}%
	\raise.2ex\copy0\kern-\wd0%
	\lower0.9ex\hbox{$\mathchar"0218$}}\mskip 3mu plus 2mu minus 1mu}}
\def\simgeq{\gtrsim}
\newtheorem{theorem}{Theorem}
\newtheorem{lemma}[theorem]{Lemma}
\newtheorem{proposition}[theorem]{Proposition}
\theoremstyle{definition}
\newtheorem{remark}{Remark}
\newif\ifmapx
\edef\jobnametmp{\expandafter\string\csname embayes2_apx\endcsname}
\edef\jobnameapx{\expandafter\mkillslash\jobnametmp}
\edef\jobnameexpand{\jobname}
\newcommand{\poly}{\mathsf{poly}}
\newcommand{\polylog}{\mathsf{polylog}}
\renewcommand{\hat}{\widehat}
\renewcommand{\tilde}{\widetilde}
\def\mmse{\mathrm{mmse}}
\begin{document}
\ifpdf
\DeclareGraphicsExtensions{.pgf}
\graphicspath{{code/}{plots/}}
\fi

\title{Poisson empirical Bayes estimation: When does $g$-modeling beat $f$-modeling in theory (and in practice)?}
\author{Yandi Shen and Yihong Wu\thanks{
Y.~Shen is with the Department of Statistics, The University of Chicago, Chicago IL, USA, 
\texttt{ydshen@uchicago.edu}.
Y.~Wu is with the Department of Statistics and Data Science, Yale University, New Haven CT, USA, 
\texttt{yihong.wu@yale.edu}.
Y.~Wu is supported in part by the NSF Grant CCF-1900507, an NSF CAREER award CCF-1651588, and an Alfred Sloan fellowship.}}
\date{\today}

\maketitle

\begin{abstract}

Empirical Bayes (EB) is a popular framework for large-scale  inference that aims to find data-driven estimators to compete with the Bayesian oracle that knows the true prior. 
Two principled approaches to EB estimation have emerged over the years: \emph{$f$-modeling}, which constructs an approximate Bayes rule by estimating the marginal distribution of the data, and \emph{$g$-modeling}, which estimates the prior from data and then applies the learned Bayes rule. For the Poisson model, the prototypical examples are the celebrated Robbins estimator and the nonparametric MLE (NPMLE), respectively. It has long been recognized in practice that the Robbins estimator, while being conceptually appealing and computationally simple,  lacks robustness and can be easily derailed by ``outliers'' (data points that were rarely observed before), 
unlike the NPMLE which provides more stable and interpretable fit thanks to its Bayes form. On the other hand, not only do the existing theories shed little light on this phenomenon, but they all point to the opposite, as both methods have recently been shown optimal in terms of the \emph{regret} (excess over the Bayes risk) for compactly supported and subexponential priors with exact logarithmic factors \cite{brown2013poisson,polyanskiy2021sharp}.

In this paper we provide a theoretical justification for the superiority of $g$-modeling over $f$-modeling for heavy-tailed data by considering priors with bounded $p$th moment previously studied for the Gaussian model \cite{jiang2009general}. For the Poisson model with sample size $n$, assuming $p>1$ (for otherwise triviality arises), we show that with mild regularization, any $g$-modeling method that is Hellinger rate-optimal in density estimation achieves a total regret $\tilde \Theta(n^{\frac{3}{2p+1}})$, which is minimax optimal within logarithmic factors; in particular, the special case of NPMLE succeeds without regularization. 
In contrast, there exists an $f$-modeling estimator whose density estimation rate is optimal but whose EB regret is suboptimal by a polynomial factor. 
These results show that the proper Bayes form provides a ``general recipe of success'' for optimal EB estimation that applies to all $g$-modeling (but not $f$-modeling) methods.
As by-products of our analysis, we also obtain 
(a) the minimax Hellinger rate of estimating Poisson mixture over the moment class;
(b) the characterization of the regret suboptimality of the Robbins estimator;
(c) an extension to the compound setting.
\end{abstract}

\newpage

\setcounter{tocdepth}{3}
\tableofcontents



\section{Introduction}

\subsection{Overview}
\label{sec:overview}

Introduced by Robbins \cite{robbins1951asymptotically, robbins1956empirical} in the 1950s, Empirical Bayes (EB) is a meaningful and powerful framework for large-scale inference that allows one to go beyond worst-case analysis and obtain data-driven estimators that adapt to the latent structure in the data.
Under the Poisson EB model, $\theta^n \equiv (\theta_1,\ldots,\theta_n)$ are latent parameters drawn independently from an unknown prior distribution $G$ supported on $\reals_+ \equiv [0,\infty)$, and conditioned on $\theta^n$, the observed $Y^n = (Y_1,\ldots,Y_n)$ are independently distributed as $Y_i|\theta_i \sim \poi(\theta_i)$, the Poisson distribution with parameter $\theta_i$.
Consequently, the marginal distribution of each $Y_i$ is the following Poisson mixture:
\begin{align}\label{def:poi_mixture_intro}
f_G(y) \equiv \int \poi(y;\theta) G(\d \theta), \quad y\in \integers_+,
\end{align}
where $\poi(y;\theta)\equiv \frac{e^{-\theta} \theta^y}{y!}$ denotes the probability mass function (pmf) of $\poi(\theta)$ throughout the paper. Given a class of priors $G$, the goal is to estimate the $n$ latent Poisson means $\theta^n$ with a minimal total risk. The EB problem, along with its twin problem of compound estimation, have found deep connections to and fruitful applications in a number of areas in statistics, including admissibility, adaptive nonparametric estimation, variable selection, multiple testing, as well as practical data analysis. We refer to the review articles \cite{casella1985introduction,zhang2003compound, efron2021empirical} and the monographs \cite{maritz1989empirical,carlin1996bayes,efron2010large} for a systematic treatment of this broad subject. 

For the squared error, the Bayes estimator minimizing the average risk is the posterior mean, given by
\begin{align}\label{def:poi_bayes_form_intro}
\theta_G(y) \equiv \E_G[\theta| Y = y] = (y+1)\frac{f_G(y+1)}{f_G(y)}, 
\end{align} 
and the Bayes risk is denoted by\footnote{Here and below, $\E_G$ and $\Prob_G$ are taken under the prior $G$.}
\begin{align}\label{def:bayes_risk_intro}
\mathsf{mmse}(G) \equiv \inf_{\hat{\theta}} \E_G\big(\hat{\theta}(Y) - \theta\big)^2 = \E_G\big(\theta_G(Y) - \theta\big)^2,
\end{align}
where $\theta\sim G$ and $Y|\theta\sim \poi(\theta)$, and the infimum is taken over all measurable functions of $Y$. 
Clearly, evaluating the Bayes estimator requires the knowledge of the prior $G$. For this reason, we refer to \eqref{def:poi_bayes_form_intro} as the \emph{oracle}.

In the Poisson EB model with $n$ i.i.d.~observations $Y^n$, the oracle applies the Bayes rule (\ref{def:poi_bayes_form_intro}) separately to each $Y_i$ to estimate $\theta_i$, resulting in the minimal total risk $n\cdot \mathsf{mmse}(G)$. Using this as a benchmark, the goal of EB estimation is to find a data-driven estimator $\hat{\theta}^n(Y^n):\integers_+^n \rightarrow \reals_+^n$ without knowing the exact prior that approaches the oracle risk as closely as possible.
To this end, the principal metric is the excess risk, also known as the \emph{regret} in the EB literature (see Section \ref{subsec:regret_prelim} for related definitions):
\begin{align}\label{def:reg_intro}
\totreg_n(\hat{\theta}^n;\mathcal{G}) \equiv \sup_{G\in\mathcal{G}}\Big\{\E_{G}\pnorm{\hat{\theta}^n(Y^n) - \theta^n}{}^2 - n\cdot \mathsf{mmse}(G)\Big\},
\end{align} 
where the supremum is taken over a class $\mathcal{G}$ of priors. Since the typical order of the Bayes risk $\mathsf{mmse}(G)$ is $O(1)$, we say the estimator $\hat{\theta}^n$ is \emph{consistent} over $\mathcal{G}$ if its regret satisfies $\totreg_n(\hat{\theta}^n;\mathcal{G}) = o(n)$ as $n\to\infty$,
so that the amortized regret per observation is vanishing; this is referred to as asymptotic optimality in Robbins' original framework \cite{robbins1956empirical}. 
Since then, significant progress has been achieved in understanding the rate of $\totreg_n(\cdot;\mathcal{G})$ 
for specific procedures as well as their optimality -- cf.~\cite{singh1979empirical,li2005convergence,jiang2009general,brown2013poisson,polyanskiy2021sharp} and the references therein.

From the previous discussion, it is clear that the key to obtaining a small regret is to accurately learn the oracle Bayes rule (\ref{def:poi_bayes_form_intro}) from the observed data. 
The majority of the current EB literature centers around two principled approaches, aptly named ``$f$-modeling" and ``$g$-modeling" \cite{efron2014two}:
\begin{itemize}
\item The $f$-modeling approach is concerned with directly estimating the mixture density $f_G$ in the Bayes rule (\ref{def:poi_bayes_form_intro}). 
For the Poisson model, the leading example in this category is the celebrated  estimator of Robbins \cite{robbins1956empirical}, which substitutes the mixture density 
in \eqref{def:poi_bayes_form_intro} by the empirical frequency:
\begin{align}\label{def:robbins_intro}
\theta^{\rob}(y) \equiv (y+1)\frac{N_n(y+1)}{N_n(y)},
\end{align} 
where $N_n(y) = \sum_{i=1}^n \indc{Y_i = y}$ is the empirical count of $y$ in the sample $Y^n$. 
 Other examples of $f$-modeling, developed for both Poisson and other exponential families, include smoothed (kernel) estimates of the mixture density \cite{good1953population, singh1979empirical, zhang1997empirical,pensky1999nonparametric, li2005convergence, zhang2005general, brown2009nonparametric, efron2019bayes}.

\item The $g$-modeling approach proceeds by first producing an estimator $\hat G$ of the prior $G$ and applying the Bayes rule corresponding to $\hat G$.\footnote{In this sense, one can view $g$-modeling as a special case of $f$-modeling which uses \emph{proper} density estimators that are valid mixture distributions. In contrast, most $f$-modeling approaches apply 
improper density estimates such as empirical distribution or kernel methods.} The leading example in this category is the nonparametric maximum likelihood estimator (NPMLE), originally proposed in \cite{kiefer1956consistency}:
\begin{align}\label{def:npmle_intro}
\hat{G} \equiv \argmax_{G \in \mathcal{G}} \prod_{i=1}^n f_G(Y_i).
\end{align}
After $\hat{G}$ is obtained, we apply the plug-in Bayes rule $\theta_{\hat{G}}(y) = (y+1)f_{\hat{G}}(y+1)/f_{\hat{G}}(y)$ as in (\ref{def:poi_bayes_form_intro}) to each observation $Y_i$. Other notable examples in this category include parametric modeling of the prior \cite{morris1983parametric, casella1985introduction}, and the nonparametric suite of minimum-distance estimators \cite{wolfowitz1953estimation,jana2022poisson}, which contains the NPMLE as a special case.
\end{itemize}

%

From a methodological perspective, 
it is well-recognized that $g$-modeling exhibits the following advantages over $f$-modeling:
\begin{itemize}
\item The Bayes form of 
$g$-modeling estimators leads to more interpretable (e.g.~monotone) and frequently more accurate estimates \cite{koenker2014convex}. 

\item The $g$-modeling approach is more flexible in incorporating knowledge of the prior distribution. For example, the sparse case can be readily dealt with by 
restricting the likelihood optimization to priors with a prescribed atom at zero \cite[Section 5]{efron2014two}.

\item The $f$-modeling approach, exemplified by the Robbins estimator, lacks robustness and exhibits numerical instability in practical settings (see, e.g., \cite{maritz1968smooth}, \cite[Section 1.9]{maritz1989empirical}, \cite[Section 6.1]{efron2021computer}, \cite{jana2022poisson}). In fact, it is easily derailed by ``outliers'', i.e., data points that appear only a few times, for which either the numerator or denominator in (\ref{def:robbins_intro}) is small, causing the estimator to take exceptionally small or large values.
See \prettyref{fig:compare} for an example with heavy-tailed priors.
\end{itemize}
On the other hand, $f$-modeling is widely applied in practice due to its computational simplicity, while $g$-modeling, especially in nonparametric settings and general dimensions, is more expensive to compute.

\begin{figure}[ht]%
\centering
\includegraphics[width=0.7\columnwidth]{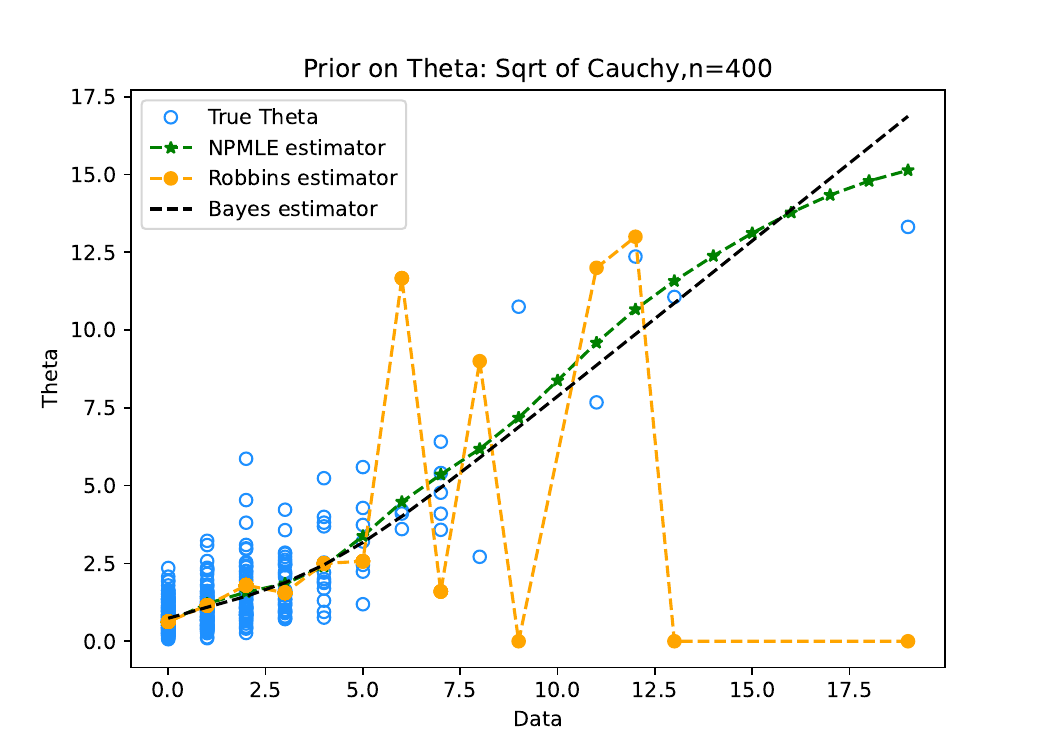}%
\caption{NPMLE vs Robbins vs Bayes estimator for heavy-tailed prior.
The pairs $(X_i,\theta_i)_{i=1}^n$ are shown in blue, where $\theta_i$ are iid copies of the square root of a standard Cauchy variable. 
The Bayes estimator corresponding to the true prior (oracle) and the learned NPMLE (computed using the solver in \cite{jana2022poisson}) are shown in black and green. The Robbins estimator is shown in orange.
}%
\label{fig:compare}%
\end{figure}

Compared to the methodological aspect, theoretical understanding on the Robbins estimator has been limited. In \cite{brown2013poisson, polyanskiy2021sharp}, the authors
studied its regret 
(\ref{def:reg_intro}) for nonparametric class 
$\calG$ of light-tailed priors (compactly supported or subexponential), and proved the surprising conclusion that the Robbins estimator
 achieves the optimal rates of regret with even the \emph{exact} logarithmic factors. 
This result is at odds with the aforementioned nonrobustness of the Robbins estimator (and $f$-modeling methods more generally) that has been widely recognized in practice.

In this paper, we obtain a general theory on $f$-modeling vs $g$-modeling in the Poisson model for the case of heavy-tailed data. To this end, we consider priors with moment constraints, a class previously studied for the Gaussian location model \cite{zhang1997empirical,genovese2000rates,ghosal2001entropies,zhang2009generalized,jiang2009general,kim2022minimax}. 
 This choice is motivated by the empirical observation that the Robbins estimator behaves poorly in the presence of outliers, which are abundant under heavy-tailed priors.
Specifically, for any real $M_p > 0$, consider the moment class
\begin{align}\label{def:mixture_class_intro}
\mathcal{G}_p(M_p) \equiv \{G \in \mathcal{P}(\R_+): ~ m_p(G) \leq M_p\}, \quad  \forall p > 0,
\end{align}
where $\mathcal{P}(\R_+)$ is the set of probability measures on $\R_+$, and $m_p(G) \equiv \int u^pG(\d u)$ is the $p$th moment of a distribution $G$ on $\reals_+$.
Next we give a summary of our main findings. 
To ease exposition, for the rest of the introduction, we shall consider $M_p = 1$ and abbreviate $\mathcal{G}_p(1)$ as $\mathcal{G}_p$.

\subsection{Optimality of $g$-modeling and suboptimality of $f$-modeling}

The main results of this paper are two-fold.
	\begin{itemize}
		\item 
	For $g$-modeling, we show that \emph{for any} rate-optimal (in Hellinger) proper density estimator, the corresponding $g$-modeling EB estimator, with a modicum of regularization, is guaranteed to achieve the optimal rate of regret (up to logarithmic factors). 
	
		\item For $f$-modeling, \emph{there exists} an $f$-modeling estimator whose density estimation rate is optimal but whose EB regret is suboptimal by a polynomial factor. 
	\end{itemize}
	These complementary results show that the \emph{proper} Bayes form is crucial and provides a ``general recipe of success'' for optimal EB estimation that applies to all $g$-modeling (but not $f$-modeling) methods. 

%
To provide more details, fix $p>1$. For any $g$-modeling method with estimated prior $\tilde{G}$ such that $f_{\tilde{G}}$ achieves the minimax Hellinger rate (up to logarithmic factors) of density estimation over $\mathcal{G}_p$, the associated Bayes estimator $\hat{\theta}^{\mathsf{g}} \equiv \theta_{\tilde G}$ given by (\ref{def:poi_bayes_form_intro}) with mild regularization achieves the following regret bound over $\mathcal{G}_p$ (see 
\eqref{def:g_smoothed_individual} for the definition of the regularized estimator and Theorem \ref{thm:regret_MLE_EB} for precise statements): 
\begin{align}\label{eq:npmle_rate_intro}
\totreg_n(\hat{\theta}^{\mathsf{g}};\mathcal{G}_p) = \tilde{O}\big(n^{\frac{3}{2p+1}}\big),
\end{align}
which is shown minimax optimal by Theorem \ref{thm:regret_lower}.
(Here $\tilde{O}(\cdot)$ and $\tilde{\Omega}(\cdot)$ hide polylogarithmic factors; see Section \ref{subsec:notation} for exact definitions.)
Furthermore, for the (important) special case of NPMLE (\ref{def:npmle_intro}), the optimal rate \prettyref{eq:npmle_rate_intro} is achieved without regularization (Theorem \ref{thm:npmle_rho_free}). See Sections \ref{subsec:reg_lower} and \ref{subsec:NPMLE} for a detailed discussion of related regret bounds in the literature. 
Turning to $f$-modeling, we first characterize the regret of the Robbins estimator (see Theorem \ref{thm:regret_robbins_EB} and \prettyref{eq:totreg-robbins} for precise statements): For any $p > 1$,
\begin{align}\label{eq:robbins_rate_intro}
\totreg_n(\hat{\theta}^{\rob};\mathcal{G}_p) = \tilde{\Theta}\big(n^{\frac{3}{p+1}}\big).
\end{align} 
Consequently, the Robbins estimator is inconsistent for $p \in (1,2)$. Furthermore, we show that a natural modification of the Robbins estimator (via interpolation with the MLE $Y^n$) achieves the regret bound $\tilde{\Theta}(n^{\frac{3}{p+2}})$, which is suboptimal by a polynomial factor for all $p>1$ (e.g., $n^{3/4}$ versus the optimal $n^{3/5}$ for $p = 2$). This deficiency is partly explained by the fact that Robbins uses the empirical estimator for $f_G$, whose worst-case Hellinger rate is $\tilde{\Omega}(n^{-\frac{p}{2(p+1)}})$ (see  Proposition \ref{prop:empirical_subopt}), which is strictly sub-optimal by polynomial factors (see (\ref{eq:density_rate_intro}) below). To draw a fair comparison with $g$-modeling, we then demonstrated a $f$-modeling estimator  which achieves the optimal Hellinger rate of density estimation but a strictly sub-optimal regret rate by a polynomial factor; see Theorem \ref{thm:hybrid} for details.  




\subsection{Poisson mixture density estimation}

Nonparametric estimation of mixture densities is a classical problem in statistics.
As an essential step toward the regret bound (\ref{eq:npmle_rate_intro}), we study the problem of estimating Poisson mixture with mixing distributions in the moment class (\ref{def:mixture_class_intro}). 
We show that the NPMLE achieves the following squared Hellinger risk (see Theorem \ref{thm:density_main} for precise statements): For all $p > 0$,
\begin{align}\label{eq:density_rate_intro}
\sup_{G\in\mathcal{G}_p} \E_G H^2(f_{\hat{G}}, f_G) = \tilde{O}(n^{-\frac{2p}{2p+1}}),
\end{align}
where $f_{\hat{G}}$ is the Poisson mixture (\ref{def:poi_mixture_intro}) induced by $\hat{G}$ in (\ref{def:npmle_intro}). 
This result is optimal up to logarithmic factors in view of the minimax lower bound in Theorem \ref{thm:density_lower}. We make the following comments on the rate (\ref{eq:density_rate_intro}), deferring a detailed discussion of the surrounding literature to Section \ref{sec:density_est}:
\begin{itemize}
\item While consistency in regret is only possible for $p > 1$, (\ref{eq:density_rate_intro}) shows that Hellinger consistency of Poisson mixture estimation is possible for all $p > 0$;
\item As discussed below, the Hellinger minimax rate of Gaussian mixture estimation over the class (\ref{def:mixture_class_intro}) is $\tilde{O}(n^{-p/(p+1)})$, which is slower than its Poisson counterpart in (\ref{eq:density_rate_intro});
\item A crucial difference between our regret bound (\ref{eq:npmle_rate_intro}) and the existing results \cite{jiang2009general, brown2013poisson, polyanskiy2021sharp, jana2022poisson} is that in all previously studied settings, the optimal rate of the amortized regret per observation (total regret divided by $n$) coincides with that of density estimation (in $H^2$) up to logarithmic factors; in comparison, (\ref{eq:npmle_rate_intro}) divided by $n$ and (\ref{eq:density_rate_intro}) differ by a polynomial order. This renders the previous reduction from regret to density estimation in, for example, \cite[Theorem 3]{jiang2009general} and \cite{jana2022poisson} not directly applicable and, as a result, the proof of (\ref{eq:npmle_rate_intro}) requires new techniques; 
see Section \ref{subsec:NPMLE} for a detailed discussion. 
\end{itemize}

\subsection{Notation}\label{subsec:notation}

For any positive integer $n$, let $[n]\equiv\{1,\ldots,n\}$. For $a,b \in \R$, $a\vee b\equiv \max\{a,b\}$ and $a\wedge b\equiv\min\{a,b\}$. For $a \in \R$, let $a_\pm \equiv (\pm a)\vee 0$. 
For any $x\in\R$ and $n\in\integers_+$, define the falling factorial $(x)_n \equiv x(x-1)\ldots (x-n+1)$. 
Let $\calP(\reals_+)$ denote the collection of all (Borel) probability measures on $\reals_+$.
For each $G \in \calP(\reals_+)$, let $\supp(G)$ denote its support.
Throughout the paper we adopt the convention $\theta^n \equiv (\theta_1,\ldots,\theta_n)$ for vectors, vector-valued functions, and random vectors.

We use standard asymptotic notation:
For positive sequences $a_n=a_n(x),b_n=b_n(x)$, we write $a_n\lesssim_{x} b_n$ and $b_n\gtrsim_x a_n$ 
(or $a_n= O_x(b_n)$ and $b_n=\Omega_x(a_n)$) if $a_n\leq C_x b$ for some constant $C_x>0$ depending only on $x$; $a_n\asymp_x b$ (or $a_n=\Theta_x(b)$) if both $a_n\lesssim_{x} b$ and $a_n\gtrsim_x b$ (the subscript $x$ is dropped is the constant $C$ is absolute constant);
$a_n= o(b_n)$ if $\lim_{n\rightarrow\infty} (a_n/b_n) = 0$;
$a_n=\poly(n)$ if $a_n=n^{O(1)}$;
$a_n=\polylog(n)$ if $a_n=(\log n)^{O(1)}$.
We will also use the tilde convention to hide polylogarithmic factors, e.g., $a_n=\tilde O(b_n)$ if $a_n= O(b_n \cdot \polylog(n))$.

For a two-sided sequence $\{f(y)\}_{y\in\mathbb{Z}}$, the \emph{forward difference} operator is recursively defined by
\begin{align}\label{eq:forwarddiff}
\Delta^kf(y) \equiv \Delta^{k-1}f(y+1) - \Delta^{k-1}f(y), \quad \Delta^{0}f(y) \equiv f(y),
\end{align}
and the \emph{backward difference} operator is defined by
\begin{align}\label{eq:backwarddiff}
\nabla^k f(y) \equiv \nabla^{k-1}f(y) - \nabla^{k-1}f(y-1), \quad \nabla^{0}f(y) \equiv f(y).
\end{align}
In particular, $\nabla f(y)=\Delta f(y-1)$. 
Expanding these recursive definitions leads to binomial-type expansions of higher-order finite differences, for example,  
\begin{equation}
\nabla^{k} f(y) =  
\sum_{i=0}^{k} (-1)^i \binom{k}{i} f(y-i).
\label{eq:backwarddiff-expand}
\end{equation}
For a one-sided sequence $\{f(y)\}_{y\in\mathbb{Z}_{+}}$, its forward and backward difference operations are understood as first extending the definition by $f(y) \equiv 0$ for all $y < 0$ and then applying the above definitions. 
Finally, recall the summation by parts formula: Provided that $f(-1)=0$,
\begin{align}
\sum_{y=0}^\infty f(y) \cdot \Delta g(y) = - \sum_{y=0}^\infty g(y)\cdot \nabla f(y).
\label{eq:SBP}
\end{align}


\subsection{Organization}
The rest of the paper is organized as follows. Section \ref{sec:density_est} contains results on the Poisson mixture density estimation, and our main results on the regret bounds are presented in Section \ref{sec:regret}. Some concluding remarks are in Section \ref{sec:conclusion}. All major proofs are collected in Sections \ref{sec:proof_density} and \ref{sec:proof_regret}, with some auxiliary results and proofs deferred to the appendices. 

\section{Estimation of Poisson mixture}\label{sec:density_est}
We start by formally introducing the density estimation framework. Let $Y^n = (Y_1,\ldots,Y_n)$ be i.i.d.~observations from the Poisson mixture $f_G$ in (\ref{def:poi_mixture_intro}),
where $G$ is some mixing distribution supported on $\reals_+$. 
We will mainly be interested in the set of mixing distributions defined in (\ref{def:mixture_class_intro}).
For any estimator $\hat{f}$ that is a valid probability mass function, its squared Hellinger error for estimating $f_G$ is
\begin{align}\label{def:hellinger}
H^2(\hat{f}, f_G) \equiv \sum_{y=0}^\infty \big(\sqrt{\hat{f}(y)} - \sqrt{f_G(y)}\big)^2.
\end{align}

We will be chiefly concerned with the nonparametric MLE (NPMLE) \cite{kiefer1956consistency}, defined by
\begin{align}\label{def:npmle}
\hat{G} \equiv \argmax_{G \in \calP(\reals_+)} \prod_{i=1}^n f_G(Y_i).
\end{align}
It is well-known that for the Poisson mixture model, \eqref{def:npmle} has a unique solution with at most $n$ atoms \cite{simar1976maximum}. We refer the readers to the monograph \cite{lindsay1995mixture}  for a systematic treatment of the NPMLE for general exponential families in one dimension and 
\cite{PW20-npmle} for more recent results. 

The following result, proved in
Section \ref{subsec:proof_density_upper}, provides a large-deviations inequality for the Hellinger risk of the NPMLE in density estimation.
\begin{theorem}\label{thm:density_main}
Suppose $Y^n = (Y_1,\ldots,Y_n)$ are i.i.d.~observations from $f_{G}$, where $G\in\mathcal{G}_p(M_p)$ for some $p > 0$ and $M_p^{1/p} \leq n^{10}$. 
Let 
\begin{align}\label{def:density_epsilon_rate}
\epsilon_n \equiv \big(n^{-p/(2p+1)}M_p^{1/(4p+2)} \vee n^{-1/2}\big)(\log n)^4.
\end{align}
Then there exists some $t_* = t_\ast(p)$ such that for all $t\geq t_*$,
\begin{align}\label{ineq:density_upper_main}
\Prob_G\Big(H(f_{\hat{G}}, f_{G}) \geq t\epsilon_n\Big) \leq 2\exp\big(-t^2n\epsilon_n^2/(8\log n)\big) \leq 2\exp(-t^2(\log n)^2/8).
\end{align}
where $\hat{G}$ is the NPMLE in (\ref{def:npmle}). 
Consequently, there exists some $C = C(p) > 0$ such that $\E_{G} H^2(f_{\hat{G}},f_G) \leq C\epsilon_n^2$ uniformly over $G\in \mathcal{G}_p(M_p)$.
\end{theorem}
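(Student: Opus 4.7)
The plan is to strike a bias--variance balance between truncating the prior (dictated by the $p$th moment) and applying the Wong--Shen/van de Geer bracketing approach to the Hellinger rate of the NPMLE, where the main ingredient is a sharp bracketing-entropy bound that scales like $\sqrt{T}$ (rather than $T$) for Poisson mixtures whose mixing distribution is supported on $[0,T]$.

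First I would choose the truncation level $T \asymp (nM_p)^{2/(2p+1)}$ up to polylogarithmic factors. For each $G\in\mathcal G_p(M_p)$, let $G_T$ denote the truncation of $G$ to $[0,T]$ (for instance, conditioning on $[0,T]$ and placing the excess mass at the endpoint). Markov's inequality gives $G((T,\infty))\leq M_p/T^p$, and the elementary Hellinger/TV comparison yields the bias bound
\begin{align*}
H^2(f_{G_T},f_G) \;\leq\; 2\|f_{G_T}-f_G\|_{\TV} \;\lesssim\; M_p/T^p,
\end{align*}
which with the chosen $T$ equals the target $\epsilon_n^2$ up to polylogarithmic factors. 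A Poisson mixture tail estimate (Chernoff on $\{\theta\leq M/2\}$ combined with Markov on $\{\theta>M/2\}$) shows that, with probability $\geq 1-n^{-C}$, $\max_{i\leq n}Y_i\leq M$ for $M \asymp T + \sqrt{T\log n}$, so only the window $\{0,\ldots,M\}$ is effectively relevant.

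The main technical step is the Hellinger bracketing entropy bound for the truncated class,
\begin{align*}
\log N_{[\,]}\bigl(\epsilon,\ \{f_H : \supp(H)\subseteq [0,T]\},\ H\bigr) \;\lesssim\; \sqrt{T}\cdot \polylog(n,T,1/\epsilon).
\end{align*}
The $\sqrt{T}$ scaling is essential: every mixing distribution on $[0,T]$ is approximated in Hellinger by a discrete prior on $O(\sqrt{T\log n})$ atoms matching its first $O(\sqrt{T\log n})$ Poisson moments, which is the same ``effective dimension'' that bounds the number of support points of the Poisson NPMLE (cf.~\cite{PW20-npmle}); quantizing the locations and weights of this discrete approximant to polynomially many levels produces the entropy estimate. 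Plugging this into the Wong--Shen/van de Geer bracketing MLE rate, the entropy integral returns a tail bound of the form
\begin{align*}
\Prob_G\bigl(H^2(f_{\hat G_T}, f_{G_T}) \geq t^2\,\sqrt{T}\,\polylog(n)/n\bigr) \;\leq\; \exp\bigl(-c\,t^2\, n \epsilon_n^2/\log n\bigr)
\end{align*}
for a (putative) MLE $\hat G_T$ constrained to priors on $[0,T]$, and balancing this variance against the bias $M_p/T^p$ reproduces the exact form of $\epsilon_n$ in \eqref{def:density_epsilon_rate} together with the sub-Gaussian tail in \eqref{ineq:density_upper_main}.

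To extend the conclusion to the actual unconstrained NPMLE $\hat G$ in \eqref{def:npmle}, I would use a sieve/peeling argument: on the event $\max_i Y_i\leq M$, the MLE inequality $\prod_i f_{\hat G}(Y_i)\geq \prod_i f_{G_T}(Y_i)$ combined with the fact that all likelihoods are evaluated on $\{0,\ldots,M\}$ implies that the restriction of $f_{\hat G}$ to this window lies in an $\epsilon_n$-neighborhood of $f_{G_T}$ within the bracketing class, while its mass on $(M,\infty)$ is controlled by the empirical distribution (which places no mass there on the good event). The main obstacle is the $\sqrt{T}$ bracketing entropy with polylogarithmic slack: the cruder $O(T)$ bound that suffices for the Gaussian mixture analogue \cite{zhang2009generalized} would yield only the slower rate $n^{-p/(2p+2)}$, and recovering the correct exponent $p/(2p+1)$ depends crucially on the moment-matching/low-effective-dimension phenomenon specific to Poisson mixtures.
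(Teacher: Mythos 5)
Your central insight is correct and is precisely the paper's Lemma~\ref{lem:moment_matching}/Lemma~\ref{lem:entropy}: the $\sqrt{T}\cdot\polylog$ entropy scaling, rather than the $T\cdot\polylog$ that suffices for Gaussian mixtures, is what drives the rate, and it comes from a moment-matching argument with effective dimension $\tilde O(\sqrt T)$. Two remarks on how you reach it: the paper does not match the first $O(\sqrt{T\log n})$ global moments but instead matches $O(\polylog(1/\eta))$ moments \emph{locally} on each cell of a quadratic partition $I_i = [\,i^2C\log(1/\eta),\,(i+1)^2C\log(1/\eta)\,)$; the $\sqrt T$ arises because there are only $\tilde O(\sqrt T)$ such cells, and global moment matching would not yield the $\eta$-accuracy you need. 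The paper also works with $\ell_\infty$ covering under $\|\cdot\|_{\infty,M}$ rather than Hellinger bracketing; the translation is routine but is where the slack function below enters.

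The genuine gap is in the step controlling the tail observations. You assert that $\max_{i\le n}Y_i\le M$ with probability $\ge 1-n^{-C}$ for $M\asymp T\asymp (nM_p)^{2/(2p+1)}$, and the sieve/peeling step then conditions on this event. That claim is false in exactly the regime that matters. For a prior with density $g(a)\propto a^{-(p+1)}(\log a)^{-2}$ on $[e,\infty)$ (the hard instance in $\mathcal G_p(1)$ used in \prettyref{lem:ht_prior}), one has $f_G(y)\asymp y^{-(p+1)}(\log y)^{-2}$, hence $\Prob_G(Y>M)\asymp M^{-p}(\log M)^{-2}\asymp\epsilon_n^2/\polylog(n)$, so
\begin{align*}
\Prob_G\Big(\max_{i\le n}Y_i>M\Big)\;=\;1-\big(1-\Prob_G(Y>M)\big)^n\;\ge\;1-\exp\Big(-c\,\frac{n\epsilon_n^2}{\polylog(n)}\Big)\;\to\;1,
\end{align*}
since $n\epsilon_n^2\to\infty$ under the theorem's hypotheses. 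So the ``good event'' you condition on has probability tending to $0$. Enlarging $M$ to make the event hold (one needs $M\gtrsim (n^{1+C}M_p)^{1/p}\gg T$) inflates the $\sqrt M$ entropy past what the rate can afford, so this route does not close. The paper avoids conditioning on $\max_iY_i$ altogether: it covers the full class $\mathcal H_0$ in $\|\cdot\|_{\infty,M}$, introduces the slack function $f_*(y)=\eta\indc{y\le M}+\eta M^2y^{-2}\indc{y>M}$ with $\sum_y f_*(y)\lesssim\eta M$, and bounds the multiplicative contribution $\prod_{i:Y_i>M}1/f_*(Y_i)$ directly via the fractional-moment estimate of Lemma~\ref{lem:truncation_moment}; this is precisely the source of the extra $\log n$ in the exponent of \eqref{ineq:density_upper_main}. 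Your proposal needs an analogue of this mechanism in place of the good-event argument.
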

\begin{remark}
The upper bound condition $M_p^{1/p} \leq n^{10}$ can be strengthened to $M_p^{1/p} \leq n^{\eta}$ for any $\eta = O(1)$, with $t_\ast$ now depending on $\eta$ as well; see Remark \ref{remark:density_lower} below for some related discussion.
\end{remark}

\begin{remark}
A natural question is whether the empirical estimator $\hat{f}^{\emp}(y) \equiv n^{-1}\sum_{i=1}^n \bm{1}_{Y_i = y}$ can achieve the same Hellinger rate. As shown in Proposition \ref{prop:empirical_subopt} in the appendix, the answer is negative. Note that this is in stark contrast to the light-tailed case (i.e., $G$ has bounded support or sub-exponential tail), where the Poisson structure becomes irrelevant and the empirical estimator is already rate-optimal down to exact logarithmic factors \cite{polyanskiy2021sharp}. 
\end{remark}

The next result provides a matching minimax lower bound, proved in Section \ref{subsec:proof_density_lower} based on a construction inspired by the proof of \cite[Theorem 2.3]{kim2022minimax}.
\begin{theorem}\label{thm:density_lower}
For any $p>0$, there exists some $c = c(p) > 0$ such that
\begin{align*}
\inf_{\hat{f}}\sup_{G\in \mathcal{G}_p(M_p)} \E_{G}H^2(\hat{f}, f_{G}) \geq c n^{-2p/(2p+1)}M_p^{1/(2p+1)}(\log n)^{-11}
\end{align*}
provided that $n^{-1/p}(\log n)^{10} \leq M_p^{1/p} \leq n^2(\log n)^2$, where the infimum is taken over all density estimate $\hat f$ measurable with respect to $Y^n\iiddistr f_G$.
\end{theorem}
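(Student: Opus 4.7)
I would prove the bound via Le Cam's two-point method applied to a pair of mixing distributions $G_0, G_1 \in \mathcal{G}_p(M_p)$ built from a moment-matching construction, following the blueprint of \cite[Theorem~2.3]{kim2022minimax} in the Gaussian setting. The aim is to exhibit $G_0 \ne G_1$ such that the induced product mixtures are statistically indistinguishable from $n$ samples while their single-observation squared Hellinger distance is of the claimed order $\epsilon_n^2 \asymp n^{-2p/(2p+1)} M_p^{1/(2p+1)}(\log n)^{-11}$. Le Cam's inequality then yields
\[
\inf_{\hat f}\sup_{G \in \mathcal{G}_p(M_p)} \E_G H^2(\hat f, f_G) \;\geq\; \tfrac{1}{4}\bigl(1 - \TV(f_{G_0}^{\otimes n}, f_{G_1}^{\otimes n})\bigr)\, H^2(f_{G_0}, f_{G_1}),
\]
and the claimed lower bound follows.

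\textbf{Choice of parameters and construction.} The construction calibrates two parameters: a scale $\lambda$ on which the priors live and an order $K$ of moment matching. I would pick $\lambda$ with $\lambda^p \lesssim M_p$ so that any probability measure on $[0,\lambda]$ lies in $\mathcal{G}_p(M_p)$, together with $K \asymp \lambda \vee \log n$, so that the Poisson tail $\Prob[\Poi(\lambda) \geq K]$ is at most $n^{-1/2}$ up to logarithmic factors. Classical moment theory (principal representations in a Chebyshev system) then produces two distinct probability measures $G_0, G_1$ supported on $[0,\lambda]$ sharing all moments up to order $K$, with an explicit and controlled gap in the $(K+1)$-st moment of order $(\lambda/K)^{K+1}$. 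Both are automatically in $\mathcal{G}_p(M_p)$ by the choice of $\lambda$.

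\textbf{Distance estimates.} Using the identity $f_G(y) = \E_G[e^{-\theta}\theta^y]/y!$ and the matching of moments up to degree $K$, for any polynomial $\pi$ of degree at most $K$ one has
\[
|f_{G_0}(y) - f_{G_1}(y)| \;\leq\; \frac{1}{y!}\,\sup_{\theta \in [0,\lambda]}|e^{-\theta}\theta^y - \pi(\theta)|.
\]
Taking $\pi$ to be the best uniform approximation and combining with Taylor expansions of $e^{-\theta}$, Stirling estimates, and Poisson-tail sums bounds this error so that $\chi^2(f_{G_0}\|f_{G_1}) \lesssim \Prob[\Poi(\lambda)\geq K]^2 \lesssim 1/n$, which in turn gives $\TV(f_{G_0}^{\otimes n}, f_{G_1}^{\otimes n}) \leq 1/2$ via the standard bound $\TV^2 \leq n \chi^2/2$.

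\textbf{Main obstacle: the Hellinger lower bound.} The delicate step is lower bounding $H^2(f_{G_0}, f_{G_1})$: moment matching makes $f_{G_0}$ and $f_{G_1}$ agree to high precision throughout the Poisson bulk, so the gap is concentrated in the tail region $y \gtrsim K$, where probabilities are small in absolute terms but relative differences can be of order one. The lower bound hinges on the explicit structure of the principal representations and on the identified $(\lambda/K)^{K+1}$ gap in the $(K+1)$-st moment, which propagates into a non-trivial contribution to $H^2$ from $y$'s near the boundary of the Poisson bulk. Balancing $\lambda$ and $K$ against $M_p$ (and handling separately the regimes $M_p^{1/p} \lesssim \log n$ and $M_p^{1/p} \gtrsim \log n$, as well as ensuring the constructed $G_0, G_1$ saturate the moment constraint up to constants) is the main technical effort; the $(\log n)^{-11}$ factor in the statement absorbs the polylogarithmic losses from polynomial approximation, Stirling-type estimates, and the Le Cam conversion. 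This step is the Poisson analog of the hardest part of \cite{kim2022minimax}.
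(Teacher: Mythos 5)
Your proposal has a fundamental flaw: a two-point Le Cam argument cannot deliver the claimed rate, and the obstruction is information-theoretic rather than technical. In a two-point method you must force $\TV(f_{G_0}^{\otimes n},f_{G_1}^{\otimes n})$ away from $1$, which by your own bound requires $n\,\chi^2(f_{G_0}\|f_{G_1})\lesssim 1$, hence $\chi^2(f_{G_0}\|f_{G_1})\lesssim n^{-1}$. Since $H^2(P,Q)=\sum_y (p(y)-q(y))^2/(\sqrt{p(y)}+\sqrt{q(y)})^2 \le \chi^2(P\|Q)$ always, you get $H^2(f_{G_0},f_{G_1})\lesssim n^{-1}$, and Le Cam's two-point bound can never exceed $O(n^{-1})$. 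The target is $n^{-2p/(2p+1)}M_p^{1/(2p+1)}(\log n)^{-11}$, which for any $M_p$ bounded away from $\Theta(1/n)$ (e.g., $M_p=1$) is polynomially larger than $n^{-1}$. So the ``main obstacle'' you identify (lower bounding $H^2$ to the target order while keeping $\chi^2\lesssim n^{-1}$) is not a hard step, it is impossible; the same inequality $H^2\le\chi^2$ that makes the $\chi^2$ bound useful forbids the Hellinger gap you need.

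What the paper does instead is use Assouad's lemma over a $\approx N$-dimensional hypercube with $N\asymp n^{1/(2p+1)}M_p^{1/(2p+1)}/\log n$. The construction $G_{\bm\tau}=w_0\delta_0+\sum_{i=i_0}^N w_i\delta_{\lambda_i}$ in \prettyref{def:lower_bound_construction} places each atom $\lambda_i\in\{a_i,b_i\}$ inside a quadratically spaced interval $I_i=[i^2(\log n)^2,(i+1)^2(\log n)^2]$, with decaying weight $w_i$ and a local shift $\delta_i=|a_i-b_i|$ calibrated so that flipping a single coordinate of $\bm\tau$ changes both $\chi^2$ and $H^2$ by $\asymp 1/(n(\log n)^{10})$. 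Assouad's lemma then aggregates the $N$ coordinates to give a total Hellinger lower bound $\asymp N/(n(\log n)^{10})\asymp n^{-2p/(2p+1)}M_p^{1/(2p+1)}(\log n)^{-11}$, which is precisely the mechanism by which the minimax rate escapes the $1/n$ ceiling of the two-point method. Note also that the paper's quadratic interval spacing (matching the $\mathrm{Var}(\Poi(\lambda))=\lambda$ scaling, and mirroring the partition \prettyref{def:quad_scale_partition} used in the upper bound) is a Poisson-specific feature: directly porting the linear-spacing moment-matching blueprint of \cite{kim2022minimax} without this modification would give the slower Gaussian rate $n^{-p/(p+1)}$, not the Poisson rate $n^{-2p/(2p+1)}$. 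Your plan would need to be replaced, not repaired: replace the single pair $(G_0,G_1)$ by a hypercube of localized perturbations on a quadratic grid and apply Assouad rather than Le Cam.
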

\begin{remark}\label{remark:density_lower}
For the above lower bound to hold, the assumption for the form $M_p^{1/p} = \tilde{O}(n^2)$ cannot be removed because the Hellinger distance is at most a constant.
\end{remark}

Theorems \ref{thm:density_main} and \ref{thm:density_lower} together determine, subject to some mild assumptions on $M_p$, the minimax rate of estimating Poisson mixture density over the moment class $\mathcal{G}_p(M_p)$ up to logarithmic factors:
\begin{align}\label{ineq:density_minimax_rough}
\inf_{\hat{f}}\sup_{G \in \mathcal{G}_p(M_p)} \E_{G}H^2(\hat{f}, f_{G}) = \tilde\Theta(n^{-\frac{2p}{2p+1}}M_p^{\frac{1}{2p+1}}).
\end{align}
This result is of independent interest, and also plays a crucial role in proving the regret optimality of NPMLE in Section \ref{sec:regret}.

Next we discuss the connection of the minimax rate (\ref{ineq:density_minimax_rough}) to the surrounding literature. Instead of surveying the large collection of results on mixture density estimation and the NPMLE, 
we will only review an incomplete list of results most related to ours. After its original introduction in \cite{kiefer1956consistency}, early results on the consistency of the NPMLE were obtained in \cite{jewell1982mixtures, heckman1984method, pfanzagl1988consistency}, among others; see also \cite{chen2017consistency} for a recent review. More recently, driven by the development of empirical process theory, mixture density estimation via the nonparametric/sieve MLE was studied in \cite{vdgeer1993hellinger, shen1994convergence, wong1995probability, vdgeer1996rates} under generic entropy conditions, and in \cite{genovese2000rates, ghosal2001entropies, ghosal2007posterior, zhang2009generalized, kim2014minimax} specifically under the Gaussian mixture model; see also \cite{saha2020nonparametric} for a multivariate extension. The state of the art on estimating nonparametric Gaussian mixture densities is \cite[Theorem 1]{zhang2009generalized} which considered priors with both light (compactly supported or subgaussian) and heavy tails (moment class):
\begin{align}\label{ineq:gau_density_rate}
\sup_{G\in \mathcal{G}} \E_{G}H^2(f_{\hat{G}}, f_G) \lesssim_p
\begin{cases}
n^{-1}(\log n)^2 & \mathcal{G} = \{G:\supp(G)\subset[-1,1]\} \text{ or } \{G:\int e^{cu^2}G(\d u)\leq 1\}\\
n^{-\frac{p}{p+1}}(\log n)^{\frac{2+3p}{2+2p}} & \mathcal{G} = \mathcal{G}_p(1),\quad \forall p > 0,
\end{cases}
\end{align}
where $c>0$ is a constant. Here, we overloaded the notation to also use $f_G$ to denote the Gaussian mixture density under prior $G$ (convolution between $G$ and standard normal), and $f_{\hat{G}}$ is the mixture density induced by the Gaussian analogue of the NPMLE (\ref{def:npmle}). Up to logarithmic factors, both bounds in (\ref{ineq:gau_density_rate}) are known to be minimax optimal
 \cite{kim2014minimax, kim2022minimax}.
 For the related problem of estimating a \emph{finite} Gaussian mixture density in both fixed and high dimensions, we refer to the works \cite{suresh2014near, ho2016convergence, HK2015, li2017robust,wu2020optimal, doss2020optimal} and the references therein.

In comparison, density estimation under the Poisson mixture model is less studied. 
 Assuming that the prior $G$ has a bounded support, \cite{lambert1984asymptotic} derived a near-parametric rate for the NPMLE under (a variation of) the $\chi^2$-divergence. More recently, \cite{jana2022poisson} studies the performance of the NPMLE when $G$ has a light tail, and obtains the following bounds for light-tailed (compactly-supported and subexponential) priors:
\begin{align*}
\sup_{G\in \mathcal{G}} \E_{G}H^2(f_{\hat{G}}, f_G) \leq C
\begin{cases}
n^{-1}\frac{\log n}{\log \log n} & \mathcal{G} = \big\{G\subset\reals_+:\supp(G)\subset[0,1]\big\}\\
n^{-1}\log n & \mathcal{G} = \big\{G\subset\reals_+:\int e^{cu}G(\d u)\leq 1\big\},
\end{cases}
\end{align*}
where $c > 0$ is a constant and $f_{\hat{G}}$ is the Poisson mixture induced by the NPMLE (\ref{def:npmle}). Both upper bounds are minimax rate-optimal with exact logarithmic factors \cite[Theorem 21]{polyanskiy2021sharp}. 
Complementing this result, (\ref{ineq:density_minimax_rough}) resolves the minimax rate for moment classes up to logarithmic factors.

We close this section with a brief discussion of the technical innovation in the proof of Theorem \ref{thm:density_main}. Following the analysis of NPMLE based on covering entropy in \cite{ghosal2001entropies, ghosal2007posterior, zhang2009generalized}, the key step of the proof is to obtain a tight entropy bound for the mixture class under moment constraint  (\ref{def:mixture_class_intro}) under a truncated $\ell_\infty$-norm, which in turn relies on a discrete approximation of an arbitrary mixing distribution $G$ on $\reals_+$. To this end, our main technical contribution is the following result (see Lemma \ref{lem:moment_matching} for precise statements): For small $\eta > 0$ and large $M$ that is at least $\polylog(1/\eta)$,
there exists a discrete distribution $G_m$ supported on $[0,2M]$ with at most $m = \tilde{O}(\sqrt{M})$ atoms (here ``$\tilde{O}(\cdot)$" hides $\polylog(1/\eta)$ factors), such that
\begin{align}\label{ineq:discrete_approx_rough}
\pnorm{f_G - f_{G_m}}{\infty, M} \equiv \max_{x = 0,\ldots,M} |f_G(x) - f_{G_m}(x)| \leq \eta,
\end{align}
where $f_{G_m}$ is the $m$-component Poisson mixture induced by $G_m$.
The above bound is then applied  in Lemma \ref{lem:entropy} to obtain a tight entropy estimate of the mixture class induced by $\calP(\reals_+)$ . 

The main strength of the bound (\ref{ineq:discrete_approx_rough}) is that over the approximation range $[0,M]$, a discrete distribution with only $\tilde{O}(\sqrt{M})$ atoms is sufficient, while the Gaussian analogue of (\ref{ineq:discrete_approx_rough}) requires $\tilde{O}(M)$ atoms \cite[Lemma 1]{zhang2009generalized}. This difference leads to the faster rate in (\ref{ineq:density_minimax_rough}) compared to the Gaussian rate in (\ref{ineq:gau_density_rate}).
An intuitive explanation is that the Poisson density $\poi(\cdot;\theta)$ resembles locally the density of $\mathcal{N}(\theta, \theta)$ (as opposed to $\mathcal{N}(\theta, 1)$ in the Gaussian location model), so that for large $\theta$, it is possible to reach the same approximation accuracy by matching less moments thanks to the extra ``blurring'' incurred by a large variance. More precisely, (\ref{ineq:discrete_approx_rough}) is proved choosing $G_m$ to match the first $O((\log (1/\eta))^2)$ moments of $G$ locally over each interval of the following \emph{quadratic} partition of $[0,2M]$:
\begin{align}\label{def:quad_scale_partition}
I_i \equiv \big[i^2C\log(1/\eta), (i+1)^2C\log(1/\eta) \wedge 2M\big), \quad 0\leq i\leq N = \tilde{O}(\sqrt{M}).
\end{align}
(See the proof of Lemma \ref{lem:moment_matching} for details.) In contrast, if we follow a linear partition $\big[iC\log(1/\eta), (i+1)C\log(1/\eta) \wedge 2M\big)$ as previously used in \cite[Lemma 1]{zhang2009generalized}, the resulting $G_m$ will again have $\tilde{O}(M)$ atoms. As explained previously, the quadratic scaling in (\ref{def:quad_scale_partition}) is tailored for Poisson distributions (whose variance equals to the mean); a similar partition is also adopted in the lower bound construction of Theorem \ref{thm:density_lower}. Incidentally, this quadratic scaling has previously been used in \cite{han2018local} for estimating distributions and their functionals on large domains based on Poissonized sampling.


\section{Regret bound}\label{sec:regret}

\subsection{Preliminary}\label{subsec:regret_prelim}
As discussed in the Introduction, in the Poisson EB model, our goal is to estimate the Poisson means $\theta^n$ based on the observations $Y^n$ and compete with the Bayes oracle. For any estimator $\hat{\theta}^n: \mathbb{Z}^n_+ \rightarrow \R_+^n$, its performance is measured by the total regret in (\ref{def:reg_intro}).
It turns out that for analysis it will be more convenient to work with the closely related notion of \emph{individual regret} \cite{polyanskiy2021sharp}, formally defined as
\begin{align}\label{def:individual_regret}
\reg_n(\hat{\theta};\mathcal{G}) \equiv \sup_{G\in\mathcal{G}}\Big\{\E_{G}\big(\hat{\theta}(Y^n) - \theta_n\big)^2 - \mathsf{mmse}(G)\Big\}.
\end{align}
where $\hat{\theta}: \integers_+^n\to\reals_+$ is a scalar estimator for $\theta_n$. 
The individual regret (\ref{def:individual_regret}) can be interpreted from the perspective of training/testing data: One may view $Y^{n-1}=(Y_1,\ldots,Y_{n-1})$ as the ``training sample" from which we learn a scalar-valued estimator $\hat{\theta}(Y^{n-1},\cdot)$, and then apply it to the fresh observation $Y_n$ to estimate its mean $\theta_n$. 

For permutation-invariant $\hat{\theta}^n$, i.e., 
\begin{equation}
(\hat{\theta}_1(Y_{\sigma(1)},\ldots, Y_{\sigma(n)}),\ldots,\hat{\theta}_n(Y_{\sigma(1)},\ldots, Y_{\sigma(n)})) = \big(\hat{\theta}_{\sigma(1)}(Y^n),\ldots, \hat{\theta}_{\sigma(n)}(Y^n)\big)
\label{eq:PI}
\end{equation}
for any permutation $\sigma$ of $[n]$, 
it follows from symmetry that
\begin{align}\label{eq:reg_relation_est}
\totreg_n(\hat{\theta}^n;\mathcal{G}) = n \cdot \reg_n(\hat{\theta}_n;\mathcal{G})
\end{align}
where $\hat{\theta}_n$ is the last coordinate of $\hat{\theta}^n$.
In what follows, we will mainly work with the individual regret due to its natural connection to function estimation: with $\theta_G(\cdot)$ the Bayes estimator defined in (\ref{def:poi_bayes_form_intro}), 
\begin{align}\label{eq:reg_function_estimation}
\notag\reg_n(\hat{\theta};\mathcal{G}) &= \sup_{G\in\mathcal{G}}\E_{G}\big(\hat{\theta}(Y_n; Y^{n-1}) - \theta_G(Y_n)\big)^2\\
&=\sup_{G\in\mathcal{G}}\E_{G}\bigpnorm{\hat{\theta}(\cdot; Y^{n-1}) - \theta_G}{\ell_2(f_G)}^2,
\end{align}
where the first identity follows from the orthogonality property of the Bayes estimator, and for any sequence $f$ and pmf $P$ on $\integers_+$, $\pnorm{f}{\ell_2(P)}^2 \equiv \sum_{x \geq 0} f^2(x)P(x)$. 
In other words, the individual regret \prettyref{eq:reg_function_estimation} is precisely the squared error (weighted by the true density $f_G$) of estimating the Bayes rule $\theta_G(\cdot)$ based on $n-1$ i.i.d.~observations. Analogous to the total regret, we say an estimator $\hat{\theta}$ is consistent in estimating $\theta_n$ if $\reg_n(\hat{\theta};\mathcal{G}) = o(1)$.

The fundamental limits of Poisson EB estimation under the two regrets in (\ref{def:reg_intro}) and (\ref{def:individual_regret}) are defined by their minimax analogues:
\begin{align}\label{def:regret_minimax}
\notag\totreg_n(\mathcal{G}) &\equiv \inf_{\hat{\theta}^n}\sup_{G\in\mathcal{G}}\Big\{\E_{G}\bigpnorm{\hat{\theta}^n(Y^n) - \theta^n}{}^2 - n \cdot \mathsf{mmse}(G)\Big\},\\
\reg_n(\mathcal{G}) &\equiv \inf_{\hat{\theta}_n}\sup_{G\in\mathcal{G}}\Big\{\E_{G}\big(\hat{\theta}_n(Y^n) - \theta_n\big)^2 - \mathsf{mmse}(G)\Big\},
\end{align}
where the infimum is taken over estimators measurable with respect to $Y^n\iiddistr f_G$. 
As shown in \cite[Lemma 5]{polyanskiy2021sharp}, the minimax total and individual regrets are in fact 
related by the following identity: for any class $\calG$ of priors,
\begin{align}\label{eq:reg_identity}
\totreg_n(\mathcal{G}) = n\cdot \reg_n(\mathcal{G}).
\end{align}

In the remainder of this section, we study the regret of general $g$- and $f$-modeling methods and determine their optimality and suboptimality by deriving minimax regret bounds.
 
\subsection{Minimax lower bound}\label{subsec:reg_lower}

We first give a minimax lower bound for $\reg_n$ for the prior class in (\ref{def:mixture_class_intro}) with a $p$th moment constraint. Its proof can be found in Section \ref{subsec:proof_regret_lower}.

\begin{theorem}\label{thm:regret_lower}
For any $p \geq 1$, there exists some $c_p >0$ such that the following holds.
\begin{itemize}
	\item For any $p>1$ and $n^{-1/p}(\log n)^{10} \leq M_p^{1/p} \leq n^2(\log n)^2$,
	\begin{align*}
\reg_n\big(\mathcal{G}_p(M_p)\big) \geq c_p n^{-2(p-1)/(2p+1)}M_p^{3/(2p+1)}(\log n)^{-11}.
\end{align*}
	\item For $p=1$,
	\begin{align*}
\reg_n\big(\mathcal{G}_1(M_1)\big) \geq c_1 M_1.
\end{align*}
\end{itemize}
\end{theorem}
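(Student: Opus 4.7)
The plan is to apply Le Cam's two-point method to the function-estimation form of the regret in \eqref{eq:reg_function_estimation}. Arguing pointwise in the test observation $Y_n = y^*$ and integrating against the common reference measure $\min(f_{G_0}, f_{G_1})$, I obtain: for any $G_0, G_1 \in \mathcal{G}_p(M_p)$,
\begin{equation*}
\reg_n(\mathcal{G}_p(M_p)) \;\gtrsim\; \bigl(1-\TV(f_{G_0}^{\otimes (n-1)}, f_{G_1}^{\otimes (n-1)})\bigr) \cdot \sum_y \min(f_{G_0}(y), f_{G_1}(y)) \, (\theta_{G_0}(y)-\theta_{G_1}(y))^2.
\end{equation*}
Thus it suffices to exhibit a pair of priors with $\chi^2(f_{G_0},f_{G_1}) \lesssim 1/n$ (so that the tensorized TV stays bounded away from $1$), comparable supports (so that $\min(f_{G_0},f_{G_1}) \asymp f_{G_i}$ on the bulk), and a large weighted $\ell_2$-gap in their Bayes rules.

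For $p > 1$, I would take $G_i = (1-q)\delta_0 + q\,\nu_i$, where $\nu_0, \nu_1$ are probability measures supported on a common interval $[A, A+\Delta]$ with $\Delta \lesssim \sqrt{A}$, matching their first $k$ central moments about $A$ while differing in the $(k+1)$st; such $\nu_i$ can be produced from the extremal solutions of a truncated Chebyshev-type moment problem, analogously to the construction in the proof of \prettyref{thm:density_lower} and \cite[Theorem 2.3]{kim2022minimax}. The moment constraint $m_p(G_i) = q\,m_p(\nu_i) \asymp qA^p \leq M_p$ is saturated by $q \asymp M_p/A^p$. The Hermite expansion of $\delta = f_{\nu_0} - f_{\nu_1}$, valid under the local Gaussian approximation $\poi(\theta) \approx \mathcal{N}(\theta, \theta)$ for $\theta \asymp A \gg 1$, yields $\chi^2(f_{G_0}, f_{G_1}) \asymp q\,(\Delta^2/A)^{k+1}/((k+1)!)^2$. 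For the Bayes-rule gap I would use the Poisson identity $\theta \poi(y;\theta) = (y+1)\poi(y+1;\theta)$ to write $(\theta_{G_0}(y) - \theta_{G_1}(y)) f_{G_0}(y) = (y+1)\delta(y+1) - \theta_{G_1}(y)\delta(y)$; a careful Hermite analysis of this numerator, in which the $(y+1) \asymp A$ prefactor amplifies the density perturbation and specific cancellations driven by the moment matching of $\nu_i$ produce the correct $A$-dependence, yields the requisite gap bound. Optimizing over $(A, \Delta, k)$ under the constraints $qA^p \asymp M_p$, $\chi^2 \asymp 1/n$, $\Delta \lesssim \sqrt{A}$, with $k \asymp \log n$, delivers the rate $n^{-2(p-1)/(2p+1)}M_p^{3/(2p+1)}$ up to the polylogarithmic factor $(\log n)^{-11}$, which absorbs $((k+1)!)^{O(1)}$ and the Hellinger-to-TV conversion constants in Le Cam. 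The hypothesis $n^{-1/p}(\log n)^{10} \leq M_p^{1/p} \leq n^2(\log n)^2$ ensures that the optimal $A$ is large enough for the Gaussian approximation to hold and small enough for $q \leq 1$.

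For the $p=1$ case, a simple two-point construction suffices. Take $G_i = (1-1/n)\delta_0 + (1/n)\delta_{\mu_i}$ with $\mu_0, \mu_1$ chosen symmetric around $nM_1$ and separated by $\sqrt{nM_1}$, so that both have first moment $\asymp M_1$. Since the Bayes rule for a two-point prior is $\theta_{G_i}(y) = \mu_i$ exactly for $y \geq 1$, a direct computation gives $\|\theta_{G_0} - \theta_{G_1}\|_{\ell_2(f_{G_0})}^2 \asymp (\mu_0 - \mu_1)^2 \cdot \Prob_{G_0}(Y_n \geq 1) \asymp nM_1 \cdot (1/n) = M_1$, while $\chi^2(f_{G_0}, f_{G_1}) \asymp 1/n$ follows from a Fisher-information computation. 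Le Cam then yields $\reg_n(\mathcal{G}_1(M_1)) \gtrsim M_1$.

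The main obstacle is the Bayes-rule gap calculation in the $p>1$ regime: correctly identifying the amplification factor relating the Bayes gap to $\chi^2(f_{G_0}, f_{G_1})$ in the moment-matching construction, which must exceed the naive $A$-amplification of the plain two-point case in order to reach the claimed rate. This hinges on Poisson-specific Hermite cancellations in $(y+1)\delta(y+1) - \theta_{G_1}(y)\delta(y)$ that do not occur in the Gaussian EB model, and which explain why the Poisson rate $n^{-2(p-1)/(2p+1)}$ is faster than the Gaussian analogue.
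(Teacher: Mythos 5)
Your $p=1$ argument is essentially correct and close in spirit to the paper's. Both exhibit a small-probability spike at a large Poisson mean separated from its twin by the Poisson noise scale $\sqrt{\mu}$ and both verify that the Bayes risk itself is negligible; the paper takes the spike probability $\asymp n^{-5}$ so it can condition on the clean event $\{Y^{n-1}=0\}$, while you take $q\asymp 1/n$ and run Le Cam with a chi-squared bound on the tensorized marginals. Either route works.

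The $p>1$ case is where the proposal fails. Le Cam's two-point method cannot produce the claimed rate no matter how the pair is engineered; the paper instead applies Assouad's lemma over the $N$-dimensional hypercube $\{G_{\bm{\tau}}\}$ from the proof of \prettyref{thm:density_lower}, and the extra factor of $N^2$ that Assouad supplies over Le Cam is exactly the polynomial gap you cannot close. To quantify this with your construction $G_i=(1-q)\delta_0+q\nu_i$, $\nu_0,\nu_1$ on $[A,A+\Delta]$, $\Delta\lesssim\sqrt{A}$, matching $k$ moments: working in the Poisson--Charlier basis, the leading coefficient satisfies $c_{k+1}\asymp (\Delta/\sqrt A)^{k+1}/\sqrt{(k+1)!}$, so $\chi^2(f_{G_0},f_{G_1})\asymp q\,c_{k+1}^2$. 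The identity
\[
(y+1)\,\nabla^{j}\poi(y+1;A)-A\,\nabla^{j}\poi(y;A)=-j\,\nabla^{j-1}\poi(y;A)
\]
shows that the Bayes-rule numerator $(y+1)\delta(y+1)-\theta_{G_1}(y)\delta(y)$ drops the Charlier order by exactly one while picking up a factor $\sqrt{(k+1)A}$, whence
\[
\sum_y f_{G_0}(y)\bigl(\theta_{G_0}(y)-\theta_{G_1}(y)\bigr)^2\asymp (k+1)\,A\cdot\chi^2(f_{G_0},f_{G_1}).
\]
So the Poisson cancellation you invoke is real, but the amplification is $(k+1)A$ and no more. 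Under the Le Cam constraint $\chi^2\lesssim 1/n$, the width bound $c_{k+1}^2\lesssim 1/(k+1)!$, and the moment budget $qA^p\lesssim M_p$, one is forced to $q\gtrsim (k+1)!/n$, hence $A\lesssim (nM_p/(k+1)!)^{1/p}$, and
\[
\text{Bayes gap}^2\;\lesssim\; (k+1)\,\bigl((k+1)!\bigr)^{-1/p}\,n^{-(p-1)/p}\,M_p^{1/p}.
\]
The $k$-dependent prefactor is maximized at a $p$-dependent constant $k$, so the best achievable two-point rate is $n^{-(p-1)/p}M_p^{1/p}$, which is strictly weaker than the target $n^{-2(p-1)/(2p+1)}M_p^{3/(2p+1)}$ by the polynomial factor $(nM_p)^{-(p-1)/(p(2p+1))}$; this gap does not close anywhere in the assumed range $n^{-1/p}(\log n)^{10}\le M_p^{1/p}\le n^2(\log n)^2$. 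Moment matching changes only the constant, not the $n$-exponent.

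The paper obtains the missing $N^2$ by placing $N=\tilde\Theta\bigl((nM_p)^{1/(2p+1)}\bigr)$ single-atom perturbations at the quadratic scales $a_i\asymp (i\log n)^2$ and applying Assouad's lemma: each coordinate contributes $\asymp a_i/(n\,\mathrm{polylog}\,n)$ independently, and there are $N$ of them. Note the perturbations match no moments; the control of the per-coordinate Bayes gap comes instead from posterior concentration on the windows $R_i\subset I_i$. Your analogy to ``the construction in the proof of \prettyref{thm:density_lower}'' conflates the two: that proof is also an Assouad construction with independent single-atom moves, not a moment-matching two-point pair.

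Finally, even granting the switch to Assouad, there is a second obstacle you do not address: the loss $\|\cdot\|_{\ell_2(f_G)}$ depends on the parameter $G$, so it does not satisfy the generalized triangle inequality needed by Assouad's lemma. The paper handles this by first proving the uniform ratio bound \prettyref{ineq:local_ratio_bound} (namely $\max_k f_{G_{\bm{\tau}}}(k)/f_{G_{\bm{\tau}'}}(k)\le M$ on the relevant range) so the data-dependent weight can be replaced by the fixed reference $f_{G_{\bm 0}}$, and then projecting an arbitrary estimator onto $\{\theta_{G_{\bm{\tau}}}\}$ via a factor-$4$ triangle argument. Any complete proof of the $p>1$ lower bound must contend with this.
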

\begin{remark}\label{remark:bayes_infinity}
The regret (\ref{def:regret_minimax}) for the moment class $\calG_p(M_p)$ is only well-defined for $p\geq 1$ in the sense that, for any $p<1$ and $M_p > 0$, there exists a prior $G$ with $m_p(G)\leq M_p$ such that the Bayes rule $\theta_G$ in (\ref{def:poi_bayes_form_intro}) is well defined, but
the Bayes risk (and thus the risk of any estimator) is infinite:
\begin{align}\label{eq:bayes_risk_infty}
\mathsf{mmse}(G) = \E_G \big(\theta_G(Y) - \theta\big)^2 = \infty.
\end{align}
See \prettyref{app:mmseinf} for a proof.
%
\end{remark}
%

With a matching upper bound (up to logarithmic factors) of $\reg_n$ in Theorem \ref{thm:regret_MLE_EB} below,  
Theorem \ref{thm:regret_lower} shows an interesting elbow phenomenon for the individual regret at $p=1$:
\begin{itemize}
\item If $p < 1$, the regret is not well-defined as 
the Bayes risk is infinite for certain priors;
\item If $p = 1$, the optimal rate of $\reg_n$ scales with $M_1$ and does not vanish with $n$, which means consistent estimation of a single parameter (taken to be $\theta_n$ in the formulation of $\reg_n$) is impossible as long as $M_1$ does not vanish. Consequently, the MLE $Y$ (or more precisely, $Y_n$ when estimating $\theta_n$), which always satisfies the risk bound $\E_G (Y - \theta)^2 = m_1(G) \leq M_1$, is already minimax rate-optimal;
\item If $p > 1$, the optimal regret decays polynomially in $n$.
As will be shown in the next two sections, a modified version of the NPMLE-based EB estimator achieves this optimal rate, while the Robbins estimator is strictly rate suboptimal. 
\end{itemize}

Let us now discuss the connection of Theorem \ref{thm:regret_lower} to the existing literature. In the seminal paper \cite{jiang2009general}, the Gaussian analogue of the EB model was studied in detail. With $\reg^g_n(\mathcal{G})$ denoting the Gaussian analogue of the individual regret defined in (\ref{def:regret_minimax}) (see Appendix \ref{sec:gau_lower_bound} for precise definitions), it was proved there that
\begin{align}\label{ineq:gau_regret_upper}
\reg^g_n(\mathcal{G}) \leq C\cdot
\begin{cases}
n^{-1}(\log n)^5, & \mathcal{G} = \big\{G:\supp(G)\subset[-1,1] \text{ or } \int e^{cu^2}G(\d u)\leq 1\big\},\\
n^{-\frac{p}{p+1}}(\log n)^{\frac{9p+8}{2p+2}} & \mathcal{G} = \mathcal{G}_p(1), \quad\forall p > 0,
\end{cases}
\end{align}
where $c > 0$ is universal, and $C > 0$ only depends on $p$. In words, the first case of (\ref{ineq:gau_regret_upper}) studies the light-tailed setup (i.e., $G$ has a bounded support/subgaussian tail), and the second case studies the heavy-tailed setup. In addition to the EB setting, \cite[Theorem 5]{jiang2009general} also extended the bounds in (\ref{ineq:gau_regret_upper}) to the so-called compound setting with a slightly modified metric; we refer to Section \ref{sec:compound} (Theorem \ref{thm:npmle_compound}) for detailed definitions and results for the Poisson model in the compound setup. Up to logarithmic factors, the bound $n^{-1}(\log n)^5$ in the first case of (\ref{ineq:gau_regret_upper}) has been shown by \cite[Theorem 1]{polyanskiy2021sharp} to be minimax optimal. By adapting the proof of Theorem \ref{thm:regret_lower}, we show in Theorem \ref{thm:gau_regret_lower}  that the second case of (\ref{ineq:gau_regret_upper}) is also minimax optimal up to logarithmic factors, thereby settling the optimality of (\ref{ineq:gau_regret_upper}) in the Gaussian EB model. It is worth noting that, in the Gaussian EB model, consistency of the individual regret is possible for all $p > 0$, as opposed to the threshold $p > 1$ in the Poisson case.

In the Poisson EB model, the optimal rate of the individual regret was known for compactly supported or subexponential priors, where \cite[Theorem 2]{polyanskiy2021sharp} shows that the minimax individual regrets are $\Theta\big(n^{-1}(\log n/\log\log n)^2\big)$ and $\Theta\big(n^{-1}(\log n)^3\big)$, respectively. As a result, for these 
light-tailed priors, the total excess risk for estimating the $n$ parameters compared with the Bayes oracle is merely $\polylog(n)$. 
In contrast, Theorem \ref{thm:regret_lower} shows that for the heavy-tailed case of moment classes, the total regret is at least $\poly(n)$ which is tight as shown in the next section. 
Finally, we note that in all previous results, the optimal rate of the individual regret coincides with that of density estimation under $H^2$ (see the discussion after Theorem \ref{thm:density_lower}) up to logarithmic factors;
this, after all, is not a universal phenomenon, as we show in this paper (comparing (\ref{ineq:density_minimax_rough}) and Theorem \ref{thm:regret_lower}).



\subsection{Positive results on $g$-modeling}\label{subsec:NPMLE}

In this section, we study the performance of a general $g$-modeling approach (with appropriate regularization) for EB estimation. Specifically, for any $\rho \geq 0$ and prior distribution $G$, let the regularized Bayes rule be
\begin{align}\label{def:reg_bayes}
\theta_G(y;\rho) \equiv (y+1)\Big(\frac{\Delta f_G(y)}{f_G(y) \vee \rho} + 1\Big),
\end{align}
where $\Delta f_G(y) \equiv f_G(y+1) - f_G(y)$ denotes the forward difference per \prettyref{eq:forwarddiff}. Clearly, $\theta_G(\cdot;\rho)$ reduces to the Bayes rule $\theta_G(\cdot)$ in (\ref{def:poi_bayes_form_intro}) when $\rho = 0$.

Following the interpretation of the individual regret (\ref{def:individual_regret}), given $n$ i.i.d. observations $Y_1,\ldots,Y_n$ from $f_G$, a generic $g$-modeling approach typically produces an estimator $H$ of the true $G$ from $Y^{n-1}$, which we then apply to $Y_n$ to produce an estimate for $\theta_n$:
\begin{align}\label{def:g_smoothed_individual}
\hat{\theta}_n^{\mathsf{g}}(Y_n;H,\rho) \equiv \hat{\theta}_n^{\mathsf{g}}(Y_n;Y^{n-1}, H,\rho) \equiv \theta_H(Y_n;\rho).
\end{align}


The following result, whose proof is given in Section \ref{subsec:proof_NPMLE}, bounds the regret of \eqref{def:g_smoothed_individual} uniformly for priors with bounded $p$th moment. In view of the impossibility result in Theorem \ref{thm:regret_lower}, we focus on the case of $p > 1$.
\begin{theorem}\label{thm:regret_MLE_EB}
Fix any $p > 1$. Let $H$ be any (random) distribution on $\R_+$ that only depends on $Y^{n-1}$. For any real $\rho > 0$ and integer $y_0 \geq 1$, let 
\begin{align*}
\mathcal{R}(y_0, \rho) \equiv M_p^{1/p}\exp(-c_0y_0) + y_0\rho^{10} + y_0^2\rho\log^2(1/\rho),
\end{align*}
where $c_0 > 0$ is some universal constant. There exists some universal $K > 0$ such that
\begin{align}\label{ineq:g_main_bound}
\E_{Y_n \sim f_G} \big(\hat{\theta}_n^{\mathsf{g}}(Y_n;H,\rho) - \theta_G(Y_n)\big)^2 \leq K \cdot \inf_{y_0 \geq 1} \bigg[\log^4(1/\rho)\Big(M_p y_0^{-(p-1)} + y_0 H^2(f_G, f_H)\Big) + \mathcal{R}(y_0, \rho)\bigg].
\end{align}
Consequently, if $H$ satisfies $\E_{Y^{n-1}\stackrel{i.i.d.}{\sim} f_G} H^2(f_G, f_H) \leq c_1\big(n^{-2p/(2p+1)}M_p^{1/(2p+1)} \vee n^{-1}\big)(\log n)^{\kappa}$ for some positive $c_1$ and $\kappa$ uniformly over $G\in \mathcal{G}(M_p)$, then upon choosing $\rho = c_2n^{-10}$ for some universal $c_2 > 0$, there exists some universal $C = C(c_1,c_2) > 0$ such that
the estimator in (\ref{def:g_smoothed_individual}) satisfies
\begin{align}
\reg_n\Big(\hat{\theta}^{\mathsf{g}}_n; \mathcal{G}_p(M_p)\Big) \leq C\big(n^{-\frac{2(p-1)}{2p+1}}M_p^{\frac{3}{2p+1}} \vee n^{-1}\big)(\log n)^{\kappa+4}.
\label{eq:regret_MLE_EB}
\end{align}
\end{theorem}

\begin{remark}
\label{rmk:totreg-npmle}
The individual regret bound of Theorem \ref{thm:regret_MLE_EB} can be translated to total regret (\ref{def:reg_intro}) as follows.
For $i\in[n]$, let $\hat{\theta}^{\mathsf{g}}_i(Y^n) = \theta_{H_{(i)}}(Y_i; \rho)$ be defined per (\ref{def:reg_bayes}), where $H_{(i)}$ is an estimator of $G$ trained from the sample $Y_{(i)} = Y^n \backslash Y_i$.
Let
\begin{align*}
\hat{\theta}^{\mathsf{g}, n}(Y^n) \equiv \big(\hat{\theta}^{\mathsf{g}}_1(Y^n),\ldots,\hat{\theta}^{\mathsf{g}}_n(Y^n)\big).
\end{align*}
It is easy to see that this estimator is permutation invariant in the sense of \prettyref{eq:PI}, so combining (\ref{eq:reg_relation_est}) and Theorem \ref{thm:regret_MLE_EB} yields 
\begin{align*}
\totreg_n(\hat{\theta}^{\mathsf{g}, n};  \mathcal{G}_p(M_p)) \leq C(n^{\frac{3}{2p+1}}M_p^{\frac{3}{2p+1}} \vee 1)(\log n)^{\kappa+4},
\end{align*}
whenever $\E_{Y_{(i)} \stackrel{i.i.d.}{\sim} f_G} H^2(f_{H_{(i)}}, f_G) \leq c_1\big(n^{-2p/(2p+1)}M_p^{1/(2p+1)} \vee n^{-1}\big)(\log n)^{\kappa}$ for all $i\in [n]$ and $G\in \mathcal{G}_p(M_p)$.

\end{remark}

In view of the regret minimax lower bound in Theorem \ref{thm:regret_lower} and the density estimation results in Theorem \ref{thm:density_lower}, Theorem \ref{thm:regret_MLE_EB} implies that for a generic $g$-modeling approach, as long as the estimated prior 
 $H$ used therein is Hellinger rate-optimal (up to logarithmic factors) in terms of density estimation, then it is also regret rate-optimal (up to logarithmic factors). Thanks to Theorem \ref{thm:density_main}, a concrete example in this category is the NPMLE. In fact, as we show below, the EB estimator based on the NPMLE (\ref{def:npmle}) trained on the whole dataset $Y^n$ also achieves the optimal regret without explicit regularization. Let
\begin{align}\label{def:npmle_eb}
\hat{\theta}^{\npmle,n}(Y^n) \equiv \big(\theta_{\hat{G}}(Y_1),\ldots, \theta_{\hat{G}}(Y_n)\big),
\end{align}
where we emphasize again that $\hat{G}$ is defined in (\ref{def:npmle}) via the entire $Y^n$. The proof the following result is given in Section \ref{subsec:proof_npmle_special}.
\begin{theorem}\label{thm:npmle_rho_free}
Suppose $M_p^{1/p} \leq n^{10}$. Then for some universal $C>0$ it holds that
\begin{align*}
\totreg_n(\hat{\theta}^{\npmle,n}; \mathcal{G}_p(M_p)) \leq C(\log n)^{13}(n^{\frac{3}{2p+1}}M_p^{\frac{3}{2p+1}} \vee 1).
\end{align*}
\end{theorem}


We now discuss the connection of Theorems \ref{thm:regret_MLE_EB} and \ref{thm:npmle_rho_free} to existing regret bounds for the NPMLE:
\begin{itemize}
\item (Poisson model) \cite{jana2022poisson} showed that when the prior $G$ is either compactly supported or subexponential, 
simple NPMLE 
without truncation or regularization achieves the optimal regret with exact logarithmic factors\footnote{For compact support, one needs to use the support-constrained NPMLE solution. } ; see also \cite{park2022poisson} for some slightly weaker guarantees in the bounded prior case.
Furthermore, compared to the celebrated Robbins' estimator (\ref{def:robbins_intro}) which is also rate-optimal in these two cases, the NPMLE and other minimum-distance estimators are shown to exhibit numerically a much more stable finite-sample performance; see the next section for a detailed study of Robbins' estimator. 
\item (Gaussian model) EB estimation in the Gaussian location model is studied in detail in the seminal work \cite{jiang2009general}; see Appendix \ref{sec:gau_lower_bound} for the exact model.
With the Gaussian counterpart of the individual regret (\ref{def:individual_regret}) denoted by $\reg_n^g(\cdot;\mathcal{G})$, \cite[Theorem 3]{jiang2009general} combined with the density estimation guarantees in \cite{zhang2009generalized} showed that the Gaussian analogue $\hat{\theta}^{\npmle,g}_n$ of (\ref{def:npmle_eb}) achieves the regret bounds in (\ref{ineq:gau_regret_upper}),
which are known to be minimax optimal up to polylogarithmic factors by the discussion thereafter. 

\end{itemize}

Next we comment briefly on the technical innovations required for proving Theorem \ref{thm:regret_MLE_EB} in comparison to existing regret analysis. 
The first major technical result, which is repeatedly used in our analysis of $g$-modeling methods and may also be of independent interest, is the following bound (cf.~Lemma \ref{lem:bayes_form_upper}) on the pointwise fluctuation of the Bayes estimator (\ref{def:poi_bayes_form_intro}): 
For \emph{any} prior $G$,
\begin{align}
|\theta_G(y) - y| \leq \E(|\theta - Y||Y = y) \lesssim \sqrt{y\vee 1}\log \frac{1}{f_G(y)}, \quad \forall y\geq 0.
\label{eq:centeredBayes}
\end{align}
In fact, for the Gaussian model the counterpart of \prettyref{eq:centeredBayes} holds without the $\sqrt{y}$ factor \cite[Lemma A.1]{jiang2009general}
; however, for the Poisson model this is tight.\footnote{To see this, simply consider the special case of $G=\delta_\lambda$ and $y = \lambda + C\sqrt{\lambda}$ for large $\lambda$ and constant $C$. In this case, by Stirling approximation both sides of \prettyref{eq:centeredBayes} agree up to a $\log \lambda$ factor.} 
This $\sqrt{y}$ factor is chiefly responsible for the different rates for the regret in the Poisson model (Theorem \ref{thm:regret_MLE_EB}) and that for the Gaussian model (Equation (\ref{ineq:gau_regret_upper})); see (\ref{ineq:npmle_main}) in the proof for details. 

The second (and much more difficult) step is to obtain the following comparison result (cf.~Proposition \ref{prop:A1_bound} for details), which relates the main term in the regret bound to the Hellinger risk of density estimation: for any two distributions $G_1,G_2$,
\begin{align}\label{ineq:A1_bound_rough}
\sum_{y=0}^{y_0} (y+1)^2\Big(\Delta f_{G_1}(y) - \Delta f_{G_2}(y)\Big)^2 \lesssim y_0\cdot H^2(f_{G_1}, f_{G_2})
+ \text{ negligibly small terms},
\end{align}
where $\Delta f_G(y) = f_G(y+1) - f_G(y)$ is the forward difference. Since $y_0$ will inevitably be chosen to be an appropriate polynomial of $n$, (\ref{ineq:A1_bound_rough}) explains a crucial difference between our regret bound in Theorem \ref{thm:regret_MLE_EB} and the previous regret analysis \cite{jiang2009general, brown2013poisson, polyanskiy2021sharp, jana2022poisson}: in all previously studied settings, the optimal rate of regret and density estimation (under $H^2$) only differ by polylogarithmic factors, while the rates in Theorems \ref{thm:density_main} and \ref{thm:regret_MLE_EB} differ by polynomial factors. 

Our proof of \prettyref{ineq:A1_bound_rough} is influenced by the seminal work of Jiang and Zhang in the Gaussian model \cite{jiang2009general}. Therein, 
to analyze the regret of NPMLE in the Gaussian model, they proved an inequality analogous to
\prettyref{ineq:A1_bound_rough} involving the derivative of the mixture density (see \cite[Lemma 1]{jiang2009general}), by means of a recursive argument of using higher-order derivatives to control the first derivative. 
Directly porting this program to the Poisson model, e.g, replacing the first-order forward difference in \prettyref{ineq:A1_bound_rough} with higher-order ones, does not work and more involved arguments are thus needed. The detailed proof is given in \prettyref{subsec:proof_NPMLE},
which constitutes the technical core of the paper.

Let us also remark that the proof technique in \cite{jana2022poisson} for light-tailed priors is not applicable to the current heavy-tailed setting.
In \cite[Lemma 4]{jana2022poisson}, the reduction from regret to density estimation is achieved via a simple truncation argument using the sample maximum $Y_{\max}=\max(Y_1,\ldots,Y_n)$, which in turn bounds the support of the NPMLE solution $\hat{G}$ \cite{simar1976maximum}. For priors with only moment constraint, bounding the learned Bayes estimator $\theta_{\hat{G}}(\cdot)$ by 
$Y_{\max}$ is too crude compared to the desired (\ref{eq:centeredBayes}), and the reduction from regret to density estimation is achieved by much more delicate arguments including (\ref{ineq:A1_bound_rough}).


Moving on to Theorem \ref{thm:npmle_rho_free}, the key reason that regularization can be removed is that the mixture density $f_{\hat{G}}$ with $\hat{G}$ trained on the entire $Y^n$ is automatically lower bounded at each $Y_i$. For the Gaussian model, such lower bound is $\tilde{\Omega}(n^{-1})$ as first observed by \cite{jiang2009general}. The situation for the Poisson is more complicated, as such lower bound is typically $\tilde{\Omega}(n^{-1} (Y_i\vee 1)^{-1/2})$ (see Lemma \ref{lem:density_lower_npmle}), and hence some careful truncation arguments have to be applied. The other major technical component of Theorem \ref{thm:npmle_rho_free} is some properly defined notion of (total) regret in the compound setting \cite{jiang2009general} and its optimal control, which may be of independent interest; we refer to Section \ref{sec:compound} (Theorem \ref{thm:npmle_compound}) for exact definitions and results. Whether such regularization-free results also hold for more general $g$-modeling approaches in the context of Theorem \ref{thm:regret_MLE_EB} remains an interesting open question.

\subsection{Negative results on $f$-modeling}

In this subsection, we demonstrate that, in order to achieve the optimal regret rate in Theorem \ref{thm:regret_MLE_EB}, the \emph{proper} Bayes form in $g$-modeling cannot be violated in general. To construct such a counterexample, we start with a detailed study of Robbins estimator, whose original form is given in (\ref{def:robbins_intro}). 
In the context of individual regret in (\ref{def:individual_regret}), we will study the following generalization of Robbins estimator for $\theta_n$:

\begin{align}\label{def:robbins}
\hat{\theta}^{\rob}_n(Y^n) \equiv \hat{\theta}^{\rob}_n(Y^n;y_0) \equiv
\begin{cases}
(Y_n+1)\frac{N_{n-1}(Y_n+1)}{N_{n-1}(Y_n) + 1} & Y_n\leq y_0,\\
Y_n & Y_n > y_0,
\end{cases}
\end{align}
where $N_{n-1}(y) = \sum_{i=1}^{n-1} \indc{Y_i = y}$, and $y_0\in\integers_+$ is a tuning parameter to be chosen later. To further simplify the notation, we will also abbreviate the above estimator as $\hat{\theta}_n^{\rob}$. Clearly, the original Robbins estimator (\ref{def:robbins_intro}), when applied to $Y_n$, corresponds to $y_0 = \infty$; however, as we will show next, without truncation, the Robbins estimator can be inconsistent.
 

The following result provides matching upper and lower bounds for the individual regret of the Robbins estimator (\ref{def:robbins}). For the rest of this subsection, for simplicity, we take $M_p = 1$ in (\ref{def:mixture_class_intro}) so that the class $\mathcal{G}_p(1)$ consists of all priors with $p$th moment at most one; nevertheless, the results below hold for any constant $M_p$.


\begin{theorem}\label{thm:regret_robbins_EB}
Fix any $p > 1$. Then there exists some $C = C(p) > 0$ such that
\begin{align}\label{ineq:robbins_main_upper}
\inf_{y_0 \geq 1}\reg_n\Big(\hat{\theta}_n^{\rob}(Y^n;y_0); \mathcal{G}_p(1)\Big) \leq Cn^{-\frac{p-1}{p+2}}(\log n)^{\frac{3(p-1)}{p+2}}.
\end{align}

Conversely, let $y_\ast \equiv \big(n/(\log n)^2\big)^{1/(p+1)}$. Then for any $y_0\geq 1$, there exists some $c = c(p) > 0$ such that
\begin{align}\label{ineq:robbins_main_lower}
\reg_n\Big(\hat{\theta}_n^{\rob}(Y^n;y_0); \mathcal{G}_p(1)\Big) \geq c\cdot \Big(\frac{(y_0\wedge y_\ast)^3}{n} + y_0^{-(p-1)}\Big).
\end{align}
Consequently, the regrets of the untruncated and optimally truncated Robbins estimator satisfy
\begin{align*}
\reg_n\Big(\hat{\theta}_n^{\rob}(Y^n;\infty); \mathcal{G}_p(1)\Big) &\geq cn^{-\frac{p-2}{p+1}}(\log n)^{-6},\\
\inf_{y_0 \geq 1}\reg_n\Big(\hat{\theta}_n^{\rob}(Y^n;y_0); \mathcal{G}_p(1)\Big) &\geq cn^{-\frac{p-1}{p+2}}.
\end{align*}
\end{theorem}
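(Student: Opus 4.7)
The plan is to prove the upper bound (\ref{ineq:robbins_main_upper}) by a direct analysis of the truncated Robbins estimator, and the lower bound (\ref{ineq:robbins_main_lower}) by two explicit least-favorable prior constructions targeting the two terms $(y_0\wedge y_\ast)^3/n$ and $y_0^{-(p-1)}$ separately; the consequences then follow by setting $y_0=\infty$ (so that the effective truncation happens at $y_\ast$) and by optimizing $y_0$ in (\ref{ineq:robbins_main_upper}).

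\textbf{Upper bound.} Split the regret into the pieces $Y_n\le y_0$ and $Y_n>y_0$. For the tail, the Pythagorean identity gives $(Y-\theta_G(Y))^2\le \E[(\theta-Y)^2\mid Y]$; since $\E[(Y-\theta)^2\mid\theta]=\theta$ under the Poisson model, the tail is bounded by $\E_G[\theta\,\mathbf{1}\{Y>y_0\}]$, which after splitting on $\{\theta>y_0/2\}$ (Markov with $m_p(G)\le 1$) and $\{\theta\le y_0/2\}$ (where $Y>y_0$ is exponentially unlikely) yields $O(y_0^{-(p-1)})$. For the main term, writing $R(y)\equiv N_{n-1}(y+1)/(N_{n-1}(y)+1)$ and $q(y)\equiv f_G(y+1)/f_G(y)=\theta_G(y)/(y+1)$, a standard bias/variance decomposition of this ratio of multinomial counts yields
\[
\E[(R(y)-q(y))^2]\;\lesssim\;\frac{f_G(y+1)(1+q(y))}{n\,f_G(y)^2}\;+\;q(y)^2\,e^{-c\,n f_G(y)},
\]
so that $f_G(y)\,(y+1)^2\E[(R(y)-q(y))^2]\lesssim n^{-1}\bigl[(y+1)\theta_G(y)+\theta_G(y)^2\bigr]+\theta_G(y)^2 f_G(y)\,e^{-c\,nf_G(y)}$. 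The key ingredient is the pointwise bound (\ref{eq:centeredBayes}), giving $\theta_G(y)\lesssim y+\sqrt{y\vee 1}\,\log n$ whenever $f_G(y)\ge n^{-O(1)}$ (much rarer $y$ are handled by a separate argument using their tiny marginal mass). Substituting and summing over $y\le y_0$ yields a total of $\tilde O(y_0^3/n)$. Balancing $y_0^3(\log n)^3/n\asymp y_0^{-(p-1)}$ produces the optimal choice $y_0\asymp n^{1/(p+2)}(\log n)^{-3/(p+2)}$ and the claimed rate.

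\textbf{Lower bound.} Set $L\equiv y_0\wedge y_\ast$. For the \emph{variance} term, take $G=(1-\epsilon)\delta_0+\epsilon\,\Unif(0,L)$ with $\epsilon=(p+1)/L^p$, so $m_p(G)\le 1$. A direct computation gives $f_G(y)\asymp \epsilon/L$, $q(y)\asymp 1$, and $\theta_G(y)\asymp y+1$ for $1\le y\le L-\sqrt L$. Since $L\le y_\ast$, we have $nf_G(y)\asymp n/L^{p+1}\gtrsim\log n$, placing $R(y)$ in its Gaussian fluctuation regime with $\var[R(y)]\asymp L/(n\epsilon)$; therefore $f_G(y)(y+1)^2\E[(R(y)-q(y))^2]\asymp (y+1)^2/n$, and summation over $y\in[1,L-\sqrt L]$ yields $\reg_n\gtrsim L^3/n$. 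For the \emph{tail} term, take $\mu\asymp y_0+\sqrt{y_0}$ and $G=(1-\mu^{-p})\delta_0+\mu^{-p}\delta_\mu$, so that $\theta_G(y)=\mu$ for $y\ge 1$ and $\mathsf{mmse}(G)$ is exponentially small. With probability $\mu^{-p}$, $\theta=\mu$ and $Y_n\sim \poi(\mu)$ lies above $y_0$ with probability $1-o(1)$, so the truncated Robbins estimator reduces to $Y_n$, contributing $\mu^{-p}\cdot\var(Y_n\mid\theta=\mu)=\mu^{1-p}\asymp y_0^{-(p-1)}$ to the regret. Taking the larger of the two bounds establishes (\ref{ineq:robbins_main_lower}).

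\textbf{Main obstacle.} The most delicate step is controlling the upper bound in the ``rare'' regime $nf_G(y)\lesssim \log n$, where $N_{n-1}(y)$ is frequently zero and $R(y)$ can be highly erratic. Naive bounds such as $R(y)\le N_{n-1}(y+1)$ are far too loose, and one must carefully trade the smallness of the probability mass $f_G(y)$ against the explosion of the estimator, either by conditioning on the event $\{N_{n-1}(y)\ge 1\}$ and controlling its complement via the exponential bias term above, or by invoking a Poissonization-type independence for the neighboring bin counts. Once this is done, the remaining polylogarithmic factors are absorbed to give the $(\log n)^{3(p-1)/(p+2)}$ factor in the stated rate.
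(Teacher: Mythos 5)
Your proposal is correct and follows essentially the same architecture as the paper's proof: split the regret at $y_0$, handle the tail via the Pythagorean identity and moment/tail bounds of $\theta\,\mathbf{1}\{Y>y_0\}$, and analyze the main term by the conditional bias/variance structure of the ratio $N_{n-1}(y+1)/(N_{n-1}(y)+1)$ combined with the pointwise Bayes-rule bound \prettyref{eq:centeredBayes}. The paper executes the bias/variance piece through exact binomial moment identities (Lemma~\ref{lem:binomial_moments}) plus the mixture-pmf ratio bound $f_G(y+1)/f_G(y)\lesssim\log n$ when $f_G(y)>n^{-1}$ (Lemma~\ref{lem:mixture_pmf}), with a separate split for $f_G(y)\le n^{-1}$ — this is precisely the ``rare regime'' issue you flag, and your proposed conditioning-on-$\{N_{n-1}(y)\ge 1\}$ strategy is the right way to handle it. For the lower bound, you obtain the tail term $y_0^{-(p-1)}$ with the same two-point prior as the paper (up to the location $\mu\asymp y_0+\sqrt{y_0}$ vs.~$a=2y_0$; both work). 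The one genuine variation is your choice of prior for the variance term: you use $G=(1-\epsilon)\delta_0+\epsilon\,\Unif(0,L)$ with $\epsilon\asymp L^{-p}$, whereas the paper (Lemma~\ref{lem:ht_prior}) builds a smooth heavy-tailed prior with density $\propto a^{-(p+1)}(\log a)^{-2}$ so that $f_G(y)\asymp y^{-(p+1)}(\log y)^{-2}$ and then lower-bounds the conditional-variance piece $(I_1)$ directly. Your construction is arguably simpler and more transparent (constant $f_G\asymp\epsilon/L$, $q(y)\asymp 1$, $\theta_G(y)\asymp y+1$), and it buys the same $L^3/n$ rate; to make it rigorous you would still need a matching \emph{lower} bound on $\var(N_{n-1}(y+1)/(N_{n-1}(y)+1))$ in the regime $nf_G(y)\gtrsim\log n$, which requires the exact binomial-moment computation rather than the heuristic ``Gaussian fluctuation'' appeal. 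One small slip: the second ``consequence'' is obtained by optimizing $y_0$ in the \emph{lower} bound \prettyref{ineq:robbins_main_lower}, not in \prettyref{ineq:robbins_main_upper}.
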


A few remarks on 
\prettyref{thm:regret_robbins_EB} are in order:

\begin{itemize}
	\item Unlike Theorem \ref{thm:regret_MLE_EB}, we do not consider any additional regularization in the formulation (\ref{def:robbins}), since the normalized denominator therein $n^{-1}(N_{n-1}(Y_n) + 1)$ is automatically lower bounded by $n^{-1}$.
	\item 
	Compared with the optimal regret $\reg_n(\mathcal{G}_p(1)) = \tilde \Theta(n^{-2(p-1)/(2p+1)})$ 
determined in  Theorems \ref{thm:regret_lower} and \ref{thm:regret_MLE_EB}, 
the generalized Robbins estimator (\ref{def:robbins}), when tuned with the best possible threshold $y_0$, is consistent for any $p > 1$ but only achieves the suboptimal rate $\tilde O(n^{-(p-1)/(p+2)})$, which cannot be improved in view of the lower bound (\ref{ineq:robbins_main_lower}). Furthermore, the original Robbins estimator with $y_0 = \infty$ is inconsistent for $p \in (1,2)$ (the case $p = 2$ is still open due to the poly-logarithmic gap in Theorem \ref{thm:regret_robbins_EB}).

\item
Using the same leave-one-out argument in \prettyref{rmk:totreg-npmle}, we may define a permutation-invariant estimator
\begin{align*}
\hat{\theta}^{\rob, n}(Y^n;y_0) \equiv \big(\hat{\theta}^{\rob}_1(Y^n;y_0), \ldots, \hat{\theta}^{\rob}_n(Y^n;y_0)\big),
\end{align*} 
where $\hat{\theta}^{\rob}_i(Y^n;y_0)$ applies the truncated Robbins estimator with $Y_{\backslash i}$ as the training data and $Y_i$ as the test data.
(Note that for $y_0=\infty$, each $\hat{\theta}^{\rob}_i$ is the same as applying \eqref{def:robbins_intro} to $Y_i$.)
This translates the individual regret bound in Theorem \ref{thm:regret_robbins_EB} to total regret, in particular,
\begin{align}
\inf_{y_0 \geq 1}\totreg_n\Big(\hat{\theta}^{\rob,n}(Y^n;y_0); \mathcal{G}_p(1)\Big) = \tilde\Theta(n^{\frac{3}{p+2}}).
\label{eq:totreg-robbins}
\end{align}

\item 
Despite the long history and wide application of the Robbins estimator, quantitative regret bounds were only obtained recently \cite{brown2013poisson,polyanskiy2021sharp}. For priors with compact support or a subexponential tail, \cite[Theorem 1]{polyanskiy2021sharp} shows that the original Robbins estimator with $y_0 = \infty$ achieves the optimal regret $O\big(n^{-1}\cdot (\log n/\log\log n)^2\big)$ and $O\big(n^{-1}\cdot (\log n)^3\big)$, respectively, with the exact logarithmic factors. 
This stands in stark contrast to the conclusion of Theorem \ref{thm:regret_robbins_EB}: for the moment class, the Robbins estimator is suboptimal by a polynomial factor.


\item 
As mentioned in \prettyref{sec:overview}, the instability of the Robbins estimator has been well recognized in practice:
it takes on exceptionally small or large values when either of its numerator or denominator is (near) zero (cf.~\prettyref{fig:compare}).
Theorem \ref{thm:regret_robbins_EB} shows that this lack of robustness is not merely a numerical issue but in fact directly related to the 
suboptimality of Robbins' estimator when the underlying prior only has a finite number of moments. 
Indeed, such heavy-tailed distributions give rise to a larger number of small but non-zero counts $N(y)$, which causes the Robbins estimator $\theta^{\rob}(y)$ to vary wildly.

\end{itemize}

The proof of Theorem \ref{thm:regret_robbins_EB} is presented in Section \ref{subsec:proof_robbins}. 
We briefly discuss of the proof technique and the ``least favorable'' priors for Robbins' estimator, which are also used in the proof of Theorem \ref{thm:hybrid} below.
The key is to obtain both upper and lower bounds for the bias and variance of  (\ref{def:robbins}) as a function of the prior $G$; see Lemma \ref{lem:binomial_moments} for details. Then the desired upper bound follows from a uniform control of these quantities using the moment constraint.
The lower bound follows by choosing two special instances of $G$: a ``sparse'' prior of the form $G = (1-\epsilon)\delta_0 + \epsilon\delta_a$ and 
a smooth heavy-tailed prior with density $g(a) \propto a^{-(p+1)}(\log a)^{-2}$, which result in the lower bound 
$y_0^{-(p-1)}$ and 
$(y_0\wedge y_\ast)^3/n$ in (\ref{ineq:robbins_main_lower}), respectively. 

Building on the analysis of Theorem \ref{thm:regret_robbins_EB}, we are now ready to construct a $f$-modeling estimator that is Hellinger rate-optimal in density estimation (up to logarithmic factors) but strictly rate sub-optimal in terms of regret. Note that we cannot directly use $\hat{f}^{\emp}$ and its induced Robbins estimator (\ref{def:robbins_intro}) for the purpose above because, as shown in Proposition \ref{prop:empirical_subopt}, $\hat{f}^{\emp}$ is Hellinger rate sub-optimal as a density estimator.


\begin{theorem}\label{thm:hybrid}
For any $\delta > 0$, there exists some probability mass function $\tilde{f}$ (measurable with respect to $Y^n$) such that
\begin{align*}
\sup_{G \in \mathcal{G}_p(1)} \E_G H^2(\tilde{f}, f_G) \leq Cn^{-\frac{2p}{2p+1}}(\log n)^6,
\end{align*}
and the resulting $f$-modeling estimator $\tilde{\theta}_n = (Y_n+1)\frac{\tilde{f}(Y_n+1)}{\tilde{f}(Y_n)}$ satisfies
\begin{align*}
\reg_n(\tilde{\theta}_n; \mathcal{G}_p(1)) \geq c\frac{n^{-\frac{2p-3}{2p+1} - \delta}}{(\log n)^4},
\end{align*}
where $C,c>0$ only depend on $p$.
\end{theorem}

%

The proof of Theorem \ref{thm:hybrid} is given in Section \ref{subsec:proof_hybrid}. 
It implies that, to achieve optimal regret, the proper Bayes form in $g$-modeling cannot be violated in general, or in other words, the Poisson mixture structure must be exploited during the density estimation stage.


\section{Concluding remarks}\label{sec:conclusion}

In this paper, we studied Poisson EB estimation with priors having a finite $p$th moment, and conducted a detailed comparison of the theoretical properties of $f-$ and $g$-modeling methods. The positive result on $g$-modeling reveals an interesting connection between density and EB estimation: Any $g$-modeling approach that achieves the optimal Hellinger rate of density estimation (up to logarithmic factors) also achieves the optimal regret rate (up to logarithmic factors). In contrast, we demonstrated an $f$-modeling method that achieves the optimal density estimation rate but is strictly regret rate sub-optimal by a polynomial factor. We also showed that the renowned Robbins estimator is sub-optimal by a polynomial factor for both density estimation and regret, which stands in sharp contrast to its optimality under light-tailed priors. 

Since $g$-modeling can been as a special class of $f$-modeling, an interesting topic for future study is to understand which properties of $g$-modeling are truly necessary to achieve regret optimality. One such property that stands out from general $f$-modeling approaches is monotonicity \cite{van1983weak, koenker2014convex, barbehenn2022nonparametric, jana2023poisson}, where it was shown in \cite{jana2023poisson} that an empirical risk minimizer with monotonicity constraint achieves optimal regret rate (down to log factors) when the prior is either bounded or has sub-exponential tails. It remains an interesting question to establish other theoretical guarantees for such monotonicity-constrained estimators, especially with heavy-tailed priors.

\section{Proofs for Section \ref{sec:density_est}}\label{sec:proof_density}

\subsection{Proof of Theorem \ref{thm:density_main}: Upper bound}\label{subsec:proof_density_upper}

\subsubsection{A local moment matching lemma}

The following local moment matching lemma is our main technical contribution in the density estimation upper bound. 
Recall that for any $f:\integers_+\to\reals$, $\pnorm{f}{\infty,M} = \max_{x=0,\ldots,M} |f(x)|$.
\begin{lemma}\label{lem:moment_matching}
Fix any mixing distribution $G$ supported on $\reals_+$, and let $f_G$ be the Poisson mixture density defined in (\ref{def:poi_mixture_intro}). Fix $M > 0$ and $\eta\in (0,10^{-3})$ such that $M \geq (\log(1/\eta))^{\rho_M}$ for some sufficiently large $\rho_M > 0$.
 Then there exists a discrete distribution $G_m$ supported on $[0,2M]$ with at most $m \leq K\sqrt{M}(\log(1/\eta))^{3/2}$ atoms for some universal $K > 0$, such that
\begin{align*}
\pnorm{f_G - f_{G_m}}{\infty, M} \leq \eta,
\end{align*}
where $f_{G_m}$ is the Poisson mixture induced by $G_m$.
\end{lemma}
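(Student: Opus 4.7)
The plan is to partition $[0,2M]$ into a logarithmic-in-$(1/\eta)$ number of sub-intervals on a quadratic scale, construct $G_m$ separately on each piece by matching a suitable number of local moments, and combine. First, I would reduce to the case $\supp(G)\subseteq[0,2M]$: for $y\le M$ and $\theta\ge 2M$ one has $\mathrm{poi}(y;\theta)\le\mathrm{poi}(y;2M)\le (2/e)^M$ by monotonicity of $\theta\mapsto \mathrm{poi}(y;\theta)$ on $[y,\infty)$ and Stirling, so the total contribution of $G|_{(2M,\infty)}$ to $f_G(y)$ for any $y\le M$ is at most $e^{-cM}$, which is $\le\eta/2$ as long as $M\gtrsim(\log(1/\eta))^{\rho_M}$ with $\rho_M$ large.

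Next, define the quadratic partition $I_i=[i^2 L,(i+1)^2 L)\cap[0,2M]$ with $L=C\log(1/\eta)$, yielding $N=\lceil\sqrt{2M/L}\rceil=O(\sqrt{M/\log(1/\eta)})$ intervals of length $|I_i|\le (2i+1)L$. On each $I_i$ with $G(I_i)>0$, let $G^{(i)}$ be the (unnormalized) restriction and invoke Tchakaloff/Gauss-quadrature to produce a discrete measure $G_m^{(i)}$ supported on $k=\lceil K/2\rceil$ points of $I_i$ with the same total mass as $G^{(i)}$ and matching the first $K-1$ ordinary moments, for the choice $K=C'(\log(1/\eta))^2$. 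Setting $G_m=\sum_i G_m^{(i)}$ gives at most $N\cdot k=O(\sqrt{M}(\log(1/\eta))^{3/2})$ atoms, which is the required count.

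For the error estimate, fix $y\le M$ and expand $\theta\mapsto\mathrm{poi}(y;\theta)$ about the midpoint $\theta_i^\ast$ of $I_i$ to order $K-1$; since $G_m^{(i)}$ matches these moments against $G^{(i)}$, the polynomial part vanishes and only the Taylor remainder contributes:
\begin{align*}
\big|f_G(y)-f_{G_m}(y)\big|\le \sum_{i}2G(I_i)\cdot\frac{(|I_i|/2)^K}{K!}\max_{\theta\in I_i}\bigl|\partial_\theta^K\mathrm{poi}(y;\theta)\bigr|+e^{-cM}.
\end{align*}
Using the identity $\partial_\theta^K\mathrm{poi}(y;\theta)=(-1)^K\nabla^K\mathrm{poi}(y;\theta)$ together with a Charlier-polynomial/Gaussian-approximation bound of the form $\max_{y\ge 0}|\partial_\theta^K\mathrm{poi}(y;\theta)|\lesssim \sqrt{K!}\,\theta^{-(K+1)/2}$ on $I_i$ (the Gaussian intuition being that $\mathrm{poi}(\cdot;\theta)\approx\mathcal{N}(\theta,\theta)$, so its $K$-th derivative in the mean parameter carries a $\theta^{-K/2}$ factor), and observing that $|I_i|/\sqrt{\theta}\le 3\sqrt{L}$ uniformly in $i\ge 1$ thanks to the quadratic scaling, the per-interval remainder becomes at most $\sqrt{K!}^{-1}(3\sqrt{L})^K\lesssim (9eL/K)^{K/2}$. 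With $K\asymp L\log(1/\eta)=(\log(1/\eta))^2$, this is $\le\eta/N$ for $C,C'$ large enough, and summing yields $\|f_G-f_{G_m}\|_{\infty,M}\le\eta$.

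The main obstacle I anticipate is the sharp uniform bound on $\max_y|\partial_\theta^K\mathrm{poi}(y;\theta)|$: the naive bound $2^K$ from the binomial expansion of $\nabla^K$ fails to decay with $\theta$ and forces $\Omega(M)$ atoms, as in the Gaussian case. To bypass this, the crucial step is exploiting the extra $\theta^{-K/2}$ decay, which is precisely what makes the Poisson approximation rate faster than the Gaussian rate in \eqref{ineq:discrete_approx_rough}. Technically, one can establish this either via a Charlier-polynomial representation $\partial_\theta^K\mathrm{poi}(y;\theta)=\mathrm{poi}(y;\theta)\cdot (-1)^K c_K(y;\theta)$ combined with the Plancherel-Rotach-type maximum bound on Charlier polynomials in the bulk $|y-\theta|\lesssim\sqrt{\theta}$, or via a direct comparison to the Gaussian heat semigroup with variance $\theta$; quantifying the error of the latter uniformly for $y\le M$ is the step that will require the most care.
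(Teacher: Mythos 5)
Your high-level strategy --- quadratic-scale partition of $[0,2M]$ with $N=\tilde O(\sqrt M)$ cells, local moment matching via Carath\'eodory/Tchakaloff on each cell, and a truncation argument for $\theta>2M$ --- is exactly the paper's. The technical engine, however, is genuinely different, and arguably cleaner. The paper runs a two-case argument with a \emph{variable} local degree $L_i$: for $i\lesssim M^{1/6}$ it Taylor-expands to order $L_i\asymp (i+1)^2(\log\tfrac1\eta)^2$ and uses the combinatorial bound $|f_j^{(L+1)}|\le\binom{L+1}{j}$, which is only useful for $j\le L+1$; for $i\gtrsim M^{1/6}$ it switches to a best-polynomial-approximation argument, writing $\mathrm{poi}(j;\cdot)\approx C(j)\exp(jH_k(\lambda/j)-\lambda)$ and then approximating $e^{-x}$ on $[0,O(\log\frac1\eta)]$. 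You instead run one Taylor-remainder argument with a \emph{uniform} degree $K\asymp(\log\tfrac1\eta)^2$, expanding about the interval midpoint and controlling the $K$-th derivative by a bound that decays like $\theta^{-K/2}$. This correctly isolates the one fact that makes the quadratic partition work for Poisson (but not Gaussian): the $K$-th $\theta$-derivative of the Poisson kernel carries a $\theta^{-K/2}$ factor, so $(|I_i|/\sqrt\theta)^K\lesssim (\log\tfrac1\eta)^{K/2}$ uniformly in $i\ge 1$.

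Two places in the write-up need patching, though neither changes the architecture. First, the stated bound $\max_y|\partial_\theta^K\mathrm{poi}(y;\theta)|\lesssim\sqrt{K!}\,\theta^{-(K+1)/2}$ is more than you need and I would not trust it uniformly in $K,\theta$; what you actually get cheaply is $\max_y|\nabla^K\mathrm{poi}(y;\theta)|\le\sqrt{K!}\,\theta^{-K/2}(\theta\vee 1)^{-1/4}$. This follows in one line from the Charlier identity $\nabla^K\mathrm{poi}(y;\theta)=\sqrt{K!}\,\theta^{-K/2}p_K(y;\theta)\mathrm{poi}(y;\theta)$ together with $\|p_K\sqrt{\mathrm{poi}}\|_{\ell^\infty}\le\|p_K\sqrt{\mathrm{poi}}\|_{\ell^2}=1$ and $\max_y\mathrm{poi}(y;\theta)\lesssim(\theta\vee 1)^{-1/2}$ --- no Plancherel--Rotach asymptotics or Gaussian heat-semigroup comparison required. (Incidentally, this is the same Charlier-orthonormality input the paper itself uses in the proof of Proposition~\ref{prop:Ak_upper}.) The weaker exponent $-1/4$ in place of $-1/2$ is harmless in your estimates. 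Second, you quietly restrict to $i\ge 1$ when writing $|I_i|/\sqrt\theta\le 3\sqrt L$: on $I_0=[0,L)$ the factor $\theta^{-K/2}$ blows up as $\theta\downarrow 0$ while the true derivative stays bounded (indeed $\lim_{\theta\to0}|\partial_\theta^K\mathrm{poi}(y;\theta)|=\binom{K}{y}$), so $I_0$ must be handled with the naive binomial bound $2^K$; the remainder there is still $\frac{(L/2)^K}{K!}2^K\lesssim(eL/K)^K\ll\eta$ for $K\asymp(\log\tfrac1\eta)^2$. Finally, a cosmetic note: since $\sum_i G(I_i)\le 1$ you only need the per-cell remainder to be $\le\eta/3$, not $\le\eta/N$, though the stronger bound is of course also attainable.
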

\begin{proof}[Proof of Lemma \ref{lem:moment_matching}]
Let $f_j(\lambda) \equiv \Poi(j;\lambda) = \lambda^je^{-\lambda}/j!$. For any $j\in[0,M]$, we have
\begin{align}\label{ineq:h_decomp}
\notag|f_G(j) - f_{G_m}(j)| &= \Big|\int f_j(\lambda) \Big(G(\d\lambda) - G_m(\d\lambda)\Big)\Big|\\
&\leq \Big|\int_0^{2M} f_j(\lambda) \Big(G(\d\lambda) - G_m(\d\lambda)\Big)\Big| + \Big|\int_{\lambda > 2M} f_j(\lambda) \Big(G(\d\lambda) - G_m(\d\lambda)\Big)\Big|.
\end{align}
For any $\lambda > 2M$, we have $j \leq \lambda/2$, hence by the Poisson tail bound (see Lemma \ref{lem:poi_basic}\ref{poi_basic1} in \prettyref{app:aux}),
 we have with $X\sim \poi(\lambda)$,
\begin{align*}
f_j(\lambda) \leq \Prob(X - \lambda \leq -\lambda/2) \leq \exp(-\lambda/12) \leq \exp(-M/6) \leq \eta/10,
\end{align*}
using the conditions on $(M,\eta)$. Hence the second term in (\ref{ineq:h_decomp}) is bounded by $\eta/10$. For the first term, let $\bar{\eta} \equiv \log(1/\eta)$, and we consider the following partition of $[0,2M]$: for $0\leq i\leq N$ with $N\equiv \ceil{\sqrt{2M/(C\bar{\eta})} - 1}$,
\begin{align}\label{def:partition}
I_i \equiv [i^2C\bar{\eta}, \big((i+1)^2C\bar{\eta}\big) \wedge 2M).
\end{align}
Let $L_i$ denote the degree of polynomial approximation we will apply on the interval $I_i$.
Let $G_i$ denote $G$ conditioned on $I_i$, namely, $G_i(A) = G(A)/w_i$ for any $A\subset I_i$, where $w_i\equiv G(I_i)$.
By the Carath\'eodory theorem, for each $0\leq i\leq N$, there exists a discrete distribution $G^{(i)}$ supported on $I_i$ with $L_i$ atoms,\footnote{In fact, $\lceil(L_i+1)/2\rceil$ atoms will do.} such that
\begin{align}\label{eq:local_matching}
\int_{I_i} u^k G_i(\d u) = \int_{I_i} u^k G^{(i)}(\d u), \quad \forall k = 1,\ldots, L_i.
\end{align}
Combine $\{G^{(i)}\}_{0\leq i\leq N}$ to obtain
\begin{align*}
G_m \equiv \sum_{i=0}^{N} w_iG^{(i)} + \Big(1-\sum_{i=0}^N w_i\Big)\delta_{2M}, 
\end{align*} 
which is supported on $[0,2M]$ with $m = \sum_{i=0}^N L_i + 1$ atoms. Now the first term in (\ref{ineq:h_decomp}) can be written as
\begin{align*}
S(j) \equiv \int_0^{2M} f_j(\lambda) \Big(G(\d\lambda) - G_m(\d\lambda)\Big) &= \sum_{i=0}^{N}\int_{I_i}f_j(\lambda) \Big(G(\d\lambda) - G_m(\d\lambda)\Big)\\
&= \sum_{i=0}^{N}w_i\cdot \int_{I_i}f_j(\lambda) \Big(G_i(\d\lambda) - G^{(i)}(\d\lambda)\Big).
\end{align*}
We will now bound $|S(j)|$ uniformly over $j\in[0,M]$. Fix any such $j$ so that $j \in I_{i_0}$ for some $i_0 = i_0(j)$. Then for any $i > i_0+1$ and $\lambda \in I_i$ (if such $i$ exists), we have
\begin{align*}
|\lambda - j| \geq |i^2C\bar{\eta} - (i_0+1)^2C\bar{\eta}| \geq (i+1)C\bar{\eta}\geq \sqrt{C\bar{\eta}\cdot \lambda}.
\end{align*}
Hence by the Poisson tail bound in Lemma \ref{lem:poi_basic}\ref{poi_basic1}, for such $\lambda$, we have $f_j(\lambda) \leq \eta$ by choosing $C > 0$ in (\ref{def:partition}) to be a large enough universal constant. A similar argument applies to $i < i_0 - 1$ and $\lambda \in I_i$. Hence
\begin{align*}
\sup_{j\in I_{i_0}}|S(j)| \leq \eta + \max_{i\in\{i_0-1,i_0,i_0+1\}}\bigg|\int_{I_i}f_j(\lambda) \Big(G_i(\d\lambda) - G^{(i)}(\d\lambda)\Big)\bigg|.
\end{align*}

\textbf{Case 1: $i_0(j) \leq M^{1/6}$.} We only bound the term
\begin{align*}
S(j,i_0) \equiv \int_{I_{i_0}}f_j(\lambda) \Big(G_{i_0}(\d\lambda) - G^{(i_0)}(\d\lambda)\Big);
\end{align*}
the other two terms for $I_{i_0-1}$ and $I_{i_0 +1}$ are similar. By Taylor expansion and the moment matching of (\ref{eq:local_matching}) on $I_{i_0}$, we have
\begin{align*}
\big|S(j,i_0)\big| = \Big|\int_{I_{i_0}} R_{L_{i_0};j}(\lambda)\Big(G_{i_0}(\d\lambda) - G^{(i_0)}(\d\lambda)\Big)\Big| \leq \sup_{\lambda\in I_{i_0}} |R_{L_{i_0};j}(\lambda)|,
\end{align*}
where, with $\underline{d}_i \equiv i^2C\bar{\eta}$ (resp.~$\bar{d}_i \equiv (i+1)^2C\bar{\eta}$) denoting the left (resp.~right) end of $I_i$,
\begin{align*}
\big|R_{L_{i_0};j}(\lambda)\big| \equiv \Big|f_j(\lambda) - \sum_{\ell=0}^{L_{i_0}}\frac{f_j^{(\ell)}(\underline{d}_{i_0})(\lambda - \underline{d}_{i_0})^\ell}{\ell!}\Big| \leq \frac{\sup_{\lambda\in[0,\bar{d}_{i_0}]}\Big|f_j^{(L_{i_0}+1)}(\lambda)\Big|}{(L_{i_0} + 1)!}\big|\bar{d}_{i_0} - \underline{d}_{i_0}\big|^{L_{i_0}+1},
\end{align*}
for all $\lambda\in I_{i_0}$. We know that for $j \leq L_{i_0} + 1$, $\sup_{\lambda\in [0, \bar{d}_{i_0}]} \big|f_j^{(L_{i_0}+1)}(\lambda)\big| \leq \sup_{\lambda\in [0, \bar{d}_{i_0}]}e^{-\lambda/2}{L_{i_0+1} \choose j} = {L_{i_0+1} \choose j} \leq (L_{i_0 + 1})^j$ \cite[Equation (3.23)]{wu2020polynomial}. Hence by choosing $L_i$ in (\ref{eq:local_matching}) such that $L_{i_0} \geq C_1 (i_0+1)^2 \bar{\eta}^2$ for some large enough universal $C_1$, we have
\begin{align*}
\big|R_{L_{i_0};j}(\lambda)\big| \leq \frac{(L_{i_0} + 1)^j \big((2i_0+1)C\bar{\eta}\big)^{L_{i_0 + 1}}}{(L_{i_0} + 1)^{L_{i_0} + 1}e^{-(L_{i_0} + 1)}} \leq \frac{\big((2i_0+1)Ce\bar{\eta}\big)^{L_{i_0 + 1}}}{(L_{i_0} + 1)^{(L_{i_0} + 1)/2}} \leq \eta/10.
\end{align*}

\textbf{Case 2: $i_0(j) \geq M^{1/6}$.} 
As in the previous case, we only bound the term $S(j,i_0)$.
Denote by
\begin{align*}
e_L(f, R) \equiv \inf_{\text{deg}(P) \leq L} \sup_{x\in R} |f(x) - P(X)|
\end{align*}
the error of the best degree-$L$ polynomial approximation of a function $f$ on the set $R$.
For any polynomial $P$ of degree at most $L_{i_0}$, the moment matching of (\ref{eq:local_matching}) on $I_{i_0}$ yields that
\begin{align*}
\big|S(j,i_0)\big| &= \Big|\int_{I_{i_0}}\Big(f_j(\lambda) - P(\lambda)\Big)\Big(G_{i_0}(\d\lambda) - G^{(i_0)}(\d\lambda)\Big)\Big|\\
&\leq \sup_{\lambda\in I_{i_0}}\Big|f_j(\lambda) - P(\lambda)\Big| \leq \sup_{\lambda\in[j - K_\eta\sqrt{j}, j + K_\eta\sqrt{j}]}\Big|f_j(\lambda) - P(\lambda)\Big|,
\end{align*}
where $K_\eta \equiv 3\sqrt{C\bar{\eta}}$ and the last inequality follows by $|\lambda - j| \leq (2i_0 + 1)C\bar{\eta} \leq K_\eta \sqrt{j}$ for all $\lambda \in I_{i_0}$.  Optimizing over $P$ yields
\begin{align*}
\big|S(j,i_0)\big| &\leq e_{L_{i_0}}\Big(f_j, [j - K_\eta\sqrt{j}, j + K_\eta\sqrt{j}]\Big) = e_{L_{i_0}}\Big(g_j, [- K_\eta\sqrt{j}, K_\eta\sqrt{j}]\Big),
\end{align*}
where $g_j(\lambda) \equiv f_j(\lambda + j) = (\lambda + j)^j e^{-(\lambda + j)}/j!$. To approximate $g_j$, note that $g_{j}(\lambda) = C(j)\exp(j\log(1 + \lambda/j) - \lambda)$ with $C(j) = (j/e)^{j}/j! \leq 1$ by Stirling approximation. Now we approximate the exponent inside $g_{j}$. Let $H_k(x)$ be the $k$th order Taylor expansion of $x\mapsto \log(1+x)$ around $x = 0$, so that $|\log(1 + x) - H_k(x)| \leq |x|^{k+1}(1/(x+1) \vee 1)$ for all $x\in[-1/2,1/2]$.
 For the given positive integer $k$, let 
\begin{align}\label{def:g_poly}
\bar{g}_{j;k} \equiv C(j)\exp\big(j\cdot H_{k}(\lambda/j) - \lambda\big).
\end{align}
Then for $\lambda \in [-K\sqrt{j}, K\sqrt{j}]$ with $\lambda/j \in [-1/2,1/2]$, we have
\begin{align*}
&|g_{j}(\lambda) - \bar{g}_{j;k}(\lambda)|\\
&\leq C(j)\exp\big(j\log(1 + \lambda/j) - \lambda\big)\cdot \Big|\exp\Big(j \cdot \big(-\log(1 + \lambda/j) + H_{k}(\lambda/j)\big)\Big) - 1\Big|\\
&\lesssim  C(j)\cdot j|\lambda/j|^{k+1} \lesssim K^{k + 1}/j^{k/2}.
\end{align*} 
Let $\bar{H}_k(\lambda) \equiv \bar{H}_k(\lambda;j) \equiv jH_k(\lambda/j) - \lambda$, so that 
\begin{align}\label{ineq:bound_Hk}
\big|\bar{H}_{k}(\lambda) + \lambda^2/(2j)\big| \leq 3|\lambda|^3/j^2, \quad \lambda\in[-K\sqrt{j}, K\sqrt{j}].
\end{align}
Since $j \geq M^{1/6} \geq (\log(1/\eta))^{\rho_M / 6}$ for some sufficiently large $\rho_M \geq 7$, we can choose $k = C_k\log(1/\eta)$ for a large enough universal $C_k > 0$ such that $\sup_{|\lambda| \leq K\sqrt{j}} |g_j(\lambda) - \bar{g}_{j;k}(\lambda)|  \leq \eta^{10}$. This implies
\begin{align*}
\big|S(j,i_0)\big| &\leq \eta^{10} + e_{L_{i_0}}\Big(\bar{g}_{j;k}, [- K_\eta\sqrt{j}, K_\eta\sqrt{j}]\Big)\\
&=  \eta^{10} + C(j) \cdot e_{L_{i_0}}\Big(\exp\big(\bar{H}_{k}(\lambda)\big), [- K_\eta\sqrt{j}, K_\eta\sqrt{j}]\Big)\\
&\stackrel{\rm (a)}{\leq}  \eta^{10} + e_{L_{i_0}/k}\Big(\exp(-\lambda), (-\bar{H}_{k})\big([- K_\eta\sqrt{j}, K_\eta\sqrt{j}]\big)\Big)\\
&\stackrel{\rm (b)}{\leq}  \eta^{10} + e_{L_{i_0}/k}\Big(\exp(-\lambda), [0, K_\eta^2]\Big),
\end{align*} 
where (a) follows as $\bar{H}_{k}(\cdot)$ maps a degree-$m$ polynomial into a polynomial of degree at most $mk$, and (b) follows from the bound in (\ref{ineq:bound_Hk}). Hence using $e_L(\exp(-\lambda), [0,r]) \leq r^{L+1}/(L+1)!$ via Taylor approximation, we have
\begin{align*}
\big|S(j,i_0)\big| \leq  \eta^{10} + \frac{K_\eta^{2(L_{i_0}/k + 1)}}{(L_{i_0}/k + 1)!} \leq \eta/10, 
\end{align*}
as long as we choose $L_i$ in (\ref{eq:local_matching}) such that $L_{i_0}/k \geq C'K_\eta^2 $ for some large universal $C' > 0$.

Combining the above two cases yields that under the partition (\ref{def:partition}), for any $j\in[0,M]$, the first term $S(j)$ of (\ref{ineq:h_decomp}) can be bounded by $C_2'\eta$ with some $G_m$ having
\begin{align*}
m \lesssim \sum_{i=0}^{M^{1/6}} (i+1)^2\bar{\eta}^2 + \sum_{i = M^{1/6} + 1}^{N} K_\eta^4 \asymp \sqrt{M}(\log (1/\eta))^{3/2}.
\end{align*} 
atoms. The proof is complete.
\end{proof}

\subsubsection{Completing the proof}

The rest of the upper bound proof largely follows that of \cite{zhang2009generalized}; see also \cite{ghosal2001entropies, ghosal2007posterior}. 
We provide the complete argument for the convenience of the reader.

For the following lemma, recall that for any $\epsilon > 0$, mixture class $\mathcal{H}$, and semi-norm $\pnorm{\cdot}{}$, $\mathcal{N}(\epsilon, \mathcal{H}, \pnorm{\cdot}{})$ denotes the $(\epsilon, \pnorm{\cdot}{})$-covering number of $\mathcal{H}$ (see, e.g., \cite[Definition 2.1.5]{van1996weak}). Let
\begin{align}\label{def:mixture_class_new}
\mathcal{H}_0 \equiv \big\{f_G: G\in \calP(\reals_+)\big\},\quad \mathcal{H}_p(M_p) \equiv \big\{f_G: G \in \mathcal{G}_p(M_p)\},
\end{align}
where $\calP(\reals_+)$ is the collection of all priors and $\mathcal{G}_p$ is the moment classes in (\ref{def:mixture_class_intro}).
\begin{lemma}\label{lem:entropy}
Fix any $M > 0$ and $\eta \in (0,10^{-3})$ such that $M \geq (\log(1/\eta))^{\rho_M}$ for some sufficiently large $\rho_M > 0$.
 Then there exists some universal $K > 0$ such that
\begin{align*}
\log \mathcal{N}(\eta, \mathcal{H}_0, \pnorm{\cdot}{\infty, M}) \leq K\sqrt{M}(\log(1/\eta))^{3/2}\log(M/\eta).
\end{align*}
\end{lemma}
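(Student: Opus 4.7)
The plan is to combine the moment matching lemma (Lemma \ref{lem:moment_matching}) with a standard grid discretization of the $m$-atom mixture parameter space. The key observation is that, in $\pnorm{\cdot}{\infty,M}$, the map $(\theta,w)\mapsto w\,\poi(\cdot;\theta)$ is jointly Lipschitz with tame constants, so once the moment matching lemma reduces the ambient class $\mathcal{H}_0$ to finite-dimensional $m$-atom mixtures on $[0,2M]$, an explicit $\eta$-grid yields the desired entropy bound.

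First, I would apply Lemma \ref{lem:moment_matching} with precision $\eta/2$ in place of $\eta$. This produces, for every $G\in\calP(\reals_+)$, a discrete measure $G_m=\sum_{k=1}^{m}w_k\delta_{\theta_k}$ supported on $[0,2M]$ with $m\leq K\sqrt{M}(\log(2/\eta))^{3/2}\lesssim \sqrt{M}(\log(1/\eta))^{3/2}$ such that $\pnorm{f_G-f_{G_m}}{\infty,M}\leq \eta/2$. So it suffices to cover the class
\[
\calF_m \equiv \Big\{f_{G_m}:G_m=\sum_{k=1}^{m}w_k\delta_{\theta_k},\ \theta_k\in[0,2M],\ w_k\geq 0,\ \sum_k w_k=1\Big\}
\]
in $\pnorm{\cdot}{\infty,M}$ to accuracy $\eta/2$, and the total bound will follow by the triangle inequality.

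For the grid step, the crucial Lipschitz estimate is that for any $j\geq 0$,
\[
\bigl|\partial_\theta \poi(j;\theta)\bigr| \;=\; \bigl|\poi(j-1;\theta)-\poi(j;\theta)\bigr| \;\leq\; 1,
\]
using the standard recursion $\partial_\theta \poi(j;\theta) = \poi(j-1;\theta) - \poi(j;\theta)$ (with $\poi(-1;\theta)\equiv 0$). Thus $\poi(j;\cdot)$ is $1$-Lipschitz in $\theta$ uniformly in $j$. Discretize $[0,2M]$ by a grid of spacing $\delta_\theta=\eta/8$ (so at most $\lceil 16M/\eta\rceil$ points), and discretize the weight simplex coordinatewise by a grid of spacing $\delta_w=\eta/(8m)$ (so at most $\lceil 8m/\eta\rceil$ values per coordinate). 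Round each $\theta_k$ and $w_k$ of $G_m$ to the nearest grid point, producing $G_m'=\sum_k w_k'\delta_{\theta_k'}$. Then for every $j\in\{0,\ldots,M\}$,
\[
|f_{G_m}(j)-f_{G_m'}(j)| \;\leq\; \sum_{k=1}^{m} w_k|\poi(j;\theta_k)-\poi(j;\theta_k')| + \sum_{k=1}^{m} |w_k-w_k'|\,\poi(j;\theta_k') \;\leq\; \delta_\theta + m\delta_w \;\leq\; \eta/2,
\]
which gives the claimed cover of $\calF_m$.

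Finally, I count: the number of grid configurations is at most $(16M/\eta)^{m}\cdot(8m/\eta)^{m}$, so
\[
\log \mathcal{N}(\eta,\mathcal{H}_0,\pnorm{\cdot}{\infty,M}) \;\lesssim\; m\bigl[\log(M/\eta) + \log(m/\eta)\bigr].
\]
Since $m\lesssim\sqrt{M}(\log(1/\eta))^{3/2}$ and the hypothesis $M\geq(\log(1/\eta))^{\rho_M}$ forces $\log m\lesssim \log M+\log\log(1/\eta)\lesssim \log(M/\eta)$, both logarithmic factors collapse into $\log(M/\eta)$, yielding
\[
\log \mathcal{N}(\eta,\mathcal{H}_0,\pnorm{\cdot}{\infty,M}) \;\leq\; K\sqrt{M}(\log(1/\eta))^{3/2}\log(M/\eta)
\]
for an appropriate universal constant $K>0$. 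There is no real obstacle beyond bookkeeping; essentially all the substance is already contained in the moment matching lemma, and the only point requiring any care is the elementary Lipschitz bound on $\partial_\theta \poi(j;\theta)$, which happens to be uniform in $j$ and prevents the grid size from depending on $M$ in a problematic way.
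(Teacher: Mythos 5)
Your proof is correct and follows essentially the same approach as the paper's: reduce to $m$-atom mixtures on $[0,2M]$ via Lemma~\ref{lem:moment_matching}, then grid the atom locations using the uniform Lipschitz bound $\sup_{j,\theta}|\partial_\theta \poi(j;\theta)|\leq 1$ (the paper's Lemma~\ref{lem:pmf_lip}) and grid the weights, with the hypothesis $M\geq(\log(1/\eta))^{\rho_M}$ absorbing the $\log m$ factor into $\log(M/\eta)$. The only cosmetic difference is in counting the weight net: the paper uses an $\ell_1$-ball net of the simplex of size $(3/\eta)^m$ while you use a coordinatewise grid of size $(8m/\eta)^m$; both yield the same final bound under the stated hypothesis.
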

\begin{proof}[Proof of Lemma \ref{lem:entropy}]
By Lemma \ref{lem:moment_matching}, for any distribution $G$ supported on $\reals_+$, there exists a discrete distribution $G_m$ supported on $[0,2M]$ with $m \leq K\sqrt{M}(\log(1/\eta))^{3/2}$ atoms such that
\begin{align*}
\pnorm{f_G - f_{G_m}}{\infty, M} \leq \eta,
\end{align*}
where $K$ is universal. We first approximate the support of $G_m$ by an $\eta$-grid of $[0,2M]$. Let $G_m = \sum_{i=1}^m w_i\delta_{\mu_i}$ with weights $\{w_i\}_{i=1}^m$ and atoms $\{\mu_i\}_{i=1}^m$. For each $\mu_i$, let $\mu'_i$ be the closet point on the grid so that $|\mu_i - \mu'_i| \leq \eta$. Let $G_{m,\eta}\equiv \sum_{i=1}^m w_i\delta_{\mu_i'}$, then with $f_j(\mu) \equiv \poi(j;\mu)$ denoting the Poisson pmf with mean $\mu$, 
\begin{align*}
\pnorm{f_{G_m} - f_{G_{m,\eta}}}{\infty} \equiv \sup_{j\geq0} \Big|\sum_{i=1}^m w_i \cdot \big(f_j(\mu_i) - f_j(\mu_i')\big)\Big| \leq \eta,
\end{align*}
using $\sup_{j\geq0}\sup_{\lambda > 0}|f_j'(\lambda)| \leq 1$ proved in Lemma \ref{lem:pmf_lip} in \prettyref{app:aux}. Next, let $\mathcal{P}^m \equiv \{w = (w_1,\ldots,w_m): w_i\geq 0, \sum_{i=1}^m w_i = 1\}$ be the probability simplex in $\R^m$, and $\mathcal{P}^{m,\eta}$ be an $\eta$-net in $\ell_1$ distance:
\begin{align*}
\sup_{w\in\mathcal{P}^m}\inf_{w'\in \mathcal{P}^{m,\eta}} \pnorm{w - w'}{1} \leq \eta.
\end{align*}
Then a standard volume comparison shows that $|\mathcal{P}^{m,\eta}| \leq (3/\eta)^{m}$. 
Let $\bar{G}_{m,\eta}$ be the approximation of $G_{m,\eta}$ with $\{w_i\}_{i=1}^m$ therein replaced by its closest point (in $\ell_1$) $\{w_i'\}_{i=1}^m$ in $\mathcal{P}^{m,\eta}$. Then using $f_j(\lambda) \leq 1$, we have
\begin{align*}
\pnorm{f_{G_{m,\eta}} - f_{\bar{G}_{m,\eta}}}{\infty} \leq \sum_{i=1}^m |w_i - w_i'| \leq \eta.
\end{align*}
This implies $\pnorm{f_G - f_{\bar{G}_{m,\eta}}}{\infty, M} \leq 3\eta$. Finally, counting the number of possible realizations of $f_{\bar{G}_{m,\eta}}$ yields 
\begin{align*}
\log \mathcal{N}(3\eta, \mathcal{H}_0, \pnorm{\cdot}{\infty, M}) \leq \log {2M/\eta +1 \choose m}\cdot \Big(\frac{3}{\eta}\Big)^m \leq m\log(9eM/m\eta^2) \lesssim m\log (M/\eta),
\end{align*}
where the last inequality follows from the condition on $M$. The claim now follows by adjusting the constants. 
\end{proof}

For the following lemma, recall the mixture class $\mathcal{H}_p(M_p)$ defined in (\ref{def:mixture_class_new}).
\begin{lemma}\label{lem:truncation_moment}
Suppose that $Y^n = (Y_1,\ldots,Y_n)$ are i.i.d.~observations from some $f_G\in\mathcal{H}_p(M_p)$ for some $p > 0$ and $M_p > 0$. Then for any $0 < \lambda < \min(1,p/2)$, $a > 0$, and $M \geq 1$, 
\begin{align*}
\E \Big\{\prod_{i=1}^n (aY_i)^{\indc{Y_i\geq M}}\Big\}^\lambda \leq \exp\Big[C_p\cdot n(aM)^\lambda \cdot \Big(\exp(-cM) + M^{-p}M_p\Big)\Big].
\end{align*}
Here $c > 0$ is universal and $C_p > 0$ only depends on $p$.
\end{lemma}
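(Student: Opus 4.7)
The plan is to exploit the i.i.d.\ structure to reduce the $n$-fold product to a single-sample moment, and then to bound the latter by splitting the mixing distribution into a small-mean and a large-mean region. By independence, the left-hand side equals $\bigl(\E(aY)^{\lambda\indc{Y\geq M}}\bigr)^n$ with $Y\sim f_G$. Writing
\begin{align*}
\E(aY)^{\lambda\indc{Y\geq M}} = \Prob(Y<M) + \E[(aY)^\lambda \indc{Y\geq M}] \leq 1 + a^\lambda\, \E[Y^\lambda \indc{Y\geq M}]
\end{align*}
and using $1+x\leq e^x$, the problem reduces to establishing the single-sample bound
\begin{align*}
\E[Y^\lambda\indc{Y\geq M}] \leq C_p\, M^\lambda\bigl(\exp(-cM) + M^{-p}M_p\bigr).
\end{align*}

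For this single-sample bound I would condition on $\theta\sim G$ (so that $Y\mid\theta\sim\poi(\theta)$) and split the integral over $\theta$ at the threshold $M/2$. The threshold $M/2$ is chosen so that in the small-mean region $\{\theta\leq M/2\}$ the event $\{Y\geq M\}$ is genuinely a large-deviation tail event. In that region I would apply H\"older's inequality with conjugate exponents $p/\lambda$ and $p/(p-\lambda)$ (well-defined because $\lambda<p/2<p$) to obtain
\begin{align*}
\E_\theta[Y^\lambda\indc{Y\geq M}] \leq \bigl(\E_\theta Y^p\bigr)^{\lambda/p}\cdot \Prob_\theta(Y\geq M)^{(p-\lambda)/p}.
\end{align*}
Standard Poisson moment estimates give $\E_\theta Y^p\lesssim_p \theta^p+1\lesssim M^p$ (using $M\geq 1$), while the Poisson tail bound of Lemma~\ref{lem:poi_basic} gives $\Prob_\theta(Y\geq M)\leq \exp(-c_0 M)$ whenever $M\geq 2\theta$; combined with $\lambda<p/2$ (so $(p-\lambda)/p\geq 1/2$), this produces the bound $\lesssim_p M^\lambda\exp(-c'M)$ uniformly over $\theta\in[0,M/2]$.

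In the complementary region $\{\theta>M/2\}$ I would drop the indicator and apply Jensen's inequality (valid since $\lambda<1$) to obtain $\E_\theta Y^\lambda\leq \theta^\lambda$, and then bound $\theta^\lambda \leq (M/2)^{\lambda-p}\theta^p$ so as to invoke the $p$th moment constraint:
\begin{align*}
\int_{\theta>M/2}\theta^\lambda\, G(\d\theta) \leq 2^{p-\lambda}\, M^{\lambda-p}\int \theta^p\, G(\d\theta) \leq 2^p\, M^\lambda\cdot M^{-p}M_p.
\end{align*}
Summing the two contributions and substituting back through the reduction of the first paragraph yields the stated inequality. The calculations are fairly routine; the only item that needs care is picking the split at $M/2$ and aligning the H\"older exponents with the hypotheses $\lambda<1$ and $\lambda<p/2$ so that the two terms in the conclusion, the exponential tail and the moment term, emerge with exactly the claimed form.
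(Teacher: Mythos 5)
Your proof is correct, but it takes a genuinely different route to the single-sample estimate $\E[Y^\lambda\indc{Y\geq M}]\lesssim_p M^\lambda\bigl(e^{-cM}+M^{-p}M_p\bigr)$. The paper works entirely on the marginal side: it writes $\E[Y^\lambda\indc{Y\geq M}]=M^\lambda\Prob_G(Y\geq M)+\lambda\int_M^\infty u^{\lambda-1}\Prob_G(Y\geq u)\,\d u$ by the layer-cake formula and then plugs in the already-established Poisson mixture tail bound of Lemma~\ref{lem:poi_basic}\ref{poi_basic3}, with $\lambda<p/2$ guaranteeing integrability of the moment piece $u^{\lambda-1-p}$ with a constant depending only on $p$. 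You instead condition on $\theta\sim G$ and split the mixing integral at $\theta=M/2$, handling the small-mean region by H\"older with exponents $(p/\lambda,\,p/(p-\lambda))$ plus a Poisson central moment bound and tail bound, and the large-mean region by Jensen ($\lambda<1$) followed by the Markov-type inequality $\theta^\lambda\leq(M/2)^{\lambda-p}\theta^p$ and the $p$th-moment constraint. In effect you are rederiving, inline, the same split that underlies Lemma~\ref{lem:poi_basic}\ref{poi_basic3}, which makes your argument more self-contained but slightly heavier (H\"older is not needed in the paper's route); both arguments invoke $\lambda<1$ and $\lambda<p/2$ at exactly the same points and produce the same form of bound, so the two approaches are of comparable strength, differing only in whether the Poisson-versus-heavy-tail split is delegated to an auxiliary lemma or carried out directly.
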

\begin{proof}[Proof of Lemma \ref{lem:truncation_moment}]
By independence of $\{Y_i\}$, we have
\begin{align*}
\E \Big(\prod_{i=1}^n (aY_i)^{\indc{Y_i\geq M}}\Big)^\lambda &= \prod_{i=1}^n \E  \Big((aY_i)^{\indc{Y_i\geq M}}\Big)^\lambda \leq \prod_{i=1}^n \E \Big(1 + (aY_i)^\lambda\indc{Y_i\geq M}\Big)\\
&\leq \prod_{i=1}^n \exp\Big(a^\lambda \cdot \E Y_i^\lambda\indc{Y_i\geq M}\Big).
\end{align*}
For each $i\in[n]$, applying $y^\lambda=M^\lambda + \lambda\cdot \int_M^\infty u^{\lambda - 1} \d u$, we have
\begin{align*}
\E Y_i^\lambda\indc{Y_i\geq M} &= \sum_{y=M}^\infty y^\lambda f_G(y) = M^\lambda \Prob_G(Y \geq M) + \lambda\cdot \int_M^\infty u^{\lambda - 1}\Prob_G(Y \geq u)\d u\\
&\stackrel{\rm (a)}{\leq} M^\lambda\Big(\exp(-cM) + (M/2)^{-p}M_p\Big) + \lambda \cdot \int_M^\infty u^{\lambda -1}\Big(\exp(-cu) + (u/2)^{-p}M_p\Big)\d u\\
&\stackrel{\rm (b)}{\lesssim} M^\lambda\exp(-c M) + M^{\lambda - p}M_p,
\end{align*}
where (a) uses the mixture tail bound in Lemma \ref{lem:poi_basic}\ref{poi_basic3}, and (b) uses the fact that $M$ is large enough and $\lambda \leq \min\{1,p/2\}$. Using this estimate, we have
\begin{align*}
\E \Big(\prod_{i=1}^n (aY_i)^{\indc{Y_i\geq M}}\Big)^\lambda \leq \exp\Big[C_p\cdot n(aM)^\lambda \cdot \Big(\exp(-cM) + M^{-p}M_p\Big)\Big],
\end{align*}
as desired.
\end{proof}

We are now ready to complete the proof of Theorem \ref{thm:density_main}.
\begin{proof}[Proof of Theorem \ref{thm:density_main}]
Suppose the true density is $f_{G_0} \in \mathcal{H}_p(M_p)$. For any $r > 0$, let $B(r) \equiv B(r;H,f_{G_0}) \equiv \{f_G\in \mathcal{H}_0: H(f_G, f_{G_0}) \leq r\}$ be the Hellinger ball of radius $r$ centered at the truth $f_{G_0}$ and let $B(r)^c \equiv  \mathcal{H}_0 \backslash B(r)$. For any positive functions $g_1, g_2$ with domain $\mathbb{Z}_+$, let 
\begin{align*}
L(g_1,g_2) \equiv \prod_{i=1}^n \frac{g_1(Y_i)}{g_2(Y_i)}.
\end{align*}
Then for any $t \geq 0$, by definition of $f_{\hat{G}}$, we have
\begin{align*}
\Prob\Big(H(f_{\hat{G}}, f_{G_0}) \geq t\epsilon_n\Big) \leq \Prob\Big(\exists f_G \in B(t\epsilon_n)^c \text{ s.t. } L(f_G, f_{G_0}) \geq 1\Big).
\end{align*}
Fix some $\eta > 0$ and $M > 0$ to be chosen later. Let $\mathcal{N} = \{f_{G_1},\ldots, f_{G_{N}}\}$ be a proper $(\eta,\pnorm{\cdot}{\infty,M})$-net of $B(t\epsilon_n)^c$ (here ``proper'' means $\mathcal{N} \subset B(t\epsilon_n)^c$), with $N = \mathcal{N}(\eta, \mathcal{H}_0, \pnorm{\cdot}{\infty, M})$ and the latter bounded by Lemma \ref{lem:entropy}. 
Let 
\begin{align*}
f_\ast(y) \equiv \eta\indc{y\leq M} + \frac{\eta M^2}{y^2}\indc{y > M}, \quad y\in\mathbb{Z}_+.
\end{align*}
Consequently, for any $f_G\in\mathcal{H}_0$ such that $H(f_G, f_{G_0}) > t\epsilon_n$, there exists some $j\leq N$ such that
\begin{align*}
f_G(y) \leq
\begin{cases}
f_{G_j}(y) + \eta = f_{G_j}(y) + f_\ast(y) & y \leq M,\\
1 & y > M,
\end{cases}
\end{align*}
which implies that
\begin{align*}
L(f_G, f_{G_0}) &= \prod_{i:Y_i\leq M}\frac{f_G(Y_i)}{f_{G_0}(Y_i)} \cdot \prod_{i:Y_i > M}\frac{f_G(Y_i)}{f_{G_0}(Y_i)} \leq \prod_{i:Y_i\leq M}\frac{(f_{G_j} + f_\ast)(Y_i)}{f_{G_0}(Y_i)} \cdot \prod_{i:Y_i > M}\frac{1}{f_{G_0}(Y_i)}\\
&\leq L(f_{G_j} + f_\ast, f_{G_0}) \cdot \prod_{i: Y_i > M} \frac{1}{f_\ast(Y_i)}.
\end{align*}
Taking the supremum over $f_G\notin B(t\epsilon_n)$ yields that
\begin{align*}
&\Prob\Big(H(f_{\hat{G}}, f_{G_0}) \geq t\epsilon_n\Big)\\
&\leq \Prob\Big(\max_{j\leq N}L(f_{G_j} + f_\ast, f_{G_0})\cdot  \prod_{i: Y_i > M} \frac{1}{f_\ast(Y_i)} \geq 1\Big)\\
&\leq \Prob\Big(\max_{j\leq N} L(f_{G_j} + f_\ast, f_{G_0}) \geq \exp(-nt^2\epsilon_n^2/2)\Big) + \Prob\Big( \prod_{i: Y_i > M} \frac{1}{f_\ast(Y_i)} \geq \exp(nt^2\epsilon_n^2/2)\Big)\\
&\equiv (I) + (II).
\end{align*}

To bound $(I)$, we have
\begin{align*}
(I) &\leq N\cdot \max_{j\leq N} \Prob\Big(L(f_{G_j} + f_\ast, f_{G_0}) \geq \exp(-nt^2\epsilon_n^2/2)\Big)\\
&= N\cdot \max_{j\leq N} \Prob\bigg(\prod_{i=1}^n \sqrt{\frac{(f_{G_j} + f_\ast)(Y_i)}{f_{G_0}(Y_i)}} \geq \exp(-nt^2\epsilon_n^2/4)\bigg)\\
&\leq N\cdot \max_{j\leq N} \exp(nt^2\epsilon_n^2/4) \Big(\E_{G_0} \sqrt{\frac{(f_{G_j} + f_\ast)(Y_1)}{f_{G_0}(Y_1)}}\Big)^n\\
&\leq N\cdot \max_{j \leq N} \exp\bigg(nt^2\epsilon_n^2/4 + n\cdot \Big(\E_{G_0} \sqrt{(f_{G_j}+f_\ast)(Y_1)/f_{G_0}(Y_1)} - 1\Big)\bigg),
\end{align*}
using $\log x\leq x -1$ for all $x > 0$ in the last inequality. 
For each $f_{G_j}$, using $\sum_{y=0}^\infty f_\ast(y) \leq 2\eta M$, we have
\begin{align*}
&\E_{G_0} \Big(\sqrt{(f_{G_j}+f_\ast)(Y_1)/f_{G_0}(Y_1)} - 1\Big) \leq \E_{G_0}\sqrt{f_{G_j}(Y_1)/f_{G_0}(Y_1)} - 1 + \sum_{y=0}^\infty \sqrt{f_*(y)f_{G_0}(y)}\\
&\leq -\frac{1}{2}H^2(f_{G_j}, f_{G_0}) + \sqrt{\sum_{y=0}^\infty f_\ast(y)} \leq -\frac{1}{2}(t\epsilon_n)^2 + \sqrt{2\eta M}.
\end{align*}
Now we choose
\begin{align}\label{eq:upper_choice}
\eta = n^{-2}, \quad M = (\log n)^{-5}(n\epsilon_n^2)^2.
\end{align}
By definition of $\epsilon_n$, we have $M \geq (\log(1/\eta))^{\rho_M}$ for some $\rho_M \geq 7$, which allows us to
apply Lemma \ref{lem:entropy} to obtain $\log N = \log \mathcal{N}(\eta, \mathcal{H}_0, \pnorm{\cdot}{\infty,M}) \leq K_p\sqrt{M} (\log n)^{5/2}$. This implies
\begin{align*}
(I) \leq \exp\Big(K_{p}\sqrt{M}(\log n)^{5/2} + nt^2\epsilon_n^2/4 -n(t\epsilon_n)^2/2 + n\sqrt{2\eta M}\Big) \leq \exp\big(-nt^2\epsilon_n^2/8\big)
\end{align*}
for $t\geq t_* = t_\ast(p)$ with some sufficiently large $t_*$.

%

To bound $(II)$, we have by the definition of $f_*$ and $\pnorm{f_G}{\infty} \leq 1$ that
\begin{align*}
(II) &= \Prob\Big(\prod_{i:Y_i > M}\frac{Y_i^2}{\eta M^2}  \geq \exp(nt^2\epsilon_n^2/2)\Big) \leq \exp(-n\lambda t^2\epsilon_n^2/4) \cdot \E \prod_{i: Y_i > M} \Big(\frac{Y_i}{M\sqrt{\eta}}\Big)^\lambda\\
&\leq \exp\bigg(-n\lambda t^2\epsilon_n^2/4 + C_p\cdot n\eta^{-\lambda/2} \cdot \Big(\exp(-cM) + M^{-p}M_p\Big)\bigg),
\end{align*}
using Lemma \ref{lem:truncation_moment} in the last inequality with $a = (M\sqrt{\eta})^{-1}$ and some $0 < \lambda < \min(1,p/2)$. Hence by choosing $\lambda = 1/\log n$, the choice of $(\eta, M)$ in (\ref{eq:upper_choice}) guarantee that
for $t\geq t_*$ with sufficiently large $t_* = t_\ast(p)$ that $(II) \leq  \exp(-n\lambda t^2\epsilon_n^2/8) = \exp\big(-nt^2\epsilon_n^2/(8\log n)\big)$. Combining the estimates of $(I)$ and $(II)$ concludes the proof.
\end{proof}

\subsection{Proof of Theorem \ref{thm:density_lower}: Lower bound}\label{subsec:proof_density_lower}
For each $i\geq 1$, let $I_i \equiv [i^2 (\log n)^2, (i+1)^2(\log n)^2]$. Let $a_0 = 0$, and for $i\geq 1$, $a_i \equiv (\log n)^2\cdot (i^2 + (i+1)^2)/2$ be the center of $I_i$. Fix two positive integers $i_0\leq N/2$ to be chosen later. 
Let $w_i \equiv M_p\big((i+1)^2(\log n)^2\big)^{-(p+1/2)}$ for $i_0\leq i\leq N$, then for large enough $n$,
\begin{align*}
\bar{w} &\equiv \sum_{i=i_0}^N w_i = M_p(\log n)^{-(2p+1)}\sum_{i=i_0}^N (i+1)^{-(2p+1)}\\
&\leq M_p\frac{(i_0+1)^{-2p} - (N+1)^{-2p}}{2p\cdot (\log n)^{2p+1}} \leq 1,
\end{align*}
as long as 
\begin{align}\label{ineq:density_lower_1}
M_p i_0^{-2p} \leq 2p.
\end{align}
Let $w_0 \equiv 1 - \bar{w}$. Let $b_i \equiv a_i + \delta_i$ with $\delta_i^2 = a_i/(nw_i(\log n)^{10})$ for $i_0\leq i \leq N$. Direct calculation shows that as long as 
\begin{align}\label{ineq:density_lower_2}
\big((N+1)\log n\big)^{2p+1} \leq C_pnM_p
\end{align}
 for some small enough $C_p > 0$, we have: (i) $\delta_i \leq |I_i|/100$, yielding that $a_i, b_i \in [(i\log n)^2 + |I_i|/4, (i\log n)^2 + 3|I_i|/4]$, and $\delta_i^2/(i\log n)^2 \leq (1600)^{-1}(\log n)^2$; (ii) $w_{N} = \min_{i_0\leq i\leq N}w_i \geq 2/n$.

For any $\bm{\tau} = (\tau_{i_0},\ldots,\tau_{N})$ with $\tau_i \in\{0,1\}$, define a probability distribution (with convention $\lambda_0 \equiv 0$)
\begin{align}\label{def:lower_bound_construction}
G_{\bm{\tau}} \equiv w_0\delta_0 + \sum_{i=i_0}^N w_i\delta_{\lambda_i}, \text{ where } 
\lambda_i \equiv
\begin{cases}
a_i  & \tau_i = 0\\
 b_i & \tau_i = 1\\
\end{cases}, \quad i_0 \leq i \leq N.
\end{align}
Since $a_i \leq b_i \leq ((i+1)\log n)^2$,
\begin{align*}
m_p(G_{\bm{\tau}}) & \leq \sum_{i=i_0}^N M_p\big((i+1)^2(\log n)^2\big)^{-(p+1/2)}\cdot \big((i+1)^2(\log n)^2\big)^{p}\\
&= M_p\cdot\sum_{i=i_0}^N \frac{1}{(i+1)\log n} \leq M_p^p\cdot\frac{\log\big((N+1)/(i_0+1)\big)}{\log n}.
\end{align*}
Hence under the condition (\ref{ineq:density_lower_2}) and additionally
\begin{align}\label{ineq:density_lower_3}
\log\Big(\frac{N+1}{i_0+1}\Big) \leq \log n,
\end{align}
we have $m_p(G_{\bm{\tau}}) \leq M_p$.

By Assouad's lemma (see, e.g., \cite[Theorem 2.12(iv)]{Tsybakov09}), it suffices to upper bound $\chi^2\big(f_{\bm{\tau}}||f_{\bm{\tau}'}\big)$ for $d(\bm{\tau},\bm{\tau}') = 1$, and lower bound $H^2(f_{\bm{\tau}},f_{\bm{\tau}'})/d(\bm{\tau},\bm{\tau}')$ for all $\bm{\tau} \neq \bm{\tau}'$, where $d(\cdot,\cdot)$ is the Hamming distance. For the first quantity, suppose that $\bm{\tau}$ and $\bm{\tau}'$ only differ at the $i_\ast$-th position. Then
\begin{align}\label{ineq:density_lower_chi}
\notag\chi^2(f_{\bm{\tau}}||f_{\bm{\tau}')} &= \sum_{k = 0}^\infty \frac{\Big(\sum_{i=i_0}^N w_i \big(\poi(k;\lambda_i) - \poi(k;\lambda_i')\big)\Big)^2}{w_0\poi(k;0) + \sum_{i=i_0}^N w_i \poi(k;\lambda_i')}\\
\notag&\leq w_{i_\ast} \cdot\sum_{k = 0}^\infty \frac{\big(\poi(k;\lambda_{i_\ast}) - \poi(k;\lambda_{i_\ast}')\big)\Big)^2}{\poi(k;\lambda_{i_\ast}')}\\
&= w_{i_\ast}\cdot \chi^2(\poi(\lambda_{i_\ast})||\poi(\lambda_{i_\ast}')) = w_{i_\ast}\Big(\exp\Big((\lambda_{i_\ast} - \lambda'_{i_\ast})^2/\lambda'_{i_\ast}\Big) - 1\Big).
\end{align}
Using $(\lambda_{i_\ast} - \lambda'_{i_\ast})^2/\lambda'_{i_\ast} = \delta_{i_\ast}^2/\lambda_{i_\ast}' \leq \delta_{i_\ast}^2/a_{i_\ast} \leq (nw_{i_\ast}(\log n)^{10})^{-1}$, the lower bound $w_{i_\ast} \geq 2/n$, and $\exp(x) - 1\leq 2x$ for $x\in(0,1/2)$, we have $\chi^2(f_{\bm{\tau}}||f_{\bm{\tau}')} \leq 2/\big(n(\log n)^{10}\big)$. 

Next, to lower bound the ratio $H^2(f_{\bm{\tau}},f_{\bm{\tau}'})/d(\bm{\tau},\bm{\tau}')$, we have (recall the convention $\lambda_0\equiv 0$)

\begin{align*}
\frac{1}{2}H^2(f_{\bm{\tau}},f_{\bm{\tau}'}) &= 1 - \sum_{k=0}^\infty \sqrt{\Big(w_0\poi(k;0) + \sum_{i=i_0}^N w_i\poi(k;\lambda_i)\Big)\Big(w_0\poi(k;0) + \sum_{i=i_0}^N w_i\poi(k;\lambda'_i)\Big)}\\
&\geq 1 - w_0\sum_{k=0}^\infty \poi(k;0) - \sum_{i=i_0}^N w_i\cdot \sum_{k=0}^\infty  \sqrt{\poi(k;\lambda_i)\poi(k;\lambda'_i)}\\
&\qquad - \sum_{\substack{i\neq j\\i,j\in\{0\}\cup [i_0,N]}}\sqrt{w_iw_j}\cdot \sum_{k=0}^\infty \sqrt{\poi(k;\lambda_i)\poi(k;\lambda'_j)}\\
&= \sum_{i=i_0}^N w_iH^2\big(\poi(\lambda_i), \poi(\lambda_i')\big) - \sum_{\substack{i\neq j\\i,j\in\{0\}\cup [i_0,N]}}\sqrt{w_iw_j}\cdot \sum_{k=0}^\infty \sqrt{\poi(k;\lambda_i)\poi(k;\lambda'_j)}\\
&\geq d(\bm{\tau},\bm{\tau}')\cdot \min_{i_0\leq i \leq N: \lambda_i \neq \lambda_i'} w_i\cdot H^2\big(\poi(\lambda_i), \poi(\lambda_i')\big) -  \sum_{\substack{i\neq j\\i,j\in\{0\}\cup [i_0,N]}}\sqrt{w_iw_j}\cdot \sum_{k=0}^\infty \sqrt{\poi(k;\lambda_i)\poi(k;\lambda'_j)}.
\end{align*}
For each $\lambda_i \neq \lambda_i'$, we have (see Lemma \ref{lem:poi_divergence} in \prettyref{app:aux})
\begin{align*}
&w_i\cdot H^2\big(\poi(\lambda_i), \poi(\lambda_i')\big) = w_i\Big(1 - \exp\big(-(\sqrt{\lambda_i} - \sqrt{\lambda_i'})^2/2\big)\Big)\\
&\geq w_i\bigg(1 - \exp\Big(-\frac{(\lambda_i - \lambda_i')^2}{8(\lambda_i\vee \lambda_i')}\Big)\bigg) = w_i\bigg(1-\exp\Big(-\frac{\delta_i^2}{8(\lambda_i\vee\lambda_i')}\Big)\bigg)\\
&\gtrsim w_i\frac{\delta_i^2}{a_i} = \frac{1}{n(\log n)^{10}}.
\end{align*}
On the other hand, for any $i\neq j$ and $C > 0$, we have $|\sqrt{\lambda_i} - \sqrt{\lambda_j'}| \geq \sqrt{C\log n}$ for all sufficiently large $n$, hence
\begin{align*}
\sum_{k=0}^\infty \sqrt{\poi(k;\lambda_i)\poi(k;\lambda'_j)} = e^{-\frac{(\sqrt{\lambda_i} - \sqrt{\lambda_j'})^2}{2}} \leq n^{-C/2}. 
\end{align*}
Combining the above two estimates yields that, for any $d(\bm{\tau},\bm{\tau}') \geq 1$, 
\begin{align*}
H^2(f_{\bm{\tau}},f_{\bm{\tau}'}) \geq d(\bm{\tau},\bm{\tau}')\cdot \frac{c_0}{n(\log n)^{10}} - N^2 \cdot n^{-C/2} \gtrsim d(\bm{\tau},\bm{\tau}')/(n(\log n)^{10}),
\end{align*}
as long as
\begin{align}\label{ineq:density_lower_4}
N \leq n^\rho
\end{align}
for some $\rho > 0$, and $C > 0$ is large enough (depending on $\rho$). Finally, by choosing $N+1 = c_pn^{1/(2p+1)}M_p^{1/(2p+1)}/\log n$ and $i_0 = (c_p/3)n^{1/(2p+1)}M_p^{1/(2p+1)}/\log n$ for some small $c_p > 0$, Assouad's Lemma yields that the minimax $H^2$-risk is at least proportional to 
$N/(n(\log n)^{10})\asymp n^{-2p/(2p+1)}M_p^{1/(2p+1)}(\log n)^{-11}$. It remains to note that the above choice of $(i_0,N)$ satisfy the conditions in (\ref{ineq:density_lower_1}), (\ref{ineq:density_lower_2}), (\ref{ineq:density_lower_3}), and (\ref{ineq:density_lower_4}) under the aforementioned condition $n^{-1/p}(\log n)^{10} \leq M_p^{1/p} \leq n^2(\log n)^2$.
\qed

\section{Proofs for Section \ref{sec:regret}}\label{sec:proof_regret}

\subsection{Proof of Theorem \ref{thm:regret_lower}}\label{subsec:proof_regret_lower}


We will divide the lower bound proof into two parts: (i) the lower bound $n^{-2(p-1)/(2p+1)}M_p^{3/(2p+1)}(\log n)^{-11}$ for $p\geq 1$; (ii) a refined lower bound $M_1$ (without the logarithmic factor) for $p = 1$.

\begin{proof}[Proof of Theorem \ref{thm:regret_lower}: $p\geq1$]
The proof is similar to that of Theorem \ref{thm:density_lower}, and uses the same lower construction therein. A technical hurdle for applying Assouad's lemma is that the regret $\reg_n$ involves $\|\cdot\|_{\ell_2(f_G)}$ where the weight $f_G$ depends on the parameter $G$ itself, which, as such, does not satisfy a generalized triangle inequality. To this end, we will relate the $\|\cdot\|_{\ell_2(f_G)}$ loss to a loss function that is independent of $G$ and then apply Assouad's lemma to this new loss. 

We proceed with the proof of Theorem \ref{thm:density_lower} till (\ref{ineq:density_lower_chi}) and continue with the following arguments. 
Recall that $\theta_G(k) \equiv \E_G(\theta|Y = k)$ is the Bayes rule. For any $\theta:\integers_+\rightarrow \reals_+$ and $G,G'\in \{G_{\bm{\tau}}\}$ 
where the prior $G_{\bm{\tau}}$ defined in \eqref{def:lower_bound_construction} is indexed by a binary vector $\bm{\tau}$, define
\begin{align*}
\pnorm{\theta - \theta_{G}}{\ell_2(f_{G'})}^{2,\mathsf{trun}} \equiv \sum_{i=i_0}^{N}\sum_{k\in R_i} \big(\theta(k) - \theta_G(k)\big)^2f_{G'}(k),
\end{align*}
where $R_i\subset I_i$ is to be chosen later. Let $\mathcal{I}\subset [i_0,N]$ such that $\lambda_i\neq \lambda_i'$ for $i\in\mathcal{I}$. Then with the shorthand $f_{\bm{\tau}} = f_{G_{\bm{\tau}}}$, 
\begin{align*}
\pnorm{\theta_{G_{\bm{\tau}}} - \theta_{G_{\bm{\tau}'}}}{\ell_2(f_{\bm{\tau}'})}^{2,\mathsf{trun}} &\geq \sum_{i\in\mathcal{I}}\sum_{k\in I_i} \big(\theta_{G_{\bm{\tau}}}(k) - \theta_{G_{\bm{\tau}'}}(k)\big)^2 \cdot \Big(w_0\poi(k;\lambda_0') + \sum_{j=i_0}^N w_j\poi(k;\lambda_j')\Big)\\
&\geq \sum_{i\in\mathcal{I}}w_i\cdot \sum_{k\in R_i}\big(\theta_{G_{\bm{\tau}}}(k) - \theta_{G_{\bm{\tau}'}}(k)\big)^2\cdot\poi(k;\lambda_i').
\end{align*}
For any $G_{\bm{\tau}}$ and $k$, let $w_j(k;G_{\bm{\tau}}) \equiv \Prob_{G_{\bm{\tau}}}(\lambda = \lambda_j | Y = k)$ be the posterior probability. Then $\theta_{G_{\bm{\tau}}}(k) = \sum_{j=i_0}^N w_j(k;G_{\bm{\tau}})\lambda_j$, and for $k\in R_i\subset I_i$,
\begin{align*}
\big(\theta_{G_{\bm{\tau}}}(k) - \theta_{G_{\bm{\tau}'}}(k)\big)^2 &\geq \Big(w_i(k;G_{\bm{\tau}})\lambda_i - w_i(k;G_{\bm{\tau}'})\lambda_i'\Big)^2 - C\Big(\sum_{j:j\neq i} w_j(k;G_{\bm{\tau}})\lambda_j + w_j(k;G_{\bm{\tau}'})\lambda'_j\Big)^2\\
&\geq (\lambda_i - \lambda_i')^2 - C'\Big(1-w_i(k;G_{\bm{\tau}}) \wedge w_i(k;G_{\bm{\tau}'})\Big) \cdot ((N+1)\log n)^4\\
&= \frac{a_i}{nw_i(\log n)^{10}} - C'\Big(1-w_i(k;G_{\bm{\tau}}) \wedge w_i(k;G_{\bm{\tau}'})\Big) \cdot ((N+1)\log n)^4.
\end{align*} 
Choose $R_i \equiv \{k\in I_i: (k - b_i)^2/k \leq (800)^{-1}(\log n)^2\}\subset I_i$. Then for such $k\in I_i$, we have $(k-a_i)^2/k \leq 2\big((k-b_i)^2 + (a_i - b_i)^2\big)/k \leq (400)^{-1}(\log n)^2 + 2\delta_i^2/(i\log n)^2 \leq (200)^{-1}(\log n)^2$, so that $(k-a_i)^2/k\vee (k-b_i)^2/k \leq (200)^{-1}(\log n)^2$ for $k\in R_i \subset I_i$. We claim that for any $j\in\{0\}\cup[i_0,N]$ that $j\neq i$ and $k\in R_i$, $w_j(k;G) \leq n^{-C}$ for $G\in\{G_{\bm{\tau}}, G_{\bm{\tau}'}\}$. To see this, note that using $w_i \geq 2/n$,
\begin{align}\label{eq:posterior_prob}
w_j(k;G) = \frac{\poi(k;\lambda_j)w_j}{\poi(k;\lambda_0)w_0 + \sum_{\ell=i_0}^N \poi(k;\lambda_\ell)w_\ell} \leq \frac{n\cdot \poi(k;\lambda_j)}{\poi(k;\lambda_i)}
\end{align}
Using the Poisson tail in Lemma \ref{lem:poi_basic}\ref{poi_basic1} and recall that $\lambda_j \in \{a_j,b_j\} \subset [(j\log n)^2 + |I_j|/4, (j\log n)^2 + 3|I_j|/4] \subset I_j$, we have $\poi(k;\lambda_j) = 0$ if $j = 0$, and if $j> 0$, $\poi(k;\lambda_j) \leq \Prob(|\poi(\lambda_j) - \lambda_j| \geq |I_j|/4) \leq 2\exp(-(50)^{-1}(\log n)^2)$. On the other hand, using Stirling approximation, we have for $\lambda_i\in\{a_i, b_i\}$,
\begin{align*}
\poi(k;{\lambda_i}) = \frac{\lambda_i^ke^{-\lambda_i}}{k!} &\geq \exp\bigg(k\log\Big(1+\frac{\lambda_i - k}{k}\Big) + (k-\lambda_i) - \log\sqrt{2\pi k} - 1/(12k)\bigg)\\
&\geq \exp\bigg(-(\lambda_i - k)^2/k - \log\sqrt{2\pi k} - 1/(12k)\bigg)\\
&\geq \exp\bigg(-2(\lambda_i - k)^2/k \bigg)/\sqrt{2\pi k} \geq \exp(-(100)^{-1}(\log n)^2)/\sqrt{2\pi k}.
\end{align*}
Combining the above two estimates yields the claim $w_j(k;G) \leq n^{-C}$ for $G\in\{G_{\bm{\tau}}, G_{\bm{\tau}'}\}$, any $j\neq i$, and $k\in R_i$. By choosing $C$ to be large enough, this implies $\big(\theta_{G_{\bm{\tau}}}(k) - \theta_{G_{\bm{\tau}'}}(k)\big)^2 \geq a_i/(nw_i(\log n)^{10}) - n^{-100}$, and hence
\begin{align}\label{ineq:L2_hamming_ratio}
\notag\pnorm{\theta_{G_{\bm{\tau}}} - \theta_{G_{\bm{\tau}'}}}{\ell_2(f_{\bm{\tau}'})}^{2,\mathsf{trun}} &\geq \sum_{i\in\mathcal{I}}w_i\cdot \sum_{k\in R_i} \Big(\frac{a_i}{nw_i(\log n)^{10}} - n^{-100}\Big)\poi(k;\lambda_i')\\
&\geq \sum_{i\in\mathcal{I}} \frac{a_i}{n(\log n)^{10}}\cdot \Prob\big(\poi(\lambda_i') \in R_i\big) - n^{-100} \gtrsim |\mathcal{I}|\cdot \frac{\min_{i\in\mathcal{I}}a_i}{n(\log n)^{10}}.
\end{align}
Here the last inequality follows by the Poisson tail bound in Lemma \ref{lem:poi_basic}\ref{poi_basic1} and noting that $\bar{R}_i \equiv \{k\in I_i: (k - a_i)^2/k \leq (1600)^{-1}(\log n)^2\}\subset R_i$ so that $\Prob\big(\poi(a_i) \in R_i\big) \geq \Prob\big(\poi(a_i) \in \bar{R}_i\big) \gtrsim 1$ and similarly for $\Prob\big(\poi(b_i) \in R_i\big)$.

Next we establish the ratio bound: for some $M > 0$,
\begin{align}\label{ineq:local_ratio_bound}
\max_{i_0\leq i\leq N}\max_{k\in R_i} \max_{G_{\bm{\tau}},G_{\bm{\tau}'}} \frac{f_{G_{\bm{\tau}}}(k)}{f_{G_{\bm{\tau}'}}(k)} \leq M.
\end{align}
Fix any $i_*\in[i_0, N]$, $k \in I_{i_*}$, and ${\bm{\tau}},{\bm{\tau}'}$. We have
\begin{align*}
\frac{f_{G_{\bm{\tau}}}(k)}{f_{G_{\bm{\tau}'}}(k)} = \frac{w_0\poi(k;\lambda_0) + \sum_{i=i_0}^N w_i\poi(k;\lambda_i)}{w_0\poi(k;\lambda_0') + \sum_{i=i_0}^N w_i\poi(k;\lambda_i')} \leq \frac{\poi(k;\lambda_{i_*})}{\poi(k;\lambda_{i_*}')} + n^{-100},
\end{align*}
where the inequality follows from the computation following (\ref{eq:posterior_prob}). For distinct $\lambda_{i_*}, \lambda_{i_*}'\in\{a_{i_*}, b_{i_*}\}$, 
\begin{align*}
\frac{\poi(k;\lambda_{i_*})}{\poi(k;\lambda_{i_*}')} = \frac{\lambda_{i_*}^{k}/k!\cdot \exp(-\lambda_{i_*})}{(\lambda'_{i_*})^{k}/k!\cdot \exp(-\lambda'_{i_*})} = \exp\Big(k\log(\lambda_{i_*}/\lambda_{i_*}') - (\lambda_{i_*} - \lambda_{i_*}')\Big).
\end{align*}
If $\lambda_{i_*} = b_{i_*}\geq a_{i_*} = \lambda_{i_*}'$, then using $\log(1+x)\leq x$, the exponent can be bounded by
\begin{align*}
\Big|k\frac{b_{i_*}-a_{i_*}}{a_{i_*}} - (b_{i_*} - a_{i_*})\Big| = \Big|\delta_{i_*}\Big(\frac{k}{a_{i_*}} - 1\Big)\Big| \lesssim \frac{\delta_{i_*}}{\sqrt{a_{i_*}}}\frac{|I_{i_*}|}{\sqrt{a_{i_*}}} \lesssim \frac{1}{(\log n)^5}\frac{i_*(\log n)^2}{i_*(\log n)} \lesssim 1. 
\end{align*}
If $\lambda_{i_*} = a_{i_*}\leq b_{i_*} = \lambda_{i_*}'$, then using $\log(1+x) - x \geq -x^2$ for $x\in(0,1/2)$, the exponent can be bounded by
\begin{align*}
\Big|-k\log(\frac{b_{i_*} - a_{i_*}}{a_{i_*}} + 1) + (b_{i_*} - a_{i_*})\Big| &\leq \Big|(b_{i_*} - a_{i_*})\Big(1 - \frac{k}{a_{i_*}}\Big)\Big| + k\Big(\frac{b_{i_*} - a_{i_*}}{a_{i_*}}\Big)^2\\
&\lesssim \frac{\delta_{i_*}}{\sqrt{a_{i_*}}}\frac{|I_{i_*}|}{\sqrt{a_{i_*}}} + \frac{k}{a_{i_*}}\frac{\delta_{i_*}^2}{a_{i_*}} \lesssim 1;
\end{align*}
note that we indeed have $(b_{i_*} - a_{i_*})/a_{i_*} = (nw_{i_*}(\log n)^{10})^{-1/2} \leq 1/2$ for large enough $n$. Putting together the two cases, we have established the claim (\ref{ineq:local_ratio_bound}).

With these preparations, we are ready to apply Assouad's lemma. Using the condition (\ref{ineq:local_ratio_bound}), we have
\begin{align}\label{ineq:minimax_lower_1}
\inf_{\hat{\theta}}\sup_{f_G\in\mathcal{H}_p} \E_{f_G}\pnorm{\hat{\theta} - \theta_G}{\ell_2(f_G)}^2 \gtrsim_M \inf_{\hat{\theta}}\max_{f_G:G\in\{G_{\bm{\tau}}\}} \E_{f_G}\pnorm{\hat{\theta} - \theta_{G}}{\ell_2(f_{G_{\bm{0}}})}^{2,\mathsf{trun}},
\end{align}
where $\bm{0}$ is the zero vector with the same length as $\bm{\tau}$.
For any estimator $\hat{\theta}$, let its associating $\hat{G}$ in $\{G_{\bm{\tau}}\}$ be given by
\begin{align*}
\hat{G} \equiv \argmin_{G\in \{G_{\bm{\tau}}\}} \pnorm{\hat{\theta} - \theta_G}{\ell_2(f_{G_{\bm{0}}})}^{2,\mathsf{trun}}.
\end{align*}
Pick any such $\hat{G}$ if the minimum is not unique. Then for any $G\in \{G_{\bm{\tau}}\}$, 
\begin{align*}
\pnorm{\theta_{\hat{G}} - \theta_G}{\ell_2(f_{G_{\bm{0}}})}^{2,\mathsf{trun}} &= \sum_{i=i_0}^{N} \sum_{k\in R_i} \big(\theta_{\hat{G}}(k) - \theta_G(k)\big)^2f_{G_{\bm{0}}}(k)\\
&\leq 2\pnorm{\theta_{\hat{G}} - \hat{\theta}}{\ell_2(f_{G_{\bm{0}}})}^{2,\mathsf{trun}}  + 2\pnorm{\theta_{G} - \hat{\theta}}{\ell_2(f_{G_{\bm{0}}})}^{2,\mathsf{trun}}  \leq 4\pnorm{\theta_{G} - \hat{\theta}}{\ell_2(f_{G_{\bm{0}}})}^{2,\mathsf{trun}}.
\end{align*}
Continuing with (\ref{ineq:minimax_lower_1}), we have 
\begin{align*}
&\inf_{\hat{\theta}}\sup_{f_G\in\mathcal{H}_p} \E_{f_G}\pnorm{\hat{\theta} - \theta_G}{\ell_2(f_G)}^2 \gtrsim_M \inf_{\hat{G}\in\{G_{\bm{\tau}}\}}\max_{G\in\{G_{\bm{\tau}}\}}\pnorm{\theta_{\hat{G}} - \theta_{G}}{\ell_2(f_{G_{\bm{0}}})}^{2,\mathsf{trun}}\\
&\gtrsim_M N\cdot\min_{\bm{\tau}\neq \bm{\tau}'}\frac{\pnorm{\theta_{G_{\bm{\tau}}} - \theta_{G_{\bm{\tau}'}}}{\ell_2(f_{G_{\bm{\tau}'}})}^{2,\mathsf{trun}}}{d(\bm{\tau}, \bm{\tau}')} \cdot \min_{\bm{\tau},\bm{\tau}':d(\bm{\tau},\bm{\tau}') = 1}\bigg(1 - \sqrt{\frac{n}{2}\chi^2(f_{\bm{\tau}}||f_{\bm{\tau}'})}\bigg),
\end{align*}
where the second inequality follows from Assouad's lemma (see, e.g., \cite[Theorem 2.12(iv)]{Tsybakov09}). By the same choices of $(i_0,N)$ as in Theorem \ref{thm:density_lower}: $N = c_p n^{1/(2p+1)}M_p^{1/(2p+1)}/\log n$ for some small $c_p$ and $i_0 = (c_p/3) n^{1/(2p+1)}M_p^{1/(2p+1)}/\log n$, and combining (\ref{ineq:density_lower_chi}) and (\ref{ineq:L2_hamming_ratio}), we obtain the lower bound rate $n^{-(2p-2)/(2p+1)}M_p^{3/(2p+1)}(\log n)^{-11}$. The proof is complete. 
\end{proof}

\begin{proof}[Proof of Theorem \ref{thm:regret_lower}: $p = 1$] 
The proof is based on a simple two-point argument. Let $a = (M_1^{-1}\vee 1)n^5$, $b = a + \sqrt{a/M_1}/100$, and $G_u \equiv (1-u^{-1})\delta_0 + u^{-1}\delta_{u\cdot M_1}$ for $u\in\{a,b\}$, both with first moment equal to $M_1$. First note that, for $G_u$, the Bayes estimator is
\begin{align*}
\theta_{G_u}(y) = \frac{uM_1\cdot e^{-uM_1}}{(u-1) + e^{-uM_1}}\indc{y = 0} + (uM_1)\indc{y>0},
\end{align*}
so the Bayes risk $\mathsf{mmse}(G_u)$ with $u\in\{a,b\}$ equals
\begin{align*}
&\E_{G_u} \big(\theta_{G_u}(Y) - \theta\big)^2\\
&= (1-u^{-1}) \E_{G_u}\big[\big(\theta_{G_u}(Y) - 0\big)^2|\theta = 0\big] + u^{-1}\E_{G_u}\big[\big(\theta_{G_u}(Y) - uM_1\big)^2|\theta = uM_1\big]\\
&=  (1-u^{-1}) \cdot \Big(\frac{uM_1e^{-uM_1}}{(u-1) + e^{-uM_1}}\Big)^2\\
&\qquad + \frac{1}{u}\Big[e^{-uM_1}\Big(\frac{uM_1e^{-uM_1}}{(u-1) + e^{-uM_1}} - uM_1\Big)^2 + (1-e^{-uM_1})(uM_1 - uM_1)^2\Big]\\
&= \frac{u(u-1)M_1^2e^{-uM_1}}{u-1 + e^{-uM_1}} \leq M_1\cdot (uM_1)e^{-uM_1} = o(M_1),
\end{align*}
where we use the fact that $uM_1 \gtrsim n^5$. Hence in order to prove an $\Omega(M_1)$ lower bound for the regret, it suffices to show the same lower bound for the risk. To this end, we have for $\tilde{\theta}_{Y^{n-1}}$ that is measurable with respect to $Y^{n-1}$, 
\begin{align*}
&\inf_{\tilde{\theta}_{Y^{n-1}}}\sup_G \E_G(\tilde{\theta}_{Y^{n-1}}(Y_n) - \theta_n)^2\\
&\gtrsim \inf_{\tilde{\theta}_{Y^{n-1}}} \Big[\E_{G_a}(\tilde{\theta}_{Y^{n-1}}(Y_n) - \theta_n)^2 + \E_{G_b}(\tilde{\theta}_{Y^{n-1}}(Y_n) - \theta_n)^2\Big]\\
&\gtrsim \frac{1}{a}\inf_{\tilde{\theta}_{Y^{n-1}}} \Big[\E_{Y^{n-1}\sim G_a}\E_{U\sim\poi(aM_1)}(\tilde{\theta}_{Y^{n-1}}(U) - aM_1)^2 + \E_{Y^{n-1}\sim G_b}\E_{U\sim\poi(bM_1)}(\tilde{\theta}_{Y^{n-1}}(U) - bM_1)^2\Big]\\
&\stackrel{\rm (a)}{\gtrsim} \frac{1}{a}\inf_{\tilde{\theta}_{Y^{n-1}}} \Big[\E_{U\sim\poi(aM_1)}(\tilde{\theta}_{Y^{n-1} = 0}(U) - aM_1)^2 + \E_{U\sim\poi(bM_1)}(\tilde{\theta}_{Y^{n-1} = 0}(U) - bM_1)^2\Big]\\
&\geq \frac{1}{a}\inf_{f} \Big[\E_{U\sim\poi(aM_1)}(f(U) - aM_1)^2 + \E_{U\sim\poi(bM_1)}(f(U) - bM_1)^2\Big]\\
&\gtrsim \frac{(a-b)^2M_1^2}{a}\Big(1-\textrm{TV}\big(\poi(aM_1), \poi(bM_1)\big)\Big) \stackrel{\rm (b)}{\gtrsim} M_1.
\end{align*}
Here in (a), we use the fact that under both $G_a$ and $G_b$, the event $\{Y^{n-1} = 0\}$ holds with probability at least $1/2$; in (b), we use Lemma \ref{lem:poi_divergence} in \prettyref{app:aux} along with the inequality $\textrm{TV}(P,Q) \leq H(P,Q)$ for any distributions $P,Q$. The proof is complete. 
\end{proof}

\subsection{Proof of Theorem \ref{thm:regret_MLE_EB}}\label{subsec:proof_NPMLE}

As mentioned near the end of \prettyref{subsec:NPMLE}, a key step of the regret analysis is to introduce a sequence $\{A_k\}$ that facilitates the control of the difficult term (\ref{ineq:A1_bound_rough}) appearing in the regret bound. 
To this end, we start with some notations. For any $y\in \integers_+$, $\rho \in \reals_+$, and two distributions $G_1, G_2$, let 
\begin{align}
w(y)\equiv w(y;G_1,G_2,\rho) \equiv \frac{1}{f_{G_1}(y)\vee \rho + f_{G_2}(y)\vee\rho}.
\label{eq:wstar}
\end{align}
For any $k\geq 0$, define
\begin{align}\label{def:Ak}
A_k^2 \equiv A_k^2(G_1, G_2;\rho) \equiv \sum_{y=0}^{\infty} (y+1)^{k}\Big(\Delta^kf_{G_1}(y) - \Delta^kf_{G_2}(y)\Big)^2w(y).
\end{align}
Here $\Delta^k$ is the $k$th-order forward difference operator defined in (\ref{eq:forwarddiff}), and so $A_k^2$ can be interpreted as a squared distance between the 
$k$th order ``discrete derivatives'' of the Poisson mixture with an appropriate weight function that also (crucially) depends on $k$.
The role of this sequence $\{A_k\}$ is the following:
\begin{itemize}
	\item The $k=0$ term corresponds to the squared Hellinger distance between the mixtures. In fact, it is easy to show that $A_0^2(G_1,G_2;\rho) \lesssim H^2(f_{G_1}, f_{G_2})$. 
	
	\item The $k=1$ term is the key in bounding the regret, which, as will soon become clear, boils down to controlling $A_1(H, G_0;\rho)$, where $G_0$ is the true prior, $H$ is the estimated prior used in $g$-modeling (e.g., the NPMLE (\ref{def:npmle})), and $\rho$ is the regularization parameter in (\ref{def:g_smoothed_individual}).
	Since directly bounding $A_1$ is difficult, we will achieve this goal with the aid of higher-order terms.
	
	\item We show that the growth of the sequence $\{A_k\}$ is at most $A_k \lesssim (Ck)^k/\rho$ (\prettyref{prop:Ak_upper}).
	
	\item We derive a recursive inequality relating each $A_k$ to its neighboring terms (\prettyref{prop:Ak_recursion}), which, combined with the boundary conditions at $k=0$ and $k=\Theta(\log n)$, allows us to tightly control the target $A_1$ with appropriately chosen $\rho=n^{-\Theta(1)}$ (\prettyref{prop:A1_bound}).
\end{itemize}

%
%

In the sequel we prove the pointwise bound and the recursive bound on the sequence $\{A_k\}$ in Sections \ref{subsubsec:Ak_pointwise} and \ref{subsubsec:Ak_recursion} respectively, before finishing the proof of Theorem \ref{thm:regret_MLE_EB} in Section \ref{subsubsec:complete}.


\subsubsection{Pointwise bound on $\{A_k\}$}
\label{subsubsec:Ak_pointwise}

\begin{proposition}\label{prop:Ak_upper}
For any distributions $G_1,G_2$ and $\rho > 0$, the following holds.
\begin{align*}
A_k^2(G_1,G_2;\rho) \leq 4k^{k}/\rho.
\end{align*}
\end{proposition}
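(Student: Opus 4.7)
The plan is to reduce the bound on $A_k^2$ to a pointwise estimate in the mixing parameter $\theta$, and then to exploit the orthogonality of the Poisson--Charlier polynomials with respect to the Poisson measure. First, since $f_{G_1}(y)\vee \rho$ and $f_{G_2}(y)\vee \rho$ are each at least $\rho$, the weight \eqref{eq:wstar} satisfies $w(y) \leq 1/(2\rho)$; combining this with the elementary inequality $(a-b)^2 \leq 2(a^2+b^2)$, the proposition reduces to showing that for any prior $G$,
\begin{align*}
\sum_{y\geq 0} (y+1)^k (\Delta^k f_G(y))^2 \leq k^k,
\end{align*}
which would in fact yield the stronger bound $A_k^2 \leq 2k^k/\rho$. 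Next, Cauchy--Schwarz in the form $(\int h\,dG)^2 \leq \int h^2\,dG$ applied to $h(\theta)=\Delta^k \poi(y;\theta)$ makes the mixing distribution drop out, so it suffices to prove the same bound with $f_G$ replaced by $\poi(\cdot;\theta)$ for every fixed $\theta \geq 0$.

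The central ingredient for this pointwise estimate is the classical identity
\begin{align*}
\Delta^k \poi(y;\theta) = \poi(y+k;\theta)\, C_k(y+k;\theta),
\end{align*}
where $C_k(z;\theta) = \sum_{j=0}^k \binom{k}{j}\,z(z-1)\cdots(z-j+1)\,(-1/\theta)^j$ is the Poisson--Charlier polynomial of degree $k$. This is verified by expanding $\Delta^k\poi(y;\theta) = \sum_i(-1)^{k-i}\binom{k}{i}\poi(y+i;\theta)$, factoring out $\poi(y+k;\theta)$ via $\poi(y+i;\theta)/\poi(y+k;\theta) = \theta^{i-k}(y+k)!/(y+i)!$, and recognizing the resulting polynomial in $y+k$. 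The Charlier polynomials satisfy the classical orthogonality relation $\sum_{z\geq 0}\poi(z;\theta)C_k(z;\theta)^2 = k!/\theta^k$.

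Substituting $z = y+k$ and using $(y+1)^k \leq z^k$ reduces the single-$\theta$ target to $\sum_{z\geq k} z^k \poi(z;\theta)^2 C_k(z;\theta)^2$. The main obstacle is now converting the extra factor of $\poi(z;\theta)$ into something compatible with the Charlier orthogonality measure. I will handle this via the Poisson identity $z(z-1)\cdots(z-k+1)\,\poi(z;\theta) = \theta^k \poi(z-k;\theta)$, together with the elementary estimate $z^k \leq (k^k/k!)\cdot z(z-1)\cdots(z-k+1)$ for $z\geq k$ (obtained by verifying that the ratio $z^k/(z(z-1)\cdots(z-k+1))$ is non-increasing on $\{k,k+1,\ldots\}$ via Bernoulli's inequality). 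Combined with $\poi(z-k;\theta) \leq 1$, these give $z^k \poi(z;\theta)^2 \leq (k^k/k!)\,\theta^k \poi(z;\theta)$, and plugging into the orthogonality relation produces $\sum_{z\geq k} z^k \poi(z;\theta)^2 C_k(z;\theta)^2 \leq k^k$ as required, yielding $A_k^2 \leq 2 k^k/\rho \leq 4 k^k/\rho$.
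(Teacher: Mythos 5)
Your proof is correct and follows essentially the same route as the paper: both reduce the bound to a single-$\theta$ estimate via Jensen/Cauchy--Schwarz and then invoke Poisson--Charlier orthogonality, the paper working with the normalized polynomials $p_k$ and explicitly splitting off the atom $G(\{0\})$, while you work with the unnormalized $C_k$ and a slightly looser polynomial bound ($z^k \leq (k^k/k!)(z)_k$ in place of the paper's $(z-k+1)^k \leq (z)_k$, which yields $2k!$ rather than $k^k$). The one small gap is that your Charlier formula and the relation $\sum_z \poi(z;\theta)C_k(z;\theta)^2 = k!/\theta^k$ only make sense for $\theta>0$, so the case $\theta=0$ (where $\Delta^k\poi(y;0)=(-1)^k\indc{y=0}$ and the sum equals $1$) needs a separate one-line check---this is precisely why the paper peels off the atom at zero before applying Jensen.
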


Before proceeding to the proof of \prettyref{prop:Ak_upper}, let us first explain the subtleties in the argument. Because of the polynomial factor $(y+1)^k$ in (\ref{def:Ak}), it is not even clear a priori whether $A_k$ is finite for moderate to large $k$. In fact, applying the binomial expansion \prettyref{eq:backwarddiff-expand} 
of the $k$th-order backward difference and the triangle inequality only works when $G_1,G_2$ have finite $k$th moments, which cannot be afforded when $k$ is large as we are working with priors with potentially heavy tails.
This suggests that it is crucial to take into account the cancellation thanks to the finite difference operator $\Delta^k$, which offsets the growth of $(y+1)^k$. 
Indeed, the proof below applies the structure of the Poisson mixture and relates $\Delta^k f_G$ to the discrete orthogonal polynomials under the Poisson weights \cite{szego1975orthogonal}. 
For even $k$, a self-contained proof based on Fourier and Laplace transforms is given in Appendix \ref{sec:second_proof}.
(This suffices for proving the main Proposition \ref{prop:A1_bound} as we can choose $k_0$ there to be an even number.)

For any $\theta > 0$, the \emph{Poisson-Charlier polynomial} is defined as 
\begin{align}\label{def:charlier}
p_k(y;\theta) \equiv \frac{\theta^{k/2}}{\sqrt{k!}}\frac{\nabla^k \poi(y;\theta)}{\poi(y;\theta)}, \quad y\in \integers_+,
\end{align}
where $\{\nabla^k\}_{k\geq 0}$ is the backward difference operator in (\ref{eq:backwarddiff}).
It is well-known \cite[Section 2.81]{szego1975orthogonal} that $\{p_k(y;\theta)\}_{k\geq 0}$ is a system of orthonormal polynomials under the $\poi(\theta)$ distribution:
\begin{align}\label{eq:charlier_ortho}
\sum_{y=0}^\infty p_k(y;\theta)p_\ell(y;\theta)\poi(y;\theta) = \indc{k=\ell}.
\end{align} 
We are now ready to present the proof of Proposition \ref{prop:Ak_upper}.
\begin{proof}[Proof of Proposition \ref{prop:Ak_upper}]
We first show that for every $G$,
\begin{align}\label{ineq:Ak_upper_general}
\sum_{y=k}^\infty (y-k+1)^k \big(\nabla^k f_G(y)\big)^2 \leq 2k!.
\end{align}
For any $G$, let $\alpha \equiv G(\{0\})$ be its mass on $0$ and $\bar{G}$ be its conditional version on $(0,\infty)$, so that
\begin{align*}
G = \alpha\delta_0 + (1-\alpha)\bar{G}. 
\end{align*}
Using the definition in (\ref{def:charlier}), we have for any $y\geq k$,
\begin{align*}
\nabla^k f_G(y) &= \E_{\theta\sim G} \nabla^k \poi(y;\theta)\\
&= \alpha \cdot \nabla^k \poi(y;0) + (1-\alpha)\cdot\E_{\theta\sim \bar{G}}\big[p_k(y;\theta)\sqrt{k!}\theta^{-k/2}\poi(y;\theta)\big]\\
&= \alpha (-1)^k\indc{y=k} + (1-\alpha)\cdot\E_{\theta\sim \bar{G}}\big[p_k(y;\theta)\sqrt{k!}\theta^{-k/2}\poi(y;\theta)\big],
\end{align*}
where the first term in the last step applies the expansion \prettyref{eq:backwarddiff-expand}.
Hence 
\begin{align*}
&\frac{1}{k!}\sum_{y=k}^\infty (y-k+1)^k \big(\nabla^k f_G(y)\big)^2\\
&\leq 1 + \sum_{y=k}^\infty (y-k+1)^k\big(\E_{\theta\sim \bar{G}}[p_k(y;\theta)\theta^{-k/2}\poi(y;\theta)]\big)^2\\
&\leq 1 + \E_{\theta\sim \bar{G}}\Big[\sum_{y=k}^\infty  \big(p_k(y;\theta)\big)^2 \poi(y;\theta) \cdot \theta^{-k}\poi(y;\theta)(y-k+1)^k\Big]\\
&\stackrel{\rm (a)}{\leq} 1 + \E_{\theta\sim \bar{G}}\Big[\sum_{y=k}^\infty  \big(p_k(y;\theta)\big)^2\poi(y;\theta)\Big] \stackrel{\rm (b)}{\leq} 2,
\end{align*}
where (a) follows from the fact that for any $y\geq k$,
\begin{align*}
\theta^{-k}\poi(y;\theta)(y-k+1)^k =  \frac{(y-k+1)^k}{(y-k+1)(y-k+2)\ldots y}\poi(y-k;\theta) \leq 1,
\end{align*}
and (b) follows from (\ref{eq:charlier_ortho}). This proves (\ref{ineq:Ak_upper_general}).

Now we apply (\ref{ineq:Ak_upper_general}) to bound $A_k$. Using $\Delta^k f_G(y) = \nabla^k f_G(y+k)$, we have
\begin{align*}
A_k^2 &\leq \rho^{-1}\Big[\sum_{y=0}^\infty (y+1)^k \big(\Delta^k f_{G_1}(y)\big)^2 + \sum_{y=0}^\infty (y+1)^k \big(\Delta^k f_{G_2}(y)\big)^2\Big]\\
&= \rho^{-1}\Big[\sum_{y=0}^\infty (y+1)^k \big(\nabla^k f_{G_1}(y+k)\big)^2 + \sum_{y=0}^\infty (y+1)^k \big(\nabla^k f_{G_2}(y+k)\big)^2 \Big]\\
&= \rho^{-1}\Big[\sum_{y=k}^\infty (y-k+1)^k \big(\nabla^k f_{G_1}(y)\big)^2 + \sum_{y=k}^\infty (y-k+1)^k \big(\nabla^k f_{G_2}(y)\big)^2\Big]\\
&\leq 4\rho^{-1}k! \leq 4k^k/\rho.
\end{align*}
The proof is complete.
\end{proof}

\subsubsection{Recursive bound on $\{A_k\}$}
\label{subsubsec:Ak_recursion}


The following is a recursive inequality for the sequence $A_k=A_k(G_1,G_2;\rho)$ defined in (\ref{def:Ak}).

\begin{proposition}\label{prop:Ak_recursion}
For any $k\geq 1$,
\begin{align}\label{ineq:Ak_recursion}
A_k^2 \leq L_kA_kA_{k-1} + A_{k-1}A_{k+1},
\end{align}
where $L_k = C\log\frac{1}{\rho} + k$ for some universal constant $C > 0$.
\end{proposition}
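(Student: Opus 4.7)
The plan is to apply summation by parts (SBP) to $A_k^2$, separating the $k$-th order difference from one of the two factors of $h_k$, and then to decompose the resulting finite difference of the weighted product via the product rule so that one piece matches $A_{k-1}A_{k+1}$ and the other matches $L_k A_{k-1}A_k$. Writing $w_k(y) := (y+1)^k w(y)$ and using $h_k = \Delta h_{k-1}$, the SBP formula \eqref{eq:SBP} applies since $w_k(-1) = 0$ for $k \geq 1$, giving $A_k^2 = -\sum_y h_{k-1}(y)\,\nabla[w_k(y) h_k(y)]$. Applying the discrete product rule $\nabla(AB)(y) = A(y)\nabla B(y) + B(y-1)\nabla A(y)$ with $A=w_k$, $B=h_k$, together with $\nabla h_k(y) = h_k(y) - h_k(y-1) = h_{k+1}(y-1)$, then yields
\[
A_k^2 \;=\; -\sum_y h_{k-1}(y) w_k(y) h_{k+1}(y-1) \;-\; \sum_y h_{k-1}(y) h_k(y-1)\, \nabla w_k(y) \;=:\; \mathcal{T}_1 + \mathcal{T}_2.
\]

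The crucial algebraic identity throughout is $w_k(y)^2 = w_{k-1}(y)\, w_{k+1}(y)$. For $\mathcal{T}_1$, Cauchy-Schwarz with weights $w_{k-1}$ and $w_{k+1}$ gives $|\mathcal{T}_1| \leq A_{k-1}\cdot\sqrt{\sum_y h_{k+1}(y-1)^2\, w_{k+1}(y)}$, and the argument shift $h_{k+1}(y-1)\leftrightarrow h_{k+1}(y)$ can be handled by expanding $h_{k+1}(y-1) = h_{k+1}(y) - h_{k+2}(y-1)$ and applying a further SBP, reducing the leading piece to $\sum_y h_{k-1}(y)w_k(y)h_{k+1}(y)$, which is bounded by $A_{k-1}A_{k+1}$ cleanly by the same identity; the shift residual is absorbed into the $L_k A_{k-1}A_k$ budget. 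For $\mathcal{T}_2$, I would decompose $\nabla w_k(y) = w(y)\bigl[(y+1)^k - y^k\bigr] + y^k\, \nabla w(y)$. The polynomial piece uses $(y+1)^k - y^k \leq k(y+1)^{k-1}$, giving $w(y)[(y+1)^k - y^k] \leq k w_{k-1}(y)$ pointwise, so Cauchy-Schwarz produces a contribution bounded by $k A_{k-1}A_k$. The weight piece is bounded via $|\nabla w(y)|/w(y) = |\nabla F(y)|/F(y-1)$, where $F(y) := f_{G_1}(y)\vee\rho + f_{G_2}(y)\vee\rho \in [2\rho,2]$; a dyadic decomposition of $y$ into the $O(\log(1/\rho))$ scales on which $w$ is essentially constant, followed by Cauchy-Schwarz on each scale, produces the $C\log(1/\rho)\cdot A_{k-1}A_k$ contribution.

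The main obstacle is controlling the weight piece of $\mathcal{T}_2$ with a logarithmic rather than polynomial dependence on $1/\rho$. A pointwise bound $|\nabla w|/w \leq 1/\rho$ is too crude for the proposition; what is needed is a scale-by-scale argument exploiting that $\log w$ takes values in an interval of length $O(\log(1/\rho))$, so that aggregating across dyadic scales of $w$ contributes only a log factor rather than a $1/\rho$ factor. The analysis has to be compatible with the bilinear structure $h_{k-1}(y)\, h_k(y-1)$, whose two factors sit at adjacent orders, and must interact carefully with the polynomial weight $(y+1)^k$ so that Cauchy-Schwarz produces the right combination $A_{k-1}A_k$ rather than an extraneous $A_k^2$ that cannot be absorbed.
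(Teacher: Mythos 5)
Your high-level strategy --- summation by parts to strip one finite difference off the squared $k$-th order term, followed by Cauchy--Schwarz via the identity $w_k(y)^2 = w_{k-1}(y)w_{k+1}(y)$ --- is the same as the paper's, and your split of $\nabla w_k$ into a polynomial piece and a pure-weight piece recovers the roles of the paper's $S_1$ and $S_2+S_4$. However, there are two genuine gaps. For $\mathcal{T}_1$, the pure product rule pairs $w_k(y)=(y+1)^k w(y)$ with $h_{k+1}(y-1)$, and any Cauchy--Schwarz split of $(y+1)^k w(y)$ gives both factors the \emph{same} weight $w(y)$; after reindexing, the $h_{k+1}$-factor becomes $\sum_y (y+2)^{k+1}h_{k+1}(y)^2 w(y+1)$, which differs from $A_{k+1}^2$ by a factor $\bigl(1+\tfrac{1}{y+1}\bigr)^{k+1}\cdot\tfrac{w(y+1)}{w(y)}$ --- the polynomial part alone is $2^{k+1}$ at $y=0$, so this does not reindex to $A_{k+1}^2$ with a clean coefficient. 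The paper avoids this by introducing the harmonic mean $\bar w(y)\leq\sqrt{w(y)w(y-1)}$ as an intermediary in the discrete product rule, so that Cauchy--Schwarz puts weight $w(y)$ on the $h_{k-1}$ factor (polynomial $y^{k-1}\leq (y+1)^{k-1}$) and weight $w(y-1)$ on the $h_{k+1}(y-1)$ factor (polynomial $y^{k+1}$, which reindexes exactly to $(y+1)^{k+1}w(y)$). Your proposed ``further SBP'' to absorb the argument shift introduces an $h_{k+2}$ term of too high an order, which cannot be traded back for $A_{k-1}A_k$ or $A_{k-1}A_{k+1}$, so the recursion does not close.

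For the weight piece of $\mathcal{T}_2$ you correctly diagnose the danger, but the resolution is not a dyadic decomposition over the range of $F$. The paper's key ingredient here is Lemma~\ref{lem:bayes_form_upper} and its corollary \eqref{eq:R1R3mom}, which use the Poisson structure (via a Chernoff-type argument on the centered Bayes estimator) to prove the \emph{pointwise} estimate
\[
\frac{|\Delta f_G(y)|}{f_G(y)\vee\rho}\;\lesssim\; \frac{1}{\sqrt{y+1}}\,\log\frac{1}{\rho},
\]
and hence $|w(y)-\bar w(y)|\lesssim \tfrac{w(y)}{\sqrt{y}}\log\tfrac{1}{\rho}$. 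The $1/\sqrt{y}$ decay is indispensable, not merely the logarithmic dependence on $\rho$: it reduces the polynomial prefactor from $y^k$ to $y^{k-1/2}$, which then splits as $y^{(k-1)/2}\cdot y^{k/2}$, matching the exponents in $A_{k-1}$ and $A_k$. A decomposition based only on the fact that $F\in[2\rho,2]$ supplies neither the $1/\sqrt{y}$ gain nor any control on how often $F$ can change scale between consecutive integers, so the residual half-power of $y$ cannot be closed. You explicitly note that Cauchy--Schwarz must produce $A_{k-1}A_k$ rather than something like $A_k^2$ --- \eqref{eq:R1R3mom} is precisely what makes that possible, and it is the missing ingredient in your sketch.
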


We need the following lemma which provides a tight bound for the (centered) Bayes estimator uniformly over all priors. This lemma may be of independent interest, and we present its proof after that of Proposition \ref{prop:Ak_recursion}.
\begin{lemma}\label{lem:bayes_form_upper}
There exists a universal constant $C > 0$ such that the following holds.
For any prior $G$ and any $y\in\integers_+$, 
\begin{align}\label{ineq:bayes_upper_1}
|\theta_G(y)-y| \leq \E(|\theta - Y||Y = y) &\leq C\sqrt{y\vee 1}\log\frac{1}{f_G(y)},
\end{align}
where the expectation is taken over $\theta\sim G$ and $Y\sim \Poi(\theta)$ and $\theta_G(y) = \E[\theta|Y=y]$.
Consequently, for any $\rho\leq 1/e$ and $y\geq 0$,
\begin{align}
\frac{|f_G(y+1)-f_G(y)|}{f_G(y)\vee \rho}
=\frac{|\Delta f_G(y)|}{f_G(y)\vee \rho}
\leq \frac{C+1}{\sqrt{y+1}} \log\frac{1}{\rho}.
\label{eq:R1R3mom}
\end{align}
\end{lemma}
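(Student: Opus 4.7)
The first inequality is immediate from Jensen: since $\theta_G(y) - y = \E[\theta - y \mid Y = y]$, taking absolute values yields $|\theta_G(y) - y| \leq \E[|\theta - Y| \mid Y = y]$. The substantive task is to establish the bound $\E[|\theta - Y| \mid Y = y] \leq C\sqrt{y\vee 1}\log(1/f_G(y))$.

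My plan is to combine the layer-cake formula with pointwise Poisson concentration. Writing $p_y(\theta) \equiv \poi(y;\theta)$ and $\nu_y(\diff\theta) \equiv p_y(\theta)\, G(\diff\theta)$,
\[
\E[|\theta - Y| \mid Y = y] \cdot f_G(y) = \int_0^\infty \nu_y\bigl(\{\theta: |\theta - y| > s\}\bigr)\,\diff s.
\]
Two upper bounds will be used for the integrand: (a) trivially, $\nu_y(A) \leq f_G(y)$ for every measurable $A$; (b) a pointwise Poisson tail bound. Bennett's inequality yields $p_y(\theta) \leq \exp\bigl(-(\theta-y)^2/(2(y \vee \theta))\bigr)$ for $y \geq 1$ (and trivially $p_0(\theta) = e^{-\theta}$), so that
\[
\sup_{|\theta - y| > s} p_y(\theta) \leq \exp\bigl(-c\min(s^2/(y \vee 1),\, s)\bigr)
\]
for a universal $c > 0$. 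Taking the minimum of (a) and (b) and choosing the threshold $T \asymp \sqrt{(y \vee 1)\log(1/f_G(y))} + \log(1/f_G(y))$ at which the two bounds cross, the contribution from $s \leq T$ is at most $T f_G(y)$, while the Gaussian/exponential tail for $s > T$ contributes only $O(f_G(y))$. Dividing by $f_G(y)$ and absorbing the lower-order $O(1)$ term using $\log(1/f_G(y)) \geq 1$ (the complementary regime $f_G(y) > 1/e$ giving a trivial bound) yields the claimed estimate.

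The corollary \eqref{eq:R1R3mom} then follows by converting the finite difference to an integral. Using $p_{y+1}(\theta) = (\theta/(y+1))\, p_y(\theta)$,
\[
|\Delta f_G(y)| \leq \frac{1}{y+1}\int|\theta - (y+1)|\, p_y(\theta)\, G(\diff\theta) \leq \frac{f_G(y)}{y+1}\bigl(\E[|\theta - Y| \mid Y = y] + 1\bigr).
\]
Substituting the bound just proved gives $|\Delta f_G(y)| \leq C f_G(y)\log(1/f_G(y))/\sqrt{y+1}$, modulo lower-order terms. To derive \eqref{eq:R1R3mom}, I would split on the value of $f_G(y)$: if $f_G(y) \geq \rho$, then $\log(1/f_G(y)) \leq \log(1/\rho)$ directly; if $f_G(y) < \rho \leq 1/e$, the monotonicity of $x\mapsto x\log(1/x)$ on $(0, 1/e]$ gives $f_G(y)\log(1/f_G(y)) \leq \rho\log(1/\rho)$, so dividing by $\rho$ produces the same bound.

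The main technical subtlety is extracting the correct $\sqrt{y \vee 1}$ dependence, which is absent in the Gaussian analogue of this lemma. This factor reflects the intrinsic $O(\sqrt{y})$ spread of $\poi(y;\cdot)$ about $\theta = y$, and must be tracked carefully in the layer-cake argument: the threshold $T$ has to balance both the Gaussian regime ($s \lesssim y$, where $p_y$ decays like $\exp(-s^2/(2y))$) and the exponential regime ($s \gtrsim y$, decaying like $\exp(-s/4)$), so that each contributes at most $O(\sqrt{y}\log(1/f_G(y))\, f_G(y))$ to the integral.
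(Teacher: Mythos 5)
Your proof takes a genuinely different route from the paper's. You attack $\E[|\theta-Y|\,|\,Y=y]$ via the layer-cake formula for the conditional measure $\nu_y(\diff\theta)=\poi(y;\theta)G(\diff\theta)$, combined with the pointwise Bennett-type bound $\poi(y;\theta)\leq\exp\bigl(-(\theta-y)^2/(2(\theta\vee y))\bigr)$ and the trivial cap $\nu_y\leq f_G(y)$, balancing the two at a threshold $T$. The paper instead controls the conditional Laplace transform: it splits on $\{\theta\leq 2y\}$ vs.\ $\{\theta>2y\}$, applies $\E|Z|\leq\sqrt{(2y/\tau)\log\E e^{\tau Z^2/(2y)}}$ on the first piece and $\E Z_+\leq\tau^{-1}\log\E e^{\tau Z}$ on the second, and verifies by elementary calculus that the resulting exponents are non-positive once weighted by the conditional density. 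Both extract the $\sqrt{y}$ from the same quadratic-vs.-linear crossover in the Poisson log-density; your approach avoids all MGF gymnastics and is arguably more transparent, while the paper's gives explicit constants without the lossy $\min$ in the tail. Your derivation of \eqref{eq:R1R3mom} via $\Delta f_G(y)=\frac{1}{y+1}\int(\theta-(y+1))\poi(y;\theta)G(\diff\theta)$ is correct and essentially equivalent to the paper's identity $(y+1)\Delta f_G(y)/f_G(y)=\theta_G(y)-(y+1)$, and your two-case split on $f_G(y)\gtrless\rho$ using monotonicity of $x\log(1/x)$ matches the paper's argument.

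One step is glossed over and is not, as stated, trivial. Your layer-cake argument yields a bound of the shape $T+O(1)$ after dividing by $f_G(y)$, and you absorb the $O(1)$ term using $\log(1/f_G(y))\geq 1$, deferring the regime $f_G(y)>1/e$ to ``a trivial bound.'' Since $\sup_\theta\poi(y;\theta)\leq 1/e$ for every $y\geq 1$, that regime arises only at $y=0$; but there the claim is $\theta_G(0)\leq C\log(1/f_G(0))$ with the right-hand side possibly far below $1$, so the $O(1)$ slack genuinely cannot be absorbed and nothing your threshold argument produces is small enough. This requires a separate (short) argument: setting $X=e^{-\theta}\in(0,1]$, one has $f_G(0)=\E X$ and $\theta_G(0)=\E[-X\log X]/\E X$, so concavity of $x\mapsto -x\log x$ and Jensen give $\theta_G(0)\leq\log(1/f_G(0))$ directly, closing the case with $C=1$. (The paper handles $y=0$ inside its MGF argument by noting $\E[e^{\tau\theta}\mid Y=0]=\frac{1}{f_G(0)}\int e^{-(1-\tau)\theta}G(\diff\theta)\leq 1/f_G(0)$.) With this one addition your proof is complete.
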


\begin{proof}[Proof of Proposition \ref{prop:Ak_recursion}]
Let $h(y) \equiv f_{G_1}(y) - f_{G_2}(y)$. Applying the summation by parts formula \prettyref{eq:SBP}, we have
\begin{align*}
A_k^2 &= \sum_{y=0}^{\infty} (y+1)^{k}\Delta^k h(y)w(y) \cdot\Big(\Delta^{(k-1)}h(y+1) - \Delta^{(k-1)}h(y)\Big)\\ 
&= \sum_{y=0}^{\infty} y^{k}\Delta^k h(y-1)w(y-1)\Delta^{(k-1)}h(y) -  \sum_{y=0}^{\infty} (y+1)^{k}\Delta^k h(y)w(y)\Delta^{(k-1)}h(y)\\
&= - \sum_{y=0}^{\infty}\Delta^{(k-1)}h(y)\cdot \Delta\Big(y^{k}\Delta^kh(y-1)w(y-1)\Big),
\end{align*}
where 
\begin{align}
\Delta\Big(y^{k}\Delta^kh(y-1)w(y-1)\Big) = (y+1)^{k}\Delta^k h(y)w(y) - y^{k}\Delta^kh(y-1)w(y-1).
\label{eq:Delyk}
\end{align}
Here we use the convention $y^{k}\Delta^kh(y-1)w(y-1) = 0$ when $y = 0$. With 
\[
\bar{w}(y) \equiv \frac{2w(y-1)w(y)}{w(y) + w(y-1)},
\]
 the harmonic mean of $w(y)$ and $w(y-1)$, \prettyref{eq:Delyk} can be bounded by
\begin{align*}
|\Delta\Big(y^{k}\Delta^kh(y-1)w(y-1)\Big)|
&\leq \Big|\big((y+1)^{k} - y^{k}\big) \Delta^kh(y) w(y)\Big| + y^{k}\Big| \Delta^kh(y)w(y) - \Delta^kh(y-1)w(y-1)\Big|\\
&\leq \Big|\big((y+1)^{k} - y^{k}\big) w(y)\Delta^k h(y)\Big| + y^{k}\bigg\{\Big|\Delta^kh(y)\big(w(y) - \bar{w}(y)\big)\Big|\\
&\qquad + \Big|\Big(\Delta^kh(y) - \Delta^kh(y-1)\Big)\bar{w}(y)\Big)\Big| + \Big|\Delta^kh(y-1)\big(w(y-1) - \bar{w}(y)\big)\Big|\bigg\}.
\end{align*}
This implies that $A_k^2 \leq \sum_{i=1}^4 S_i$, where
\begin{align*}
S_1 &\equiv \sum_{y=0}^{\infty} \Big|\Delta^{(k-1)}h(y)\Big|\Big|\big((y+1)^{k} - y^{k}\big) w(y)\Delta^k h(y)\Big|,\\
S_2 &\equiv \sum_{y=0}^{\infty} y^{k}\cdot\Big|\Delta^{(k-1)}h(y) \cdot\Delta^kh(y)\Big|    \cdot  |w(y) - \bar{w}(y)|,\\
S_3 &\equiv \sum_{y=0}^{\infty} y^{k}\cdot\Big|\Delta^{(k-1)}h(y)\Big|\Big|\Delta^{(k+1)}h(y-1)\Big| \cdot \bar{w}(y),\\
S_4 &\equiv \sum_{y=0}^{\infty} y^{k}\cdot\Big|\Delta^{(k-1)}h(y)\Big|\Big|\Delta^kh(y-1)\Big|\cdot |w(y-1) - \bar{w}(y)|.
\end{align*}
Now we bound these four terms separately. Using $(y+1)^k - y^k = \int_y^{y+1} kx^{k-1}\d x\leq k(y+1)^{k-1}$, $S_1$ satisfies
\begin{align*}
S_1 &\leq k\cdot\sum_{y=0}^{\infty} \Big|\Delta^{(k-1)}h(y)\Big|\Big|(y+1)^{k-1} w(y)\Delta^k h(y)\Big|\\
&\leq  k\cdot\sum_{y=0}^{\infty} \Big|\Delta^{(k-1)}h(y)\Big|\Big|(y+1)^{k-1/2} w(y)\Delta^k h(y)\Big|\\
&\leq k\cdot \Big(\sum_{y=0}^{\infty} (y+1)^{k-1}\big(\Delta^{(k-1)}h(y)\big)^2 w(y)\Big)^{1/2}\Big(\sum_{y=0}^{\infty} (y+1)^{k}\big(\Delta^kh(y)\big)^2 w(y)\Big)^{1/2}\\
&= k\cdot A_kA_{k-1}.
\end{align*}

To bound $S_2$, note that 
for any prior $G$, applying \prettyref{eq:R1R3mom} in Lemma \ref{lem:bayes_form_upper} yields, for any $y\geq 1$,
\begin{align}
\big|f_G(y) - f_G(y-1)\big| \lesssim 
\frac{f_G(y-1) \vee \rho}{\sqrt{y}} \log\frac{1}{\rho}.  
\label{eq:fGfG}
\end{align}
Recall from \prettyref{eq:wstar} that $w(y)= \frac{1}{f_{G_1}(y)\vee \rho + f_{G_2}(y)\vee\rho}$. 
Then for any $y\geq 1$,
\begin{align*}
|w(y) - \bar{w}(y)| &= \frac{w(y)|w(y) - w(y-1)|}{w(y) + w(y-1)}\\
& = \frac{w^2(y)w(y-1)}{w(y) + w(y-1)} |f_{G_1}(y)\vee \rho + f_{G_2}(y)\vee\rho - f_{G_1}(y-1)\vee \rho - f_{G_2}(y-1)\vee\rho| \\
& \leq w(y)
\pth{ \frac{|f_{G_1}(y)-f_{G_1}(y-1)|}{f_{G_1}(y-1)\vee \rho} + \frac{|f_{G_2}(y)-f_{G_2}(y-1)|}{f_{G_2}(y-1)\vee \rho}} \\
&\stepa{\lesssim} \frac{Cw(y)}{\sqrt{y}} \log\frac{1}{\rho},
\end{align*}
where (a) applies \prettyref{eq:fGfG}.
Applying this and Cauchy-Schwarz, we have, for some universal $C > 0$, 
\begin{align*}
S_2 &\leq C\log\frac{1}{\rho}\cdot\sum_{y=1}^{\infty} y^{k-1/2}\cdot\Big|\Delta^{(k-1)}h(y)\Delta^kh(y)\Big|w(y)\\
&\leq  C\log\frac{1}{\rho}\cdot\Big(\sum_{y=0}^{\infty} (y+1)^{k-1}\big(\Delta^{(k-1)}h(y)\big)^2w(y)\Big)^{1/2}\Big(\sum_{y=0}^{\infty} (y+1)^k\big(\Delta^kh(y)\big)^2w(y)\Big)^{1/2}\\
&= C\log\frac{1}{\rho}\cdot A_{k}A_{k-1}.
\end{align*}

For $S_3$, using $\bar{w}(y) \leq \sqrt{w(y)w(y-1)}$ we get
\begin{align*}
S_3 &\leq \Big(\sum_{y=0}^{\infty} y^{k-1}\big(\Delta^{(k-1)}h(y)\big)^2w(y)\Big)^{1/2}\Big(\sum_{y=0}^\infty y^{k+1}\big(\Delta^{(k+1)}h (y-1)\big)^2w(y-1)\Big)^{1/2}\\ 
&\leq \Big(\sum_{y=0}^{\infty} (y+1)^{k-1}\big(\Delta^{(k-1)}h(y)\big)^2w(y)\Big)^{1/2}\Big(\sum_{y=0}^\infty (y+1)^{k+1}\big(\Delta^{(k+1)}h (y)\big)^2w(y)\Big)^{1/2}\\ 
&= A_{k-1}A_{k+1}.
\end{align*}

The bound for $S_4$ is similar to $S_2$: using
\begin{align*}
\big|w(y-1) - \bar{w}(y)\big| &= \frac{w(y-1)\big|w(y-1) - w(y)\big|}{w(y) + w(y-1)}\\
&= \frac{w(y)w^2(y-1)}{w(y)+w(y-1)}\big|f_{G_1}(y)\vee\rho + f_{G_2}(y)\vee \rho - f_{G_1}(y-1)\vee\rho - f_{G_2}(y-1)\vee\rho\big|\\
&\leq w^{1/2}(y)w^{1/2}(y-1)\Big(\frac{|f_{G_1}(y) - f_{G_1}(y-1)|}{f_{G_1}(y-1)\vee \rho} + \frac{|f_{G_2}(y) - f_{G_2}(y-1)|}{f_{G_2}(y-1)\vee \rho}\Big)\\
&\lesssim \frac{Cw^{1/2}(y)w^{1/2}(y-1)}{\sqrt{y}}\log\frac{1}{\rho},
\end{align*}
we apply a similar argument as in $S_2$ to obtain
\begin{align*}
S_4 \leq C\log\frac{1}{\rho}\cdot A_{k}A_{k-1}.
\end{align*}
Assembling the estimates of $S_1$--$S_4$ yields the desired recursion \prettyref{ineq:Ak_recursion} for $\{A_k\}$.
\end{proof}

\begin{proof}[Proof of Lemma \ref{lem:bayes_form_upper}]
We start with the decomposition
\begin{align*}
\E_G(|\theta - Y||Y = y) = \E_G(|\theta - Y|\indc{\theta \leq 2y}|Y = y) + \E_G(|\theta - Y|\indc{\theta > 2y}|Y = y).
\end{align*}

We first prove the bound
\begin{align}\label{ineq:bayes_upper_11}
\E_G(|\theta - Y|\indc{\theta \leq 2y}|Y = y) &\leq C \sqrt{y \cdot \log\frac{1}{f_G(y)}}.
\end{align}
When $y = 0$, both sides equal to $0$ and there is nothing to prove, so we assume $y\geq 1$. Fix any $\tau\in (0,1/2]$, then by Jensen's inequality, we have
\begin{align*}
\E_G(|\theta - Y|\indc{\theta \leq 2y}|Y = y) &\leq \sqrt{\frac{2y}{\tau} \E_G\Big[\Big(\frac{\tau(\theta - Y)^2}{2y}\Big)\indc{\theta\leq 2y}|Y = y\Big]}\\
&\leq \sqrt{\frac{2y}{\tau} \log \E_G\Big[\exp\Big(\frac{\tau(\theta - Y)^2}{2y}\Big)\indc{\theta\leq 2y}|Y = y\Big]}.
\end{align*}
Using the Stirling approximation,  the inner expectation can be bounded as
\begin{align*}
&\E_G\Big[\exp\Big(\frac{\tau(\theta - Y)^2}{2y}\Big)\indc{\theta\leq 2y}|Y = y\Big]\\
&= \frac{1}{f_G(y)}\int_{\theta \in (0,2y]} \exp\Big(\frac{\tau(\theta - y)^2}{2y}\Big)\cdot \frac{\theta^ye^{-\theta}}{y!} \d G(\theta)\\
&\lesssim \frac{1}{f_G(y)}\cdot \frac{1}{\sqrt{y}} \cdot \int_{\theta \in(0,2y]} \exp\Big(\frac{\tau(\theta - y)^2}{2y}\Big)\cdot \frac{\theta^ye^{-\theta}}{y^ye^{-y}} \d G(\theta)\\
&= \frac{1}{f_G(y)}\cdot \frac{1}{\sqrt{y}} \cdot \E_G \exp\Big(\frac{\tau(\theta - y)^2}{2y} - (\theta - y) + y\log(\theta/y)\Big)\indc{\theta\in(0, 2y]},
\end{align*}
where $\theta = 0$ is excluded in the integral since the posterior probability of $\theta = 0$ given $y\geq 1$ is zero. Since $y\geq 1$, it suffices to show the above exponent is non-positive for all $\theta\leq 2y$. Let $z\equiv \theta - y$, so that this exponent equals
\begin{align*}
M(z) \equiv \frac{\tau z^2}{2y} - z + y\log\Big(1 + \frac{z}{y}\Big).
\end{align*}
If $z \leq 0$, using $\log(1+x) \leq x - x^2/2$ for $x\in (-1, 0]$ and $z/y = \theta/y - 1 \in (-1, 0]$, 
\begin{align*}
M(z) \leq  \frac{\tau z^2}{2y} - \frac{z^2}{2y} \leq \frac{z^2}{2y}(\tau - 1) \leq 0.
\end{align*}
If $z > 0$, we have
\begin{align*}
M(z) = \int_0^z \Big(\frac{\tau t}{y} - 1 + \frac{y}{y + t}\Big) \d t = \int_0^z t\Big(\frac{\tau}{y} - \frac{1}{y+t}\Big)\d t \leq 0,
\end{align*}
using the fact $\theta\leq 2y$ and $\tau \leq 1/2$ in the last step. This concludes the bound (\ref{ineq:bayes_upper_11}).

Next we prove the bound 
\begin{align}\label{ineq:bayes_upper_2}
\E_G(|\theta - Y|\indc{\theta > 2y}|Y = y) &\leq C\log\frac{1}{f_G(y)}.
\end{align}
We first assume $y\geq 1$. Then we similarly have
\begin{align*}
\E_G(|\theta - Y|\indc{\theta > 2y}|Y = y) &\leq \frac{1}{\tau} \log \E_G\Big[\exp\big(\tau(\theta - Y)\big)\indc{\theta > 2y}|Y = y\Big].
\end{align*}
The inner expectation can be computed as
\begin{align*}
&\E_G\Big[\exp\big(\tau(\theta - Y)\big)\indc{\theta > 2y}|Y = y\Big]\\
&= \frac{1}{f_G(y)}\int_{\theta > 2y} \exp\big(\tau(\theta - y)\big)\cdot \frac{\theta^ye^{-\theta}}{y!} \d G(\theta)\\
&\lesssim \frac{1}{f_G(y)}\cdot \frac{1}{\sqrt{y}} \cdot \int_{\theta > 2y} \exp\big(\tau(\theta - y)\big)\cdot \frac{\theta^ye^{-\theta}}{y^ye^{-y}} \d G(\theta)\\
&= \frac{1}{f_G(y)}\cdot \frac{1}{\sqrt{y}} \cdot \E \exp\Big(-(\tau-1)(\theta - y) + y\log(\theta/y)\Big)\indc{\theta > 2y},
\end{align*}
With $u\equiv (z-y)/y \in (1,\infty)$ and choosing $\tau < 0.1$, the exponent is
\begin{align*}
M(u) = y\big(-(1-\tau)u + \log(1+u)\big) \leq 0,
\end{align*} 
proving the bound (\ref{ineq:bayes_upper_2}) for $y\geq 1$. On the other hand, the bound still holds for $y = 0$ because
\begin{align*}
\E_G\Big[\exp\big(\tau(\theta - Y)\big)\indc{\theta > 0}|Y = 0\Big] = \frac{1}{f_G(y)}\int_{\theta > 0} \exp\big(-(1-\tau)\theta\big)\d G(\theta) \leq \frac{1}{f_G(y)}.
\end{align*}
Combining (\ref{ineq:bayes_upper_11}) and (\ref{ineq:bayes_upper_2}) completes the proof of (\ref{ineq:bayes_upper_1}).

Finally, to show \prettyref{eq:R1R3mom},
\begin{align*}
\Big|(y+1)\frac{\Delta f_G(y)}{f_G(y)\vee \rho}\Big| 
&= \Big|\big(\theta_G(y) - (y+1)\big)\cdot\frac{f_G(y)}{f_G(y)\vee \rho}\Big|\\
&\stepa{\leq}
C \sqrt{y\vee 1} \log\frac{1}{f_G(y)}\cdot \frac{f_G(y)}{f_G(y) \vee \rho} + 1 \\
& \stepb{\leq} C \sqrt{y\vee 1}\log\frac{1}{\rho} + 1 \leq (C+1) \sqrt{y \vee 1}\log\frac{1}{\rho},
\end{align*}
where (a) applies \eqref{ineq:bayes_upper_1}; 
(b) uses the fact that $x\mapsto x\log(1/x)$ is increasing in $(0,1/e)$ so that $\max_{t\geq 0} \frac{t}{t\vee \rho}\log(1/t) = \log\frac{1}{\rho}$ for $\rho\leq 1/e$.
\end{proof}

We are now ready to state and prove our main bound of $A_1$ defined in (\ref{def:Ak}), by combining the estimates from Propositions \ref{prop:Ak_upper} and \ref{prop:Ak_recursion}. Recall that $H^2(\cdot,\cdot)$ is the squared Hellinger distance.
\begin{proposition}\label{prop:A1_bound}
For any $\rho \leq 1/e$, there exists some universal $C > 0$ such that
\begin{align*}
A_1^2(G_1,G_2;\rho) \leq C\big((\log 1/\rho)^4\cdot H^2(f_{G_1},f_{G_2}) + \rho^{10}\big),
\end{align*}
uniformly over distributions $G_1,G_2$. 
\end{proposition}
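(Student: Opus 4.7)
The plan is to iterate the recursion from \prettyref{prop:Ak_recursion} to bound $A_1$ by combining $A_0$ (which is Hellinger-like) with $A_{k_0}$ for some $k_0 = \Theta(\log(1/\rho))$ (which is controlled pointwise via \prettyref{prop:Ak_upper}).

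First, a direct estimate gives $A_0^2 \leq 2 H^2(f_{G_1}, f_{G_2})$: indeed $w(y) \leq 2/(\sqrt{f_{G_1}(y)}+\sqrt{f_{G_2}(y)})^2$, because $(\sqrt{a}+\sqrt{b})^2 \leq 2(a+b) \leq 2(f_{G_1}(y)\vee\rho + f_{G_2}(y)\vee\rho)$. Assume without loss of generality $A_0 > 0$ (otherwise $f_{G_1}\equiv f_{G_2}$ and all $A_k=0$); then in fact $A_k > 0$ for every $k$, since $A_k = 0$ would force $\Delta^k h \equiv 0$ on $\integers_+$ for $h := f_{G_1}-f_{G_2}$, making $h$ the restriction of a polynomial of degree $\leq k-1$, which combined with $h \in \ell^1$ forces $h \equiv 0$. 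In particular all ratios $R_k := A_k/A_{k-1}$ are well-defined and positive.

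Dividing the recursion \prettyref{ineq:Ak_recursion} by $A_k A_{k-1}$ yields $R_k \leq L_k + R_{k+1}$; telescoping gives $R_j \geq R_1 - S_{j-1}$ where $S_j := \sum_{i=1}^j L_i \leq C j\log(1/\rho) + j^2$. Fix $k_0 := \lceil C_1 \log(1/\rho)\rceil$ for a sufficiently large constant $C_1$, so that $S_{k_0-1} \leq \Sigma(\log(1/\rho))^2$ with $\Sigma = \Sigma(C_1)$. I then split on $R_1$: if $R_1 \leq 2\Sigma(\log(1/\rho))^2$, then $A_1^2 = R_1^2 A_0^2 \lesssim (\log(1/\rho))^4 A_0^2 \lesssim (\log(1/\rho))^4 H^2$; otherwise $R_1 > 2 S_{k_0-1}$, forcing $R_j \geq R_1/2$ for all $j \leq k_0$ and hence $A_{k_0} \geq A_0 (R_1/2)^{k_0}$. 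Combining with $A_{k_0} \leq 2\sqrt{k_0^{k_0}/\rho}$ from \prettyref{prop:Ak_upper} and taking $k_0$-th roots yields
\[
A_1 \;=\; R_1 A_0 \;\leq\; 4\sqrt{k_0}\,\rho^{-1/(2k_0)}\,A_0^{1-1/k_0}.
\]

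Since $k_0 \asymp \log(1/\rho)$ the factor $\rho^{-1/(2k_0)}$ is $O(1)$; writing $A_0 = \rho^\alpha$ gives $A_0^{1-1/k_0} = \rho^\alpha \cdot e^{O(\alpha/C_1)}$. A short case analysis on $\alpha$ finishes the proof: if $A_0^2 \gtrsim \rho^{10}$ (i.e., $\alpha \lesssim 5$), taking $C_1$ large enough makes the extra factors constant and the right side is $\lesssim (\log(1/\rho)) A_0^2 \lesssim (\log(1/\rho))^4 H^2$; if $A_0^2 < \rho^{10}$ (i.e., $\alpha > 5$), the right side behaves like $\rho^{2\alpha}$ up to slowly varying factors, which is $\lesssim \rho^{10}$ after absorbing a harmless polylog into the $\rho$-power (using $\rho^{11}(\log(1/\rho))^{O(1)} \lesssim \rho^{10}$ for $\rho \leq 1/e$). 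The main obstacle is the book-keeping in the boundary regime $A_0^2 \approx \rho^{10}$, where both terms of the conclusion are comparable: one must pick $C_1$ large enough that the $\rho^{-1/(2k_0)}$ and $A_0^{-1/k_0}$ factors in Case B contribute only absolute constants, and that the Case A threshold $2\Sigma(\log(1/\rho))^2$ dovetails with the pointwise bound---the essential mechanism being the logarithmically long chain of ratios $R_j$ that interpolates between the Hellinger-controlled $A_0$ and the pointwise-controlled $A_{k_0}$.
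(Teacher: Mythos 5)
Your proof is correct and takes essentially the same approach as the paper: define the ratio sequence $R_k = A_k/A_{k-1}$, telescope the recursion, split into two cases according to whether the ratios lie below or above a polylog threshold, and in the large-ratio case combine the pointwise bound on $A_{k_0}$ at $k_0 \asymp \log(1/\rho)$ with the long product of ratios. The paper's version is slightly cleaner in the large-ratio case: it lower-bounds $A_{k_0}/A_1 = \prod_{\ell=2}^{k_0}\gamma_\ell \geq K^{k_0-1}$ and concludes $A_1 \leq K^{-(k_0-1)}A_{k_0} \leq \rho^{10}$ directly (with $K$ a large constant multiple of $k_0$), which sidesteps the $k_0$-th-root extraction and the final bookkeeping on $A_0 = \rho^\alpha$ that your route requires.
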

\begin{proof}[Proof of Proposition \ref{prop:A1_bound}]
Define $\gamma_k \equiv A_k/A_{k-1}$. Then the recursion in Proposition \ref{prop:Ak_recursion} yields that for any $k\geq 1$, with $L_k = C\log\frac{1}{\rho} + k$ for some universal $C > 0$,
\begin{align*}
\gamma_k \leq L_k + \gamma_{k+1}.
\end{align*}
Let $K > 0$ to be chosen later. Define $k_0 \equiv \log (1/\rho)$, and we discuss two cases. 
\paragraph{Case (i): $\gamma_k \leq K$ for some $k\in[k_0]$.} Then
\begin{align*}
\gamma_1 \leq \sum_{\ell=1}^{k-1} L_{\ell} + \gamma_k \leq C(\log\frac{1}{\rho}k \vee k^2) + K\leq C(\log 1/\rho)^2 + K. 
\end{align*}
This implies that $A_1^2 = A_0^2 \gamma_1^2 \lesssim \big((\log 1/\rho)^4 + K^2\big)\cdot H^2(f_{G_1}, f_{G_2})$, by noting that
\begin{align*}
A_0^2 = \sum_{y=0}^{\infty} \frac{\big(f_{G_1}(y) - f_{G_2}(y)\big)^2}{f_{G_1}(y)\vee \rho + f_{G_2}(y) \vee \rho} \lesssim  \sum_{y=0}^{\infty} \big(\sqrt{f_{G_1}(y)} - \sqrt{f_{G_2}(y)}\big)^2 = H^2(f_{G_1}, f_{G_2}).
\end{align*}

\paragraph{Case (ii): $\gamma_k > K$ for all $k\in [k_0]$.} Then $A_{k_0}/A_1 = \prod_{\ell=2}^{k_0} \gamma_\ell \geq K^{k_0 - 1}$, which, when combined with the bound in Proposition \ref{prop:Ak_upper}, implies
\begin{align*}
A_1 \leq K^{-(k_0 - 1)}A_{k_0} \leq (2/\sqrt{\rho}) k_0^{k_0/2} \cdot K^{-(k_0 - 1)} \leq \rho^{10},
\end{align*}
by choosing $K$ to be a large constant multiple of $k_0$. Collecting the two bounds completes the proof.
\end{proof}

\subsubsection{Completing the proof}
\label{subsubsec:complete}

We need a further technical lemma bounding the regret of the MLE $Y$ in the scalar Poisson EB model $\theta \sim G$ and $Y|\theta \sim \poi(\theta)$. Recall that $\theta_G(y) \equiv \E_G[\theta|Y=y]$ is the Bayes estimator associate with the prior $G$.
\begin{lemma}\label{lem:MLE_regret}
There exist some universal $C,c > 0$ such that 
for any $p\geq 1, y_0\geq 1$, and $M_p > 0$,  and any $G$ with $p$th moment $m_p(G) \leq M_p$,
\begin{align*}
\E_{Y\sim f_G}\big[\big(Y - \theta_G(Y)\big)^2\indc{Y\geq y_0}\big] \leq C\Big(M_p^{1/p}\exp(-cy_0) + M_p y_0^{-(p-1)}\Big).
\end{align*}
\end{lemma}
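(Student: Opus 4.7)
The plan is to pass from $(Y-\theta_G(Y))^2$ to $(Y-\theta)^2$ via Jensen's inequality and then split according to whether $\theta$ is small or large relative to $y_0$. Since $\theta_G(Y)=\E_G[\theta\mid Y]$ is the $L^2$-projection of $\theta$ onto functions of $Y$, the decomposition
\[
\E_G[(Y-\theta)^2\mid Y]=(Y-\theta_G(Y))^2+\var_G(\theta\mid Y)
\]
yields the pointwise inequality $(Y-\theta_G(Y))^2\leq \E_G[(Y-\theta)^2\mid Y]$. Multiplying by $\indc{Y\geq y_0}$, taking expectations, and applying the tower property with $Y\mid\theta\sim\Poi(\theta)$ reduces the problem to bounding $\E_G\,\E\qth{(Y-\theta)^2\indc{Y\geq y_0}\mid\theta}$. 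I would then split the outer integral over $\theta$ at the threshold $y_0/2$; the two regions will produce, respectively, the polynomial and exponential terms of the target estimate.

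On $\{\theta>y_0/2\}$ the inner expectation is at most $\E[(Y-\theta)^2\mid\theta]=\theta$, and the Markov-type inequality $\theta\indc{\theta>y_0/2}\leq \theta^p(y_0/2)^{-(p-1)}$ (valid for $p\geq 1$) gives a contribution of order $M_p y_0^{-(p-1)}$, matching the second term of the claim. On $\{\theta\leq y_0/2\}$ the event $\{Y\geq y_0\}$ is a large deviation from the mean; for $y\geq y_0\geq 2\theta$ I would use $(y-\theta)^2\leq y^2$ together with the Poisson identity
\[
y^2\,\poi(y;\theta)=\theta^2\,\poi(y-2;\theta)+\theta\,\poi(y-1;\theta), \qquad y\geq 2,
\]
to get $\sum_{y\geq y_0}y^2\,\poi(y;\theta)\leq(\theta^2+\theta)\,\Prob(Y\geq y_0-2\mid\theta)$. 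The Poisson Chernoff bound (Lemma \ref{lem:poi_basic}) then controls this tail probability by $2e^{-cy_0}$ uniformly for $\theta\leq y_0/2$, once $y_0$ exceeds an absolute constant; for small $y_0$ the crude bound $\E[(Y-\theta_G(Y))^2]\leq\E_G[\theta]\leq M_p^{1/p}$ suffices and can be absorbed into $C$.

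The one subtle point, and the main obstacle worth flagging, is the integration of $\theta^2+\theta$ against $G$ on $\{\theta\leq y_0/2\}$ without inflating the power of $M_p$. A naive use of $\E_G[\theta^2]\leq M_p^{2/p}$ (for $p\geq 2$) would yield an $M_p^{2/p}e^{-cy_0}$ contribution, which is worse than $M_p^{1/p}e^{-cy_0}$ when $M_p$ is large. The fix is to exploit the truncation itself through $\theta^2\indc{\theta\leq y_0/2}\leq (y_0/2)\theta$, so that together with Jensen's inequality $\E_G[\theta]\leq M_p^{1/p}$ this region contributes at most $Cy_0 M_p^{1/p}e^{-cy_0}$. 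The polynomial $y_0$ factor is then absorbed into a slightly smaller exponential rate $e^{-c'y_0}$, delivering the first term of the bound. Adding the two regions gives the required estimate.
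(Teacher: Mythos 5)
Your proposal is correct, and the overall skeleton matches the paper's: reduce to $(Y-\theta)^2$ via the orthogonality/projection inequality $(Y-\theta_G(Y))^2\le\E[(Y-\theta)^2\mid Y]$, split the outer integral over $\theta$ at $y_0/2$, and kill the heavy-tail region $\{\theta>y_0/2\}$ with the elementary bound $\E[(Y-\theta)^2\mid\theta]=\theta$ plus $\theta\indc{\theta>y_0/2}\le\theta^p(y_0/2)^{-(p-1)}$. The only place you diverge is the light region $\{\theta\le y_0/2\}$. The paper applies conditional Cauchy--Schwarz, $\E[(Y-\theta)^2\indc{Y\ge y_0}\mid\theta]\le\sqrt{\E[(Y-\theta)^4\mid\theta]}\,\sqrt{\Prob(Y\ge y_0\mid\theta)}$, and then invokes the centered Poisson moment bound $\E[(Y-\theta)^4\mid\theta]\lesssim(\theta\vee1)^2$ (Lemma~\ref{lem:poi_basic}\ref{poi_basic2}) so that the $\theta$ factor falls out directly. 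You instead bound $(Y-\theta)^2\le Y^2$ on the event $\{Y\ge y_0\ge 2\theta\}$, use the exact identity $y^2\poi(y;\theta)=\theta^2\poi(y-2;\theta)+\theta\poi(y-1;\theta)$ to peel off $(\theta^2+\theta)\Prob(Y\ge y_0-2\mid\theta)$, and then avoid the $M_p^{2/p}$ loss by the truncation bound $\theta^2\indc{\theta\le y_0/2}\le(y_0/2)\theta$, absorbing the extra polynomial $y_0$ into a smaller exponential rate. This is a legitimate and slightly more elementary route that sidesteps the centered fourth-moment lemma; the point you flag about $M_p^{2/p}$ versus $M_p^{1/p}$ is exactly the right thing to worry about, and your fix handles it cleanly.
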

\begin{proof}[Proof of Lemma \ref{lem:MLE_regret}]
Let $\theta\sim G$ and $Y\sim\Poi(\theta)$. 
By the orthogonality property of the Bayes estimator $\theta_G(\cdot)$, we have, for any measurable functions $h(Y)\geq 0$ and $\tilde{\theta}(Y)$, 
\begin{align*}
\E_G\big[(\tilde{\theta}(Y) - \theta)^2h(Y)\big] = \E_G\big[(\tilde{\theta}(Y) - \theta_G(Y))^2h(Y)\big] + \E_G\big[(\theta_G(Y) - \theta)^2h(Y)\big]
\end{align*}
provided that all expectations are finite. 
Applying this with $\tilde{\theta}(y) = y$ and $h(y) = \indc{y\geq y_0}$, we have 
\begin{align}\label{eq:mle_error}
\notag&\E\big[(Y - \theta_G(Y))^2\indc{Y\geq y_0}\big] \leq  \E\big[(Y - \theta)^2\indc{Y\geq y_0}\big]\\
\notag&= \E\big[(Y - \theta)^2\indc{Y\geq y_0}\indc{\theta\leq y_0/2}\big] + \E\big[(Y - \theta)^2\indc{Y\geq y_0}\indc{\theta > y_0/2}\big]\\
\notag&\lesssim \E_{\theta\sim G} \Big[\indc{\theta \leq y_0/2} \cdot \theta \cdot \sqrt{\Prob(Y \geq y_0|\theta)}\Big] + \E\big[(Y - \theta)^2\indc{\theta > y_0/2}\big]\\
&\stackrel{\rm (a)}{\lesssim} M^{1/p}_p\exp(-cy_0) + \E \big(\theta\indc{\theta > y_0/2}\big) \stackrel{\rm (b)}{\lesssim}  M_p^{1/p}\exp(-cy_0) + M_p y_0^{-(p-1)},
\end{align}
where (a) follows from the Poisson tail bound in Lemma \ref{lem:poi_basic}\ref{poi_basic1} and $\Expect_G[\theta] \leq (\Expect_G[\theta^p])^{1/p} \leq M_p^{1/p}$; (b) follows from Markov's inequality and the condition $m_p(G) \leq M_p$. The proof is complete.
\end{proof}

We are now ready to complete the proof of Theorem \ref{thm:regret_MLE_EB}.

\begin{proof}[Proof of Theorem \ref{thm:regret_MLE_EB}]
In the proof, we will abbreviate $\hat{\theta}^{\mathsf{g}}_n(Y_n;H)$ as $\hat{\theta}(Y_n;H)$. Fix an integer $y_0 \geq 1$ to be optimized. We first condition on $Y^{n-1}$ so that $H$ is fixed. Then
\begin{align*}
\E_{Y_n \sim f_G} \big(\hat{\theta}(Y_n;H) - \theta_G(Y_n)\big)^2 &= \Big(\sum_{y=0}^{y_0} + \sum_{y=y_0+1}^\infty\Big) f_G(y)\Big(\theta_H(y;\rho) - \theta_G(y)\Big)^2\\
&\equiv (I) + (II). 
\end{align*}

We first bound $(II)$. We have
\begin{align*}
(II) \lesssim \sum_{y=y_0+1}^\infty  f_G(y)\Big(\theta_H(y;\rho) - y\Big)^2 + \sum_{y=y_0+1}^\infty  f_G(y)\Big(y - \theta_G(y)\Big)^2.
\end{align*}
By Lemma \ref{lem:bayes_form_upper}, 
\begin{align*}
\sum_{y=y_0+1}^\infty  f_G(y)\Big(\theta_H(y;\rho) - y\Big)^2 \leq \log^2(1/\rho) \cdot \E[Y\bm{1}_{Y > y_0}] \lesssim \log^2(1/\rho) \cdot M_py_0^{-(p-1)}.
\end{align*}
On the other hand, the second term is bounded by a constant multiple of $M_p^{1/p}\exp(-cy_0) + M_p y_0^{-(p-1)}$ by Lemma \ref{lem:MLE_regret}, so
\begin{align}\label{ineq:regret_II}
(II) \lesssim M_p^{1/p}\exp(-cy_0) + \log^2(1/\rho) \cdot M_p y_0^{-(p-1)}.
\end{align}


Next we bound $(I)$. We can further decompose it as
\begin{align*}
(I) &=  \sum_{y=0}^{y_0} f_G(y)(y+1)^2\bigg(\frac{\Delta f_{H}(y)}{f_{H}(y) \vee \rho} - \frac{\Delta f_G(y)}{f_G(y)}\bigg)^2\\
&\lesssim {\sum_{y=0}^{y_0} f_G(y)(y+1)^2\bigg(\frac{\Delta f_{H}(y)}{f_{H}(y) \vee \rho} - \frac{\Delta f_G(y)}{f_G(y)\vee \rho}\bigg)^2} + {\sum_{y=0}^{y_0} f_G(y)(y+1)^2\Big[\frac{\Delta f_G(y)}{f_G(y)}\Big(1 - \frac{f_G(y)}{\rho}\Big)_+\Big]^2}\\
&\equiv r_1+r_2.
\end{align*}
For the second term,
\begin{align*}
r_2 \leq 
\sum_{y\in[0,y_0]: f_G(y)\leq \rho} f_G(y)(y+1)^2\Big[\frac{\Delta f_G(y)}{f_G(y)}\Big]^2
&= \sum_{y\in[0,y_0]: f_G(y)\leq \rho} f_G(y)\Big[\theta_G(y) - (y+1)\Big]^2\\
&\stepa{\lesssim} \sum_{y\in[0,y_0]: f_G(y)\leq \rho} f_G(y) 
(y+1) \pth{\log\frac{1}{f_G(y)}}^2 \\
&\stepb{\lesssim} y_0^2 \rho \log^2 \frac{1}{\rho} \numberthis \label{eq:r2}
\end{align*}
where (a) applies \prettyref{lem:bayes_form_upper} and (b) applies the monotonicity of $x \mapsto x (\log \frac{1}{x})^2$ on $[0,\rho]$ for sufficiently small $\rho$.

%
The first term is decomposed as
\begin{align*}
r_1 
&\lesssim \sum_{y=0}^{y_0} f_G(y)(y+1)^2\bigg[\bigg(\frac{\Delta f_{H}(y)}{f_{H}(y)\vee \rho} - \frac{2\Delta f_{H}(y)}{f_{H}(y)\vee\rho + f_G(y)\vee\rho}\bigg)^2 \\
&\qquad +\bigg(\frac{2\big(\Delta f_{H}(y)-\Delta f_G(y)\big)}{f_{H}(y)\vee \rho + f_G(y)\vee \rho}\bigg)^2 + \bigg(\frac{2\Delta f_G(y)}{f_{H}(y)\vee \rho + f_G(y)\vee\rho} - \frac{\Delta f_G(y)}{f_G(y)\vee \rho}\bigg)^2\bigg]\\
&\equiv R_1 + R_2 + R_3.
\end{align*}
For $R_3$, we have
\begin{align*}
R_3 &= \sum_{y=0}^{y_0} f_G(y)(y+1)^2 \bigg(\frac{2\Delta f_G(y)}{f_{H}(y)\vee \rho + f_G(y)\vee\rho} - \frac{\Delta f_G(y)}{f_G(y)\vee \rho}\bigg)^2\\
&=  \sum_{y=0}^{y_0} f_G(y)\Big((y+1)\frac{\Delta f_G(y)}{f_G(y)\vee \rho}\Big)^2 \frac{\big(f_{H}(y)\vee\rho - f_G(y)\vee\rho\big)^2}{\big(f_{H}(y)\vee\rho + f_G(y)\vee\rho\big)^2}\\
&\lesssim \sum_{y=0}^{y_0} \Big((y+1)\frac{\Delta f_G(y)}{f_G(y)\vee \rho}\Big)^2 \cdot \Big(\sqrt{f_{H}(y)} - \sqrt{f_G(y)}\Big)^2,
\end{align*}
where the last inequality follows from the fact that for any $a,b,\rho\in[0,1]$,
\[
a\pth{\frac{a\vee \rho - b \vee \rho}{a\vee \rho + b \vee \rho}}^2 
\leq 
\frac{(a\vee \rho - b \vee \rho)^2}{a\vee \rho + b \vee \rho} \leq 
2 (\sqrt{a\vee \rho} -\sqrt{ b \vee \rho})^2 \leq 2 (\sqrt{a} -\sqrt{ b })^2.
\]
Applying \prettyref{eq:R1R3mom}, we have for any $y\leq y_0$, 
\begin{align*}
\Big|(y+1)\frac{\Delta f_G(y)}{f_G(y)\vee \rho}\Big| 
\lesssim \sqrt{y_0}\log\frac{1}{\rho}, \numberthis \label{eq:R1R3}
\end{align*}
As a result,
\begin{align}\label{ineq:R3_bound}
R_3 \lesssim y_0 \pth{\log \frac{1}{\rho}}^2 \sum_{y=0}^{y_0}  \Big(\sqrt{f_{H}(y)} - \sqrt{f_G(y)}\Big)^2 \leq y_0 \pth{\log \frac{1}{\rho}}^2 H^2(f_{H}, f_G).
\end{align}
By an entirely analogous argument, we also have
\begin{align}\label{ineq:R1_bound}
R_1 \lesssim y_0 \pth{\log \frac{1}{\rho}}^2 H^2(f_{H}, f_G).
\end{align}

Finally we bound $R_2$, which corresponds to the key step outlined in (\ref{ineq:A1_bound_rough}). 
Recall $w(y) = w(y;H,G, \rho) = \big(f_{G}(y)\vee \rho + f_{H}(y)\vee\rho\big)^{-1}$ as defined in (\ref{eq:wstar})
Recall also the sequence $\{A_k\equiv A_k(H,G;\rho)\}$ in (\ref{def:Ak}); in particular, 
$A_1^2 = \sum_{y=0}^{\infty} (y+1)\Big(\Delta f_{G}(y) - \Delta f_{H}(y)\Big)^2w(y)$.
Then
\begin{align*}
R_2 &\lesssim \sum_{y=0}^{y_0} (y+1)^2 \cdot \Big[\big(\Delta f_{H}(y) - \Delta f_G(y)\big)^2 w(y)\Big]\\
&\leq (y_0+1) \cdot \Big[\sum_{y=0}^{\infty} (y+1)\big(\Delta f_{H}(y) - \Delta f_G(y)\big)^2 w(y)\Big] = (y_0+1) \cdot A_1^2(H,G;\rho),
\end{align*}
Applying (the crucial) Proposition \ref{prop:A1_bound} yields
\begin{align}\label{ineq:R2_bound}
R_2 \lesssim y_0(\log 1/\rho)^4\cdot H^2(f_{H}, f_G) + y_0 \cdot \rho^{10}.
\end{align}
Combining \eqref{eq:r2} and (\ref{ineq:R3_bound})--(\ref{ineq:R2_bound}), we obtain
\begin{align*}
(I) \lesssim y_0(\log 1/\rho)^4\cdot H^2(f_{H}, f_G) + y_0 \cdot \rho^{10} + y_0^2 \rho \log^2 \frac{1}{\rho},
\end{align*}
which together with the bound in (\ref{ineq:regret_II}) implies the bound in (\ref{ineq:g_main_bound}).

Consequently, if $\E H^2(f_{\hat{G}}, f_G) \leq c_1n^{-2p/(2p+1)}M_p^{1/(2p+1)}(\log n)^\kappa$ for some positive $c_1$ and $\kappa$ uniformly over $G \in \mathcal{G}_p(M_p)$, by taking expectation of both sides of (\ref{ineq:g_main_bound}) with respect to $Y^{n-1}$ and choosing $\rho \asymp n^{-10}$, we have 
\begin{align}\label{ineq:npmle_main}
\reg(\hat{\theta}^{\mathsf{g}}(Y_n;H); \mathcal{G}_p(M_p)) &\lesssim_{c_1}  (\log n)^{\kappa+4}\cdot \Big[y_0 \cdot \big(n^{-2p/(2p+1)}M_p^{1/(2p+1)}\big) + M_p y_0^{-(p-1)}\Big] + \bar{\mathcal{R}}(y_0)
\end{align}
for any integer $y_0 \geq 1$, where
\begin{align}\label{def:npmle_remainder}
\bar{\mathcal{R}}(y_0) \equiv O\bigg(M_p^{1/p}\exp(-cy_0) + n^{-10} y_0  + n^{-9}y_0^2\bigg).
\end{align}
By choosing $y_0 = \floor{n^{2/(2p+1)}M_p^{2/(2p+1)}}$, the first two terms in \prettyref{ineq:npmle_main} are both bounded by \\ $O(n^{-2(p-1)/(2p+1)}M_p^{3/(2p+1)}(\log n)^{\kappa+4})$. Finally, for $\bar{\mathcal{R}}(y_0)$, we have
\begin{itemize}
\item Under the condition $M_p^{1/p} \geq n^{-1/p}(\log n)^{10}$, the first term satisfies 
\begin{align*}
M_p^{1/p}\cdot \exp(-cy_0) = O(n^{-2(p-1)/(2p+1)}M_p^{3/(2p+1)}(\log n)^{\kappa+4});
\end{align*}
\item Under the same condition, the second and third term are also bounded by the same order as above.
\end{itemize}
The proof is complete.
\end{proof}

\subsection{Proof of Theorem \ref{thm:npmle_rho_free}} \label{subsec:proof_npmle_special}
\begin{proof}
Let $M>1$ be specified later and
\begin{align}\label{eq:rho_choice}
\rho =  \frac{n^{-C}}{\sqrt{M}n}
\end{align}
for some large $C = C(p) > 0$. For any prior $G$ in $\mathcal{G}_p(M_p)$, we have
\begin{align*}
&\E_G \pnorm{\hat{\theta}^{\npmle,n}(Y^n) - \theta^n}{}^2 - \pnorm{\theta_G(Y^n) - \theta^n}{}^2\\
&=  \E_G \pnorm{\theta_{\hat{G}}(Y^n) - \theta_G(Y^n)}{}^2\\
&= \E_G \sum_{i=1}^n \big(\theta_{\hat{G}}(Y_i) - \theta_G(Y_i)\big)^2\bm{1}_{Y_i \leq M} +  \underbrace{\E_G \sum_{i=1}^n \big(\theta_{\hat{G}}(Y_i) - \theta_G(Y_i)\big)^2\bm{1}_{Y_i > M}}_{R_1}\\
&\stackrel{(*)}{=} \E_G \sum_{i=1}^n \big(\theta_{\hat{G}}(Y_i;\rho) - \theta_G(Y_i)\big)^2\bm{1}_{Y_i \leq M} + R_1\\
&\leq 2\E_G \pnorm{\theta_{\hat{G}}(Y^n;\rho) - \theta_G(Y^n;\rho)}{}^2 + 2\underbrace{\E_G \sum_{i=1}^n\big(\theta_G(Y_i;\rho) - \theta_G(Y_i)\big)^2\bm{1}_{Y_i \leq M}}_{R_2} + R_1\\
&\leq 4 \E_G\pnorm{\theta_{\hat{G}}(Y^n;\rho) - \theta_{G_n}(Y^n)}{}^2  + 4\underbrace{\E_{G}  \pnorm{\theta_{G}(Y^n;\rho) - \theta_{G_n}(Y^n)}{}^2 }_{R_3} + R_1+2R_2,
\end{align*}
where $G_n = n^{-1}\sum_{i=1}^n \delta_{\theta_i}$ denotes the empirical distribution of $\theta^n$, and $(*)$ follows from Lemma \ref{lem:density_lower_npmle} and the choice of $\rho$ in (\ref{eq:rho_choice}).
In summary, we have
\begin{align}\label{eq:npmle_reg_decomp}
\totreg_n(\hat{\theta}^{\npmle,n};\mathcal{G}_p(M_p)) \lesssim \sup_{G\in \mathcal{G}_p(M_p)}\Big[\E_G \pnorm{\theta_{\hat{G}}(Y^n;\rho) - \theta_{G_n}(Y^n)}{}^2 + \sum_{i=1}^3 R_i\Big]. 
\end{align}

We first bound $R_1$-$R_3$. For $R_1$, by symmetry we have
\begin{align*}
n^{-1}R_1 \lesssim \E_G\big(\theta_{\hat{G}}(Y_n) - Y_n\big)^2\bm{1}\{Y_n > M\} + \E_G\big(\theta_{G}(Y_n) - Y_n\big)^2\bm{1}\{Y_n > M\}.
\end{align*}
For the first term, using $f_{\hat{G}}(Y_n) \gtrsim (Y_n \vee 1)^{-1/2}n^{-1}$ from Lemma \ref{lem:density_lower_npmle} and Lemma \ref{lem:bayes_form_upper}, 
\begin{align*}
\E_G\big(\theta_{\hat{G}}(Y_n) - Y_n\big)^2\bm{1}\{Y_n > M\} &\lesssim \E_G \Big(\sqrt{Y_n \vee 1}\log \frac{1}{f_{\hat{G}}(Y_n)}\Big)^2\bm{1}\{Y_n > M\}\\
&\lesssim \E_G \big(Y_n\log^2 (nY_n)\big)\bm{1}\{Y_n > M\} \lesssim M_p \cdot M^{-(p-1)}\log^2(nM). 
\end{align*}
The second term is already bounded by Lemma \ref{lem:MLE_regret} of the paper:
\begin{align*}
\E_G\big(\theta_{G}(Y_n) - Y_n\big)^2\bm{1}\{Y_n > M\} \lesssim M_p^{1/p} \cdot \exp(-cM) + M_p \cdot M^{-(p-1)}, 
\end{align*}
so we have
\begin{align*}
R_1 \lesssim n(\log (nM))^2 \cdot (M_p^{1/p} \cdot \exp(-cM) + M_p \cdot M^{-(p-1)}). 
\end{align*}
The term $R_2 \lesssim nM^2 \rho \log^2(1/\rho)$ by (\ref{eq:r2}). For $R_3$, we apply Theorem \ref{thm:regret_MLE_EB} 
to obtain 
\begin{align*}
R_3 &= n \cdot \E_{\theta^n} \E_{Y\sim f_{G_n}} (\theta_G(Y;\rho) - \theta_{G_n}(Y))^2\\
&\lesssim n \cdot \E_{\theta^n} \inf_{y_0 > 1} \Big[\log^4(1/\rho) \cdot \Big(m_p(G_n) y_0^{-(p-1)} + y_0 H^2(f_G, f_{G_n})\Big) + \mathcal{R}(y_0,\rho)\Big]\\
&\leq n \cdot \inf_{y_0>1} \Big[\log^4(1/\rho) \cdot \Big(\E_{\theta^n} m_p(G_n) y_0^{-(p-1)} + y_0 \E_{\theta^n}H^2(f_G, f_{G_n})\Big) + \mathcal{R}(y_0,\rho)\Big],
\end{align*}
where $m_p(G_n) = \int_{\R_+} u^p G_n(\d u)$, and
\begin{align*}
\mathcal{R}(y_0, \rho) = m_p(G_n)^{1/p}\exp(-c_0y_0) + y_0\rho^{10} + y_0^2\rho\log^2(1/\rho).
\end{align*}
Note that $\E_{\theta^n} m_p(G_n) = m_p(G) \leq M_p$, and $\E_{\theta^n} H^2(f_G, f_{G_n})  \lesssim n^{-\frac{2p}{2p+1}}m_p(G)^{\frac{1}{2p+1}}$ by Lemma \ref{lem:emp_density} below, yielding 
\begin{align*}
R_3 \lesssim n\log^4(1/\rho)\cdot (n^{-\frac{2(p-1)}{2p+1}}M_p^{\frac{3}{2p+1}} \vee n^{-1}).
\end{align*} 
In summary, by choosing $M = n^{C_0}$ for some large $C_0$ and $\rho$ as in (\ref{eq:rho_choice}) with a larger $C$, we have
\begin{align}\label{eq:R_bounds}
R_1 + R_2 + R_3 \lesssim (\log n)^4 (n^{\frac{3}{2p+1}}M_p^{\frac{3}{2p+1}} \vee 1).
\end{align}
Lastly, for the other term in (\ref{eq:npmle_reg_decomp}), we have
\begin{align*}
\E_G \pnorm{\theta_{\hat{G}}(Y^n;\rho) - \theta_{G_n}(Y^n;\rho)}{}^2 = \E_{\theta^n}\E_{Y^n|\theta^n}\pnorm{\theta_{\hat{G}}(Y^n;\rho) - \theta_{G_n}(Y^n;\rho)}{}^2,
\end{align*}
where the inner expectation is taken over the compound setup outlined in Section \ref{subsec:compound_regret}. When $m_p(G_n)\leq n^{10p}$, using (\ref{eq:bound_compound}) (see Remark \ref{rmk:l2_regret}) and $p\geq 1$,
\begin{align*}
&\E_{\theta^n} \bm{1}\{m_p(G_n)\leq n^{10p}\}\E_{Y^n|\theta^n} \pnorm{\theta_{\hat{G}}(Y^n;\rho) - \theta_{G_n}(Y^n;\rho)}{}^2 \\&\lesssim \E_{\theta^n} (\log n)^{13}(n^{\frac{3}{2p+1}}m_p(G_n)^{\frac{3}{2p+1}} \vee 1)\\
&\leq (\log n)^{13}n^{\frac{3}{2p+1}}((\E_{\theta^n} m_p(G_n))^{\frac{3}{2p+1}}  \vee 1)\\
&\leq (\log n)^{13}(n^{\frac{3}{2p+1}}M_p^{\frac{3}{2p+1}} \vee 1).
\end{align*}
On the other hand, by Lemma \ref{lem:bayes_form_upper}, we always have
\begin{align}\label{eq:bound_compound_2}
\E_{Y^n|\theta^n} \pnorm{\theta_{\hat{G}}(Y^n;\rho) - \theta_{G_n}(Y^n;\rho)}{}^2 \leq C(\log n)^2 n\cdot (m_1(G_n) \vee 1),
\end{align}
so using $m_1(G_n) \leq m_p(G_n)^{1/p}$, 
\begin{align*}
&\E_{\theta^n} \bm{1}\{m_p(G_n)> n^{10p}\}\E_{Y^n|\theta^n} \pnorm{\theta_{\hat{G}}(Y^n;\rho) - \theta_{G_n}(Y^n;\rho)}{}^2\\
& \lesssim n(\log n)^2\E_{\theta_n} m_p(G_n)^{1/p}\bm{1}_{m_p(G_n)> n^{10p}} \lesssim n(\log n)^2 M_p n^{-10(p-1)} \lesssim (\log n)^{13}n^{\frac{3}{2p+1}}M_p^{\frac{3}{2p+1}},
\end{align*}
using the condition $M_p \leq n^{10p}$ in the last step. Combining the above two bounds with (\ref{eq:R_bounds}) completes the proof.
\end{proof}

\begin{lemma}\label{lem:density_lower_npmle}
Let $\hat{G}$ be given by (\ref{def:npmle}). There exists some universal $c > 0$ such that almost surely, 
\begin{align*}
f_{\hat{G}}(Y_i) \geq \frac{c}{(Y_i \vee 1)^{1/2} n}, \quad \forall i\in[n].
\end{align*}
\end{lemma}
\begin{proof}
Let $\ell_n(G) = \sum_{i=1}^n \log f_G(Y_i)$. By definition of $\hat{G}$, we have $\ell_n(\hat{G}) \geq \ell_n((1-\epsilon)\hat{G} + \epsilon \delta_\theta)$ for any $\epsilon\in[0,1]$ and $\theta \in \R_+$, implying
\begin{align*}
0 \geq \lim_{\epsilon\rightarrow 0}\frac{\d}{\d \epsilon}\ell_n((1-\epsilon)\hat{G} + \epsilon \delta_\theta) = \sum_{i=1}^n \Big(\frac{\poi(Y_i;\theta)}{f_{\hat{G}}(Y_i)} - 1\Big).
\end{align*}
This entails that for each $i\in[n]$, 
\begin{align*}
f_{\hat{G}}(Y_i) \geq \frac{1}{n}\sup_{\theta\in\R_+} \poi(Y_i;\theta) = \frac{\poi(Y_i;Y_i)}{n} \gtrsim \frac{1}{n\sqrt{Y_i \vee 1}},
\end{align*}
using Stirling's approximation in the last step. 
\end{proof}


\begin{lemma}\label{lem:emp_density}
Suppose $m_p(G) < \infty$ for some $p > 1$. Let $\theta_1,\ldots,\theta_n$ be $n$ iid draws from $G$ and $G_n = n^{-1}\sum_{i=1}^n \delta_{\theta_i}$. Then $\E_{\theta^n} H^2(f_G, f_{G_n}) \leq K n^{-\frac{2p}{2p+1}}m_p(G)^{\frac{1}{2p+1}}$ for some universal $K > 0$.  
\end{lemma}
\begin{proof}
For some $y_0 \in \mathbb{Z}_+$ to be fixed later, we have
\begin{align*}
\E H^2(f_G, f_{G_n}) &= \E \sum_{y=0}^\infty \big(\sqrt{f_G(y)} - \sqrt{f_G(y)}\big)^2 \leq \E \sum_{y=0}^{y_0} \frac{(f_{G_n}(y) - f_G(y))^2}{f_G(y)} + 2\sum_{y>y_0} f_G(y)\\
&\leq \sum_{y=0}^{y_0} \frac{1}{n}\frac{\var_{\theta\sim G}\poi(y;\theta)}{f_G(y)} + 2 \frac{m_p(G)}{y^p} \leq n^{-1}\sum_{y=0}^{y_0} (y+1)^{-1/2} + \frac{m_p(G)}{y^p} \lesssim \frac{\sqrt{y_0}}{n} + \frac{m_p(G)}{y_0^p},
\end{align*}
where $(*)$ uses
\begin{align*}
\frac{\var_{\theta\sim G}\poi(y;\theta)}{f_G(y)} \leq \frac{\E_{\theta\sim G} (\poi(y;\theta))^2}{\E_{\theta\sim G} \poi(y;\theta)} \leq \sup_{\theta\geq 0}\poi(y;\theta) = \frac{y^ye^{-y}}{y!} \lesssim (y+1)^{-1/2}. 
\end{align*}
The result follows by choosing $y_0 \sim (nm_p(G))^{\frac{2}{(2p+1)}}$.
\end{proof}

\subsection{Proof of Theorem \ref{thm:regret_robbins_EB}}\label{subsec:proof_robbins}

\subsubsection{Proof of upper bound}

We need two more technical results before the proof of Theorem \ref{thm:regret_robbins_EB}. For the following lemma, $\bin(n,p)$ denotes the binomial distribution with $n$ trials and success probability $p$.

\begin{lemma}\label{lem:binomial_moments}
Suppose $X(n)\equiv X(n;p) \sim \bin(n,p)$ for some $n \in \integers_+$ and $p\in[0,1]$. Then
\begin{align}
\label{binomial_moment1}\E \Big(\frac{n-X(n)}{X(n) + 1}\Big) &= \frac{1-p}{p}\Prob(X(n)\geq 1),\\
\label{binomial_moment2}\E \Big(\frac{n - X(n)}{(X(n) + 1)^2}\Big) &\asymp \frac{1-p}{p^2}\frac{1}{n+1} \Prob\big(X(n+1)\geq 2\big),\\
\label{binomial_moment3}\var\Big(\frac{n - X(n)}{X(n) + 1}\Big) &\lesssim \Big(\frac{1}{n+2}\cdot\frac{1-p}{p^3}\Prob\big(X(n+2) \geq 3\big)\Big) \wedge n^2.
\end{align}

\end{lemma}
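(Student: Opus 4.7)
The plan is to derive all three bounds from a single combinatorial identity for falling-factorial ratios of Binomial random variables, together with the Efron--Stein inequality for the variance. I would start by establishing the following workhorse identities, valid for every $k\geq 1$ and $X\equiv X(n)\sim\bin(n,p)$:
\begin{align*}
\E\left[\frac{(n-X)(n-X-1)\cdots (n-X-k+1)}{(X+1)(X+2)\cdots (X+k)}\right]
&=\left(\frac{1-p}{p}\right)^k \Prob(X(n)\geq k), \\
\E\left[\frac{1}{(X+1)(X+2)\cdots (X+k)}\right]
&=\frac{\Prob(X(n+k)\geq k)}{(n+1)\cdots(n+k)\,p^k}.
\end{align*}
Both follow by writing the expectation explicitly, noting that the combined factorials collapse into a shifted binomial coefficient, and reindexing the sum to recognize a $\bin(n+k,p)$ tail after pulling out the appropriate powers of $p$ and $1-p$. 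Identity \eqref{binomial_moment1} is then the $k=1$ case of the first display.

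For \eqref{binomial_moment2}, I would use the pointwise bracketing $\frac{1}{(X+1)(X+2)} \leq \frac{1}{(X+1)^2} \leq \frac{2}{(X+1)(X+2)}$, valid for every $X\geq 0$, which reduces the claim to evaluating $\E[(n-X)/((X+1)(X+2))]$ exactly. Using $(n-X)\binom{n}{X}=n\binom{n-1}{X}$ and absorbing $(X+1)(X+2)$ into $(X+2)!$ turns the summand into a multiple of $\binom{n+1}{X+2}$ times the Bernoulli weights, and a single reindexing yields $\frac{1-p}{p^2(n+1)}\Prob(X(n+1)\geq 2)$, proving \eqref{binomial_moment2}.

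For the variance \eqref{binomial_moment3}, the first step is the algebraic identity $Z\equiv (n-X)/(X+1) = (n+1)/(X+1) - 1$, which reduces the problem to $(n+1)^2\Var(1/(X+1))$. Writing $X=X_1+\cdots+X_n$ with $X_i\iiddistr\Bern(p)$ and applying Efron--Stein to $f(X)=1/(X+1)$, the one-coordinate conditional variance evaluates to $p(1-p)\bigl(\tfrac{1}{X_{-1}+1}-\tfrac{1}{X_{-1}+2}\bigr)^2 = p(1-p)/((X_{-1}+1)(X_{-1}+2))^2$ with $X_{-1}\sim\bin(n-1,p)$, so
\begin{align*}
\Var(1/(X+1)) \leq np(1-p)\,\E\left[\frac{1}{((X_{-1}+1)(X_{-1}+2))^2}\right].
\end{align*}
The crude pointwise bound $((X_{-1}+1)(X_{-1}+2))^2 \geq \tfrac{1}{6}\prod_{j=1}^4(X_{-1}+j)$ (worst case at $X_{-1}=0$) converts this into the second workhorse identity with $k=4$, yielding $\Var(Z)\lesssim (1-p)\Prob(X(n+3)\geq 4)/((n+2)p^3)$; the monotonicity $\Prob(X(n+3)\geq 4)\leq \Prob(X(n+2)\geq 3)$, obtained by conditioning on the last Bernoulli in the decomposition $X(n+3)=X(n+2)+\Bern(p)$, yields the first alternative in \eqref{binomial_moment3}, while $\Var(Z)\leq \E Z^2\leq n^2$ gives the second (since $0\leq Z\leq n$).

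The main obstacle will be the variance bound \eqref{binomial_moment3}: the naive bound $\Var(Z)\leq \E Z^2$ is off by a factor of $n$ for moderate $p$, so cancellation must be extracted. The Efron--Stein step above does precisely that by introducing a fourth-order falling factorial in the denominator, which is the key numerical gain; once the constants in that bound are verified to be absolute, the remaining manipulations are routine binomial computations.
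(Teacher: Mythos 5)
Your proof is correct, and the first two identities are established by essentially the same combinatorial manipulations as in the paper (the bracketing $\frac{1}{(X+1)(X+2)}\leq\frac{1}{(X+1)^2}\leq\frac{2}{(X+1)(X+2)}$ and the index shift producing a shifted binomial are identical). The difference is in \eqref{binomial_moment3}, where you take a genuinely different route. The paper computes $\E Z^2$ directly, decomposes the summand via $\frac{n-k}{k+1}=\frac{n-k-1}{k+2}+\frac{n+1}{(k+1)(k+2)}$ to split off $(\frac{1-p}{p})^2\Prob(X(n)\geq 2)$ plus a remainder $S$, and then needs a separate sign lemma $\Prob(X(n)\geq 2)\leq\Prob^2(X(n)\geq 1)$ to drop the main term. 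You instead rewrite $Z=(n+1)/(X+1)-1$ and apply Efron--Stein to $f(X)=1/(X+1)$ with the Bernoulli decomposition $X=\sum X_i$; the one-coordinate conditional variance collapses to $p(1-p)\big((X_{-1}+1)(X_{-1}+2)\big)^{-2}$, a falling-factorial-of-degree-four pointwise bound puts it in the range of your second workhorse identity with $k=4$, and the final bound drops out without any sign comparison. Your route has the advantage of unifying all three claims under two clean identities and avoiding the nontrivial observation $\Prob(X\geq2)\leq\Prob^2(X\geq1)$; the paper's route is fully elementary but requires spotting that decomposition and proving that inequality. Both give the stated rate with absolute constants. (One tiny point worth writing out when you flesh this in: the step $\Var(Z)=(n+1)^2\Var(1/(X+1))$ relies on $Z$ and $1/(X+1)$ differing by an affine map, which is exactly your identity $Z=(n+1)/(X+1)-1$; and the constant $\frac16$ in $((Y+1)(Y+2))^2\geq\frac16\prod_{j=1}^4(Y+j)$ is tight at $Y=0$.)
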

\begin{proof}[Proof of Lemma \ref{lem:binomial_moments}]
For (\ref{binomial_moment1}), we have
\begin{align*}
\E \frac{n-X(n)}{X(n)+1} =  \sum_{k=0}^n {n\choose k+1}p^k(1-p)^{n-k} = \frac{1-p}{p}\Prob(X(n)\geq 1).
\end{align*}

For (\ref{binomial_moment2}), using the fact that $(k+2)^{-2} \leq (k+1)^{-2} \leq 2(k+2)^{-2}$ for all $k\in\integers_+$, the left side equals
\begin{align*}
&\sum_{k=0}^n {n\choose k}p^k(1-p)^{n-k}\frac{n-k}{(k+1)^2} \asymp \sum_{k=0}^{n-1} \frac{n!}{(n-k-1)!(k+2)!}p^k(1-p)^{n-k}\\
&= \frac{1}{n+1}\frac{1-p}{p^2} \sum_{k=2}^{n+1} {n+1\choose k} p^k (1-p)^{n+1-k} = \frac{1}{n+1}\frac{1-p}{p^2}\Prob\big(X(n+1) \geq 2\big).
\end{align*}

For (\ref{binomial_moment3}), using (\ref{binomial_moment1}), we have
\begin{align*}
\var\Big(\frac{n - X(n)}{X(n) + 1}\Big) = \E\Big(\frac{n - X(n)}{X(n) + 1}\Big)^2 - \Big(\frac{1-p}{p}\Big)^2\Prob^2(X(n)\geq 1).
\end{align*}
To compute the second moment, with
\begin{align*}
D \equiv \frac{n-k}{k+1} - \frac{n-k-1}{k+2} = \frac{n+1}{(k+1)(k+2)} \asymp \frac{n+1}{(k+2)(k+3)},
\end{align*}
we have
\begin{align*}
&\E\Big(\frac{n - X(n)}{X(n) + 1}\Big)^2 = \sum_{k=0}^{n-1} \frac{n!}{(k+1)!(n-k-1)!}p^k(1-p)^{n-k}\frac{n-k}{k+1}\\
&= \sum_{k=0}^{n-1} \frac{n!}{(k+1)!(n-k-1)!}p^k(1-p)^{n-k}\Big(\frac{n-k-1}{k+2} + D\Big)\\
&\equiv \sum_{k=0}^{n-2} {n\choose k+2}p^k(1-p)^{n-k} + S = \Big(\frac{1-p}{p}\Big)^2\Prob(X(n)\geq 2) + S.
\end{align*}
We claim that $\Prob(X(n)\geq 2) - \Prob^2(X(n)\geq 1) \leq 0$. Indeed, this quantity equals
\begin{align*}
&\big(1 - \Prob(X(n) = 0) - \Prob(X(n) = 1)\big) - \big(1 - \Prob(X(n) = 0)\big)^2\\
&= \Prob(X(n) = 0) - \Prob(X(n) = 1) - \Prob^2(X(n) = 0)\\
&= (1-p)^n - np(1-p)^{n-1} - (1-p)^{2n}\\
&= (1-p)^{n-1}\big[1- (n+1)p - (1-p)^{n+1}\big] \leq 0.
\end{align*}
Hence the desired variance is bounded by $S$, where
\begin{align*}
S &= \sum_{k=0}^{n-1} {n\choose k+1}p^k(1-p)^{n-k} D \asymp \sum_{k=0}^{n-1} \frac{(n+1)!}{(k+3)!(n-k-1)!}p^k(1-p)^{n-k}\\
&= \frac{1}{n+2}\sum_{k=0}^{n-1}{n+2\choose k+3}p^k(1-p)^{n-k} = \frac{1}{n+2}\cdot\frac{1-p}{p^3}\Prob(X(n+2) \geq 3).
\end{align*}
On the other hand, since $(n - X(n))/(X(n) + 1) \leq n$, its variance is trivially bounded by $n^2$. The proof is complete.
\end{proof}

\begin{lemma}\label{lem:mixture_pmf}
For any $y \geq 0$ and distribution $G$, there exists some universal $K > 0$ such that
\begin{align*}
f_G(y+1) \leq  K\cdot\Big(\sqrt{\frac{\log^2 n}{y+1}} \vee 1\Big) f_G(y) + n^{-10}.
\end{align*}
\end{lemma}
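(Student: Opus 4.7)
The starting point is the Bayes identity
\begin{equation}\label{eq:planident}
f_G(y+1) \;=\; \frac{\theta_G(y)}{y+1}\,f_G(y),
\end{equation}
which follows immediately from $(y+1)\,\mathrm{Poi}(y+1;\theta)=\theta\,\mathrm{Poi}(y;\theta)$ and the definition $\theta_G(y)=\E_G[\theta\mid Y=y]=\sum_\theta\theta\,\mathrm{Poi}(y;\theta)G(d\theta)/f_G(y)$. Combining \eqref{eq:planident} with the pointwise control of the Bayes rule from Lemma~\ref{lem:bayes_form_upper}, namely $\theta_G(y)\le y + C\sqrt{y\vee 1}\log(1/f_G(y))$, and using $\sqrt{y\vee 1}/(y+1)\le 1/\sqrt{y+1}$, I would derive the master inequality
\begin{equation}\label{eq:planmaster}
f_G(y+1)\;\le\; f_G(y)\;+\;\frac{C}{\sqrt{y+1}}\,f_G(y)\log\frac{1}{f_G(y)}.
\end{equation}
From here the lemma follows from a short two-case analysis.

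In the first case, when $f_G(y)\ge n^{-11}$, one simply bounds $\log(1/f_G(y))\le 11\log n$ in \eqref{eq:planmaster} to obtain
\[
f_G(y+1)\;\le\; f_G(y)\left(1+\frac{11C\log n}{\sqrt{y+1}}\right)\;\le\;(1+11C)\left(\sqrt{\tfrac{\log^2 n}{y+1}}\vee 1\right)f_G(y),
\]
which is the desired inequality with $K=1+11C$ and the $n^{-10}$ slack unused. In the second case, when $f_G(y)<n^{-11}$, I would exploit that $x\mapsto x\log(1/x)$ is increasing on $(0,1/e]$, so $f_G(y)\log(1/f_G(y))\le 11\,n^{-11}\log n$; plugging this into \eqref{eq:planmaster} yields
\[
f_G(y+1)\;\le\; n^{-11}\;+\;\frac{11C\,n^{-11}\log n}{\sqrt{y+1}}\;\le\;n^{-10}
\]
for all $n$ sufficiently large, absorbing the whole bound into the residual term. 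For the finitely many small $n$ below this threshold, the bound $f_G(y+1)\le 1$ is absorbed into $K$.

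There is no real obstacle: the lemma is a soft corollary of \eqref{eq:planident} plus the already-established Lemma~\ref{lem:bayes_form_upper}, and the only care required is in splitting the regime $f_G(y)\gtrless n^{-\Theta(1)}$ so that the $\log(1/f_G(y))$ factor in \eqref{eq:planmaster} is either traded for a $\log n$ (large-density case) or absorbed by the smallness of $f_G(y)$ itself (tail case). The specific choice of exponent $-11$ versus $-10$ is cosmetic and could be replaced by any two exponents with a sufficient gap to swallow the extra $\log n$ factor coming from the $x\log(1/x)$ bound.
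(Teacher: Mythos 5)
Your argument is correct in spirit and takes a genuinely different route from the paper's. The paper proves this lemma by splitting the integral $f_G(y+1)=\int \Poi(y+1;a)\,G(\d a)$ into the region $|a-(y+1)|\le 100\sqrt{(y+1)\log^2 n}$ and its complement, bounding the near region via $\frac{\Poi(y+1;a)}{\Poi(y;a)}=\frac{a}{y+1}$ and the far region via the Poisson tail bound (Lemma~\ref{lem:poi_basic}). You instead observe the algebraic identity $f_G(y+1)=\frac{\theta_G(y)}{y+1}f_G(y)$ and invoke the already-established Lemma~\ref{lem:bayes_form_upper}, giving the master inequality $f_G(y+1)\le f_G(y)+\frac{C}{\sqrt{y+1}}f_G(y)\log\frac{1}{f_G(y)}$, after which a dichotomy on whether $f_G(y)$ is polynomially small finishes the job. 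This is shorter and more modular, since it piggybacks on a lemma already in hand rather than redoing a similar tail calculation.

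One small flaw in your write-up: the final claim that the exceptional small-$n$ regime can be handled by noting $f_G(y+1)\le 1$ and ``absorbing into $K$'' does not work, because $K$ multiplies $f_G(y)$, which can be arbitrarily small while $f_G(y+1)$ exceeds $n^{-10}$; so enlarging $K$ cannot rescue the inequality on its own. The correct patch is easy and does not require a separate small-$n$ case at all: replace the threshold exponent $11$ by a universal $A$ chosen large relative to $C$. Then Case 1 ($f_G(y)\ge n^{-A}$) gives $f_G(y+1)\le (1+AC)\big(\sqrt{\tfrac{\log^2 n}{y+1}}\vee 1\big)f_G(y)$, and Case 2 ($f_G(y)<n^{-A}$) gives, via the monotonicity of $x\mapsto x\log(1/x)$ on $(0,1/e]$, $f_G(y+1)\le n^{-A}(1+AC\log n)$, which is at most $n^{-10}$ for every $n\ge 2$ once $A$ is large enough (since $n^{A-10}$ dominates $1+AC\log n$). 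With that adjustment the proof is complete and correct.
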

\begin{proof}[Proof of Lemma \ref{lem:mixture_pmf}]
For any $y \geq 0$, 
\begin{align*}
f_G(y+1) = \int \frac{a^{y+1}e^{-a}}{(y+1)!}G(\d a) &= \int_{a:|a - (y+1)|\leq 100\sqrt{(y+1)\log^2 n}} \frac{a^{y+1}e^{-a}}{(y+1)!}G(\d a)\\
&\qquad + \int_{a:|a-(y+1)|>100\sqrt{(y+1)\log^2 n}} \frac{a^{y+1}e^{-a}}{(y+1)!}G(\d a).
\end{align*}
The first term can be bounded by
\begin{align*}
\int_{a:|a - (y+1)|\leq 100\sqrt{(y+1)\log^2 n}} \frac{a^{y+1}e^{-a}}{(y+1)!}G(\d a) &\leq \frac{y+1 + \sqrt{100(y+1)\log^2 n}}{y+1}f_G(y)\\
&\asymp \Big(\sqrt{\frac{\log^2 n}{y+1}} \vee 1\Big) f_G(y).
\end{align*}
For the second term, if $a \leq 100(y+1)\log n$, then with $X \sim \poi(a)$, the Poisson tail bound in Lemma \ref{lem:poi_basic}\ref{poi_basic1} yields that 
\begin{align*}
\frac{a^{y+1}e^{-a}}{(y+1)!} &= \Prob(X = y+1) \leq \Prob(|X - a| \geq |y+1 - a|)\\
&\leq \Prob\bigg(|X - a| \geq 100\sqrt{(y+1)\log^2 n}\bigg) \leq \exp\bigg(-C\frac{100^2(y+1)\log^2 n}{a \vee 100\sqrt{(y+1)\log^2n}}\bigg) \leq n^{-10}.
\end{align*}
If $a > 100(y+1)\log n\geq 100\log n$, we have
\begin{align*}
\frac{a^{y+1}e^{-a}}{(y+1)!} &= \Prob(X = y+1) \leq \Prob(X \leq a/2) \leq \Prob(|X-a| \geq a/2) \leq \exp\Big(-Ca\Big) \leq n^{-10}.
\end{align*}
Combining the two estimates yields that
\begin{align*}
\int_{a:|a-(y+1)|>100\sqrt{(y+1)\log^2 n}} \frac{a^{y+1}e^{-a}}{(y+1)!}G(\d a) \leq n^{-10},
\end{align*}
as desired. 
\end{proof}

We are now ready for the bounding the regret of Robbins' estimator.

\begin{proof}[Proof of Theorem \ref{thm:regret_robbins_EB}: Upper bound]
In the sequel, we omit the superscript in $\hat{\theta}^{\rob}$ as defined in \eqref{def:robbins}. We also assume for simplicity that the training data has sample size $n$ instead of $n-1$. Fix any distribution $G$ with $m_p(G)\leq 1$. For a fresh observation $Y$ from $f_G$, we have
\begin{align}\label{eq:robbins_reg_decomp}
\notag\E \big(\hat{\theta}(Y) - \theta_G(Y)\big)^2 &= \sum_{y=0}^\infty f_G(y)\E\Big(\hat{\theta}(y) - (y+1)\frac{f_G(y+1)}{f_G(y)}\Big)^2\\
\notag&= \sum_{y=0}^{y_0} f_G(y)(y+1)^2\E\Big(\frac{N(y+1)}{N(y) + 1} - \frac{f_G(y+1)}{f_G(y)}\Big)^2 \\
&\quad + \sum_{y=y_0+1}^\infty f_G(y)\Big(y - (y+1)\frac{f_G(y+1)}{f_G(y)}\Big)^2 \equiv (I) + (II). 
\end{align}
By Lemma \ref{lem:MLE_regret}, we have
\begin{align}\label{ineq:robbins_upper_II}
(II) = \E_G\big[\big(Y - \theta_G(Y)\big)^2\indc{Y > y_0}\big] \lesssim y_0^{-(p-1)} + \exp(-cy_0).
\end{align}

We will abbreviate $f_G(y)$ as $f(y)$, and use $\bin(n,p)$ to denote the binomial distribution with $n$ trials and success probability $p$. Note that conditioning on $N(y)$, $N(y+1)|N(y) \sim \bin\big(n - N(y), f(y+1)/(1 - f(y))\big)$, and marginally $N(y)\sim \bin(n, f(y))$. 
 Hence we have $(I) = (I_1) + (I_2)  + (I_3)$, where
\begin{align}
(I_1) & \equiv \sum_{y=0}^{y_0} \frac{f(y)f(y+1)\big(1-f(y) - f(y+1)\big)}{\big(1-f(y)\big)^2}(y+1)^2 \E \frac{n - N(y)}{(N(y) + 1)^2}, \label{ineq:robbins_upper_I1}\\
(I_2) &\equiv \sum_{y=0}^{y_0} f(y)(y+1)^2 \cdot \Big(\E{\frac{n-N(y)}{N(y) + 1}}  \frac{f(y+1)}{1-f(y)} - \frac{f(y+1)}{f(y)}\Big)^2,\label{ineq:robbins_upper_I2}\\
(I_3) &\equiv \sum_{y=0}^{y_0} f(y)(y+1)^2 \cdot \Big(\frac{f(y+1)}{1-f(y)}\Big)^2 \cdot \var\Big(\frac{n-N(y)}{N(y) + 1}\Big).\label{ineq:robbins_upper_I3}
\end{align}
By (\ref{binomial_moment2}) in Lemma \ref{lem:binomial_moments},
 with $X_1 \sim \bin(n+1, f(y))$,
\begin{align*}
(I_1) &\asymp \sum_{y=0}^{y_0} \frac{f(y+1)\big(1-f(y) - f(y+1)\big)}{(n+1)\cdot\big(1-f(y)\big)f(y)} (y+1)^2 \cdot \Prob(X_1 \geq 2)\\
&\leq \sum_{y=0}^{y_0} \frac{f(y+1)}{(n+1)f(y)}(y+1)^2 \cdot \Prob(X_1 \geq 2)\big(\indc{f(y) > n^{-1}} + \indc{f(y)\leq n^{-1}}\big).
\end{align*}
If $f(y) > n^{-1}$, by Lemma \ref{lem:mixture_pmf}, we have $f(y+1)/f(y) \lesssim \Big(\sqrt{\log^2 n/(y+1)} \vee 1\Big) + n^{-9} \lesssim \log n$, hence
\begin{align*}
\sum_{y=0}^{y_0} \frac{f(y+1)}{(n+1)f(y)}(y+1)^2 \cdot \Prob(X_1 \geq 2)\indc{f(y)> n^{-1}} \lesssim \frac{\log n}{n}\sum_{y=0}^{y_0} (y+1)^2 \asymp \frac{\log n}{n}y_0^3.
\end{align*}
If $f(y) \leq n^{-1}$, the same lemma yields $f(y+1) \lesssim \log n/n$, hence using $\Prob(X_1\geq 2) \leq 2^{-1}(n+1)f(y)$, we have
\begin{align*}
\sum_{y=0}^{y_0} \frac{f(y+1)}{(n+1)f(y)}(y+1)^2 \cdot \Prob(X_1 \geq 2)\indc{f(y) \leq n^{-1}} \lesssim \sum_{y=0}^{y_0} f(y+1) (y+1)^2 \lesssim \frac{\log n}{n}y_0^3.
\end{align*} 
This concludes $(I_1) \lesssim (\log n /n)y_0^3$. For $(I_2)$, (\ref{binomial_moment1}) in Lemma \ref{lem:binomial_moments} yields that, with $X_2 \sim \bin(n, f(y))$ and $\theta_G(\cdot)$ the Bayes estimator, 
\begin{align*}
(I_2) &= \sum_{y=0}^{y_0} f(y)(y+1)^2 \cdot \Big(\frac{f(y+1)}{f(y)}\Prob(X_2 \geq 1) - \frac{f(y+1)}{f(y)}\Big)^2\\
& = \sum_{y=0}^{y_0} \big(\theta_G(y)\big)^2 \big(1-f(y)\big)^{2n}\cdot f(y) \leq  \sum_{y=0}^{y_0} \big(\theta_G(y)\big)^2 e^{-2nf(y)}\cdot f(y)\\
&\lesssim \sum_{y=0}^{y_0} y^2 e^{-2nf(y)}\cdot f(y) + \sum_{y=0}^{y_0} \big(\E[|\theta-Y||Y=y]\big)^2 e^{-2nf(y)}\cdot f(y)\\
&\stackrel{\rm (a)}{\lesssim} \frac{y_0^3}{n} + y_0\cdot \sum_{y=0}^{y_0} \log^2\big(1/f(y)\big)e^{-2nf(y)}\cdot f(y) \lesssim \frac{y_0^3}{n} + \frac{y_0^2}{n}(\log n)^2,
\end{align*}
where (a) follows from Lemma \ref{lem:bayes_form_upper} and the fact that $\sup_{t>0} t e^{-2nt} \lesssim \frac{1}{n}$.
Finally, using (\ref{binomial_moment3}) in Lemma \ref{lem:binomial_moments} and $(1-p)/p^3 = (1-p)/p^2 + (1-p)^2/p^3$, we have
\begin{align*}
(I_3) &\lesssim \sum_{y=0}^{y_0} f(y)(y+1)^2 \cdot \Big(\frac{f(y+1)}{1-f(y)}\Big)^2 \cdot \frac{1}{n}\Big(\frac{1-f(y)}{f(y)^2} +\frac{\big(1-f(y)\big)^2}{f(y)^3}\Big)\indc{f(y) > n^{-1}}\\
&\qquad + \sum_{y=0}^{y_0} f(y)(y+1)^2 \cdot \Big(\frac{f(y+1)}{1-f(y)}\Big)^2\cdot n^2\indc{f(y) \leq n^{-1}} \equiv (I_{3,1}) + (I_{3,2}).
\end{align*}
For $f(y) > n^{-1}$, we have shown $f(y+1)/f(y)\lesssim \log n$ in the analysis of $(I_1)$, so $(I_{3,1}) \lesssim (\log n)^2\cdot y_0^3/n$. For $f(y)\leq n^{-1}$, we use $f(y) \vee f(y+1) \lesssim \log n/n$ to deduce $(I_{3,2}) \lesssim (\log n)^3\cdot y_0^3/n$, hence $(I_3) \lesssim (\log n)^3 \cdot y_0^3/n$.

In summary, we have shown that $(I) \lesssim (\log n)^3\cdot y_0^3/n + \exp(-cy_0)$. 
Combining this with the estimate of $(II)$ in (\ref{ineq:robbins_upper_II}) yields that
\begin{align*}
\inf_{y_0}\sup_{m_p(G)\leq 1} \pnorm{\hat{\theta}(\cdot;y_0) - \theta_G}{\ell_2(f_G)}^2 &\lesssim \inf_{y_0} \Big\{\frac{(\log n)^3}{n}y_0^3 + \exp(-cy_0) + y_0^{-(p-1)}\Big\}\\
&\asymp n^{-\frac{p-1}{p+2}}(\log n)^{\frac{3(p-1)}{p+2}}.
\end{align*}
This proves the desired upper bound \eqref{ineq:robbins_main_upper}.
\end{proof}

\subsubsection{Proof of lower bound}\label{subsec:proof_robbins_lower}

The following lemma constructs a special prior that will be used in the lower bound.
\begin{lemma}\label{lem:ht_prior}
Fix any $p > 0$. There exists some prior $G$ such that with some some universal $c,C > 0$ only depending on $p$,
\begin{align}\label{ineq:ht_prior_1}
c\cdot\frac{y^{-(p+1)}}{(\log y \vee 1)^2}\leq f_G(y) \leq C\cdot\frac{y^{-(p+1)}}{(\log y \vee 1)^2},
\end{align}
for all $y\geq 0$. Consequently, there exists some $c' = c'(p) > 0$ such that for all $y\geq 0$,
\begin{align}\label{ineq:ht_prior_2}
\frac{f_G(y+1)}{f_G(y)} \wedge \frac{1-f_G(y) - f_G(y+1)}{1-f_G(y)} \geq c'.
\end{align}
\end{lemma}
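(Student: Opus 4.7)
My plan is to take $G$ to be the absolutely continuous prior on $\reals_+$ with density
\[
g(a) \;=\; c_p \cdot a^{-(p+1)}(\log a)^{-2}\indc{a\geq e},
\]
where $c_p>0$ normalizes $g$ (finite since $\int_e^\infty a^{-(p+1)}(\log a)^{-2}\,\d a<\infty$ for any $p>0$). The key identity I will exploit is that, viewing $\poi(y;a)=\tfrac{a^y e^{-a}}{y!}$ as a function of $a$ gives the density of $A_y\sim\GammaD(y+1,1)$, so
\[
f_G(y) \;=\; \int_0^\infty g(a)\,\poi(y;a)\,\d a \;=\; \E\big[g(A_y)\big].
\]
Since $A_y$ has mean $y+1$ and standard deviation $\sqrt{y+1}$, while both factors of $g$ are monotone and slowly varying on scale $\sqrt{y}$ when $y\gg 1$, one expects $\E g(A_y)\asymp g(y)\asymp y^{-(p+1)}(\log y)^{-2}$.

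To make this rigorous for large $y$, I would split
\[
f_G(y) \;=\; \E\big[g(A_y)\indc{A_y\in[y/2,2y]}\big] \;+\; \E\big[g(A_y)\indc{A_y\notin[y/2,2y]}\big].
\]
On the ``bulk'' event $\{A_y\in[y/2,2y]\}$ I use monotonicity of $g$ to sandwich $g(A_y)$ between constant multiples of $y^{-(p+1)}(\log y)^{-2}$, then exploit $\Prob(A_y\in[y/2,2y])\to 1$ (which follows from Gamma concentration, itself reducible to the Poisson tail bound in Lemma~\ref{lem:poi_basic}\ref{poi_basic1} via $\Prob(A_y\leq t)=\Prob(\poi(t)\geq y+1)$) to produce matching two-sided bounds of the correct order. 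On the complement, the crude estimate $g\leq g(e)=O(1)$ together with the exponential Gamma tail $\Prob(A_y\notin[y/2,2y])\leq e^{-cy}$ yields a contribution which is negligible compared to $y^{-(p+1)}(\log y)^{-2}$. For bounded $y$, boundedness of $f_G(y)$ above and below by positive constants is immediate: $f_G(y)\leq 1$ trivially, and restricting the defining integral to, say, $a\in[e,e^2]$ produces a uniform positive lower bound.

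The consequence \eqref{ineq:ht_prior_2} will then follow from \eqref{ineq:ht_prior_1} by elementary manipulations. The first factor reduces to bounding $\big((y+1)/y\big)^{-(p+1)}\cdot\big((\log(y+1)\vee 1)/(\log y\vee 1)\big)^{-2}$ away from zero, which is clear from the monotonicity of these ratios in $y$. The second factor equals $1-f_G(y+1)/(1-f_G(y))$, which tends to $1$ as $y\to\infty$ since $f_G(y),f_G(y+1)\to 0$; for the finitely many bounded values of $y$ it suffices to note that $f_G$ puts positive mass outside $\{y,y+1\}$, so the minimum over those cases is strictly positive. I anticipate the main technical step is the Laplace-type two-sided estimate in \eqref{ineq:ht_prior_1}, but because only matching constants (rather than sharp leading-order asymptotics) are required, the crude bulk/tail splitting above should be enough and no refined saddle-point analysis is needed.
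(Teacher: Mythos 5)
Your Gamma-duality reframing $f_G(y)=\E[g(A_y)]$ with $A_y\sim\GammaD(y+1,1)$ is a clean and correct way to obtain the two-sided bounds \eqref{ineq:ht_prior_1}: since $g$ is decreasing and slowly varying on $[e,\infty)$, sandwiching $g(A_y)$ on the bulk event $\{A_y\in[y/2,2y]\}$ and using the Poisson--Gamma duality $\Prob(A_y\le t)=\Prob(\Poi(t)\ge y+1)$ for the exponentially small tails does give $f_G(y)\asymp_p y^{-(p+1)}(\log y\vee 1)^{-2}$. The paper instead directly splits $\int_e^\infty=\int_e^{y/2}+\int_{y/2}^\infty$, bounds the first piece by $e^{-cy}$ via the Poisson tail, and evaluates the second via $\int_0^\infty \frac{a^y e^{-a}}{y!}a^{-(p+1)}\,\d a=\Gamma(y-p)/y!\asymp y^{-(p+1)}$ by Stirling; this gives an exact Gamma-function identity rather than a concentration argument, but the two routes are essentially equivalent in strength.

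However, there is a genuine gap relative to what the lemma needs to deliver: your $G$ has $p$th moment strictly larger than one. Indeed $m_p(G)=c_p\int_e^\infty a^{-1}(\log a)^{-2}\,\d a=c_p$ with $c_p=\big(\int_e^\infty a^{-(p+1)}(\log a)^{-2}\,\d a\big)^{-1}>1$, so $G\notin\mathcal{G}_p(1)$. Although the lemma statement does not spell this out, the lemma is invoked in the proof of the Robbins lower bound (Theorem~\ref{thm:regret_robbins_EB}), where the prior must lie in $\mathcal{G}_p(1)$ for the supremum over $\{G: m_p(G)\le1\}$ to pick it up; and there is no Poisson analogue of the Gaussian scaling trick that would let you rescale $\theta$ to shrink the moment while preserving the mixture structure. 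The paper resolves this by taking $G=\epsilon\delta_0+(1-\epsilon)\bar G$ with $\bar G$ your continuous prior, choosing $\epsilon\in(0,1)$ so that $m_p(G)=c_0(1-\epsilon)=1$; since $\poi(y;0)=\indc{y=0}$, this leaves $f_G(y)=(1-\epsilon)f_{\bar G}(y)$ for $y\ge1$ and only inflates $f_G(0)$, so the asymptotics \eqref{ineq:ht_prior_1} are unchanged. You should add this mixing step. Your handling of \eqref{ineq:ht_prior_2} is fine in spirit, but the second factor is cleaner to bound uniformly via $1-f_G(y)\ge f_G(y+1)+f_G(y+2)$, giving $\frac{1-f_G(y)-f_G(y+1)}{1-f_G(y)}\ge\frac{f_G(y+2)}{f_G(y+1)+f_G(y+2)}\gtrsim_p 1$ by \eqref{ineq:ht_prior_1}, which avoids the informal ``finitely many cases'' appeal.
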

\begin{proof}[Proof of Lemma \ref{lem:ht_prior}]
Let $g(a) \equiv c_0a^{-(p+1)}(\log a)^{-2}$ on $[e, \infty)$ with $c_0 = c_0(p) > 0$ chosen such that $\int_e^\infty g(a)\d a = 1$. Let $\bar{G}$ be a distribution with density $g$, and
\begin{align*}
G \equiv \epsilon \delta_0 + (1-\epsilon)\bar{G},
\end{align*}
for some $\epsilon \in [0,1]$. Then
\begin{align*}
m_p(G) = c_0(1-\epsilon)\int_e^\infty a^p \cdot a^{-(p+1)}(\log a)^{-2} \d a = c_0(1-\epsilon).
\end{align*}
Note that
\begin{align*}
c_0 = \frac{1}{\int_e^\infty a^{-(p+1)}(\log a)^{-2}\d a} = \frac{\int_e^\infty a^p \cdot a^{-(p+1)}(\log a)^{-2} \d a}{\int_e^\infty a^{-(p+1)}(\log a)^{-2}\d a} > 1,
\end{align*}
hence we may choose $\epsilon = \epsilon(p) \in (0,1)$ such that $m_p(G) = 1$. 
Next we consider this $G$ and it suffices to prove (\ref{ineq:ht_prior_1}) for all sufficiently large $y$. We have
\begin{align}\label{ineq:ht_density}
\notag f_G(y) &= (1-\epsilon)c_0\int_e^\infty \frac{a^ye^{-a}}{y!} \cdot a^{-(p+1)}(\log a)^{-2} \d a\\
\notag&\lesssim \int_e^{y/2}  \frac{a^ye^{-a}}{y!} \cdot a^{-(p+1)}(\log a)^{-2} \d a + \int_{y/2}^\infty  \frac{a^ye^{-a}}{y!} \cdot a^{-(p+1)}(\log a)^{-2} \d a\\
\notag&\stackrel{\rm (a)}{\lesssim} \exp(-cy) + (\log y)^{-2}\Big(\int_0^\infty \frac{a^ye^{-a}}{y!} \cdot a^{-(p+1)} \d a\Big)\\
&=  \exp(-cy) + \frac{\Gamma(y-p)}{y!}(\log y)^{-2} \stackrel{\rm (b)}{\lesssim} y^{-(p+1)}(\log y)^{-2},
\end{align}
where in (a) we use $\sup_{a\in(0,y/2)} \poi(y;a) \lesssim \exp(-cy)$ by Lemma \ref{lem:poi_basic}\ref{poi_basic1}; 
(b) follows from Stirling approximation of the Gamma function. The matching lower bound is analogous, so we have proved (\ref{ineq:ht_prior_1}). The inequality (\ref{ineq:ht_prior_2}) for $\frac{f_G(y+1)}{f_G(y)} $ follows from \eqref{ineq:ht_prior_1} directly. Finally, 
\begin{align*}
\frac{1-f_G(y) - f_G(y+1)}{1-f_G(y)} &= 1 - \frac{f_G(y+1)}{1-f_G(y)} \geq 1 - \frac{f_G(y+1)}{f_G(y+1) + f_G(y+2)}\\
&= \frac{f_G(y+2)}{f_G(y+1) + f_G(y+2)} \gtrsim_p 1.
\end{align*} 
\end{proof}

\begin{proof}[Proof of Theorem \ref{thm:regret_robbins_EB}: Lower bound]
Fix any $y_0 \in[1,\infty]$. First take $G = (1-\epsilon)\delta_0 + \epsilon\delta_a$ with $\epsilon = a^{-p}$ and $a = 2y_0$, then $m_p(G) = \epsilon a^p = 1$ and $\theta_G(y) = a$ for any $y > 0$. Recall the regret decomposition $(I_1)+(I_2)+(I_3)   +(II)$ in  (\ref{eq:robbins_reg_decomp})--(\ref{ineq:robbins_upper_I3}). Then
\begin{align*}
(II) &= \E \big(Y - \theta_G(Y)\big)^2\indc{Y\geq y_0} = \E (Y - a)^2\indc{Y\geq y_0} \geq \epsilon\cdot \E\big[(Y - a)^2\indc{Y\geq y_0}|\theta = a\big]\\
&\gtrsim \epsilon\cdot \E\big[(Y - a)^2|\theta = a\big] = a\epsilon \asymp y_0^{-(p-1)}.
\end{align*}
Next take the prior $G$ in Lemma \ref{lem:ht_prior}. Then $f_G(y) \asymp_p y^{-(p+1)}(\log y)^{-2}$. Thus by setting 
\begin{align*}
y_\ast \equiv  c_p\big(n/(\log n)^2\big)^{1/(p+1)}
\end{align*}
with some appropriate $c_p$, we have $f_{G}(y) \geq 5/n$ for all $y\leq y_\ast$ by the construction in Lemma \ref{lem:ht_prior}.
Hence in the regret decomposition (\ref{eq:robbins_reg_decomp}) with $X\sim \bin(n, f_G(y))$,
\begin{align*}
(I_1) &\asymp \sum_{y=0}^{y_0} \frac{f_G(y+1)\big(1-f_G(y) - f_G(y+1)\big)}{(n+1)\cdot\big(1-f_G(y)\big)f_G(y)} (y+1)^2 \cdot \Prob(X \geq 2)\\
&\geq \sum_{y=0}^{y_0\wedge y_\ast} \frac{f_G(y+1)\big(1-f_G(y) - f_G(y+1)\big)}{(n+1)\cdot\big(1-f_G(y)\big)f_G(y)} (y+1)^2 \cdot \Prob(X \geq 2)\\
&\stackrel{\rm (a)}{\gtrsim} \sum_{y=0}^{y_0\wedge y_\ast} \frac{(y+1)^2}{n} \cdot \frac{f_G(y+1)}{f_G(y)} \cdot \frac{1-f_G(y) - f_G(y+1)}{1-f_G(y)}\\
&\stackrel{\rm (b)}{\gtrsim} \sum_{y=0}^{y_0\wedge y_\ast}  \frac{(y+1)^2}{n} \gtrsim \frac{(y_0\wedge y_\ast)^3}{n},
\end{align*}
where (a) follows since for all $y\leq y_\ast$ we have $f_G(y)\geq 5/n$ so $\Prob(X\geq 2) \gtrsim 1$, and (b) follows from (\ref{ineq:ht_prior_2}) in Lemma \ref{lem:ht_prior}. Averaging over the above two priors yields that
\begin{align*}
\inf_{y_0>1}\sup_{m_p(G)\leq 1} \pnorm{\hat{\theta}(\cdot;y_0) - \theta_G}{\ell_2(f_G)}^2 \gtrsim \inf_{y_0>1} \Big\{y_0^{-(p-1)} + \frac{(y_0\wedge y_\ast)^3}{n}\Big\} \asymp n^{-\frac{p-1}{p+2}}.
\end{align*}
The proof is complete.
\end{proof}

\subsection{Proof of Theorem \ref{thm:hybrid}}\label{subsec:proof_hybrid}

\begin{proof}
We start with the definition of this $f$-modeling estimator. With i.i.d. observations $Y_1,\ldots,Y_n$ from some $f_G$, let
\begin{align*}
\hat{f}^{\emp}(y) \equiv \frac{N_n(y)}{n} = \frac{\sum_{i=1}^n \bm{1}\{Y_i = y\}}{n}, \quad y\in \mathbb{Z}_+,
\end{align*}
be the empirical estimator. For some $y_0\in \mathbb{Z}_+$ to be specified, let
\begin{align*}
\bar{f}(y) \equiv
\begin{cases}
f_{\hat{G}}(y) & y \leq y_0,\\
\hat{f}^{\emp}(y) & y > y_0,
\end{cases}
\end{align*}
where $\hat{G}$ is the NPMLE given by (\ref{def:npmle}). Define a hybrid density estimator
\begin{align}
\hat{f}^{\hb}(y) \equiv \frac{\bar{f}(y)}{a}, \quad \text{where} \quad a \equiv \sum_{y=0}^\infty \bar{f}(y). 
\end{align}
Since $\hat{f}^{\emp}$ and $f_{\hat{G}}$ are both valid probability mass functions, $a$ is well-defined. Correspondingly, the induced EB estimator for $\theta_n$ is
\begin{align*}
\hat{\theta}_n^{\hb}(Y^n) = (Y_n + 1)\frac{\hat{f}^{\hb}(Y_n+1)}{\hat{f}^{\hb}(Y_n)} = 
\begin{cases}
(Y_n+1)\frac{f_{\hat{G}}(Y_n+1)}{f_{\hat{G}}(Y_n)} & Y_n \leq y_0,\\
(Y_n+1)\frac{N_{n-1}(Y_n+1)}{N_{n-1}(Y_n) + 1} & Y_n > y_0,
\end{cases}
\end{align*}
where $N_{n-1}(y) = \sum_{i=1}^{n-1} \bm{1}\{Y_i = y\}$ is the number of occurrences of $y$ among $Y^{n-1}$. In words, $\hat{\theta}_n^{\hb}$ is an interpolation between the NPMLE EB and Robbins estimators, and clearly belongs to the $f$-modeling category because $\hat{f}^{\hb}$ is not a valid Poisson mixture. We will now prove density estimation upper bound and regret lower bound for this estimator.

\paragraph{Density estimation upper bound} We will show that if $y_0 > cn^{2/(2p+1)}$ for some universal $c > 0$, then there exists some $C = C(p)>0$  such that 
\begin{align*}
\sup_{G \in \mathcal{G}_p(1)} \E_G H^2(\hat{f}^{\hb}, f_G) \leq Cn^{-\frac{2p}{2p+1}}(\log n)^6.
\end{align*}
We first prove this result for the un-normalized $\bar{f}(y)$. We have
\begin{align*}
\E \pnorm{\sqrt{\bar{f}} - \sqrt{f_G}}{\ell_2}^2 = \E \sum_{y=0}^\infty \big(\sqrt{\bar{f}(y)} - \sqrt{f_G(y)}\big)^2 \leq \E H^2(f_{\hat{G}}, f_G) + \E \sum_{y > y_0} \big(\sqrt{\hat{f}^{\emp}(y)} - \sqrt{f_G(y)}\big)^2.
\end{align*}
The first term is bounded by $C n^{-\frac{2p}{2p+1}}(\log n)^6$ for some $C = C(p) > 0$ by Theorem \ref{thm:density_main}. Using $\E \hat{f}^{\emp}(y) = f_G(y)$ and Markov inequality,
\begin{align*}
\E \sum_{y > y_0} \big(\sqrt{\hat{f}^{\emp}(y)} - \sqrt{f_G(y)}\big)^2 \lesssim \sum_{y > y_0} f_G(y) \leq y_0^{-p},
\end{align*}
yielding the claim for the un-normalized $\bar{f}(y)$. With $a = \pnorm{\sqrt{\bar{f}}}{\ell_2}^2$, this implies
\begin{align*}
\E (\sqrt{a} - 1)^2 = \E \big(\pnorm{\sqrt{\bar{f}}}{\ell_2} - \pnorm{\sqrt{f_G}}{\ell_2}\big)^2 \leq \E \pnorm{\sqrt{\bar{f}} - \sqrt{f_G}}{\ell_2}^2.
\end{align*}
Then
\begin{align*}
\E H^2(\hat{f}^{\hb}, f_G) = \E \pnorm{\sqrt{\bar{f}}/\sqrt{a} - \sqrt{f_G}}{\ell_2}^2 &\leq 2\E \pnorm{\sqrt{\bar{f}} - \sqrt{f_G}}{\ell_2}^2 + 2\E\pnorm{\sqrt{\bar{f}}}{\ell_2}^2 \cdot (1/\sqrt{a} - 1)^2\\
&= 2\E \pnorm{\sqrt{\bar{f}} - \sqrt{f_G}}{\ell_2}^2 + \E(\sqrt{a} - 1)^2\\
&\leq 3\E \pnorm{\sqrt{\bar{f}} - \sqrt{f_G}}{\ell_2}^2,
\end{align*}  
as desired. 

\paragraph{Regret lower bound} Let $y_\ast = \floor{K_p(n/\log^2 n)^{1/(2p+1)}}$ for some large $K_p > 0$. We will show that there exists some $c = c(p) > 0$ such that
\begin{align*}
\reg_n(\hat{\theta}^{\hb}_n; \mathcal{G}_p(1)) \geq \frac{cn(y_0 \vee y_\ast)^{-(2p-1)}}{(\log n)^4}.
\end{align*}
It is clear that
\begin{align*}
\E_{Y^n} \big[\hat{\theta}^{\hb}_n(Y_n) - \theta_G(Y_n)\big]^2 \geq \E_{Y^{n-1}} \sum_{y > y_0} f_G(y) (y+1)^2 \E\Big(\frac{N_{n-1}(y+1)}{N_{n-1}(y) + 1} - \frac{f_G(y+1)}{f_G(y)}\Big)^2.
\end{align*}
Recall that as in (\ref{eq:robbins_reg_decomp}), the above expectation can be decomposed into three non-negative terms $(I_1)$-$(I_3)$, where, with $X \sim \text{Bin}(n, f_G(y))$,
\begin{align*}
(I_1) &= \sum_{y > y_0} \frac{f_G(y)f_G(y+1)\big(1-f_G(y) - f_G(y+1)\big)}{\big(1-f_G(y)\big)^2}(y+1)^2 \cdot \E \frac{(n-1) - N_{n-1}(y)}{\big(N_{n-1}(y) + 1\big)^2}\\
&\asymp \sum_{y > y_0} \frac{f_G(y+1)\big(1-f_G(y) - f_G(y+1)\big)}{nf_G(y)(1-f_G(y))}(y+1)^2(y+1)^2 \Prob(X \geq 2).
\end{align*}
Now take the prior $G$ constructed in Lemma \ref{lem:ht_prior} where $f_G(y) \asymp_p y^{-(p+1)}(\log y + 1)^{-2}$. Note that the definition of $y_\ast$ implies that $f_G(y) \leq 0.001 n^{-1}$ whenever $y > y_\ast$.
Then the above $(I_2)$ can be further lower bounded by 
\begin{align*}
&\sum_{y > y_0 \vee y_\ast} \frac{f_G(y+1)\big(1-f_G(y) - f_G(y+1)\big)}{nf_G(y)(1-f_G(y))}(y+1)^2 \Prob(X = 2)\\
&\gtrsim_p n\cdot \sum_{y > y_0\vee y_\ast}  (y+1)^2f_G(y)^2 \big(1-f_G(y)\big)^{n-2} \gtrsim \frac{n}{(\log n)^4}\cdot \sum_{y > y_0\vee y_\ast} y^{-2p} \asymp_p \frac{n(y_0 \vee y_\ast)^{-(2p-1)}}{(\log n)^4}.
\end{align*}
To conclude the proof, it remains to choose $y_0 = cn^{2/(2p+1)+\delta'}$ for some small $\delta'$ (depending on $\delta$).
\end{proof}

\appendix

\section{Auxiliary results}
\label{app:aux}

\begin{lemma}\label{lem:poi_basic}
Let $X \sim \poi(\theta)$ for some $\theta > 0$, and $Y \sim f_G$ with $f_G \in \mathcal{H}_p(M_p)$ in (\ref{def:mixture_class_new}) for some $p > 0$ and $M_p > 0$. 
\begin{enumerate}[label=(\alph*)]
\item \label{poi_basic1} (Poisson tail) For any $t > 0$,
\begin{align*}
\Prob(X - \theta > t) \vee \Prob(X - \theta < -t) \leq \exp\Big(-\frac{t^2}{2(\theta + t)}\Big).
\end{align*}
\item \label{poi_basic2} (Poisson centered moments) There exists some universal $C>0$ such that for any $p\geq 1$, $\E|X - \theta|^p \leq (Cp)^{p}(\theta \vee 1)^{p/2}$. 
\item \label{poi_basic3} (Poisson mixture tail) There exists some universal $c > 0$ such that, for any $t > 0$,
\begin{align*}
\Prob(Y \geq t) \leq \exp(-ct) + (t/2)^{-p}M_p.
\end{align*}
\end{enumerate}
\end{lemma}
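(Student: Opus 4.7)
\medskip
\noindent\textbf{Proof proposal for Lemma \ref{lem:poi_basic}.}
All three parts are fairly standard consequences of Chernoff-type estimates for the Poisson distribution; I will outline them in order.

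For part \ref{poi_basic1}, the plan is a direct Chernoff bound using the Poisson log-MGF $\log\E e^{s(X-\theta)}=\theta(e^s-1-s)$. For the upper tail I would take $s=\log(1+t/\theta)>0$, obtaining the sharp Bennett bound $\Prob(X-\theta\geq t)\leq \exp(-\theta h(t/\theta))$ with $h(u)=(1+u)\log(1+u)-u$, then relax using the elementary inequality $h(u)\geq u^2/(2+u)$ for $u\geq 0$ (which gives $\theta h(t/\theta)\geq t^2/(2\theta+t)\geq t^2/(2(\theta+t))$). For the lower tail, I would apply Chernoff with $s<0$; the cumulant satisfies $e^s-1-s\leq s^2/2$ for $s\leq 0$, leading to the sub-Gaussian bound $\Prob(X-\theta\leq -t)\leq \exp(-t^2/(2\theta))$, which is stronger than what is claimed.

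For part \ref{poi_basic2}, I would integrate the tail bound from \ref{poi_basic1} via the layer-cake formula
\[
\E|X-\theta|^p=\int_0^\infty pt^{p-1}\Prob(|X-\theta|>t)\,dt\leq 2\int_0^\infty pt^{p-1}\exp\Bigl(-\tfrac{t^2}{2(\theta+t)}\Bigr)dt,
\]
and split the integral at $t_\ast=\theta\vee 1$. On $[0,t_\ast]$ the exponent is at least $t^2/(4(\theta\vee 1))$, yielding a Gaussian-type contribution of order $(Cp)^{p/2}(\theta\vee 1)^{p/2}$ via the standard Gamma integral. On $[t_\ast,\infty)$ the exponent is at least $t/4$, yielding an exponential-type contribution of order $(Cp)^p$. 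Combining the two ranges and absorbing constants into $C$ gives the claimed $(Cp)^p(\theta\vee 1)^{p/2}$ bound.

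For part \ref{poi_basic3}, I would simply condition on $\theta\sim G$ and split according to whether $\theta\leq t/2$ or $\theta>t/2$:
\[
\Prob(Y\geq t)=\int \Prob(\poi(\theta)\geq t)\,G(d\theta)\leq \sup_{\theta\leq t/2}\Prob(\poi(\theta)\geq t)+\Prob_G(\theta>t/2).
\]
For $\theta\leq t/2$, part \ref{poi_basic1} with deviation $t-\theta\geq t/2$ gives $\Prob(\poi(\theta)\geq t)\leq \exp(-(t/2)^2/(2(\theta+t-\theta)))\leq \exp(-t/8)$; for the second term, Markov's inequality and $m_p(G)\leq M_p$ yield $\Prob_G(\theta>t/2)\leq M_p/(t/2)^p$. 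Adding the two terms with $c=1/8$ completes the proof.

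None of the three steps presents a genuine obstacle; the only minor care is in part \ref{poi_basic2}, where one must verify that the polynomial factor $t^{p-1}$ against the exponential decay indeed produces a constant of the form $(Cp)^p$ uniformly in $p\geq 1$, which follows from the bound $\int_0^\infty t^{p-1}e^{-t/4}dt=4^p\Gamma(p)\leq(Cp)^p$ by Stirling.
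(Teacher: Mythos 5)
Your proof is correct and follows essentially the same route as the paper: part (a) via the Poisson Chernoff/Bennett bound and the elementary lower bound on $h(u)=(1+u)\log(1+u)-u$ (you use $h(u)\geq u^2/(2+u)$, the paper uses the slightly weaker $h(u)\geq u^2/(2(1+u))$, both sufficient), part (b) by integrating the tail of part (a) (the paper simply cites Boucheron--Lugosi--Massart for this step, but your explicit split of the integral at $\theta\vee1$ is exactly the standard argument), and part (c) by conditioning on $\theta$ and splitting at $\theta\leq t/2$ with Markov's inequality for the tail of $G$, just as in the paper.
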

\begin{proof}[Proof of Lemma \ref{lem:poi_basic}]
For Part \ref{poi_basic1}, we only prove the right tail, since the left tail follows from a similar argument and actually admits the stronger bound $\exp(-t^2/(2\theta))$. Since $\E e^{sX} = \exp(\theta e^s - \theta)$, the Chernoff bound yields that, for any $t > 0$, 
\begin{align*}
\Prob(X - \theta > t) \leq \exp\Big(-\sup_{s\geq 0}(st - \theta e^s + \theta + s\theta)\Big) = \exp\big(-\theta h(t/\theta)\big) \stackrel{\rm (a)}{\leq} \exp\Big(-\frac{t^2}{2(\theta + t)}\Big),
\end{align*}
where $h(u) \equiv (1+u)\log(1+u) - u$ for any $u > -1$, and (a) follows from the fact that $h(u) \geq u^2/(2(1+u))$ for any $u \geq 0$.
Part \ref{poi_basic2} follows directly by integrating the tail estimate in Claim (1); see, e.g. \cite[Theorem 2.3]{boucheron2013concentration}. For Part \ref{poi_basic3}, let $\theta\sim G$ for some 
$G\in \calG_p(M_p)$ and $Y|\theta \sim\poi(\theta)$. For any $t \geq 0$,
\begin{align*}
\Prob(Y \geq t) = \E \Prob(\poi(\theta) \geq  t|\theta) &= \E \Prob(\poi(\theta) \geq  t|\theta)\indc{\theta \leq t/2} + \E \Prob(\poi(\theta) \geq  t|\theta)\indc{\theta > t/2}\\
&\leq \E\Prob(\poi(\theta) - \theta \geq  t/2|\theta)\indc{\theta \leq t/2} + \Prob_{\theta\sim G}(\theta \geq t/2)\\
&\stackrel{\rm (a)}{\leq} \E\exp\bigg(-\frac{(t/2)^2}{2(\theta + t/2)}\bigg)\indc{\theta \leq t/2} + (t/2)^{-p}M_p\\
&\leq \exp(-ct) + (t/2)^{-p}M_p,
\end{align*}
as desired. Here in (a) we use the Poisson tail in Part \ref{poi_basic1}.
\end{proof}

\begin{lemma}\label{lem:pmf_lip}
For any $j\in\integers_+$, let $\poi(j;\lambda) \equiv \lambda^je^{-\lambda}/j!$ be the Poisson density. Then 
\begin{align*}
\sup_{\lambda > 0}\sup_{j\geq 0}\left|\frac{\d\poi(j;\lambda)}{\d \lambda}\right| \leq 1.
\end{align*}
\end{lemma}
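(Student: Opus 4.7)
The plan is to compute the derivative in closed form and then bound it using the fact that Poisson probabilities are bounded by one. Specifically, I would start by observing that for $j \geq 1$,
\begin{align*}
\frac{\d \poi(j;\lambda)}{\d \lambda} = \frac{j\lambda^{j-1}e^{-\lambda} - \lambda^j e^{-\lambda}}{j!} = \poi(j-1;\lambda) - \poi(j;\lambda),
\end{align*}
while for $j = 0$,
\begin{align*}
\frac{\d \poi(0;\lambda)}{\d \lambda} = -e^{-\lambda} = -\poi(0;\lambda).
\end{align*}
Both identities can be written uniformly as $\tfrac{\d}{\d \lambda}\poi(j;\lambda) = \poi(j-1;\lambda) - \poi(j;\lambda)$ with the convention $\poi(-1;\lambda)\equiv 0$.

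Next, since $\{\poi(j;\lambda)\}_{j \geq 0}$ is a probability mass function on $\integers_+$, each individual term satisfies $0 \leq \poi(j;\lambda) \leq 1$. Hence for all $j \geq 0$ and $\lambda > 0$,
\begin{align*}
\left|\frac{\d \poi(j;\lambda)}{\d \lambda}\right| = |\poi(j-1;\lambda) - \poi(j;\lambda)| \leq \max\bigl(\poi(j-1;\lambda),\poi(j;\lambda)\bigr) \leq 1.
\end{align*}
Taking the supremum over $j \geq 0$ and $\lambda > 0$ yields the claim. There is no real obstacle here; the lemma is essentially an immediate consequence of the explicit form of the derivative and the fact that Poisson pmf values are probabilities.
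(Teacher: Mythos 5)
Your proof is correct and takes essentially the same approach as the paper: compute the derivative explicitly as $\poi(j-1;\lambda) - \poi(j;\lambda)$ and use that each Poisson pmf value lies in $[0,1]$.
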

\begin{proof}
The claim clearly holds for $j = 0$, and for $j \geq 1$,  
\begin{align*}
\frac{\d \poi(j;\lambda)}{\d\lambda} = \frac{j\lambda^{j-1}e^{-\lambda} - \lambda^je^{-\lambda}}{j!} = \poi(j-1;\lambda) - \poi(j;\lambda) \in [-1,1].
\end{align*}
\end{proof}

\begin{lemma}\label{lem:poi_divergence}
Let $\poi(\lambda)$ and $\poi(\lambda')$ be Poisson distributions 
with means $\lambda$ and $\lambda'$. Then
\begin{align*}
\chi^2\big(\poi(\lambda)||\poi(\lambda')\big) &= \exp\big((\lambda - \lambda')^2/\lambda'\big) - 1,\\
H^2\big(\poi(\lambda), \poi(\lambda')\big) &= 1 - \exp\big(-(\sqrt{\lambda} - \sqrt{\lambda'})^2/2\big).
\end{align*}
Furthermore,
\begin{align*}
\chi^2\big(\mathcal{N}(\theta,1) || \mathcal{N}(\theta', 1)\big) = \exp\big((\theta - \theta')^2\big) - 1.
\end{align*}
\end{lemma}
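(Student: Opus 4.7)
The proof is by direct computation using the explicit product form of the Poisson and Gaussian densities and summing/integrating a geometric-exponential series.

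For the first identity, write $\chi^2\big(\poi(\lambda)\|\poi(\lambda')\big)=\sum_{k\geq 0}\poi(k;\lambda)^2/\poi(k;\lambda')-1$. The ratio simplifies to
\[
\frac{\poi(k;\lambda)^2}{\poi(k;\lambda')}=\frac{(\lambda^2/\lambda')^k}{k!}\,e^{-2\lambda+\lambda'},
\]
and summing the series yields $\exp(\lambda^2/\lambda'-2\lambda+\lambda')-1=\exp((\lambda-\lambda')^2/\lambda')-1$.

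For the second identity, the Bhattacharyya affinity satisfies
\[
\sum_{k\geq 0}\sqrt{\poi(k;\lambda)\poi(k;\lambda')}=e^{-(\lambda+\lambda')/2}\sum_{k\geq 0}\frac{(\sqrt{\lambda\lambda'})^k}{k!}=\exp\!\big(\sqrt{\lambda\lambda'}-(\lambda+\lambda')/2\big)=\exp\!\big(-(\sqrt{\lambda}-\sqrt{\lambda'})^2/2\big),
\]
which, upon subtraction from $1$ (in the convention adopted in the statement), gives the claimed Hellinger expression.

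For the third identity, write $\chi^2(\mathcal{N}(\theta,1)\|\mathcal{N}(\theta',1))=\int\phi_\theta^2/\phi_{\theta'}\,\mathrm{d}x-1$ and complete the square in the exponent $-(x-\theta)^2+\tfrac12(x-\theta')^2=-\tfrac12(x-(2\theta-\theta'))^2+(\theta-\theta')^2$. The Gaussian integral of the quadratic term equals $1$, leaving $e^{(\theta-\theta')^2}-1$.

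All three computations reduce to recognizing a scaled exponential series (in the Poisson case) or completing the square (in the Gaussian case); there is no real obstacle, and the argument is just bookkeeping with the exponent.
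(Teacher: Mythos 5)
Your proof is correct, and it follows the same direct computation that the paper invokes with the one-line remark ``Straightforward computation.'' The series manipulations, the exponent algebra $\lambda^2/\lambda'-2\lambda+\lambda'=(\lambda-\lambda')^2/\lambda'$ and $\sqrt{\lambda\lambda'}-(\lambda+\lambda')/2=-(\sqrt{\lambda}-\sqrt{\lambda'})^2/2$, and the completed square $-\tfrac12(x-(2\theta-\theta'))^2+(\theta-\theta')^2$ all check out, and you rightly flag that the Hellinger identity uses the $H^2=1-\mathrm{BC}$ normalization rather than the one in \eqref{def:hellinger}.
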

\begin{proof}
	Straightforward computation.
\end{proof}

\section{Proof of (\ref{eq:bayes_risk_infty}) for $p < 1$}
\label{app:mmseinf}


We take $G$ to be the distribution with density $g(a;\tau) = \tau a^{-2}\indc{a\geq \tau}$. For any given $p\in(0,1)$ and $M_p > 0$, we have $G\in \mathcal{G}_p(M_p)$ by choosing $\tau = \big((1-p)M_p\big)^{1/p}$. Since the following calculation holds for any $\tau > 0$, we only consider $\tau = 1$ for simplicity. We will abbreviate $f_G$ as $f$. 
To show $\mmse(G)=\infty$, it suffices to analyze the conditional expectation. 
By definition, for $\theta\sim G$ and $Y|\theta\sim \poi(\theta)$, we have
\begin{align}\label{eq:bayes_cond_var}
\notag\E_G \big(\theta_G(Y) - \theta\big)^2 &= \E \big[\E(\theta^2|Y) - (\E\theta|Y)^2\big]\\
\notag&= \E (Y+1)(Y+2)\frac{f(Y+2)}{f(Y)} - \Big((Y+1)\frac{f(Y+1)}{f(Y)}\Big)^2\\
&= \sum_{y=0}^\infty \frac{y+1}{f(y)}\cdot \Big((y+2)f(y+2)f(y) - (y+1)f^2(y+1)\Big).
\end{align}
Now using the definition of $G$, the inner term can be computed as
\begin{align*}
&(y+2)f(y+2)f(y) - (y+1)f^2(y+1)\\
&= (y+2)\int_1^\infty \frac{a^ye^{-a}}{(y+2)!}\d a \cdot \int_1^\infty \frac{a^{y-2}e^{-a}}{y!}\d a - (y+1) \Big(\int_1^\infty\frac{a^{y-1}e^{-a}}{(y+1)!}\d a \Big)^2\\
&= \frac{1}{y!(y+1)!}\int_1^\infty\int_1^\infty e^{-(a+b)}\Big(\frac{1}{2}a^yb^{y-2} + \frac{1}{2}b^ya^{y-2} - a^{y-1}b^{y-1}\Big)\d a\d b\\
&= \frac{1}{y!(y+1)!}\int_1^\infty\int_1^\infty e^{-(a+b)}\frac{1}{2}a^{y-2}b^{y-2}(a-b)^2\d a\d b =  \frac{c^2(y)}{y!(y+1)!}\var(U),
\end{align*}
where $U\equiv U(y)$ is a random variable with density $f_U(a) = a^{y-2}e^{-a}/c(y)$ on $[1,\infty)$, with $c(y)\equiv \int_1^\infty a^{y-2}e^{-a}\d a$. We claim that for sufficiently large $y$, $\var(U) \gtrsim y$. To see this, let $\bar{c}(y)\equiv \int_0^1 a^{y-2}e^{-a}\d a \leq 1$, and $V$ be a random variable with density $f_V(a) = a^{y-2}e^{-a}/\bar{c}(y)$ on $[0,1]$. Let $W$ be a Bernoulli variable independent of $(U,V)$ with success probability $q \equiv c(y)/(c(y) + \bar{c}(y))$. Note that $c(y) = (y-2)! - \bar c(y)$ and $\bar{c}(y)\leq 1$, so that $1 - q = \bar{c}(y)/(c(y) + \bar{c}(y)) \leq 1/(y-2)!$. Let $Z \equiv WU + (1-W)V$, so that $Z$ has density
\begin{align*}
f_Z(a) = qf_U(a) + (1-q)f_V(a) = \frac{a^{y-2}e^{-a}}{\int_0^\infty b^{y-2}e^{-b}\d b}, \quad a\geq 0.
\end{align*}
This implies $Z\sim \Gamma(y-1)$ with $\E Z = \var(Z) = y-1$. Moreover, using $\cov(WU, (1-W)V) \leq 0$, we have
\begin{align*}
\var(Z) &= \var(WU + (1-W)V) \leq \var(WU) + \var((1-W)V) \leq q\E U^2 - q^2(\E U)^2 + 1\\
&= q\var(U) + q(1-q)(\E U)^2 + 1 \leq q\var(U) + o(y),
\end{align*}
where the last step follows
from $1 - q  \leq 1/(y-2)!$
and $y-1 = \E Z = q\E U + (1-q)\E V$ so that $\E U = O(y)$.
 This yields $\var(U) \gtrsim \var(Z) = y$, proving the claim. Plugging the above estimate into (\ref{eq:bayes_cond_var}) yields that, for some large $K > 0$,
\begin{align*}
\E_G \big(\theta_G(Y) - \theta\big)^2 &\geq \sum_{y=K}^\infty \frac{y+1}{f(y)}\cdot \Big((y+2)f(y+2)f(y) - (y+1)f^2(y+1)\Big)\\
&\gtrsim \sum_{y=K}^\infty \frac{y}{y^{-2}}y^{-4} =  \sum_{y=K}^\infty y^{-1} = \infty,
\end{align*}
where we use the readily obtainable fact that $f(y) \asymp y^{-2}$
 (see a similar computation in (\ref{ineq:ht_density})). The proof is complete.

\section{A complex-analytic proof of Proposition \ref{prop:Ak_upper} for even $k$}\label{sec:second_proof}
In this section, we provide a second proof of Proposition \ref{prop:Ak_upper} which is self-contained and based on generating functions. We start with two definitions. For a sequence $f: \integers_+ \rightarrow \R$, its generating function is defined by
\begin{align*}
\phi_{f}(z) \equiv \sum_{y = 0}^\infty f(y)z^y, \quad z\in \mathbb{C}.
\end{align*} 
When $\|f\|_{\ell_1} < \infty$, $\phi_f(z)$ is a holomorphic function on the unit disk $D\equiv \{z\in \mathbb{C}: |z| \leq 1\}$. Next, for a signed measure $G$ on $\reals_+$, its generating function (Laplace transform) is defined by
\begin{align*}
\phi_G(z) \equiv \int_{\reals_+} e^{z\theta} G(\d\theta), \quad z \in \mathbb{C}.
\end{align*}
When $\pnorm{G}{\textrm{TV}} \equiv \int_{\reals_+}|G(\d\theta)| < \infty$, $\phi_G(z)$ is a holomorphic function on the half plane $\{z\in \mathbb{C}: \mathfrak{R}(z) \leq 0\}$. The following lemma will be useful. 

\begin{lemma}\label{lem:pgf_basic}
Given any sequence $\{f(y)\}_{y\geq 0}\in \ell_1$, the following hold for $z\in D = \{z\in \mathbb{C}: |z| \leq 1\}$.
\begin{enumerate}
\item (Finite difference) Under the convention $f(y)\equiv 0$ for $y < 0$, for any $k\in \integers_+$, 
\begin{equation}
	\phi_{\nabla^k f} (z) = (1-z)^k \phi_{f}(z),
	\label{eq:GF-Deltak}
	\end{equation}
where $\nabla^k$ denotes the $k$th-order backward difference defined in \prettyref{eq:backwarddiff}.

\item (Derivatives) For any $k\in \integers_+$, define the $f_{[k]}(y) \equiv (y+k)_k f(y+k)$ with $(\cdot)_k$ the falling factorial. Then
	\begin{equation}
	\phi_{f_{[k]}} (z) = \phi_{f}^{(k)}(z).
	\label{eq:GF-deriv}
	\end{equation}

\item (Parseval's identity)
\begin{align*}
\sum_{y=0}^\infty f(y)^2 = \frac{1}{2\pi}\int_0^{2\pi} \big|\phi_f(e^{i\omega})\big|^2 \d\omega.
\end{align*}
\end{enumerate}
\end{lemma}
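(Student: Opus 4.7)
The plan is to verify the three identities directly from the definition of $\phi_f$, exploiting the absolute convergence of $\phi_f$ on the closed unit disk guaranteed by $\|f\|_{\ell_1}<\infty$, which justifies all manipulations (termwise differentiation, reindexing, evaluation on $\{|z|=1\}$).

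For part~(1), I will handle $k=1$ first. Writing $\phi_{\nabla f}(z)=\sum_{y\ge 0}(f(y)-f(y-1))z^y$ and using $f(-1)=0$, a single-index shift yields $\phi_{\nabla f}(z)=\phi_f(z)-z\phi_f(z)=(1-z)\phi_f(z)$. The general $k$ follows by a straightforward induction on $k$, since $\nabla^{k}f=\nabla(\nabla^{k-1}f)$ and the extension convention $\nabla^{k-1}f(y)=0$ for $y<0$ is preserved at each step.

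For part~(2), I will again start with $k=1$. Termwise differentiation gives $\phi_f'(z)=\sum_{y\ge 1} y f(y) z^{y-1}=\sum_{y\ge 0} (y+1)f(y+1)z^y=\phi_{f_{[1]}}(z)$, where the reindexing $y\mapsto y+1$ is legitimate on the open disk and extends continuously to $|z|\le 1$. For general $k$, iterating this identity (or directly differentiating $k$ times and applying the falling-factorial identity $y(y-1)\cdots(y-k+1)=(y)_k$) produces $\phi_f^{(k)}(z)=\sum_{y\ge 0}(y+k)_k f(y+k)z^y=\phi_{f_{[k]}}(z)$.

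For part~(3), on the unit circle $z=e^{i\omega}$ the generating function $\phi_f(e^{i\omega})=\sum_{y\ge 0} f(y)e^{iy\omega}$ is simply the Fourier series of the $\ell_1$-sequence $f$ (viewed as supported on $\integers_+$). Since $\{e^{iy\omega}\}_{y\in\integers}$ is an orthonormal basis of $L^2([0,2\pi], d\omega/(2\pi))$ and $f\in\ell_1\cap\ell_2$ (the $\ell_2$ inclusion being automatic once we invoke the identity to be proved only for bounded $f$, or more robustly by truncating to $f_N(y)=f(y)\indc{y\le N}$ and passing to the limit using monotone/dominated convergence), Parseval's identity in this basis yields $\sum_{y\ge 0} f(y)^2=\frac{1}{2\pi}\int_0^{2\pi}|\phi_f(e^{i\omega})|^2 d\omega$.

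None of the three parts presents a genuine obstacle; the only subtlety is justifying termwise operations (differentiation in part (2), Fourier inversion in part (3)) at the boundary $|z|=1$, which is handled in the standard way by first working on $|z|<1$ and then either invoking continuity under the $\ell_1$ hypothesis or approximating $f$ by finitely supported sequences.
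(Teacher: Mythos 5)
Your proposal is correct and follows essentially the same route as the paper: shift/induction for the finite-difference identity, termwise differentiation for the derivative identity, and orthogonality of $\{e^{i y\omega}\}$ for Parseval (the paper simply expands the double sum and applies Fubini rather than citing Parseval by name, but that is the same computation). The only minor wrinkle is your aside about $\ell_2$ membership, which is more elaborate than needed since $\ell_1(\integers_+)\subset\ell_2(\integers_+)$ is automatic for sequences.
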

\begin{proof}
\begin{enumerate}
	\item We prove by induction. The claim clearly holds for $k = 0$. Suppose the claim holds up to $k$, then
\begin{align*}
\phi_{\nabla^{k+1}f} (z) &= \sum_{y=0}^\infty \big(\nabla^{k}f(y) - \nabla^{k}f(y-1)\big)z^y = \phi_{\nabla^{k}f}(z) - z\cdot \sum_{y=-1}^\infty\nabla^{k}f(y)z^y\\
&= \phi_{\nabla^{k}f}(z) - z\cdot \sum_{y=0}^\infty\nabla^{k}f(y)z^y = (1-z)\phi_{\nabla^{k}f}(z) = (1-z)^{k+1} \phi_f(z),
\end{align*}
using the fact that $\nabla^{k}f(-1) = 0$ for all $k\geq 0$. 

\item This holds because
\begin{align*}
\frac{\d^k}{\d z^k} \sum_{y=0}^{\infty} f(y)z^{y} = \sum_{y=k}^\infty (y)_k f(y) z^{y-k}= \sum_{y=0}^\infty (y+k)_{k}f(y+k)z^y = \phi_{f_{[k]}}(z).
\end{align*}

\item The right side equals
\begin{align*}
\sum_{y_1,y_2=0}^{\infty} f(y_1)f(y_2)\frac{1}{2\pi}\int_0^{2\pi} e^{i\omega(y_1-y_2)}\d\omega = \sum_{y_1,y_2=0}^\infty f(y_1)f(y_2)\indc{y_1=y_2} = \sum_{y=0}^\infty f(y)^2,
\end{align*}
where we use the fact that $f\in \ell_1(\integers_+) \subset \ell_2(\integers_+)$ to apply Fubini's theorem.
\end{enumerate}
\end{proof}

When $f$ and $G$ are probability measures, $\phi_f$ and $\phi_G$ correspond to their probability generating function and the moment generating function (Laplace transform).
In particular, we have 
\[
\phi_G^{(k)}(0) = \Expect_{\theta\sim G}[\theta^k]
\]
Thus, if the $k$th moment of $G$ does not exist, we anticipate $|\phi_G^{(k)}(z)|$ to blow up as $z$ approaches the imaginary axis from the left. The following estimate will be useful:
For any $\Re(z) <0$, 
\begin{equation}
|\phi_G^{(k)}(z)| \leq \pth{\frac{k}{e|\Re(z)|}}^k.
\label{eq:Gz-estimate}
\end{equation}
Indeed, let $\Re(z) = - \epsilon <0$. Then
\[
|\phi_G^{(k)}(z)| \leq \int_{\reals_+} |e^{z\theta}| \theta^k G(d\theta) = 
\int_{\reals_+} e^{-\epsilon \theta} \theta^k G(d\theta)
\leq \sup_{\theta\geq 0} \theta^k e^{-\epsilon\theta} = \pth{\frac{k}{e\epsilon}}^k.
\]

For any prior $G$ on $\reals_+$, recall that $f_G$ denotes the corresponding Poisson mixture. The following identity \cite[Eq.~(114)]{PW18-dual2} relates their generating functions:
\begin{equation}
\phi_{f_G}(z) = \phi_{G}(z-1).
\label{eq:PGF}
\end{equation}
Indeed, 
\[
\phi_{f_G}(z) = \Expect_{Y\sim f_G}[z^Y] = \Expect_{\theta\sim G}[\Expect_{Y\sim \Poi(\theta)}[z^Y|\theta] ] = \Expect_{\theta\sim G}[e^{(z-1)\theta} ] = \phi_G(z-1).
\]

We are now ready to give a second proof of Proposition \ref{prop:Ak_upper}. 
We aim to prove the following: For any distribution $G$ and any even $k$,
\begin{equation}
\sum_{y\geq 0} (y+1)^k (\Delta^k f_G(y))^2 \leq 2^{3k} k!
\label{eq:Ak}
\end{equation}
where $\Delta f_G(y) = f_G(y+1)-f_G(y)$ is the forward difference defined in \prettyref{eq:forwarddiff}.
To deduce Proposition \ref{prop:Ak_upper} from here, recall the definition of $A_k$ in (\ref{def:Ak}) and $w(y) \leq 1/(2\rho)$ from \prettyref{eq:wstar}. Then
\begin{align}\label{ineq:Ak_second_proof}
\notag A_k^2 = \sum_{y=0}^\infty (y+1)^{2\ell} \Big(\Delta^k f_{G_1}(y) - \Delta^k f_{G_2}(y)\Big)^2 w(y) \leq \frac{1}{\rho}  2^{3k} k!
\end{align}
which yields Proposition \ref{prop:Ak_upper} in view of the assumption that $\rho \geq n^{-K_\rho}$ and $k\geq  \kappa\log n$.

To prove \prettyref{eq:Ak}, let $k=2\ell$. Using $\Delta^kf(y) = \nabla^{k} f(y+k)$, we have
\begin{align*}
\sum_{y\geq 0} (y+1)^k (\Delta^k f_G(y))^2 
= & ~ \sum_{y\geq 0} \pth{(y+1)^{\ell} \cdot \nabla^{2\ell} f_G(y+2\ell)}^2 \\
\leq & ~ \sum_{y\geq 0} \pth{(y+2\ell)_{\ell} \cdot \nabla^{2\ell} f_G(y+2\ell)}^2 \\
\leq & ~ \sum_{y\geq 0} \Big(\underbrace{(y+\ell)_{\ell}  \cdot \nabla^{2\ell} f_G(y+\ell)}_{=(\nabla^{2\ell} f_G)_{[\ell]}(y) \equiv g(y)}\Big)^2 = \|g\|_2^2.
\end{align*}

Next we show $\|g\|_2^2 \leq 2^{3k} k!$.
Fix $a \in (0,1)$ and let $\tilde g(y) \triangleq g(y) a^y$. Note 
$0 \leq f_G \leq 1$ since $f_G$ is a pmf. 
Applying the binomial expansion of backward difference in \prettyref{eq:backwarddiff-expand}, we have
\[
\nabla^{2\ell} f_G(y) =  
\sum_{i=0}^{2\ell} (-1)^i \binom{2\ell}{i} f_G(y-i).
\]
So
$|g(y)| \leq 2^{2\ell} (y+\ell)_{\ell} \leq 2^{2\ell} (y+\ell)^{\ell}$ and hence $\sum_{y\geq 0}|\tilde g(y)|<\infty$ for any $0<a<1$.

Note that
\begin{equation}
\phi_{\tilde g}(e^{i\omega})  = \phi_{g}(a e^{i\omega}) =
 \left.\frac{d^\ell}{dz^\ell} (\phi_{G}(z -1) (z -1)^{2\ell})\right|_{z = ae^{i\omega}}.
\label{eq:gtilde-fourier}
\end{equation}
where the second identity applies 
\prettyref{eq:GF-Deltak}, \prettyref{eq:GF-deriv}, and \prettyref{eq:PGF}.
By chain rule, we have 
\begin{equation}
\frac{d^\ell}{dz^\ell} (\phi_{G}(z -1) (z -1)^{2\ell})
= \sum_{m=0}^\ell
\binom{\ell}{m} 2\ell(2\ell-1)\cdots(\ell+m+1)  \cdot \phi_{G}^{(m)}(z -1) 
(z-1)^{\ell+m}.
\label{eq:chainrule}
\end{equation}
Crucially, for $z=a e^{j \omega}$, 
\[
0< -\Re(z-1) = 1 - a \cos\omega, \quad 
|z-1| = \sqrt{a^2+1 - 2a\cos\omega}.
\]
Applying the estimate \prettyref{eq:Gz-estimate}, we have for every $0 \leq m\leq \ell$,
\[
|\phi_{G}^{(m)}(ae^{i\omega} -1) (ae^{i\omega}-1)^{\ell+m}| \leq \pth{\frac{m}{e(1 - a \cos\omega)}}^m \cdot (a^2+1 - 2a\cos\omega)^{\frac{\ell+m}{2}}.
\]
If $\cos \omega\geq 0$, $a^2+1 - 2a\cos\omega \leq 2-2\cos\omega$ and $1-a\cos\omega \geq 1-1\cos\omega$;
if $\cos \omega\leq 0$, $a^2+1 - 2a\cos\omega \leq (a+1)^2 \leq 4$ and $1-a\cos\omega \geq 1$.
In all, we have for all $a\in (0,1), \omega \in [0,\pi]$ and $0\leq m \leq \ell$, 
\[
|\phi_{G}^{(m)}(ae^{i\omega} -1) (ae^{i\omega}-1)^{\ell+m}| \leq \pth{\frac{m}{e}}^m 2^{\ell+m}
\]
and hence, in view of \prettyref{eq:gtilde-fourier} and \prettyref{eq:chainrule}, $|\phi_{\tilde g}(e^{i\omega})|$ is bounded uniformly in $a$ and $\omega$.
Thus
\begin{equation}
\|g\|_2^2
=
\lim_{a\uparrow1} 
\|\tilde g\|_2^2
= \lim_{a\uparrow1} \frac{1}{2\pi}\int_0^{2\pi}  \left|\phi_{\tilde g}(e^{i\omega})\right|^2
=  \frac{1}{2\pi}\int_0^{2\pi} \lim_{a\uparrow1} \left|\phi_{\tilde g}(e^{i\omega})\right|^2,
\label{eq:gg}
\end{equation}
where the three equalities follow from the monotone convergence theorem, Parseval's identity, and the dominated convergence theorem, respectively.
To bound the limit inside the integral, note that
\[
\phi_{G}^{(m)}(ae^{i\omega} -1) (ae^{i\omega}-1)^{\ell+m} \xrightarrow{a\uparrow1} 
\pth{\frac{m}{e}}^m 2^{\frac{\ell+m}{2}} (1- \cos\omega)^{\frac{\ell-m}{2}} 
\]
so, applying \prettyref{eq:gtilde-fourier} and \prettyref{eq:chainrule},
\begin{align*}
\lim_{a\uparrow1} 
\left|\phi_{\tilde g}(e^{i\omega})\right|
= & ~ 2^\ell\sum_{m=0}^\ell \binom{\ell}{m} 2\ell(2\ell-1)\cdots(\ell+m+1)  \pth{\frac{m}{e}}^m  \\
\leq & ~ 2^\ell \sum_{m=0}^\ell \binom{\ell}{m} \frac{(2\ell)!}{(\ell+m)!} m!	= 2^\ell \ell! \sum_{m=0}^\ell \binom{2\ell}{\ell+m} \leq 2^{3\ell} \ell!.
\end{align*}
Substituting this into \prettyref{eq:gg} shows that 
$\|g\|_2^2 \leq 2^{6\ell} (\ell!)^2 \leq 2^{3k} k!$, completing the proof of \prettyref{eq:Ak}.

\section{Regret lower bound in the Gaussian EB model}\label{sec:gau_lower_bound}

In the Gaussian EB model, we have latent $\theta^n = (\theta_1,\ldots, \theta_n) \stackrel{i.i.d.}{\sim} G$ for some distribution $G$ on $\R$, and we observe i.i.d.~data $X^n = (X_1,\ldots,X_n)$ such that $X_i|\theta_i \sim \mathcal{N}(\theta_i,1)$. The goal is again to estimate the underlying Gaussian means $\theta^n$. 

With some abuse of notation, we still use $f_G$ to denote the Gaussian mixture density:
\begin{align}\label{def:gau_mixture}
f_G(x) \equiv \int \phi(x-\theta) G(\d\theta), \quad x\in\R,
\end{align} 
where $\phi(\cdot)$ is the standard normal density. Analogous to the definition (\ref{def:regret_minimax}) in the Poisson model, define the individual regret
\begin{align*}
\reg^g_n(\mathcal{G}) \equiv \inf_{\hat{\theta}^n} \sup_{G\in\mathcal{G}} \big\{\E_{G} \big(\hat{\theta}_n(X^n) - \theta_n\big)^2 - \mathsf{mmse}^g(G)\big\},
\end{align*}
where $\mathsf{mmse}^g(G)$ is the Bayes risk under prior $G$ in the Gaussian EB model. In the seminal paper \cite{jiang2009general}, the upper bound in (\ref{ineq:gau_regret_upper}) was proved for $\reg^g_n(\mathcal{G})$.
Up to logarithmic factors, the first bound $n^{-1}(\log n)^5$ of (\ref{ineq:gau_regret_upper}) has been shown by \cite[Theorem 1]{polyanskiy2021sharp} to be minimax optimal. The following result shows that the second bound of (\ref{ineq:gau_regret_upper}) is also minimax optimal up to logarithmic factors. 

\begin{theorem}\label{thm:gau_regret_lower}
For any $p > 0$, there exists some $c = c(p) > 0$ such that
\begin{align*}
\reg^g_n(\mathcal{G}_p(1)) \geq cn^{-\frac{p}{p+1}}(\log n)^{-11}.
\end{align*}
\end{theorem}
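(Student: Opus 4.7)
The plan is to mirror the proof of \prettyref{thm:regret_lower} for the Poisson model, replacing the \emph{quadratic} partition (\ref{def:partition}) by a \emph{linear} one to reflect the fact that the Gaussian noise variance does not grow with $\theta$. Fix integers $i_0 \leq N/2$ to be optimized. Set $I_i \equiv [i\log n, (i+1)\log n]$, $a_i \equiv (i+1/2)\log n$, and $w_i \equiv c_p (i\log n)^{-(p+1)}/\log n$ for $i_0 \leq i \leq N$, with $c_p$ chosen so that $\sum_i w_i a_i^p \leq 1/2$; set $w_0 \equiv 1 - \sum_{i\geq i_0} w_i$. Define $\delta_i^2 \equiv (nw_i(\log n)^{10})^{-1}$ and $b_i \equiv a_i + \delta_i$. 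For each $\bm\tau \in \{0,1\}^{N-i_0+1}$, let
\begin{align*}
G_{\bm\tau} \equiv w_0\delta_0 + \sum_{i=i_0}^{N} w_i\delta_{\lambda_i}, \quad \lambda_i \equiv a_i\indc{\tau_i=0} + b_i\indc{\tau_i=1}.
\end{align*}
For the eventual choice $N \asymp n^{1/(p+1)}(\log n)^{(10-p)/(p+1)}$ and $i_0 \asymp N$, one checks that the perturbation $\delta_i \ll \log n$ stays well inside $I_i$ and $G_{\bm\tau} \in \mathcal{G}_p(1)$.

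Two local estimates drive the argument. First, for $d(\bm\tau,\bm\tau') = 1$ with flip at position $i_\ast$, the convexity bound leading to (\ref{ineq:density_lower_chi}) together with the Gaussian identity $\chi^2(\mathcal{N}(\theta,1)\|\mathcal{N}(\theta',1)) = e^{(\theta-\theta')^2}-1$ from \prettyref{lem:poi_divergence} gives
\begin{align*}
\chi^2(f_{\bm\tau}\|f_{\bm\tau'}) \leq w_{i_\ast}\big(e^{\delta_{i_\ast}^2}-1\big) \lesssim \frac{1}{n(\log n)^{10}},
\end{align*}
so that $n\chi^2 \to 0$. Second, because the atoms are separated by at least $\log n$ times a constant, a Gaussian tail bound shows that on $R_i \equiv \{x\in I_i: |x - a_i| \leq c\sqrt{\log n}\}$, the posterior weight placed on the correct atom is at least $1 - n^{-C}$ for any prescribed $C>0$. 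Consequently, when $\bm\tau,\bm\tau'$ differ at $i_\ast$,
\begin{align*}
\big(\theta_{G_{\bm\tau}}(x) - \theta_{G_{\bm\tau'}}(x)\big)^2 \geq \delta_{i_\ast}^2 - n^{-C}\cdot \max_i a_i^2, \quad x\in R_{i_\ast},
\end{align*}
and a parallel computation gives the ratio bound $f_{G_{\bm\tau}}(x)/f_{G_{\bm\tau'}}(x) = O(1)$ uniformly on $\bigcup_i R_i$, which is the Gaussian analogue of (\ref{ineq:local_ratio_bound}). The latter allows one to replace the $G$-dependent loss $\|\cdot\|_{\ell_2(f_G)}^2$ by the $G$-free truncated surrogate $\|\cdot\|_{\ell_2(f_{G_{\bm 0}})}^{2,\mathsf{trun}}$ supported on $\bigcup_i R_i$, exactly as in (\ref{ineq:L2_hamming_ratio}).

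Feeding these pieces into Assouad's lemma as in the Poisson case yields
\begin{align*}
\reg^g_n(\mathcal{G}_p(1)) \gtrsim N\cdot \min_{i_0\leq i \leq N} w_i\delta_i^2 \cdot \Prob\big(\mathcal{N}(a_i,1) \in R_i\big) \gtrsim \frac{N}{n(\log n)^{10}},
\end{align*}
and the choice of $N$ above saturates the constraint $\delta_i \lesssim \log n$, i.e., $nw_N \gtrsim (\log n)^{-12}$, delivering the claimed $n^{-p/(p+1)}(\log n)^{-11}$ rate (with some slack to spare in the logarithmic exponent). The main technical obstacle is the ratio bound $f_{G_{\bm\tau}}/f_{G_{\bm\tau'}} = O(1)$ on $\bigcup_i R_i$: writing the log-likelihood ratio around each $a_{i_\ast}$ as $\delta_{i_\ast}(x - a_{i_\ast}) - \delta_{i_\ast}^2/2$ plus contributions from other atoms, one must combine $|x - a_{i_\ast}| = O(\sqrt{\log n})$ on $R_{i_\ast}$ with $\delta_{i_\ast} = O\big((nw_{i_\ast}(\log n)^{10})^{-1/2}\big)$ and the $\log n$-separation to show that the cross-atom contributions are polynomially small in $n$. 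Apart from this, all estimates are direct Gaussian analogues of those in \prettyref{subsec:proof_regret_lower}; in particular, no two-point refinement at $p = 1$ is needed here, since consistency persists for all $p > 0$ in the Gaussian model.
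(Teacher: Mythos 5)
Your plan---a linear partition scaling as $\log n$ per cell (versus the quadratic scaling in the Poisson proof), two-atom perturbations with $w_i\delta_i^2 \asymp (n(\log n)^{10})^{-1}$, reduction to a $G$-free truncated $\ell_2$ surrogate, and Assouad's lemma---is exactly the paper's strategy; your use of intervals of length $\log n$ rather than the paper's $(\log n)^2$ is a harmless variation, and the Gaussian $\chi^2$ identity and separation arguments are correctly identified.

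However, the binding constraint on $N$ is wrong, and as stated it breaks the argument. You take $\delta_i \lesssim \log n$ (so that $b_i$ stays inside $I_i$), yielding $N \asymp n^{1/(p+1)}(\log n)^{(10-p)/(p+1)}$ and hence $\delta_N^2 \asymp (\log n)^2$. With that choice, the key estimate $\chi^2(f_{\bm{\tau}}\,\Vert\, f_{\bm{\tau}'}) \leq w_{i_*}\bigl(e^{\delta_{i_*}^2}-1\bigr) \lesssim (n(\log n)^{10})^{-1}$ fails: one needs $\delta_{i_*}^2 = O(1)$ to linearize $e^{\delta^2}-1\lesssim\delta^2$, but here $e^{\delta_N^2} = e^{\Theta((\log n)^2)}$ is super-polynomial in $n$, so $n\chi^2 \to\infty$ rather than $\to 0$ and Assouad returns a vacuous bound. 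The density-ratio bound that you flag as the ``main technical obstacle'' also breaks: on $R_{i_*}$ of width $\asymp\sqrt{\log n}$, the dominant-atom log-ratio is $\asymp \delta_{i_*}|x-a_{i_*}| \lesssim \delta_{i_*}\sqrt{\log n}$, which at $i_*=N$ is $\asymp (\log n)^{3/2}$, not $O(1)$; relatedly, $\delta_N\asymp\log n$ pushes $b_N$ to the edge of $I_N$. What is actually required is $\delta_i^2 = O(1)$ for the $\chi^2$ step and $\delta_i \lesssim (\log n)^{-1/2}$ for the ratio bound; the paper secures both with room to spare by re-imposing the Poisson condition $w_{i_*} \geq 2/n$, which forces $\delta_i^2 \lesssim (\log n)^{-10}$. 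Once you impose a constraint of this kind, $N$ is capped at $\asymp n^{1/(p+1)}\,\polylog(n)$ with a smaller power of $\log n$ than you claim, but the lower bound $N/(n(\log n)^{10})$ remains $n^{-p/(p+1)}(\log n)^{-O(1)}$, so the theorem (which allows a $(\log n)^{-11}$ loss) is still delivered after this correction.
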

\begin{proof}[Proof of Theorem \ref{thm:gau_regret_lower}]
Since the proof is similar to that of Theorem \ref{thm:regret_lower}, we only provide a sketch of the arguments. We adopt a similar lower construction $\{G_{\bm{\tau}}\}$ as in (\ref{def:lower_bound_construction}), with the following modifications. Let $a_0\equiv 0$, and for $i\geq 1$, $I_i \equiv [i(\log n)^2, (i+1)(\log n)^2]$ with $a_i$ being the center of $I_i$. Let $w_i \equiv \big((i+1)(\log n)^2\big)^{-(p+1)}$, and $w_0\equiv 1 - \sum_{i=i_0}^N w_i$. Let $b_i\equiv a_i + \delta_i$, with $\delta_i^2 \equiv \big(nw_i(\log n)^{10}\big)^{-1}$. Then proceeding along the same lines to (\ref{ineq:density_lower_chi}), we have
\begin{align*}
\notag\chi^2\big(f_{\bm{\tau}}||f_{\bm{\tau}'}\big) &= \int \frac{\Big(\sum_{i=i_0}^N w_i \big(\phi(x-\lambda_i) - \phi(x-\lambda_i')\big)\Big)^2}{w_0\phi(x) + \sum_{i=i_0}^N w_i \phi(x-\lambda_i')}\d x\\
&\leq w_{i_*}\cdot \chi^2\big(\mathcal{N}(\lambda_{i_*},1)||\mathcal{N}(\lambda_{i_*}',1)\big) = w_{i_*}\Big(\exp\big((\lambda_{i_*} - \lambda'_{i_*})^2\big) - 1\Big),
\end{align*}
where $\phi(\cdot)$ denotes the standard normal density, and we use the Gaussian calculation in Lemma \ref{lem:poi_divergence}. Hence using $(\lambda_{i_*} - \lambda'_{i_*})^2 = \delta_{i_*}^2 = (nw_{i_*}(\log n)^{10})^{-1}$ and the lower bound $w_{i_*}\geq 2/n$, we have $\chi^2(f_{\bm{\tau}}||f_{\bm{\tau}'}) \leq 2/(n(\log n)^{10})$. 

Then we proceed to the calculations in (\ref{ineq:L2_hamming_ratio}) to obtain
\begin{align*}
\notag\pnorm{\theta_{G_{\bm{\tau}}} - \theta_{G_{\bm{\tau}'}}}{\ell_2(f_{\bm{\tau}'})}^{2,\mathsf{trun}} \gtrsim \frac{|\mathcal{I}|}{n(\log n)^{10}}.
\end{align*}
The rest of the proof is essentially identical to that of Theorem \ref{thm:regret_lower} by applying Assouad's lemma and choosing $N = c_pn^{1/(p+1)}/\log n$ and $i_0 = N/2$. The proof is complete.
\end{proof}

\section{Sub-optimality of empirical estimator}

The following result demonstrates the sub-optimality of the empirical estimator
\begin{align*}
\tilde{f}(y) \equiv \frac{1}{n}\sum_{i=1}^n \bm{1}\{Y_i = y\}, \quad y\in\mathbb{Z}_+,
\end{align*}
in density estimation. 

\begin{proposition}\label{prop:empirical_subopt}
Fix any $p > 0$. There exists some $G \in \mathcal{G}_p(1)$ and $c = c(p) > 0$ such that, with $Y_1,\ldots,Y_n$ i.i.d. observations from $f_G$,
\begin{align*}
\E_G H^2(\tilde{f}, f_G) \geq cn^{-\frac{p}{p+1}}(\log n)^{-\frac{4}{p+1}}.
\end{align*}
\end{proposition}
\begin{proof}
Consider the prior $G$ constructed in Lemma \ref{lem:ht_prior}. Let $y_\ast \equiv  \floor{c_p\big(n/(\log n)^4\big)^{1/(p+1)}}$ for some small enough $c_p$ such that $f_G(y) \geq 100\log n/n$ whenever $y \leq y_\ast$. Then
\begin{align*}
\E_G H^2(\tilde{f}, f_G) &= \sum_{y=0}^\infty \E\Big(\frac{\tilde{f}(y) - f_G(y)}{\sqrt{\tilde{f}(y)} + \sqrt{f_G(y)}}\Big)^2\\
&\asymp \sum_{y=0}^\infty \E\frac{\big[\tilde{f}(y) - f_G(y)\big]^2}{\tilde{f}(y) + f_G(y)}\\
&\geq \sum_{y=0}^{y_\ast} \E\frac{\big[\tilde{f}(y) - f_G(y)\big]^2}{\tilde{f}(y) + f_G(y)}\bm{1}\{\tilde{f}(y) \leq 50f_G(y)\}\\
&\gtrsim \sum_{y=0}^{y_\ast} \frac{\E\Big[\big(\tilde{f}(y) - f_G(y)\big)^2\bm{1}\{\tilde{f}(y) \leq 50f_G(y)\}\Big]}{f_G(y)}\\
&\stackrel{(*)}{\gtrsim} \sum_{y=0}^{y_\ast} \frac{1-f_G(y)}{n} - n^{-10} \asymp \frac{y_\ast}{n} \asymp_p n^{-\frac{p}{p+1}}(\log n)^{-\frac{4}{p+1}},
\end{align*}
where $(*)$ follows from standard binomial concentration.
\end{proof}

\section{Results in the compound setting}\label{sec:compound}

In this section, we collect a few results for the Poisson model in the compound setting, which play an essential role in the proof of Theorem \ref{thm:npmle_rho_free}.

\subsection{Density estimation}

Let $\theta^n = (\theta_1,\ldots,\theta_n) \in \R_+^n$ be a deterministic vector and $Y^n = (Y_1,\ldots,Y_n)$ be independent variables with $Y_i \sim \poi(\theta_i)$ for $1\leq i \leq n$. Let $f_i(\cdot) \equiv \poi(\cdot;\theta_i)$ be the $i$th marginal pmf, $G_n \equiv n^{-1}\sum_{i=1}^n\delta_{\theta_i}$, and the average density be
\begin{align}
f_{G_n}(y) \equiv \frac{1}{n}\sum_{i=1}^n f_i(y), \quad y \in \mathbb{Z}_+.
\end{align}
For any distribution $G$ on $\R_{+}$ and $p>0$, let
\begin{align*}
m_p(G) \equiv \int_{\R_+} u^p G(\d u).
\end{align*}
Let $\hat{G}$ be the NPMLE given by (\ref{def:npmle}). The $\Prob_{Y^n}$ and $\E_{Y^n}$ below are under the randomness of $Y^n$ described above.

\begin{proposition}\label{prop:compound_density}
Suppose $p > 0$ and $m_p(G_n)^{1/p} \leq n^{10}$. Let
\begin{align}\label{eq:eps_compound}
\epsilon_n \equiv \big(n^{-p/(2p+1)}m_p(G_n)^{1/(4p+2)} \vee n^{-1/2}\big) (\log n)^{4}. 
\end{align}
Then there exists some $t_\ast = t_\ast(p)$ such that for all $t \geq t_\ast$,
\begin{align*}
\Prob_{Y^n}\Big(H(f_{\hat{G}}, f_{G_n}) \geq t\epsilon_n\Big) \leq 2\exp\Big(-t^2n\epsilon_n^2/(8\log n)\Big) \leq 2\exp\Big(-t^2(\log n)^2/8\Big).
\end{align*}
Consequently, there exists some $C = C(p) > 0$ such that $\E_{Y^n} H^2(f_{\hat{G}}, f_{G_n}) \leq C\epsilon_n^2$.
\end{proposition}
%

\begin{proof}[Proof of Proposition \ref{prop:compound_density}]
The proof is very similar to Theorem \ref{thm:density_main} and we only sketch the minor difference below. Using the same notation as in Theorem \ref{thm:density_main} and following the proof there, we have
\begin{align*}
&\Prob_{Y^n}\Big(H(f_{\hat{G}}, f_{G_n}) \geq t\epsilon_n\Big)\\
&\leq \Prob_{Y^n}\Big(\max_{j\leq N} L(f_{H_j} + f_\ast, f_{G_n}) \geq \exp(-nt^2\epsilon_n^2/2)\Big) + \Prob_{Y^n}\Big(\prod_{i: Y_i > M} \frac{1}{f_\ast(Y_i)} \geq \exp(nt^2\epsilon_n^2/2)\Big)\\
&\equiv (I) + (II).
\end{align*}
Here, for some fixed $\eta > 0$ and $M > 0$ to be chosen later, $\{f_{H_j}: 1\leq j\leq N\}$ is a proper $(\eta, \pnorm{\cdot}{\infty,M})$-net of $B(t\epsilon_n)^c$, and by Lemma \ref{lem:entropy} there exists some universal $K > 0$ such that
\begin{align*}
N \leq K\sqrt{M}\big(\log (1/\eta)\big)^{3/2}\log(M/\eta). 
\end{align*}
To bound $(I)$, we have
\begin{align*}
(I) &\leq N\cdot \max_{j\leq N} \Prob_{Y^n}\Big(\prod_{i=1}^n \sqrt{\frac{(f_{H_j} + f_\ast)(Y_i)}{f_{G_n}(Y_i)}} \geq \exp(-nt^2\epsilon_n^2/4)\Big)\\
&\leq N\cdot \max_{j\leq N} \exp\bigg(nt^2\epsilon_n^2/4 + \sum_{i=1}^n \log \E_{Y^n} \sqrt{\frac{(f_{H_j} + f_\ast)(Y_i)}{f_{G_n}(Y_i)}}\bigg)\\
&\leq N\cdot \max_{j\leq N} \exp\bigg(nt^2\epsilon_n^2/4 + n \cdot \Big(\sum_{y=0}^\infty \sqrt{(f_{H_j} + f_\ast)(y) f_{G_n}(y)} - 1\Big)\bigg).
\end{align*}
Here in the last step, since $\log x \leq x - 1$ for all $x > 0$, we have
\begin{align*}
&\sum_{i=1}^n \log \E_{Y^n} \sqrt{\frac{(f_{H_j} + f_\ast)(Y_i)}{f_{G_n}(Y_i)}} \leq \sum_{i=1}^n \E_{Y^n} \sqrt{\frac{(f_{H_j} + f_\ast)(Y_i)}{f_{G_n}(Y_i)}} - n\\
&= \sum_{i=1}^n \sum_{y = 0}^\infty f_i(y) \sqrt{\frac{(f_{H_j} + f_\ast)(y)}{f_{G_n}(y)}} - n = n \cdot \Big(\sum_{y=0}^\infty \sqrt{(f_{H_j} + f_\ast)(y) f_{G_n}(y)} - 1\Big).
\end{align*}
Now the same argument as in Theorem \ref{thm:density_main} implies that for sufficiently large $t$ (depending only on $p$),
\begin{align*}
(I) \leq \exp\Big(K_p\sqrt{M}(\log n)^{5/2} + nt^2\epsilon_n^2/4 - n(t\epsilon_n)^2/2 + n\sqrt{\eta M}\Big) \leq \exp(-nt^2\epsilon_n^2/8).
\end{align*}
The rest of the proof is the same, upon noting that Lemma \ref{lem:truncation_moment} can also be extended to the compound setting using the same argument as above. 
\end{proof}

\subsection{Regret bounds}\label{subsec:compound_regret}
In the compound estimation setting there are multiple definitions of regret \cite{jiang2009general,greenshtein2009asymptotic,saha2020nonparametric,polyanskiy2021sharp}; see \cite[Proposition 3]{polyanskiy2021sharp} for a comparison of these with the empirical Bayes regret.
Following \cite{jiang2009general, polyanskiy2021sharp}, we consider the following notion of (total) regret in the compound setup. For any estimator $\hat{\theta}^n: \mathbb{Z}_+^n \rightarrow \R_+^n$, its total regret at $\theta^n$ is defined by
\begin{align}\label{eq:totreg-compound}
\totreg_n(\hat{\theta}^n; \theta^n) =  \E_{Y^n} \pnorm{\hat{\theta}^n(Y^n) - \theta^n}{}^2 - \E_{Y^n}\pnorm{\theta_{G_n}(Y^n) - \theta^n}{}^2,
\end{align}
where $G_n$ denotes the empirical distribution of $\theta^n$. 
The interpretation of \prettyref{eq:totreg-compound} is the excess risk with respect to the \emph{best separable} oracle, which is simply the Bayes rule
$\theta_{G_n}(Y^n)=\theta_{G_n}(Y_1),\ldots,\theta_{G_n}(Y_n)$ with the empirical distribution $G_n$ as the prior.
 Recall that for any distribution $G$ on $\R_+$ and $p > 0$, $m_p(G) = \int_{\R_+} u^p G(\d u)$. Recall that $\hat{\theta}^{\npmle,n}$ is given by (\ref{def:npmle_eb}).

\begin{theorem}\label{thm:npmle_compound}
Suppose $p > 1$ and $\theta^n\in\R^n$ is such that $m_p(G_n) \leq n^{10p}$. Then 
\begin{align}\label{eq:bound_compound}
\totreg_n(\hat{\theta}^{\npmle,n}; \theta^n) \leq C(\log n)^{13}\Big[n^{\frac{3}{2p+1}}m_p(G_n)^{\frac{3}{2p+1}} \vee 1\Big]
\end{align}
for some universal $C > 0$. Moreover, \prettyref{eq:bound_compound} continues to hold if $\theta_{\hat{G}}(Y^n)$ (resp.~$\theta_{G_n}(Y^n)$) in the definition \prettyref{eq:totreg-compound}
 of $\totreg_n(\hat{\theta}^{\npmle,n}; \theta^n)$ is replaced by the regularized version $\theta_{\hat{G}}(Y^n;\rho)$ (resp.~$\theta_{G_n}(Y^n;\rho)$) for any $\rho \leq n^{-C_\rho}$ with some large universal $C_\rho > 0$.
\end{theorem}

\begin{remark}\label{rmk:l2_regret}
A related definition of total regret at $\theta^n$ (see e.g., \cite{saha2020nonparametric}) is
\begin{align}
\totreg_n'(\hat{\theta}^n; \theta^n) = \E_{Y^n} \pnorm{\hat{\theta}^n(Y^n) - \theta_{G_n}(Y^n)}{}^2.
\label{eq:totgret-compound2}
\end{align}
Note that without orthogonality principle, it is unclear whether \prettyref{eq:totreg-compound} and \prettyref{eq:totgret-compound2} coincide.
Nevertheless, as we show in Section \ref{subsec:proof_remark_regret}, under the same conditions as in Theorem \ref{thm:npmle_compound}, the bound (\ref{eq:bound_compound}) also holds for $\totreg_n'(\hat{\theta}^{\npmle,n}; \theta^n)$, even when $\theta_{\hat{G}}(Y^n)$ and 
$\theta_{G_n}(Y^n)$  are replaced by their regularized versions $\theta_{\hat{G}}(Y^n;\rho)$ and $\theta_{G_n}(Y^n;\rho)$, respectively. 
\end{remark}

\begin{proof}[Proof of Theorem \ref{thm:npmle_compound}]
We only consider the regularized version $\theta_{\hat{G}}(Y^n;\rho)$ and $\theta_{G_n}(Y^n;\rho)$.
The result for the unregularized $\theta_{\hat{G}}(Y^n)$ and $\theta_{G_n}(Y^n)$ follows from the same steps and error bounds leading up to (\ref{eq:npmle_reg_decomp}).

Fix some $M > 0$ to be chosen later. We have
\begin{align*}
&\E_{Y^n} \pnorm{\theta_{\hat{G}}(Y^n;\rho) - \theta^n}{}^2\\
&= \E_{Y^n} \sum_{i=1}^n \big(\theta_{\hat{G}}(Y_i;\rho) - \theta_i\big)^2\bm{1}_{Y_i \leq M} + \underbrace{\E_{Y^n} \sum_{i=1}^n \big(\theta_{\hat{G}}(Y_i;\rho) - \theta_i\big)^2\bm{1}_{Y_i > M}}_{\zeta_1}\\
&\leq \E_{Y^n} \sum_{i=1}^n \big(\theta_{\hat{G}}(Y_i;\rho) - \theta_i\big)^2\bm{1}_{Y_i \leq M, |Y_i - \theta_i| \leq \sqrt{M}(\log n)^2} + \zeta_1\\
&\quad + \underbrace{\E_{Y^n} \sum_{i=1}^n \big(\theta_{\hat{G}}(Y_i;\rho) - \theta_i\big)^2\bm{1}_{Y_i \leq M, \theta_i > 2M}}_{\zeta_2} + \underbrace{\E_{Y^n} \sum_{i=1}^n \big(\theta_{\hat{G}}(Y_i;\rho) - \theta_i\big)^2\bm{1}_{Y_i \leq M, \theta_i\leq 2M, |Y_i - \theta_i| > \sqrt{M}(\log n)^2}}_{\zeta_3}.
\end{align*}
Let $A_i$ denote the event $\{Y_i \leq M, |Y_i - \theta_i| \leq \sqrt{M}(\log n)^2\}$ for $i\leq n$, and $E$ denote the event $H(f_{\hat{G}}, f_{G_n}) \leq t_\ast\epsilon_n$, where $\epsilon_n$ is given by (\ref{eq:eps_compound}). Then
\begin{align*}
\E_{Y^n} \sum_{i=1}^n \big(\theta_{\hat{G}}(Y_i;\rho) - \theta_i\big)^2\bm{1}_{A_i} = \E_{Y^n} \sum_{i=1}^n \big(\theta_{\hat{G}}(Y_i;\rho) - \theta_i\big)^2\bm{1}_{A_i}\bm{1}_{E} + \underbrace{\E_{Y^n} \sum_{i=1}^n \big(\theta_{\hat{G}}(Y_i;\rho) - \theta_i\big)^2\bm{1}_{A_i}\bm{1}_{E^c}}_{\zeta_4}.
\end{align*}  
On the event $E$, we find a net $\{G_j, j=1,\ldots,N\}$ such that $H(f_{G_j}, f_{G_n}) \leq t_\ast \epsilon_n$ and for any $\tilde{G}$ such that $f_{\tilde{G}}$ lies in the $\epsilon_n$-Hellinger ball around $f_G$, there exists some $j\in[N]$ such that
\begin{align*}
\pnorm{\theta_{G_j}(\cdot;\rho) - \theta_{\tilde{G}}(\cdot;\rho)}{\infty, M} \equiv \sup_{y \in[0,M]\cap \mathbb{Z}} \big|\theta_{G_j}(y;\rho) - \theta_{\tilde{G}}(y;\rho)\big|\leq \eta,
\end{align*}
for some $\eta>0$ to be specified. Then
\begin{align*}
&\E_{Y^n} \sum_{i=1}^n \big(\theta_{\hat{G}}(Y_i;\rho) - \theta_i\big)^2\bm{1}_{A_i}\bm{1}_E\\
&\leq \underbrace{\E_{Y_n} \inf_{j\in [N]}  \Big|\sum_{i=1}^n \big(\theta_{\hat{G}}(Y_i;\rho) - \theta_i\big)^2\bm{1}_{A_i} - \sum_{i=1}^n \big(\theta_{G_j}(Y_i;\rho) - \theta_i\big)^2\bm{1}_{A_i}\Big|\bm{1}_E}_{\zeta_5}+\\
&\quad + \E_{Y_n} \max_{j\leq N} \sum_{i=1}^n \big(\theta_{G_j}(Y_i;\rho) - \theta_i\big)^2\bm{1}_{A_i}.
\end{align*}
For each $j\in[N]$, define the variable
\begin{align*}
Z_j = \sum_{i=1}^n  \Big[\big(\theta_{G_j}(Y_i;\rho) - \theta_i\big)^2 - \big(\theta_{G_n}(Y_i;\rho) - \theta_i\big)^2\Big]\bm{1}_{A_i}.
\end{align*}
Then
\begin{align*}
&\E_{Y^n} \max_{j\leq N} \sum_{i=1}^n \big(\theta_{G_j}(Y_i;\rho) - \theta_i\big)^2\bm{1}_{A_i}\\
&= \E_{Y^n} \max_{j\in N} Z_j + \E_{Y^n} \sum_{i=1}^n \big(\theta_{G_n}(Y_i;\rho) - \theta_i\big)^2\bm{1}_{A_i}\\
&\leq \underbrace{\E_{Y^n}  \max_{j\in N} |Z_j - \E_{Y^n} Z_j|}_{\zeta_6} + \max_{j\in N} \E_{Y^n} Z_j + \E_{Y^n} \sum_{i=1}^n \big(\theta_{G_n}(Y_i;\rho) - \theta_i\big)^2\bm{1}_{A_i}\\
&= \max_{j\in [N]} \E_{Y^n}  \sum_{i=1}^n \big(\theta_{G_j}(Y_i;\rho) - \theta_i\big)^2\bm{1}_{A_i} + \zeta_6.
\end{align*}
To summarize, we have
\begin{align*}
&\E_{Y^n} \pnorm{\theta_{\hat{G}}(Y^n;\rho) - \theta^n}{}^2  - \E_{Y^n} \pnorm{\theta_{G_n}(Y^n) - \theta^n}{}^2\\
&\leq \max_{j\leq N}\Big[\E_{Y_n} \pnorm{\theta_{G_j}(Y^n;\rho) - \theta^n}{}^2 - \E_{Y^n} \pnorm{\theta_{G_n}(Y^n) - \theta^n}{}^2\Big]+ \sum_{i=1}^6 \zeta_i\\
&= n\cdot \max_{j\leq N}\Big[\E_{G_n} \big(\theta_{G_j}(Y;\rho) - \theta\big)^2 - \E_{G_n} \big(\theta_{G_n}(Y) - \theta\big)^2\Big]+ \sum_{i=1}^6 \zeta_i\\
&= n\cdot \max_{j\leq N} \E_{G_n} \big(\theta_{G_j}(Y;\rho) - \theta_{G_n}(Y)\big)^2 + \sum_{i=1}^6 \zeta_i,
\end{align*}

Recall that $\{G_j\}_{j=1}^N$ satisfy $H^2(f_{G_j}, f_{G_n}) \leq (t_\ast\epsilon_n)^2$, so Theorem \ref{thm:regret_MLE_EB} yields that
\begin{align}\label{ineq:compound_regret_main}
\E_{G_n} \big(\theta_{G_j}(Y;\rho) - \theta_{G_n}(Y)\big)^2 \lesssim (\log n)^{12} \cdot \Big[n^{-\frac{2(p-1)}{2p+1}}m_p(G_n)^{\frac{3}{2p+1}} + n^{-1}\Big].
\end{align}

Next we bound $\zeta_1-\zeta_6$ and choose the parameters $(M,\eta)$ along the way. Using Lemma \ref{lem:bayes_form_upper} and calculations in (\ref{eq:mle_error}),
\begin{align*}
\zeta_1 &\lesssim \sum_{i=1}^n \E_{Y^n}\big[ \big(\theta_{\hat{G}}(Y_i;\rho) - (Y_i+1)\big)^2 +\big(\theta_i - (Y_i+1)\big)^2 \big] \bm{1}_{Y_i > M}\\
&\lesssim \log^2(1/\rho) \cdot \Big(\sum_{i=1}^n \E_{Y^n} Y_i\bm{1}_{Y_i \geq M} + \sum_{i=1}^n \E_{Y^n} (Y_i - \theta_i)^2\bm{1}_{Y_i > M}\Big)\\
&= n\log^2(1/\rho) \cdot \E_{G_n}[\big(Y + (Y-\theta)^2\big)\bm{1}_{Y > M}]\\
&\lesssim n\log^2(1/\rho)\Big(\frac{m_p(G_n)}{M^{p-1}} + m_p(G_n)^{1/p}\exp(-cM)\Big).
\end{align*}
For $\zeta_2$, note that the trivial bound $\max_{i\in[n]} \theta_i \leq (nm_p(G_n))^{1/p}$ and Lemma \ref{lem:bayes_form_upper} yields
\begin{align}\label{eq:bound_trivial}
\notag\big|\theta_{\hat{G}}(Y_i;\rho) - \theta\big|\bm{1}_{Y_i \leq M} &\leq \big(\big|\theta_{\hat{G}}(Y_i;\rho) - (Y_i+1)\big| + \big|Y_i+1-\theta_i\big|\big)\bm{1}_{Y_i\leq M}\\
&\lesssim \log(1/\rho) \cdot (M + (nm_p(G_n))^{1/p}).
\end{align}
For each $i\in[n]$ such that $\theta_i > 2M$, we also have by Lemma \ref{lem:poi_basic} that
\begin{align*}
\Prob_{Y^n}(Y_i < M) \leq \Prob_{Y^n}(Y_i - \theta_i \leq -\theta_i/2) \leq \exp(-c\theta_i) \leq \exp(-2cM), 
\end{align*}
so
\begin{align*}
\zeta_2 \lesssim n\log^2(1/\rho) \cdot (M + (nm_p(G_n))^{1/p})^2\exp(-2cM).
\end{align*}
For $\zeta_3$, note that for $\theta_i \leq 2M$, 
\begin{align*}
\Prob_{Y^n}(|Y_i - \theta_i| > \sqrt{M}(\log n)^2) \leq \exp(-c(\log n)^2), 
\end{align*}
so we have
\begin{align*}
\zeta_3 \lesssim n\log^2(1/\rho) \cdot (M + (nm_p(G_n))^{1/p})^2\exp\big(-c(\log n)^2\big).
\end{align*}
For $\zeta_4$, using again the trivial bound (\ref{eq:bound_trivial}), 
we have
\begin{align*}
\zeta_4 &\lesssim n\log^2(1/\rho) \cdot  \big(M + (nm_p(G_n))^{\frac{1}{p}}\big)^2 \Prob_{Y^n}(E^c)\\ 
&\lesssim n\log^2(1/\rho) \cdot  \big(M + (nm_p(G_n))^{\frac{1}{p}}\big)^2 \cdot\exp\big(-t_\ast^2(\log n)^2/4\big).
\end{align*}
For $\zeta_5$, on the event $E$, there exists some $j_0\in [N]$ such that $\pnorm{\theta_{\hat{G}}(\cdot;\rho) - \theta_{G_{j_0}}(\cdot;\rho)}{\infty, M} \leq \eta$, so using again the trivial bound (\ref{eq:bound_trivial}), 
\begin{align*}
\zeta_5 &\leq \sum_{i=1}^n \E \Big|\big(\theta_{\hat{G}}(Y_i;\rho) - \theta_{G_n}(Y_i;\rho)\big)^2 - \big(\theta_{G_{j_0}}(Y_i;\rho) - \theta_{G_n}(Y_i;\rho)\big)^2\Big|\bm{1}_{Y_i\leq M}\\
&\lesssim n\log(1/\rho)\big(M + (nm_p(G_n))^{\frac{1}{p}}\big) \cdot \eta.
\end{align*}
For $\zeta_6$, note that for any $j\in[N]$, by Bernstein's inequality we have 
\begin{align*}
\Prob(|Z_j - \E_{Y^n} Z_j|\geq t) \leq \exp(-C\frac{t^2}{\sigma_j^2} \wedge \frac{t}{B_j}),
\end{align*}
where
\begin{align*}
B_j &\equiv \max_{i\in[n]}\max_{y} \big|\big(\theta_{G_j}(y;\rho) - \theta_i\big)^2 - \big(\theta_{G_n}(y;\rho) - \theta_i\big)^2\big|\bm{1}_{y\leq M, |y - \theta_i| \leq \sqrt{M}(\log n)^2}\\
& \lesssim \log^2(n/\rho)M,
\end{align*}
and
\begin{align*}
\sigma_j^2 &\equiv \var_{Y^n}(Z_j)\\
&= \sum_{i=1}^n \var_{Y^n}\big(\big[\big(\theta_{G_j}(Y_i;\rho) - \theta_i\big)^2 - \big(\theta_{G_n}(Y_i;\rho) - \theta_i\big)^2\big]\bm{1}_{A_i}\big)\\
&\leq \sum_{i=1}^n \E_{Y^n}\Big[\Big(\big(\theta_{G_j}(Y_i;\rho) - \theta_i\big)^2 - \big(\theta_{G_n}(Y_i;\rho) - \theta_i\big)^2\Big)^2\bm{1}_{A_i}\Big]\\
& \lesssim \log^2(n/\rho)M \cdot \sum_{i=1}^n  \E_{Y^n} \big(\theta_{G_j}(Y_i;\rho) - \theta_{G_n}(Y_i;\rho)\big)^2\\
&= n\log^2(n/\rho)M \cdot \E_{G_n} \big(\theta_{G_j}(Y;\rho) - \theta_{G_n}(Y;\rho)\big)^2\\
&\lesssim (\log n)^{12}\log^2(n/\rho) \cdot \Big(n^{\frac{3}{2p+1}}m_p(G_n)^{\frac{3}{2p+1}} \vee 1\Big)M,
\end{align*}
using Theorem \ref{thm:regret_MLE_EB} (its variant with $\theta_{G_n}(\cdot)$ replaced by $\theta_{G_n}(\cdot;\rho)$) in the last step. Hence using the entropy bound in Lemma \ref{lem:net_cardinality}, we have
\begin{align*}
\zeta_6 \lesssim \sqrt{\log N} \cdot \max_j\sigma_j + \log N \cdot \max_jB_j \lesssim (\log (nM/\rho\eta))^{10} \Big[M^{3/4}\Big(n^{\frac{3}{4p+2}}m_p(G_n)^{\frac{3}{4p+2}} \vee 1\Big) + M^{3/2}\Big].
\end{align*}
Combining the bounds for $\zeta_1-\zeta_6$, we have
\begin{align*}
\sum_{i=1}^6 \zeta_i &\lesssim \frac{n\log^2(1/\rho)m_p(G_n)}{M^{p-1}} + n\log^2(1/\rho) \cdot  \big(M + (nm_p(G_n))^{\frac{1}{p}}\big)^2 \cdot\exp\big(-c'((\log n)^2 \wedge M)\big)\\
&\quad + n\log(1/\rho)\big(M + (nm_p(G_n))^{\frac{1}{p}}\big) \cdot \eta + (\log (nM/\rho\eta))^{10} \Big[M^{3/4}\Big(n^{\frac{3}{4p+2}}m_p(G_n)^{\frac{3}{4p+2}} \vee 1\Big)+ M^{3/2}\Big].
\end{align*}
By choosing $M = (\log n)^2 \cdot \big((nm_p(G_n))^{\frac{2}{2p+1}} \vee 1\big)$ and $\eta = n^{-100}$, we obtain the desired result. 
\end{proof} 

\begin{lemma}\label{lem:net_cardinality}
For any $\rho > 0$, let $\Theta_0(\rho) \equiv \{\theta_{G}(\cdot; \rho): G\subset\mathcal{P}(\R_+)\}$ be the set of all $\rho$-regularized Bayes forms. For any $\theta_{G}(\cdot;\rho), \theta_{H}(\cdot;\rho)\in\Theta_0(\rho)$ and $M > 0$, let
\begin{align*}
\pnorm{\theta_{G}(\cdot;\rho) - \theta_{H}(\cdot,\rho)}{\infty,M} \equiv \sup_{y\in\mathbb{Z}_+\cap[0,M]} \big|\theta_{G}(y;\rho) - \theta_{H}(y;\rho)\big|.
\end{align*}
Then for any $\eta \in (0,10^{-3})$ and $M \geq (\log(1/\rho\eta))^{\rho_M}$ for some sufficiently large $\rho_M > 0$, there exists some universal $K > 0$ such that
\begin{align*}
\log \mathcal{N}(\eta, \Theta_0, \pnorm{\cdot}{\infty, M}) \leq K\sqrt{M}\big(\log(M/\rho\eta)\big)^{5/2}.
\end{align*}
\end{lemma}
\begin{proof}
For any $y\in[0,M]$ and distributions $G, H$ such that $\pnorm{f_G - f_H}{\infty,2M} \leq \tau$, we have
\begin{align*}
\big|\theta_G(y;\rho) - \theta_H(y;\rho)\big| &= (y+1)\Big|\frac{\Delta f_G(y)}{f_G(y)\vee\rho} - \frac{\Delta f_H(y)}{f_H(y)\vee\rho}\Big|\\
&\leq (y+1)\cdot\Big[\Big|\frac{\Delta f_G(y)}{f_G(y)\vee\rho} - \frac{2\Delta f_G(y)}{f_G(y)\vee\rho + f_H(y)\vee \rho}\Big| + \Big|\frac{2\big(\Delta f_G(y) - \Delta f_H(y)\big)}{f_G(y)\vee\rho + f_H(y)\vee \rho}\Big|\\
&\quad + \Big|\frac{\Delta f_H(y)}{f_H(y)\vee\rho} - \frac{2\Delta f_H(y)}{f_G(y)\vee\rho + f_H(y)\vee \rho}\Big|\Big]\\
&\stackrel{(*)}{\lesssim} (y+1)\cdot\Big[\frac{1}{\sqrt{y+1}}\log(1/\rho)\cdot \frac{\pnorm{f_G - f_H}{\infty,M}}{2\rho} + \frac{2\pnorm{f_G - f_H}{\infty,2M}}{2\rho}\Big]\\
&\lesssim M\log(1/\rho)\frac{\tau}{\rho},
\end{align*} 
where $(*)$ follows from Lemma \ref{lem:bayes_form_upper} in the paper. Now by choosing $\tau = \eta\rho/(KM\log(1/\rho))$ for some large universal $K > 0$, the claim follows from Lemma \ref{lem:entropy} in the paper by adjusting the constants. 
\end{proof}

\subsection{Proof for Remark \ref{rmk:l2_regret}}\label{subsec:proof_remark_regret}
\begin{proof}
The proof is similar to that of Theorem \ref{thm:npmle_compound} so we omit some repetitive details. Same as Theorem \ref{thm:npmle_compound}, we directly prove the version with $\theta_{\hat{G}}(Y^n;\rho)$ and $\theta_{G_n}(Y^n;\rho)$.

Let $E$ denote the event $H(f_{\hat{G}}, f_{G_n}) \leq t_\ast\epsilon_n$, where $\epsilon_n$ is given by (\ref{eq:eps_compound}).  Let $M > 0$ be chosen later. Then
\begin{align*}
&\E_{Y^n} \pnorm{\theta_{\hat{G}}(Y^n;\rho) - \theta_{G_n}(Y^n;\rho)}{}^2\\
&= \E_{Y^n} \sum_{i=1}^n \big(\theta_{\hat{G}}(Y_i;\rho) - \theta_{G_n}(Y_i;\rho)\big)^2\bm{1}_{Y_i \leq M} + \underbrace{\E_{Y^n} \sum_{i=1}^n \big(\theta_{\hat{G}}(Y_i;\rho) - \theta_{G_n}(Y_i;\rho)\big)^2\bm{1}_{Y_i > M}}_{\xi_1}\\
&= \E_{Y^n} \sum_{i=1}^n \big(\theta_{\hat{G}}(Y_i;\rho) - \theta_{G_n}(Y_i;\rho)\big)^2\bm{1}_{Y_i \leq M}\bm{1}_E + \underbrace{\E_{Y^n} \sum_{i=1}^n \big(\theta_{\hat{G}}(Y_i;\rho) - \theta_{G_n}(Y_i;\rho)\big)^2\bm{1}_{Y_i \leq M}\bm{1}_{E^c}}_{\xi_2} + \xi_1.
\end{align*}
Now we take the same covering $\{G_j\}_{j=1}^N$ as in the proof of Theorem \ref{thm:npmle_compound} to obtain
\begin{align*}
&\E_{Y^n} \sum_{i=1}^n \big(\theta_{\hat{G}}(Y_i;\rho) - \theta_{G_n}(Y_i;\rho)\big)^2\bm{1}_{Y_i \leq M}\bm{1}_E\\
&\leq \underbrace{\E_{Y_n} \inf_{j\in [N]}  \Big|\sum_{i=1}^n \big(\theta_{\hat{G}}(Y_i;\rho) - \theta_{G_n}(Y_i;\rho)\big)^2\bm{1}_{Y_i \leq M} - \sum_{i=1}^n \big(\theta_{G_j}(Y_i;\rho) - \theta_{G_n}(Y_i;\rho)\big)^2\bm{1}_{Y_i\leq M}\Big|\bm{1}_E}_{\xi_3}+\\
&\quad + \E_{Y_n} \max_{j\leq N} \sum_{i=1}^n \big(\theta_{G_j}(Y_i;\rho) - \theta_{G_n}(Y_i;\rho)\big)^2\bm{1}_{Y_i\leq M}\\
&\leq \max_{j\leq N}\E \sum_{i=1}^n \big(\theta_{G_j}(Y_i;\rho) - \theta_{G_n}(Y_i;\rho)\big)^2 + \xi_3\\
&\quad \underbrace{\E_{Y_n} \max_{j\leq N} \bigg|\sum_{i=1}^n \big(\theta_{G_j}(Y_i;\rho) - \theta_{G_n}(Y_i;\rho)\big)^2\bm{1}_{Y_i\leq M} - \E \sum_{i=1}^n \big(\theta_{G_j}(Y_i;\rho) - \theta_{G_n}(Y_i;\rho)\big)^2\bm{1}_{Y_i\leq M}\bigg|}_{\xi_4}.
\end{align*}
To summarize, we have
\begin{align*}
&\E_{Y^n} \pnorm{\theta_{\hat{G}}(Y^n;\rho) - \theta_{G_n}(Y^n;\rho)}{}^2 \\ 
&\leq \max_{j\leq N} \E_{Y^n} \pnorm{\theta_{G_j}(Y^n;\rho) - \theta_{G_n}(Y^n;\rho)}{}^2+ \sum_{i=1}^4 \xi_i\\
&= n\cdot \max_{j\leq N} \E_{G_n} \big(\theta_{G_j}(Y;\rho) - \theta_{G_n}(Y;\rho)\big)^2 + \sum_{i=1}^4 \xi_i.
\end{align*}

The first term is bounded as in (\ref{ineq:compound_regret_main}) by Theorem \ref{thm:regret_MLE_EB} (its variant with $\theta_{G_n}(\cdot)$ replaced by $\theta_{G_n}(\cdot;\rho)$). Similarly, $\xi_1,\xi_2,\xi_3$ enjoy the same bounds as $\zeta_1,\zeta_4,\zeta_5$.
For $\xi_4$, note that for any fixed distribution $G$, by Bernstein's inequality we have 
\begin{align*}
\Prob\bigg(\bigg|\sum_{i=1}^n \big(\theta_G(Y_i;\rho) - \theta_{G_n}(Y_i;\rho)\big)^2\bm{1}_{Y_i\leq M} - \E\big(\theta_G(Y_i;\rho) - \theta_{G_n}(Y_i;\rho)\big)^2\bm{1}_{Y_i\leq M}\bigg| \geq t\bigg) \leq \exp(-C\frac{t^2}{\sigma^2} \wedge \frac{t}{B}),
\end{align*}
where by Lemma \ref{lem:bayes_form_upper},
\begin{align*}
B &\equiv \max_{y} \big(\theta_G(y;\rho) - \theta_{G_n}(y;\rho)\big)^2\bm{1}_{Y_i\leq M} \lesssim \log^2(1/\rho)M, \\
\sigma^2 &\equiv \sum_{i=1}^n \var(\big(\theta_G(Y_i;\rho) - \theta_{G_n}(Y_i;\rho)\big)^2\bm{1}_{Y_i\leq M})\\
&\leq \sum_{i=1}^n \E \big(\theta_G(Y_i;\rho) - \theta_{G_n}(Y_i;\rho)\big)^4\bm{1}_{Y_i\leq M}\\
& \lesssim \log^2(1/\rho)M \cdot \sum_{i=1}^n  \E \big(\theta_G(Y_i;\rho) - \theta_{G_n}(Y_i;\rho)\big)^2\\
&\lesssim (\log n)^{12}\log^2(1/\rho) \cdot\Big(n^{\frac{3}{2p+1}}m_p(G_n)^{\frac{3}{2p+1}} \vee 1\Big)M.
\end{align*}
Hence using the entropy bound in Lemma \ref{lem:net_cardinality}, we have
\begin{align*}
\xi_4 \lesssim \sqrt{\log N} \cdot \sigma + \log N \cdot B \lesssim (\log (nM/\rho\eta))^{10} \Big[M^{3/4}\Big(n^{\frac{3}{4p+2}}m_p(G_n)^{\frac{3}{4p+2}} \vee 1\Big) + M^{3/2}\Big].
\end{align*}
Now take the same choices of $(M,\eta)$ as in the proof of Theorem \ref{thm:regret_MLE_EB} to conclude. 
\end{proof}

\section*{Acknowledgment}
The authors thank Yutong Nie for useful input on the lower bound in \prettyref{thm:density_lower}. 
The authors are also grateful to Soham Jana and Yury Polyanskiy for helpful discussions.


\begin{thebibliography}{DWYZ23}

\bibitem[BG09]{brown2009nonparametric}
Lawrence~D. Brown and Eitan Greenshtein.
\newblock Nonparametric empirical {B}ayes and compound decision approaches to
  estimation of a high-dimensional vector of normal means.
\newblock {\em Ann. Statist.}, 37(4):1685--1704, 2009.

\bibitem[BGR13]{brown2013poisson}
Lawrence~D. Brown, Eitan Greenshtein, and Ya'acov Ritov.
\newblock The {P}oisson compound decision problem revisited.
\newblock {\em J. Amer. Statist. Assoc.}, 108(502):741--749, 2013.

\bibitem[BLM13]{boucheron2013concentration}
St\'ephane Boucheron, G\'abor Lugosi, and Pascal Massart.
\newblock {\em Concentration inequalities: {A} nonasymptotic theory of
  independence}.
\newblock Oxford University Press, Oxford, 2013.

\bibitem[BZ22]{barbehenn2022nonparametric}
Alton Barbehenn and Sihai~Dave Zhao.
\newblock A nonparametric regression approach to asymptotically optimal
  estimation of normal means.
\newblock {\em arXiv preprint arXiv:2205.00336}, 2022.

\bibitem[Cas85]{casella1985introduction}
George Casella.
\newblock An introduction to empirical {B}ayes data analysis.
\newblock {\em Amer. Statist.}, 39(2):83--87, 1985.

\bibitem[Che17]{chen2017consistency}
Jiahua Chen.
\newblock Consistency of the {MLE} under mixture models.
\newblock {\em Statist. Sci.}, 32(1):47--63, 2017.

\bibitem[CL09]{carlin1996bayes}
Bradley~P. Carlin and Thomas~A. Louis.
\newblock {\em Bayesian methods for data analysis}.
\newblock Texts in Statistical Science Series. CRC Press, Boca Raton, FL, third
  edition, 2009.

\bibitem[DWYZ23]{doss2020optimal}
Natalie Doss, Yihong Wu, Pengkun Yang, and Harrison~H. Zhou.
\newblock Optimal estimation of high-dimensional {G}aussian location mixtures.
\newblock {\em Ann. Statist.}, 51(1):62--95, 2023.

\bibitem[Efr10]{efron2010large}
Bradley Efron.
\newblock {\em Large-scale inference}, volume~1 of {\em Institute of
  Mathematical Statistics (IMS) Monographs}.
\newblock Cambridge University Press, Cambridge, 2010.
\newblock Empirical Bayes methods for estimation, testing, and prediction.

\bibitem[Efr14]{efron2014two}
Bradley Efron.
\newblock Two modeling strategies for empirical {B}ayes estimation.
\newblock {\em Statist. Sci.}, 29(2):285--301, 2014.

\bibitem[Efr19]{efron2019bayes}
Bradley Efron.
\newblock Bayes, oracle {B}ayes and empirical {B}ayes.
\newblock {\em Statistical science}, 34(2):177--201, 2019.

\bibitem[Efr24]{efron2021empirical}
Bradley Efron.
\newblock Empirical bayes: Concepts and methods.
\newblock In {\em Handbook of Bayesian, Fiducial, and Frequentist Inference},
  pages 8--34. Chapman and Hall/CRC, 2024.

\bibitem[EH21]{efron2021computer}
Bradley Efron and Trevor Hastie.
\newblock {\em Computer age statistical inference---algorithms, evidence, and
  data science}, volume~6 of {\em Institute of Mathematical Statistics (IMS)
  Monographs}.
\newblock Cambridge University Press, Cambridge, student edition, 2021.

\bibitem[Goo53]{good1953population}
I.~J. Good.
\newblock The population frequencies of species and the estimation of
  population parameters.
\newblock {\em Biometrika}, 40:237--264, 1953.

\bibitem[GR09]{greenshtein2009asymptotic}
Eitan Greenshtein and Ya'acov Ritov.
\newblock Asymptotic efficiency of simple decisions for the compound decision
  problem.
\newblock {\em Lecture Notes-Monograph Series}, pages 266--275, 2009.

\bibitem[GvdV01]{ghosal2001entropies}
Subhashis Ghosal and Aad~W. van~der Vaart.
\newblock Entropies and rates of convergence for maximum likelihood and {B}ayes
  estimation for mixtures of normal densities.
\newblock {\em Ann. Statist.}, 29(5):1233--1263, 2001.

\bibitem[GvdV07]{ghosal2007posterior}
Subhashis Ghosal and Aad van~der Vaart.
\newblock Posterior convergence rates of {D}irichlet mixtures at smooth
  densities.
\newblock {\em Ann. Statist.}, 35(2):697--723, 2007.

\bibitem[GW00]{genovese2000rates}
Christopher~R. Genovese and Larry Wasserman.
\newblock Rates of convergence for the {G}aussian mixture sieve.
\newblock {\em Ann. Statist.}, 28(4):1105--1127, 2000.

\bibitem[HJW18]{han2018local}
Yanjun Han, Jiantao Jiao, and Tsachy Weissman.
\newblock Local moment matching: A unified methodology for symmetric functional
  estimation and distribution estimation under wasserstein distance.
\newblock In {\em Proc. 2018 Conference On Learning Theory (COLT)}, pages
  3189--3221, 2018.

\bibitem[HK18]{HK2015}
Philippe Heinrich and Jonas Kahn.
\newblock Strong identifiability and optimal minimax rates for finite mixture
  estimation.
\newblock {\em Ann. Statist.}, 46(6A):2844--2870, 2018.

\bibitem[HN16]{ho2016convergence}
Nhat Ho and XuanLong Nguyen.
\newblock Convergence rates of parameter estimation for some weakly
  identifiable finite mixtures.
\newblock {\em Ann. Statist.}, 44(6):2726--2755, 2016.

\bibitem[HS84]{heckman1984method}
J.~Heckman and B.~Singer.
\newblock A method for minimizing the impact of distributional assumptions in
  econometric models for duration data.
\newblock {\em Econometrica}, 52(2):271--320, 1984.

\bibitem[Jew82]{jewell1982mixtures}
Nicholas~P. Jewell.
\newblock Mixtures of exponential distributions.
\newblock {\em Ann. Statist.}, 10(2):479--484, 1982.

\bibitem[JPTW23]{jana2023poisson}
Soham Jana, Yury Polyanskiy, Anzo~Z Teh, and Yihong Wu.
\newblock Empirical bayes via erm and rademacher complexities: the poisson
  model.
\newblock In {\em The Thirty Sixth Annual Conference on Learning Theory}, pages
  5199--5235. PMLR, 2023.

\bibitem[JPW22]{jana2022poisson}
Soham Jana, Yury Polyanskiy, and Yihong Wu.
\newblock Optimal empirical {B}ayes estimation for the {P}oisson model via
  minimum-distance methods.
\newblock {\em arXiv preprint arXiv:2209.01328}, 2022.

\bibitem[JZ09]{jiang2009general}
Wenhua Jiang and Cun-Hui Zhang.
\newblock General maximum likelihood empirical {B}ayes estimation of normal
  means.
\newblock {\em The Annals of Statistics}, 37(4):1647--1684, 2009.

\bibitem[KG22]{kim2022minimax}
Arlene K.~H. Kim and Adityanand Guntuboyina.
\newblock Minimax bounds for estimating multivariate {G}aussian location
  mixtures.
\newblock {\em Electron. J. Stat.}, 16(1):1461--1484, 2022.

\bibitem[Kim14]{kim2014minimax}
Arlene K.~H. Kim.
\newblock Minimax bounds for estimation of normal mixtures.
\newblock {\em Bernoulli}, 20(4):1802--1818, 2014.

\bibitem[KM14]{koenker2014convex}
Roger Koenker and Ivan Mizera.
\newblock Convex optimization, shape constraints, compound decisions, and
  empirical {B}ayes rules.
\newblock {\em Journal of the American Statistical Association},
  109(506):674--685, 2014.

\bibitem[KW56]{kiefer1956consistency}
J.~Kiefer and J.~Wolfowitz.
\newblock Consistency of the maximum likelihood estimator in the presence of
  infinitely many incidental parameters.
\newblock {\em Ann. Math. Statist.}, 27:887--906, 1956.

\bibitem[LGL05]{li2005convergence}
Jianjun Li, Shanti~S. Gupta, and Friedrich Liese.
\newblock Convergence rates of empirical {B}ayes estimation in exponential
  family.
\newblock {\em J. Statist. Plann. Inference}, 131(1):101--115, 2005.

\bibitem[Lin95]{lindsay1995mixture}
Bruce~G Lindsay.
\newblock Mixture models: theory, geometry, and applications.
\newblock Ims, 1995.

\bibitem[LS17]{li2017robust}
Jerry Li and Ludwig Schmidt.
\newblock Robust and proper learning for mixtures of gaussians via systems of
  polynomial inequalities.
\newblock In {\em Conference on Learning Theory}, pages 1302--1382. PMLR, 2017.

\bibitem[LT84]{lambert1984asymptotic}
Diane Lambert and Luke Tierney.
\newblock Asymptotic properties of maximum likelihood estimates in the mixed
  {P}oisson model.
\newblock {\em Ann. Statist.}, 12(4):1388--1399, 1984.

\bibitem[Mar68]{maritz1968smooth}
J.~S. Maritz.
\newblock On the smooth empirical {B}ayes approach to testing of hypotheses and
  the compound decision problem.
\newblock {\em Biometrika}, 55:83--100, 1968.

\bibitem[ML89]{maritz1989empirical}
J.~S. Maritz and T.~Lwin.
\newblock {\em Empirical {B}ayes methods}, volume~35 of {\em Monographs on
  Statistics and Applied Probability}.
\newblock Chapman \& Hall, London, second edition, 1989.

\bibitem[Mor83]{morris1983parametric}
Carl~N. Morris.
\newblock Parametric empirical {B}ayes inference: theory and applications.
\newblock {\em J. Amer. Statist. Assoc.}, 78(381):47--65, 1983.
\newblock With discussion.

\bibitem[Pen99]{pensky1999nonparametric}
Marianna Pensky.
\newblock Nonparametric empirical {B}ayes estimation via wavelets.
\newblock In {\em Bayesian inference in wavelet-based models}, volume 141 of
  {\em Lect. Notes Stat.}, pages 323--340. Springer, New York, 1999.

\bibitem[Pfa88]{pfanzagl1988consistency}
J.~Pfanzagl.
\newblock Consistency of maximum likelihood estimators for certain
  nonparametric families, in particular: mixtures.
\newblock {\em J. Statist. Plann. Inference}, 19(2):137--158, 1988.

\bibitem[PP22]{park2022poisson}
Hoyoung Park and Junyong Park.
\newblock Poisson mean vector estimation with nonparametric maximum likelihood
  estimation and application to protein domain data.
\newblock {\em Electron. J. Stat.}, 16(2):3789--3835, 2022.

\bibitem[PW19]{PW18-dual2}
Yury Polyanskiy and Yihong Wu.
\newblock Dualizing le cam's method for functional estimation, with
  applications to estimating the unseens.
\newblock {\em arXiv preprint arXiv:1902.05616}, 2019.

\bibitem[PW20]{PW20-npmle}
Yury Polyanskiy and Yihong Wu.
\newblock Self-regularizing property of nonparametric maximum likelihood
  estimator in mixture models.
\newblock {\em Arxiv preprint arXiv:2008.08244}, Aug 2020.

\bibitem[PW21]{polyanskiy2021sharp}
Yury Polyanskiy and Yihong Wu.
\newblock Sharp regret bounds for empirical bayes and compound decision
  problems.
\newblock 2021.

\bibitem[Rob51]{robbins1951asymptotically}
Herbert Robbins.
\newblock Asymptotically subminimax solutions of compound statistical decision
  problems.
\newblock In {\em Proceedings of the {S}econd {B}erkeley {S}ymposium on
  {M}athematical {S}tatistics and {P}robability, 1950}, pages 131--148. Univ.
  California Press, Berkeley-Los Angeles, Calif., 1951.

\bibitem[Rob56]{robbins1956empirical}
Herbert Robbins.
\newblock An empirical {B}ayes approach to statistics.
\newblock In {\em Proceedings of the {T}hird {B}erkeley {S}ymposium on
  {M}athematical {S}tatistics and {P}robability, 1954--1955, vol. {I}}, pages
  157--163. Univ. California Press, Berkeley-Los Angeles, Calif., 1956.

\bibitem[SG20]{saha2020nonparametric}
Sujayam Saha and Adityanand Guntuboyina.
\newblock On the nonparametric maximum likelihood estimator for gaussian
  location mixture densities with application to gaussian denoising.
\newblock {\em The Annals of Statistics}, 48(2):738--762, 2020.

\bibitem[Sim76]{simar1976maximum}
L\'{e}opold Simar.
\newblock Maximum likelihood estimation of a compound poisson process.
\newblock {\em The Annals of Statistics}, pages 1200--1209, 1976.

\bibitem[Sin79]{singh1979empirical}
R.~S. Singh.
\newblock Empirical {B}ayes estimation in {L}ebesgue-exponential families with
  rates near the best possible rate.
\newblock {\em Ann. Statist.}, 7(4):890--902, 1979.

\bibitem[So75]{szego1975orthogonal}
G\'abor Szeg\H~o.
\newblock {\em Orthogonal polynomials}, volume Vol. XXIII of {\em American
  Mathematical Society Colloquium Publications}.
\newblock American Mathematical Society, Providence, RI, fourth edition, 1975.

\bibitem[SOAJ14]{suresh2014near}
Ananda~Theertha Suresh, Alon Orlitsky, Jayadev Acharya, and Ashkan Jafarpour.
\newblock Near-optimal-sample estimators for spherical gaussian mixtures.
\newblock {\em Advances in Neural Information Processing Systems}, 27, 2014.

\bibitem[SW94]{shen1994convergence}
Xiaotong Shen and Wing~Hung Wong.
\newblock Convergence rate of sieve estimates.
\newblock {\em Ann. Statist.}, 22(2):580--615, 1994.

\bibitem[Tsy09]{Tsybakov09}
A.~B. Tsybakov.
\newblock {\em Introduction to Nonparametric Estimation}.
\newblock Springer Verlag, New York, NY, 2009.

\bibitem[vdG93]{vdgeer1993hellinger}
Sara van~de Geer.
\newblock Hellinger-consistency of certain nonparametric maximum likelihood
  estimators.
\newblock {\em Ann. Statist.}, 21(1):14--44, 1993.

\bibitem[vdG96]{vdgeer1996rates}
Sara van~de Geer.
\newblock Rates of convergence for the maximum likelihood estimator in mixture
  models.
\newblock {\em J. Nonparametr. Statist.}, 6(4):293--310, 1996.

\bibitem[vdVW96]{van1996weak}
Aad van~der Vaart and Jon~A. Wellner.
\newblock {\em Weak {C}onvergence and {E}mpirical {P}rocesses}.
\newblock Springer Series in Statistics. Springer-Verlag, New York, 1996.

\bibitem[vHS83]{van1983weak}
J.~C. van Houwelingen and Th. Stijnen.
\newblock Monotone empirical {B}ayes estimators for the continuous
  one-parameter exponential family.
\newblock {\em Statist. Neerlandica}, 37(1):29--43, 1983.

\bibitem[Wol53]{wolfowitz1953estimation}
J.~Wolfowitz.
\newblock Estimation by the minimum distance method.
\newblock {\em Ann. Inst. Statist. Math., Tokyo}, 5:9--23, 1953.

\bibitem[WS95]{wong1995probability}
Wing~Hung Wong and Xiaotong Shen.
\newblock Probability inequalities for likelihood ratios and convergence rates
  of sieve {MLE}s.
\newblock {\em Ann. Statist.}, 23(2):339--362, 1995.

\bibitem[WY20a]{wu2020optimal}
Yihong Wu and Pengkun Yang.
\newblock Optimal estimation of {G}aussian mixtures via denoised method of
  moments.
\newblock {\em Ann. Statist.}, 48(4):1981--2007, 2020.

\bibitem[WY20b]{wu2020polynomial}
Yihong Wu and Pengkun Yang.
\newblock Polynomial methods in statistical inference: Theory and practice.
\newblock {\em Foundations and Trends® in Communications and Information
  Theory}, 17(4):402--586, 2020.

\bibitem[Zha97]{zhang1997empirical}
Cun-Hui Zhang.
\newblock Empirical bayes and compound estimation of normal means.
\newblock {\em Statistica Sinica}, 7(1):181--193, 1997.

\bibitem[Zha03]{zhang2003compound}
Cun-Hui Zhang.
\newblock Compound decision theory and empirical {B}ayes methods.
\newblock {\em The Annals of Statistics}, 31(2):379--390, 2003.

\bibitem[Zha05]{zhang2005general}
Cun-Hui Zhang.
\newblock General empirical {B}ayes wavelet methods and exactly adaptive
  minimax estimation.
\newblock {\em Ann. Statist.}, 33(1):54--100, 2005.

\bibitem[Zha09]{zhang2009generalized}
Cun-Hui Zhang.
\newblock Generalized maximum likelihood estimation of normal mixture
  densities.
\newblock {\em Statistica Sinica}, pages 1297--1318, 2009.

\end{thebibliography}
\end{document}